\documentclass[11pt,letterpaper,reqno]{amsart}

\numberwithin{equation}{section}

%%%%%%packages

\usepackage{floatrow}

\usepackage{amssymb}
\usepackage{amsmath}
\usepackage{mathrsfs}
\usepackage{dsfont}
\usepackage{verbatim}
\usepackage{stmaryrd, MnSymbol}
\usepackage{enumerate, xspace}
\usepackage{changebar}
\usepackage{hyperref}
\usepackage{comment}
\usepackage[tmargin=3cm,lmargin=3cm,rmargin=3cm,bmargin=3cm,headheight=1cm,footskip=1.5cm]{geometry}
\usepackage{mathtools}
\mathtoolsset{showonlyrefs,showmanualtags}
\usepackage{tikz}
\usepackage{tikz-cd}
\usetikzlibrary{arrows.meta,decorations.pathreplacing}
\usepackage{youngtab}
\usetikzlibrary{graphs,shapes}
\usepackage[edges]{forest}
\usepackage{algorithm2e}
\setcounter{MaxMatrixCols}{25}
\usepackage{cancel}
\usepackage{here}
\usepackage{amsthm}
\usepackage{caption}
\captionsetup{justification   = raggedright,
              singlelinecheck = false}
\usepackage{ulem}
\usepackage[foot]{amsaddr}

%%%%%%%%% tableaux and Young diagrams package
%\usepackage{ytableau}

%%%%%%%%%Pagination
%\hfuzz=15pt
%\pagestyle{plain}
%\textwidth 140mm
%  \textheight 218mm
%  \topmargin 0mm
%  \oddsidemargin 30pt
%  \evensidemargin 30pt
%% definition of theorem-type environments
\newtheorem{theorem}{Theorem}[section]
\newtheorem*{theorem*}{Theorem}
\newtheorem{lemma}[theorem]{Lemma}
\newtheorem{corollary}[theorem]{Corollary}
\newtheorem{proposition}[theorem]{Proposition}
\newtheorem*{proposition*}{Proposition}

\newtheorem{remark}[theorem]{Remark}

\numberwithin{equation}{section}

\renewcommand{\a}{\alpha}
\renewcommand{\b}{\beta}
\newcommand{\e}{\varepsilon}

\newcommand{\fa}{\varphi}

\newcommand{\p}{\mathbf{p}}
\newcommand{\q}{\mathbf{q}}

\def\i{{\mathrm{i}}}
\def\R{{\mathbb{R}}}
\def\N{{\mathbb{N}}}
\def\Z{{\mathbb{Z}}}

\def\E{{\mathbb{E}}}

\def\E{{\mathbb{E}}}

\title[Seat number configuration of the box-ball system]{Seat number configuration of the box-ball system, and its relation to the 10-elimination and invariant measures}

\author{Hayate Suda}

\begin{document}

\begin{abstract}
    The box-ball system (BBS) is a soliton cellular automaton introduced in \cite{TS}, and it is known that the dynamics of the BBS can be linearized by several methods. Recently, a new linearization method, called the seat number configuration, was introduced in \cite{MSSS}. 
{In this paper, we develop this method further by introducing
the $k$-skip map, which is a natural operation on the seat number
configuration. From the soliton point of view, this map lowers the height
of each soliton by $k$. We first show that the $k$-skip map shifts the seat
number configuration and that, for finite ball configurations on the
half-line, the 1-skip map coincides with the 10-elimination introduced in \cite{MIT}. We
then extend the seat number configuration and the $k$-skip map to the BBS
on the whole-line. Finally, we study the distribution of the $k$-skipped configuration under the 
invariant measures introduced in \cite{FG}. As an application, we compute expectations of
the carriers with seat numbers, which are related to the stationary
current and the effective velocity of solitons. }
\end{abstract}

\maketitle

        \section{Introduction}
            
            The box-ball system (BBS) is a soliton cellular automaton introduced in \cite{TS}. In this paper, we consider the BBS($\ell$), $\ell \in \N\cup\{\infty\}$, which is a class of generalizations of the BBS introduced in \cite{TM}. 
            The configuration space $\Omega$ is either $\left\{0,1 \right\}^{\N}$ or a certain subset of $\left\{0,1 \right\}^{\Z}$, and depending on the configuration space, we refer to the BBS as the BBS on the half-line or the BBS on the whole-line, respectively.  
            Here, for $\eta \in \Omega$, $\eta(x) = 0$ (resp. $\eta(x) = 1$) means that the site $x$ is vacant (resp. occupied). 
            The dynamics of the BBS($\ell$) on the half-line is given via the {\it carrier with capacity $\ell$}, $W_{\ell}\left(\eta, \cdot\right) \ : \Z_{\ge 0} \to \Z_{\ge 0}$, which is recursively constructed as follows : 
                \begin{itemize}

                    \item An empty carrier starts from $x = 0$, i.e., $W_{\ell}\left(\eta, 0\right) = 0$. 
                
                    \item If there is a ball at $x$ and the carrier is not full, then the carrier picks it up, i.e., if $\eta(x) = 1$ and $W_{\ell}\left(\eta, x - 1\right) < \ell$, then $W_{\ell}\left(\eta, x\right) = W_{\ell}\left(\eta, x - 1\right) + 1$.

                    \item If $x$ is vacant and the carrier is not empty, then the carrier puts down a ball, i.e., if $\eta(x) = 0$ and $W_{\ell}\left(\eta, x - 1\right) > 0$ then $W_{\ell}\left(\eta, x\right) = W_{\ell}\left(\eta, x - 1\right) - 1$. 

                    \item Otherwise, the carrier just goes through. 
                \end{itemize}
            Then, by using $W_{\ell}\left(\eta, \cdot\right)$, the one-step time evolution of the BBS is described by the operator $T_{\ell} : \Omega \to \Omega$ defined as
                \begin{align}\label{eq:dynamics}
                    T_{\ell}\eta(x) := \eta(x) + W_{\ell}\left(\eta, x - 1\right) - W_{\ell}\left(\eta, x\right).
                \end{align}
            Note that for the case $\Omega \subset \left\{0,1\right\}^{\Z}$, the domain of $W_{\ell}(\eta, \cdot)$ can be extended to $\Z$, and the one-step time evolution of the BBS($\ell$) on the whole-line is also described by $T_{\ell} : \Omega \to \Omega$, which is defined via the same equation \eqref{eq:dynamics}, see Section 4 for details. 
            
            Despite the simple description of the dynamics, it is known that the BBS exhibits solitonic behavior, i.e., the BBS is a classical integrable system.
            There are {infinitely many} types of solitons in the BBS, and each type is labeled by $k \in \N$. A $k$-soliton is composed of $k$ $1$s and $k$ $0$s, and solitons in the BBS are identified by the {\it Takahashi--Satsuma algorithm}, see Appendix \ref{app:TS} for details. { Figure \ref{fig:intro_soliton} gives a simple illustration of how solitons can be visualized
through the graph of the carrier $W_{\infty}$. The colored solitons in the figure are those
identified by the Takahashi--Satsuma algorithm.  In the carrier graph
representation, a $k$-soliton appears as a peak of height $k$.  Thus, in
Figure~\ref{fig:intro_soliton}, the blue curve represents a $3$-soliton, while the orange curve
represents a $1$-soliton.  The figure also shows how solitons of different sizes interact.  After the interaction, both solitons recover their original shapes, but their positions are shifted compared
with the positions expected from free propagation. This shift is called
the phase shift of the soliton interaction.}  In addition {to the solitonic behavior of the BBS}, it is known that the BBS is also integrable in the quantum sense, and the origin of the integrability of the BBS has been well-studied, see \cite{IKT} and references therein for details. We note that the concept of ``soliton" is not necessary to state the results of this paper, but if something can be interpreted intuitively in terms of solitons, it will be mentioned as remarks (e.g., Remarks \ref{rem:seat_sol}, \ref{rem:zeta_sol}, \ref{rem:skip_sol}.). 

\begin{figure}
    \centering
    \begin{tikzpicture}[scale=0.6, every node/.style={transform shape}]
\colorlet{BigSol}{blue!75!black}
\colorlet{SmallSol}{orange!85!black}
\colorlet{GridCol}{gray!35}

% Convention:
% the vertical grid lines are box boundaries labelled 1,2,3,...,
% and a ball in box i is drawn at x=i+1/2, namely at the midpoint of the box.
% The carrier graph is drawn at boundary points.

\newcommand{\balls}[1]{\foreach \i in {#1}{\fill ({\i+0.5},0.40) circle[radius=0.18];}}

\newcommand{\bbsrow}[8]{%
  \begin{scope}[shift={(0,#1)}]
    \draw[black,thick] (1,0) -- (22.0,0);
    \foreach \x in {1,...,22}{\draw[GridCol] (\x,-0.15) -- (\x,3.55);}    
    \foreach \y in {1,2,3}{\draw[gray!18] (1,\y) -- (22.0,\y);}    
    \foreach \y/\lab in {1/1,2/2,3/3}{\node[left,gray!80] at (0.65,\y) {\small $\lab$};}
    \node[left] at (-0.2,1.7) {#2};
    \balls{#3}
    \draw[black!25,line width=0.8pt] #4;
    \draw[BigSol,line width=1.25pt] #5;
    \draw[SmallSol,line width=1.25pt] #6;
    #7
    #8
  \end{scope}%
}

%\node[anchor=west] at (1,19.4) {$W\eta$ for $\eta=11100000010\cdots$};

% eta = 11100000010...
\bbsrow{16}{$\eta$}
 {1,2,3,10}
 {(1,0)--(2,1)--(3,2)--(4,3)--(5,2)--(6,1)--(7,0)--(8,0)--(9,0)--(10,0)--(11,1)--(12,0)--(22,0)}
 {(1,0)--(2,1)--(3,2)--(4,3)--(5,2)--(6,1)--(7,0)}
 {(10,0)--(11,1)--(12,0)}
 {\node[BigSol] at (2.5,2.6) {$3$-soliton};}
 {\node[SmallSol] at (11.0,1.35) {$1$-soliton};}

% T eta = 000111000010...
\bbsrow{12}{$T_{\infty}\eta$}
 {4,5,6,11}
 {(1,0)--(2,0)--(3,0)--(4,0)--(5,1)--(6,2)--(7,3)--(8,2)--(9,1)--(10,0)--(11,0)--(12,1)--(13,0)--(22,0)}
 {(4,0)--(5,1)--(6,2)--(7,3)--(8,2)--(9,1)--(10,0)}
 {(11,0)--(12,1)--(13,0)}

% T^2 eta = 0000001110010...
\bbsrow{8}{$T^{2}_{\infty}\eta$}
 {7,8,9,12}
 {(1,0)--(2,0)--(3,0)--(4,0)--(5,0)--(6,0)--(7,0)--(8,1)--(9,2)--(10,3)--(11,2)--(12,1)--(13,2)--(14,1)--(15,0)--(22,0)}
 {(7,0)--(8,1)--(9,2)--(10,3)--(11,2)--(12,1) (14,1)--(15,0)}
 {(12,1)--(13,2)--(14,1)}

% T^3 eta = 000000000110110...
\bbsrow{4}{$T^{3}_{\infty}\eta$}
 {10,11,13,14}
 {(1,0)--(2,0)--(3,0)--(4,0)--(5,0)--(6,0)--(7,0)--(8,0)--(9,0)--(10,0)--(11,1)--(12,2)--(13,1)--(14,2)--(15,3)--(16,2)--(17,1)--(18,0)--(22,0)}
 {(10,0)--(11,1)--(12,2) (14,2)--(15,3)--(16,2)--(17,1)--(18,0)}
 {(12,2)--(13,1)--(14,2)}

% T^4 eta = 000000000001001110...
\bbsrow{0}{$T^{4}_{\infty}\eta$}
 {12,15,16,17}
 {(1,0)--(2,0)--(3,0)--(4,0)--(5,0)--(6,0)--(7,0)--(8,0)--(9,0)--(10,0)--(11,0)--(12,0)--(13,1)--(14,0)--(15,0)--(16,1)--(17,2)--(18,3)--(19,2)--(20,1)--(21,0)--(22,0)}
 {(15,0)--(16,1)--(17,2)--(18,3)--(19,2)--(20,1)--(21,0)}
 {(12,0)--(13,1)--(14,0)}

\foreach \x in {0,...,21}{\node[below,gray!75] at (\x+1,-0.72) {\scriptsize \x};}
%\node[anchor=west] at (22.6,5.7) {interaction};
%\node[anchor=west] at (22.6,1.4) {phase shift};

\end{tikzpicture}

    \caption{Graphs of the carrier $W_{\infty}\left(\cdot, \cdot\right)$ for $\eta, T_{\infty}\eta, T_{\infty}^2\eta, T_{\infty}^3\eta$ and $T_{\infty}^4\eta$ where $\eta = 111000000100\dots \in \{0,1\}^{\N}$. The balls are drawn at box centers, while the carrier graph is drawn on box
boundaries; hence the two grids are shifted by $1/2$. The solitons shown in the figure are identified by the Takahashi--Satsuma
algorithm. }
    \label{fig:intro_soliton}
\end{figure}

            As in the cases of classical integrable systems, such as the Korteweg-de Vries equation, the initial value problem of the BBS can be solved via the explicit linearization methods \cite{T,KOSTY,MIT,FNRW}. 
            Recently, a new linearization of the BBS($\ell$) on the half-line, called the seat number configuration, was introduced in \cite{MSSS} and used to study the explicit relationships between the rigged configuration and the slot decomposition. 

            {The aim of this paper is to develop the seat number configuration further
and to apply it to the study of invariant measures of the BBS.  The main
new object introduced in this paper is the $k$-skip map $\Psi_k$.  This
map is a natural operation from the viewpoint of the seat number
configuration: it removes the sites corresponding to the first $k$ seats
of the carrier with seat numbers and reads the remaining sites from left
to right.  In terms of solitons, this operation has a simple
interpretation.  It removes the first $k$ heads and tails from each
soliton, and hence a soliton of size $k+\ell$ in the original
configuration is regarded as a soliton of size $\ell$ after applying
$\Psi_k$, while solitons of size at most $k$ disappear.

The rest of the paper is organized as follows.  First, we study the
basic properties of the $k$-skip map on the half-line and show that it
acts as a shift of the seat number configuration.  We then relate this map
to the classical 10-elimination, which was introduced in \cite{MIT} to
solve the initial value problem of the periodic BBS.  Next, in order to
apply the method to the randomized BBS on $\Z$, we extend the seat number
configuration and the $k$-skip map to the whole-line.  Finally, we apply
the $k$-skip map to the invariant measures introduced in \cite{FG}.  This
allows us to describe the distribution of the configuration obtained by
lowering the height of each soliton by $k$.  As an application, we compute
expectations of the carriers with seat numbers under these invariant
measures.  These quantities are related to the stationary current of
solitons and the {\it effective velocity} of solitons. Here the effective velocity $v^{\mathrm{eff}}_{k}$ of $k$-solitons is the average speed of $k$-solitons in the bi-infinite configuration. Since there are infinitely many solitons coming from $-\infty$ for the whole-line case, $v^{\mathrm{eff}}_{k}$ is not equal to $k$ in general, see \cite{FNRW,OSS} for the effective velocities under invariant measures. }

\subsection{Main results and organization}

{We now summarize the main results of this paper and explain how they are
related to each other.  The first group of results concerns the construction of the
$k$-skip map and its basic properties.  In Section \ref{sec:seat}, we
define $\Psi_k$ on the half-line.  In Proposition
\ref{prop:seat_semig}, we show that $\Psi_k$ shifts the seat
number configuration: the $(k+\ell,\sigma)$-seats of the original
configuration become the $(\ell,\sigma)$-seats of the $k$-skipped
configuration.  As consequences, we obtain the semigroup property of
$(\Psi_k)_{k \in \Z_{\geq 0}}$ in Proposition \ref{prop:semig} and the
corresponding shift relation for the counting function of solitons in
Proposition \ref{prop:shift}.  From the soliton point of view, these
results mean that $\Psi_k$ lowers the height of each soliton by $k$.

The second group of results concerns the relation with the 10-elimination.  In
Section \ref{sec:10}, we recall the half-line version of the
10-elimination and its rigging.  In Theorem \ref{thm:10=seat}, we show
that, for finite ball configurations on the half-line, the 1-skip map
coincides with the 10-elimination.  Moreover, the soliton-counting
functions defined from the seat number configuration coincide with the
corresponding 10-riggings.  

The third group of results concerns the extension of the seat number configuration
and the $k$-skip map to the whole-line.  This extension is needed in order
to apply the seat number configuration to the study of the BBS under stationary distributions on the whole-line. In Section
\ref{sec:line}, we define the carrier with seat numbers, the seat number
configuration and the $k$-skip map on the whole-line.  We also prove the
corresponding linearization formula in Theorem \ref{thm:linear_whole}. 
Compared with the half-line case, an offset term appears in the
whole-line setting, and this term describes the effect coming from the left of the origin. In Remarks \ref{rem:offset_ex}, \ref{rem:offset_vel} and Appendix \ref{app:eff_velo}, we explain how the expectation of the offset relates to the effective velocity of solitons. 

The fourth group of results concerns invariant measures.  In Section
\ref{sec:dis}, we consider the invariant measures introduced in
\cite{FG}. Theorem \ref{thm:skip_stat} describes the distribution of the $k$-skipped configurations under these measures.  In soliton language, this theorem
gives the distribution of the configuration obtained by lowering the
height of each soliton by $k$. The distribution of the $k$-skipped configurations again belongs to
the same class of invariant measures under a certain conditioning. We then consider two-sided Markov distributions, which are important examples of invariant measures. Theorem \ref{thm:skip_markov} shows that under such a two-sided Markov distribution conditioned on a certain event, the distribution of the $k$-skipped configuration is again a two-sided Markov distribution with a  different transition matrix. 

Finally, we use the above results to compute expectations of
the carriers with seat numbers. The key point is that the $k$-skip map naturally gives recursive
relations for the expectations of certain quantities under invariant measures. By
analyzing this recursion, we obtain explicit formulae for the carrier
expectations and related quantities, such as the stationary current of solitons and
the effective velocity of solitons, see Section \ref{subsec:expectation}. In particular, in a two-sided Markov distribution case with transition matrix $P = \left(p(i,j)\right)_{i,j = 0,1}$ satisfying $0 < p(0,1) < p(1,0)$, we give explicit formulae for the above quantities in terms of parameters $c = \frac{p(1,1)}{p(0,0)}, d = \frac{p(0,1)}{p(1,0)}$, see Propositions \ref{prop:carrier_ex} and \ref{prop:eff_velo}. These formulae are consistent with those previously derived in the physics literature \cite{KMP}, see  Remark \ref{rem:phys}. To the best of our knowledge,
the present work gives the first rigorous derivation of these formulae from two-sided Markov distributions.}
            
            The contents of Sections  \ref{sec:seat}, \ref{sec:10} and \ref{sec:line} are closely related to \cite{MSSS} and can be considered as a continuation of it.  On the other hand, the contents of Section \ref{sec:dis} are related to the field of the randomized BBS, which has been actively studied in recent years \cite{CKST, CS, CS2, FG, KL, KLO, KMP, KMP2, KMP3, LLP, LLPS}. 
            We expect that the statistical properties of the seat number configuration and the $k$-skip map introduced in this paper will be useful in the analysis of the randomized BBS. 
            In fact, in the forthcoming paper \cite{OSS}, the space-time scaling limit of a tagged soliton is considered, and the $k$-skip map plays an essential role in the proof. We look forward to seeing more studies utilizing the seat number configuration and the $k$-skip map in the future. 

        \section{Seat number configuration}\label{sec:seat}
        
            In this section, first we briefly recall the definition of the carrier with seat numbers and the corresponding seat number configuration introduced in \cite{MSSS}. Then, in the subsequent subsection, we introduce the notion of the $k$-skip map, and show that the $k$-skip map induces a shift operator of the seat number configuration. Throughout this section, the configuration space of the BBS is 
            $\Omega = \left\{0,1 \right\}^{\N}$. 

        {The seat number configuration has a natural interpretation in terms of
solitons.  It records, through the numbered seats of the carrier, how the
balls belonging to solitons are transported.  This representation is
particularly useful because it separates solitons according to their
sizes.  More precisely, the definitions below allow us to count solitons
of each size and to describe how this soliton content changes under the
maps considered in this paper.  This solitonic interpretation will be
made more explicit in Remarks~2.2, 2.4 and 2.11.}
            
            %Since this paper deals with the seat number configuration, the concept of solitons is not necessary, but the relationship between them is interesting. We refer the interested reader to \cite[Section 2]{MSSS}. Also in this paper, we will mention some remarks from the viewpoint of solitons. 

            \subsection{Carrier with seat numbers}

                We consider a situation where the seats of the carrier are numbered by $\N$, and introduce functions $\mathcal{W}_{k}(\eta, \cdot) : \Z_{\ge 0} \to \{0,1\}$ {which indicate whether the No.~$k$ seat is occupied},  i.e., $\mathcal{W}_{k}(\eta, x) = 0$ (resp. $\mathcal{W}_{k}(\eta, x) = 1$) means that the No. $k$ seat is vacant (resp. occupied) at site $x$.
                Then, the refined construction of the carrier with seat numbers is given as follows : 
                    \begin{itemize}
                        \item An empty carrier {with infinite capacity} starts from $x = 0$, i.e., $\mathcal{W}_{k}(\eta, 0) = 0$ for any $k \in \N$.
                        
                        \item If there is a ball at site $x$, then the carrier picks the ball and puts it at the empty seat with the smallest seat number, i.e., if $\eta(x) = 1$ and $\min\left\{ k \in \N  \ ; \ \mathcal{W}_{k}(\eta, x - 1) = 0 \right\} = \ell$, then 
                            \begin{align}
                                \mathcal{W}_{k}(\eta, x) = \begin{dcases}
                                    1 \ & \  k = \ell, \\
                                    \mathcal{W}_{k}(\eta, x - 1) \ & \ k \neq \ell.
                                \end{dcases}
                            \end{align}
                        
                        \item If the site $x$ is empty, and if there is at least one occupied seat, then the carrier puts down the ball at the occupied seat with the smallest seat number, i.e., if $\eta(x) = 0$ and $\min\left\{ k \in \N  \ ; \ \mathcal{W}_{k}(\eta, x - 1) = 1 \right\} = \ell < \infty$, then 
                            \begin{align}
                                \mathcal{W}_{k}(\eta, x) = \begin{dcases}
                                    0 \ & \  k = \ell, \\
                                    \mathcal{W}_{k}(\eta, x - 1) \ & \ k \neq \ell.
                                \end{dcases}
                            \end{align}

                        \item Otherwise, the carrier just goes through, i.e., if $\eta(x) = 0$ and $ \mathcal{W}_{k}(\eta, x - 1) = 0$ for any $k \in \N$, then 
                            \begin{align}
                                \mathcal{W}_{k}(\eta, x) = \mathcal{W}_{k}(\eta, x - 1) = 0
                            \end{align}
                        for any $k \in \N$.
                    \end{itemize}
        {Figure \ref{fig:carrier-graph-with-seat-numbers} illustrates this construction
for the configuration $\eta=11011000\cdots$, showing both the carrier graph
$W_\infty(\eta,\cdot)$ and the corresponding state of the numbered seats.}
    \begin{figure}[t]
\centering
\begin{tikzpicture}[
    x=0.85cm,y=0.85cm,
    >=Stealth,
    font=\small
]

% ---------------------------------------------------------
% colors
% ---------------------------------------------------------
\colorlet{seatone}{red!70!black}
\colorlet{seattwo}{green!50!black}
\colorlet{seatthree}{blue!70!black}

\tikzset{
  eventlabel/.style={font=\scriptsize, inner sep=1pt, fill=white},
  carrierbox/.style={draw=gray!75, line width=0.55pt},
  smallball/.style={circle, fill=black, inner sep=1.7pt}
}

% ---------------------------------------------------------
% carrier graph grid
% ---------------------------------------------------------
\foreach \x in {0,...,9}{
  \draw[gray!18] (\x+0.5,-0.85)--(\x+0.5,3.35);
}
\foreach \y in {0,...,3}{
  \draw[gray!18] (0.35,\y)--(9.65,\y);
}

% ---------------------------------------------------------
% axes
% ---------------------------------------------------------
\draw[->,gray!70] (0.35,0)--(9.9,0) node[right] {$x$};
\draw[->,gray!70] (0.5,-0.05)--(0.5,3.45)
  node[above] {$W_\infty(\eta,\cdot)$};

\node[left,gray!80] at (0.35,0) {$0$};
\node[left,gray!80] at (0.35,1) {$1$};
\node[left,gray!80] at (0.35,2) {$2$};
\node[left,gray!80] at (0.35,3) {$3$};

% ---------------------------------------------------------
% carrier graph for eta = 11011000...
% W(0)=0, W(1)=1, W(2)=2, W(3)=1, W(4)=2,
% W(5)=3, W(6)=2, W(7)=1, W(8)=0
% ---------------------------------------------------------
\draw[line width=1.1pt,black]
  (0.5,0)--(1.5,1)--(2.5,2)--(3.5,1)
  --(4.5,2)--(5.5,3)--(6.5,2)--(7.5,1)--(8.5,0);
\draw[line width=1.1pt,gray!60] (8.5,0)--(9.5,0);

\foreach \p in {(0.5,0),(1.5,1),(2.5,2),(3.5,1),(4.5,2),(5.5,3),(6.5,2),(7.5,1),(8.5,0)}{
  \fill \p circle (2.2pt);
}

% ---------------------------------------------------------
% eta row
% ---------------------------------------------------------
\node[left] at (0.35,-0.45) {$\eta$};

\foreach \x in {1,2,4,5}{
  \fill (\x,-0.45) circle (2.4pt);
}
\foreach \x in {3,6,7,8,9}{
  \draw[gray!80,fill=white] (\x,-0.45) circle (2.3pt);
}

\foreach \x in {1,...,9}{
  \node[below=13pt,gray!80] at (\x,-0.45) {\scriptsize $\x$};
}

% ---------------------------------------------------------
% event labels near graph
% ---------------------------------------------------------
\node[eventlabel,text=seatone] at (1.0,0.5) {$(1,\uparrow)$};
\node[eventlabel,text=seattwo] at (2.0,1.5) {$(2,\uparrow)$};
\node[eventlabel,text=seatone] at (3.0,1.5) {$(1,\downarrow)$};
\node[eventlabel,text=seatone] at (4.0,1.5) {$(1,\uparrow)$};
\node[eventlabel,text=seatthree] at (5.0,2.5) {$(3,\uparrow)$};
\node[eventlabel,text=seatone] at (6.0,2.5) {$(1,\downarrow)$};
\node[eventlabel,text=seattwo] at (7.0,1.5) {$(2,\downarrow)$};
\node[eventlabel,text=seatthree] at (8.0,0.5) {$(3,\downarrow)$};

% =========================================================
% Carrier-with-seat snapshots below the eta row
% =========================================================

%\node[left,align=right] at (0.35,-2.20){\scriptsize carrier\\[-1pt]\scriptsize after site $x$};

% left seat labels
\node[left,text=seatthree] at (0.25,-1.50) {\scriptsize No.\,3};
\node[left,text=seattwo]   at (0.25,-1.90) {\scriptsize No.\,2};
\node[left,text=seatone]   at (0.25,-2.30) {\scriptsize No.\,1};

% draw small carriers centered at x=0,...,8
% each carrier has width 0.48 and height 1.2
% bottom-left of carrier at (x-0.24,-2.50)
\foreach \x in {0,...,8}{
  \begin{scope}[shift={(\x+0.5,-2.50)}]
    \draw[carrierbox] (-0.24,0) rectangle (0.24,1.20);
    \draw[carrierbox] (-0.24,0.40)--(0.24,0.40);
    \draw[carrierbox] (-0.24,0.80)--(0.24,0.80);
  \end{scope}
}

% x=0: empty

% x=1: seat 1 occupied
\fill (1.5,-2.30) circle (1.8pt);

% x=2: seats 1,2 occupied
\fill (2.5,-2.30) circle (1.8pt);
\fill (2.5,-1.90) circle (1.8pt);

% x=3: seat 2 occupied
\fill (3.5,-1.90) circle (1.8pt);

% x=4: seats 1,2 occupied
\fill (4.5,-2.30) circle (1.8pt);
\fill (4.5,-1.90) circle (1.8pt);

% x=5: seats 1,2,3 occupied
\fill (5.5,-2.30) circle (1.8pt);
\fill (5.5,-1.90) circle (1.8pt);
\fill (5.5,-1.50) circle (1.8pt);

% x=6: seats 2,3 occupied
\fill (6.5,-1.90) circle (1.8pt);
\fill (6.5,-1.50) circle (1.8pt);

% x=7: seat 3 occupied
\fill (7.5,-1.50) circle (1.8pt);

% x=8: empty

% ---------------------------------------------------------
% highlight changed cells in the small carriers
% ---------------------------------------------------------
% helper: rectangles for changed seats
% x=1 seat 1
\fill[seatone!12] (1.5-0.24,-2.50) rectangle (1.5+0.24,-2.10);
\draw[seatone,line width=0.6pt] (1.5-0.24,-2.50) rectangle (1.5+0.24,-2.10);

% x=2 seat 2
\fill[seattwo!12] (2.5-0.24,-2.10) rectangle (2.5+0.24,-1.70);
\draw[seattwo,line width=0.6pt] (2.5-0.24,-2.10) rectangle (2.5+0.24,-1.70);

% x=3 seat 1
\fill[seatone!12] (3.5-0.24,-2.50) rectangle (3.5+0.24,-2.10);
\draw[seatone,line width=0.6pt] (3.5-0.24,-2.50) rectangle (3.5+0.24,-2.10);

% x=4 seat 1
\fill[seatone!12] (4.5-0.24,-2.50) rectangle (4.5+0.24,-2.10);
\draw[seatone,line width=0.6pt] (4.5-0.24,-2.50) rectangle (4.5+0.24,-2.10);

% x=5 seat 3
\fill[seatthree!12] (5.5-0.24,-1.70) rectangle (5.5+0.24,-1.30);
\draw[seatthree,line width=0.6pt] (5.5-0.24,-1.70) rectangle (5.5+0.24,-1.30);

% x=6 seat 1
\fill[seatone!12] (6.5-0.24,-2.50) rectangle (6.5+0.24,-2.10);
\draw[seatone,line width=0.6pt] (6.5-0.24,-2.50) rectangle (6.5+0.24,-2.10);

% x=7 seat 2
\fill[seattwo!12] (7.5-0.24,-2.10) rectangle (7.5+0.24,-1.70);
\draw[seattwo,line width=0.6pt] (7.5-0.24,-2.10) rectangle (7.5+0.24,-1.70);

% x=8 seat 3
\fill[seatthree!12] (8.5-0.24,-1.70) rectangle (8.5+0.24,-1.30);
\draw[seatthree,line width=0.6pt] (8.5-0.24,-1.70) rectangle (8.5+0.24,-1.30);

% redraw balls after highlighting
\fill (1.5,-2.30) circle (1.8pt);
\fill (2.5,-2.30) circle (1.8pt);
\fill (2.5,-1.90) circle (1.8pt);
\fill (3.5,-1.90) circle (1.8pt);
\fill (4.5,-2.30) circle (1.8pt);
\fill (4.5,-1.90) circle (1.8pt);
\fill (5.5,-2.30) circle (1.8pt);
\fill (5.5,-1.90) circle (1.8pt);
\fill (5.5,-1.50) circle (1.8pt);
\fill (6.5,-1.90) circle (1.8pt);
\fill (6.5,-1.50) circle (1.8pt);
\fill (7.5,-1.50) circle (1.8pt);

% ---------------------------------------------------------
% arrows connecting eta/site to the carrier snapshots
% ---------------------------------------------------------
\foreach \x in {1,...,8}{
  \draw[gray!45,->] (\x,-0.80)--(\x+0.5,-1.25);
}

% ---------------------------------------------------------
% explanatory labels
% ---------------------------------------------------------
%\node[align=left,anchor=west] at (0.75,-3.05){\scriptsize Each small carrier shows the occupied numbered seats after passing site $x$.\\ \scriptsize The highlighted cell is the seat that changes at that site.};

\end{tikzpicture}
\caption{
Carrier graph and carrier with seat numbers for $\eta=11011000\cdots$. 
%The upper part shows the carrier graph $W_{\infty}(\eta,\cdot)$ together with the seat event at each site.
The lower part shows the state of the numbered seats of the carrier after it passes each site.
%A filled dot in the small carrier means that the corresponding seat is occupied, and the highlighted cell indicates the seat that changes at that site.
}
\label{fig:carrier-graph-with-seat-numbers}
\end{figure}
                In other words, we define $\mathcal{W}_{k}(\eta, \cdot) : \Z_{\ge 0} \to \{0,1\}$ recursively as follows : $\mathcal{W}_{k}(\eta, 0) := 0$ for any $k \in \N$, and
                    \begin{align}
                        \mathcal{W}_{k}(\eta, x) & =\mathcal{W}_{k}(\eta, x-1) + \eta(x)(1-\mathcal{W}_{k}(\eta, x-1))\prod_{\ell=1}^{k-1}\mathcal{W}_{\ell}(\eta, x-1) \\  & \quad  -(1-\eta(x))\mathcal{W}_{k}(\eta, x-1)\prod_{\ell=1}^{k-1}(1-\mathcal{W}_{\ell}(\eta, x-1)). 
                    \end{align}
                {The carrier with seat numbers may be regarded as a refinement of the usual carrier with finite capacity.  To see this, let us look only at the first $\ell$ seats.  Then the total number of balls in these seats changes according to the same rule as the carrier with capacity $\ell$.  Hence, from the construction of $\mathcal{W}_k$, we obtain}
                    \begin{align}\label{eq:car_l_seat}
                        W_{\ell}\left(\eta, x\right) = \sum_{k = 1}^{\ell} \mathcal{W}_{k}\left(\eta, x\right),
                    \end{align}
                for any $\ell \in \N$ and $x \in \N$, where $W_{\ell}$ is the carrier with capacity $\ell$ defined in Introduction. 
                Then, the seat number configuration $\eta^{\sigma}_{k} \in \Omega, k \in \N, \sigma \in \left\{ \uparrow, \downarrow \right\}$ is defined as 
                    \begin{align}
                        \eta^{\uparrow}_{k}(x) &:= \begin{dcases}
                                            1 \ & \ \text{if } \mathcal{W}_{k}(\eta, x) > \mathcal{W}_{k}(\eta, x - 1), \\
                                            0 \ & \ \text{otherwise}
                        \end{dcases} \\
                        &\ = \eta(x)(1-\mathcal{W}_{k}(\eta, x-1))\prod_{\ell=1}^{k-1}\mathcal{W}_{\ell}(\eta, x-1) \label{def:seatup}, 
                    \end{align}
                and
                    \begin{align}
                        \eta^{\downarrow}_{k}(x) &:= \begin{dcases}
                                            1 \ & \ \text{if } \mathcal{W}_{k}(\eta, x) < \mathcal{W}_{k}(\eta, x - 1), \\
                                            0 \ & \ \text{otherwise}
                        \end{dcases} \\ 
                        &\ = (1-\eta(x))\mathcal{W}_{k}(\eta, x-1)\prod_{\ell=1}^{k-1}(1-\mathcal{W}_{\ell}(\eta, x-1)) \label{def:seatdown}. 
                    \end{align}
                Here, $\eta^{\uparrow}_{k}(x) = 1$ (resp. $\eta^{\downarrow}_{k}(x) = 1$) means that a ball gets into (resp. off) No.$k$ seat at site $x$. We say that $x \in \N$ is a $\left(k,\uparrow\right)$-seat (resp. $\left(k,\downarrow\right)$-seat) if $\eta^{\uparrow}_{k}(x) = 1$ (resp. $\eta^{\downarrow}_{k}(x) = 1$). 
                
                We also define 
                $r\left(\eta, \cdot\right) \in \Omega$ as 
                    \begin{align}\label{def:record}
                        r\left(\eta, x \right) := 1 - \sum_{k \in \N} \sum_{\sigma \in \left\{ \uparrow, \downarrow \right\}} \eta^{\sigma}_{k}(x) { \in \{0,1\}}.
                    \end{align}
                We say that $x \in \N$ is a {\it record} if $r\left(\eta, x \right) = 1$. 
            \begin{remark}\label{rem:record}
                $r\left(\eta, x \right) = 1$ means that the carrier with infinite capacity $W_{\infty}$ {does not change its state at} the site $x$, i.e., $W_{\infty}\left(\eta,x-1\right) = W_{\infty}\left(\eta,x\right) = 0$. In other words, $r\left(\eta, x \right) = 1$ if and only if for any $1 \le z \le x$,
                    \begin{align}
                        \sum_{y = z}^{x} {\left(2\eta\left(y\right) - 1\right)} < 0.
                    \end{align}
                This characterization of records will be used in Section \ref{sec:line}. 
            \end{remark}
            \begin{remark}\label{rem:seat_sol}
                The relation between solitons and the seat number configuration is as follows. Suppose that $\gamma \subset \N$ is a $k$-soliton in $\eta$, and we define 
                    \begin{align}
                        H(\gamma) &:= \left\{ x \in \gamma \ ; \ \eta(x) = 1 \right\} = \left( H_{1} < \dots < H_{k} \right), \\
                        T(\gamma) &:= \left\{ x \in \gamma \ ; \ \eta(x) = 0 \right\} = \left( T_{1} < \dots < T_{k} \right).
                    \end{align}
                Then, for any $1 \le \ell \le k$, at $H_{\ell}$ (resp. $T_{\ell}$), a ball gets into (resp. off) No. $\ell$ seat, i.e., 
                    \begin{align}
                        \eta^{\uparrow}_{\ell}\left(H_{\ell}\right) = \eta^{\downarrow}_{\ell}\left(T_{\ell}\right) = 1,
                    \end{align}
                see \cite[Section 2]{MSSS} for details. Thus, a $k$-soliton is composed of one of each $\left(\ell, \sigma\right)$-seat for $1 \le \ell \le k$ and $\sigma \in \{\uparrow, \downarrow\}$. {See Figure \ref{fig:seat_soliton} for an illustration of this relation in terms of the carrier graph.} Hence, if $\eta \in \Omega_{<\infty}$, then the quantity $\sum_{x \in \N} \eta^{\uparrow}_{k}(x) = \sum_{x \in \N} \eta^{\downarrow}_{k}(x)$ represents the total number of solitons with size $k$ and larger. We also note that a record can be seen as a site that is not a component of any soliton. 
            \end{remark}

\begin{figure}
    \centering
    \begin{tikzpicture}[
    x=0.75cm,y=0.75cm,
    >=Stealth,
    font=\small
]

% Color conventions
\colorlet{threeSol}{blue!75!black}
\colorlet{oneSol}{orange!85!black}
\colorlet{seatOne}{red!70!black}
\colorlet{seatTwo}{green!50!black}
\colorlet{seatThree}{blue!70!black}

% =========================================================
% Panel (a): single 3-soliton
% =========================================================
\begin{scope}[shift={(0,6.8)}]

% title
%\node[font=\bfseries] at (4.5,3.95) {(a) single $3$-soliton};

% grid
\foreach \x in {0,...,9}{
  \draw[gray!18] (\x+0.5,-1.05)--(\x+0.5,3.35);
}
\foreach \y in {0,...,3}{
  \draw[gray!18] (0.35,\y)--(9.65,\y);
}

% axes
\draw[->,gray!70] (0.35,0)--(9.85,0) node[right] {$x$};
\draw[->,gray!70] (0.5,-0.05)--(0.5,3.45)
  node[above] {$W_\infty(\eta,\cdot)$};

% carrier graph: 3-soliton in blue
\draw[line width=1.1pt]
  (0.5,0)--(1.5,1)--(2.5,2)--(3.5,3)
  --(4.5,2)--(5.5,1)--(6.5,0);

% flat part
\draw[line width=1.1pt,gray!60]
  (6.5,0)--(9.5,0);

% balls and empty boxes
\foreach \x/\lab in {1/{H_1},2/{H_2},3/{H_3}}{
  \fill (\x,-0.45) circle (2.4pt);
  \node[below=2pt] at (\x,-0.45) {$1$};
  \node[above=2pt] at (\x,-0.45) {$\lab$};
}
\foreach \x/\lab in {4/{T_1},5/{T_2},6/{T_3}}{
  \draw[gray!80,fill=white] (\x,-0.45) circle (2.4pt);
  \node[below=2pt,gray!80] at (\x,-0.45) {$0$};
  \node[above=2pt] at (\x,-0.45) {$\lab$};
}

% record sites
\foreach \x in {7,8,9}{
  \draw[gray!80,fill=white] (\x,-0.45) circle (2.4pt);
  \node[below=2pt,gray!80] at (\x,-0.45) {$0$};
}
%\node[above=2pt,gray!80] at (7,-0.45) {\scriptsize record};
%\node[above=2pt,gray!80] at (8,-0.45) {\scriptsize record};
%\node[above=2pt,gray!80] at (9,-0.45) {\scriptsize record};

% site numbers
\foreach \x in {1,...,9}{
  \node[below=15pt,gray!80] at (\x,-0.45) {\scriptsize $\x$};
}

% seat labels
\node[fill=white,inner sep=1pt,text=seatOne] at (1,0.68)
  {\scriptsize $(1,\uparrow)$};
\node[fill=white,inner sep=1pt,text=seatTwo] at (2,1.5)
  {\scriptsize $(2,\uparrow)$};
\node[fill=white,inner sep=1pt,text=seatThree] at (3,2.5)
  {\scriptsize $(3,\uparrow)$};

\node[fill=white,inner sep=1pt,text=seatOne] at (4,2.5)
  {\scriptsize $(1,\downarrow)$};
\node[fill=white,inner sep=1pt,text=seatTwo] at (5,1.5)
  {\scriptsize $(2,\downarrow)$};
\node[fill=white,inner sep=1pt,text=seatThree] at (6,0.68)
  {\scriptsize $(3,\downarrow)$};

% soliton brace
%\draw[decorate,decoration={brace,amplitude=5pt,mirror},threeSol]
  %(1,-0.95)--(6,-0.95)
  %node[midway,below=6pt] {$3$-soliton};

% configuration label
%\node[align=left,anchor=west] at (0.7,3.25){\scriptsize $\eta=1110000\cdots$};

\end{scope}

% =========================================================
% Panel (b): interaction of 3-soliton and 1-soliton
% =========================================================
\begin{scope}[shift={(0,0)}]

% title
%\node[font=\bfseries] at (5.9,4.15)
  %{(b) $3$-soliton and $1$-soliton during interaction};

% grid
\foreach \x in {0,...,10}{
  \draw[gray!18] (\x+0.5,-1.15)--(\x+0.5,3.35);
}
\foreach \y in {0,...,3}{
  \draw[gray!18] (0.35,\y)--(10.65,\y);
}

% axes
\draw[->,gray!70] (0.35,0)--(10.85,0) node[right] {$x$};
\draw[->,gray!70] (0.5,-0.05)--(0.5,3.45)
  node[above] {$W_\infty(\eta,\cdot)$};

% carrier graph during interaction for configuration 11011000...
% Blue pieces belong to the 3-soliton; orange pieces belong to the 1-soliton.
\draw[line width=1.1pt,threeSol]
  (0.5,0)--(1.5,1)--(2.5,2);
\draw[line width=1.1pt,oneSol]
  (2.5,2)--(3.5,1)--(4.5,2);
\draw[line width=1.1pt,threeSol]
  (4.5,2)--(5.5,3)--(6.5,2)--(7.5,1)--(8.5,0);
\draw[line width=1.1pt,gray!60]
  (8.5,0)--(10.5,0);

% balls and empty boxes for 11011000...
\foreach \x/\lab in {
  1/{H_1},
  2/{H_2},
  4/{H_1},
  5/{H_3}
}{
  \fill (\x,-0.45) circle (2.4pt);
  \node[below=2pt] at (\x,-0.45) {$1$};
  \node[above=2pt] at (\x,-0.45) {$\lab$};
}
\foreach \x/\lab in {
  3/{T_1},
  6/{T_1},
  7/{T_2},
  8/{T_3}
}{
  \draw[gray!80,fill=white] (\x,-0.45) circle (2.4pt);
  \node[below=2pt,gray!80] at (\x,-0.45) {$0$};
  \node[above=2pt] at (\x,-0.45) {$\lab$};
}
\foreach \x in {9,10}{
  \draw[gray!80,fill=white] (\x,-0.45) circle (2.4pt);
  \node[below=2pt,gray!80] at (\x,-0.45) {$0$};
}
%\node[above=2pt,gray!80] at (9,-0.45) {record};

% site numbers
\foreach \x in {1,...,10}{
  \node[below=15pt,gray!80] at (\x,-0.45) {\scriptsize $\x$};
}

% seat labels for 3-soliton
\node[fill=white,inner sep=1pt,text=seatOne] at (1,0.68)
  {\scriptsize $(1,\uparrow)$};
\node[fill=white,inner sep=1pt,text=seatTwo] at (2,1.5)
  {\scriptsize $(2,\uparrow)$};
\node[fill=white,inner sep=1pt,text=seatThree] at (5,2.5)
  {\scriptsize $(3,\uparrow)$};

\node[fill=white,inner sep=1pt,text=seatOne] at (6,2.5)
  {\scriptsize $(1,\downarrow)$};
\node[fill=white,inner sep=1pt,text=seatTwo] at (7,1.5)
  {\scriptsize $(2,\downarrow)$};
\node[fill=white,inner sep=1pt,text=seatThree] at (8,0.68)
  {\scriptsize $(3,\downarrow)$};

% seat labels for 1-soliton (during interaction, the down-seat appears before the up-seat in space)
\node[fill=white,inner sep=1pt,text=oneSol] at (3,1.5)
  {\scriptsize $(1,\downarrow)$};
\node[fill=white,inner sep=1pt,text=oneSol] at (4,1.5)
  {\scriptsize $(1,\uparrow)$};

% soliton braces
%\draw[decorate,decoration={brace,amplitude=5pt,mirror},threeSol]
  %(1,-0.98)--(8,-0.98)
  %node[midway,below=6pt] {$3$-soliton};
%\draw[decorate,decoration={brace,amplitude=5pt},oneSol]
  %(3,-1.22)--(4,-1.22)
  %node[midway,below=6pt] {$1$-soliton};

% configuration label
%\node[align=left,anchor=west] at (0.7,3.25){\scriptsize $\eta = 11011000\cdots$};

\end{scope}

\end{tikzpicture}

    \caption{The upper figure shows a single $3$-soliton, which consists of one
$(\ell,\uparrow)$-seat and one $(\ell,\downarrow)$-seat for each
$\ell=1,2,3$. The lower figure shows a $3$-soliton and a $1$-soliton during interaction.}
    \label{fig:seat_soliton}
\end{figure}

                We note that $\mathcal{W}_{k}$ can be represented as 
                    \begin{align}\label{eq:car_n_seat}
                        \mathcal{W}_{k}\left(\eta, x \right) = \sum_{y = 1}^{x} \left(\eta^{\uparrow}_{k}\left(y\right) - \eta^{\downarrow}_{k}\left(y\right)\right),
                    \end{align}
                for any $k \in \N$ and $x \in \N$. Also, we recall a  useful lemma proved in \cite{MSSS}. 
                    \begin{lemma}[Lemma 3.1 in \cite{MSSS}]\label{lem:1_half}
                        Suppose that $\eta \in \Omega$. Then, for any $k \in \N$ and $x \in \N$, we have the following. 
                    \begin{enumerate}
                        \item $\eta^{\uparrow}_{k}\left(x\right) = 1$ implies $\sum_{y = 1}^{x} \left(\eta^{\uparrow}_{\ell}\left(y\right) - \eta^{\downarrow}_{\ell}\left(y\right)\right) = 1$ for any $1 \le \ell \le k$.
                        \item $\eta^{\downarrow}_{k}\left(x\right) = 1$ implies $\sum_{y = 1}^{x} \left(\eta^{\uparrow}_{\ell}\left(y\right) - \eta^{\downarrow}_{\ell}\left(y\right)\right) = 0$ for any $1 \le \ell \le k$.
                        \item $r\left(\eta, x\right) = 1$ implies $\sum_{y = 1}^{x} \left(\eta^{\uparrow}_{\ell}\left(y\right) - \eta^{\downarrow}_{\ell}\left(y\right)\right) = 0$ for any $\ell \in \N$.
                    \end{enumerate}
                        
                    \end{lemma}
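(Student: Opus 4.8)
The plan is to reduce everything to a direct unwinding of the product formulas \eqref{def:seatup} and \eqref{def:seatdown}. By the telescoping identity \eqref{eq:car_n_seat} one has $\mathcal{W}_{\ell}(\eta,x)-\mathcal{W}_{\ell}(\eta,x-1)=\eta^{\uparrow}_{\ell}(x)-\eta^{\downarrow}_{\ell}(x)$, and each partial sum appearing in the statement is exactly $\mathcal{W}_{\ell}(\eta,x)\in\{0,1\}$; moreover, by \eqref{eq:car_l_seat} the three assertions are equivalent to: (1) $\eta^{\uparrow}_{k}(x)=1\Rightarrow W_{k}(\eta,x)=k$; (2) $\eta^{\downarrow}_{k}(x)=1\Rightarrow W_{k}(\eta,x)=0$; (3) $r(\eta,x)=1\Rightarrow W_{k}(\eta,x)=0$ for every $k\in\N$.

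For (1): if $\eta^{\uparrow}_{k}(x)=1$, then \eqref{def:seatup} forces $\eta(x)=1$, $\mathcal{W}_{k}(\eta,x-1)=0$ and $\mathcal{W}_{\ell}(\eta,x-1)=1$ for all $1\le\ell\le k-1$. For such $\ell<k$ the factor $1-\mathcal{W}_{\ell}(\eta,x-1)$ in \eqref{def:seatup} vanishes, so $\eta^{\uparrow}_{\ell}(x)=0$, while $\eta(x)=1$ makes $\eta^{\downarrow}_{\ell}(x)=0$; hence $\mathcal{W}_{\ell}(\eta,x)=\mathcal{W}_{\ell}(\eta,x-1)=1$. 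For $\ell=k$ one still has $\eta^{\downarrow}_{k}(x)=0$ (as $\eta(x)=1$), so $\mathcal{W}_{k}(\eta,x)=\mathcal{W}_{k}(\eta,x-1)+\eta^{\uparrow}_{k}(x)=1$. Thus $\mathcal{W}_{\ell}(\eta,x)=1$ for $1\le\ell\le k$, which is (1). Part (2) follows from the entirely parallel computation: by \eqref{def:seatdown}, $\eta^{\downarrow}_{k}(x)=1$ forces $\eta(x)=0$, $\mathcal{W}_{k}(\eta,x-1)=1$ and $\mathcal{W}_{\ell}(\eta,x-1)=0$ for $\ell<k$, and the same manipulation gives $\mathcal{W}_{\ell}(\eta,x)=0$ for $1\le\ell\le k$.

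Part (3) is the only one requiring more than a one-line unwinding, because it concerns all seats at once, so the plan is first to characterise records. Abbreviating $\mathcal{W}_{\ell}:=\mathcal{W}_{\ell}(\eta,x-1)\in\{0,1\}$, I would use the telescoping identities $\sum_{k\ge1}(1-\mathcal{W}_{k})\prod_{\ell=1}^{k-1}\mathcal{W}_{\ell}=1$ and $\sum_{k\ge1}\mathcal{W}_{k}\prod_{\ell=1}^{k-1}(1-\mathcal{W}_{\ell})=\mathbf{1}\{\exists\,\ell:\mathcal{W}_{\ell}=1\}$, the first of which uses that $\prod_{\ell=1}^{k}\mathcal{W}_{\ell}\to0$ as $k\to\infty$ (valid since $\sum_{\ell}\mathcal{W}_{\ell}(\eta,x-1)=W_{\infty}(\eta,x-1)\le x-1<\infty$, so only finitely many seats are occupied at $x-1$). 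Summing \eqref{def:seatup} and \eqref{def:seatdown} over $k$ then gives $\sum_{k}\eta^{\uparrow}_{k}(x)=\eta(x)$ and $\sum_{k}\eta^{\downarrow}_{k}(x)=(1-\eta(x))\,\mathbf{1}\{\exists\,\ell:\mathcal{W}_{\ell}(\eta,x-1)=1\}$, so $r(\eta,x)=1$ is equivalent to $\eta(x)=0$ together with $\mathcal{W}_{\ell}(\eta,x-1)=0$ for every $\ell\in\N$. In that case every increment $\eta^{\uparrow}_{\ell}(x)-\eta^{\downarrow}_{\ell}(x)$ vanishes, so $\mathcal{W}_{\ell}(\eta,x)=\mathcal{W}_{\ell}(\eta,x-1)=0$ for all $\ell$, which is (3). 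One could equally avoid the infinite sums: if $\eta(x)=1$ the ball enters the smallest empty seat (which exists), forcing some $\eta^{\uparrow}_{m}(x)=1$ and hence $r(\eta,x)=0$; and if $\eta(x)=0$ with some seat occupied at $x-1$, a ball leaves the smallest occupied seat.

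I do not expect a genuine obstacle here: the statement is combinatorial bookkeeping once the definitions are in hand. The only points deserving care are the limit $\prod_{\ell=1}^{k}\mathcal{W}_{\ell}(\eta,x-1)\to0$ used in (3), and the elementary remark that at each fixed site at most one of the quantities $\eta^{\sigma}_{k}(x)$ is nonzero --- equivalently, that being a record at $x$ is the same as no ball entering or leaving the carrier at $x$ --- which is precisely what reduces (3) to the record characterisation above.
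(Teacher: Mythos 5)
Your proof is correct. Note that the paper itself does not prove this lemma at all --- it is imported verbatim as Lemma 3.1 of \cite{MSSS} --- so there is nothing to compare against; your direct unwinding of \eqref{def:seatup}--\eqref{def:seatdown} together with the telescoping identity \eqref{eq:car_n_seat} (which identifies each partial sum with $\mathcal{W}_{\ell}(\eta,x)$) is exactly the intended one-step, induction-free verification, and the two points you flag as needing care (the vanishing of $\prod_{\ell=1}^{K}\mathcal{W}_{\ell}(\eta,x-1)$ for large $K$, and the characterisation of $r(\eta,x)=1$ as $\eta(x)=0$ with an empty carrier at $x-1$) are handled correctly.
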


                {We now introduce some functions which will be useful for counting
solitons.  The idea is to use the seat number
configuration to decompose the original configuration into intervals,
and then to count the number of solitons of each size in each
interval.}  For any $k \in \N \cup \{\infty\}$ and $x \in \Z_{\ge 0}$, we define $\xi_{k}\left( \eta, x \right)$ as $\xi_{k}\left( \eta, 0 \right) := 0$, and
                    \begin{align}
                        \xi_{k}\left( \eta, x \right) &:= x -  \sum_{y = 1}^{x} \sum_{ \ell = 1}^{k} \sum_{\sigma \in \left\{ \uparrow, \downarrow \right\}} \eta^{\sigma}_{\ell}\left(y\right) \\
                        &\ = \sum_{y = 1}^{x} \left( r(y) +  \sum_{ \ell \ge k + 1} \sum_{\sigma \in \left\{ \uparrow, \downarrow \right\}}  \eta^{\sigma}_{\ell}\left(y\right) \right).
                    \end{align}
                {Thus $\xi_k(\eta,x)$ counts the number of sites up to $x$ which remain
after removing sites corresponding to the seats numbered
$1,\ldots,k$. }
                We remark that $\xi_{k}\left( \eta, x \right) - \xi_{k}\left( \eta, x - 1 \right) = 1$ if and only if {$x$ is a record or } $x$ is a $\left(k+\ell, \sigma\right)$-seat with some $\ell \in \N$ and $\sigma \in \{\uparrow, \downarrow\}$, and $\xi_{k}\left( \eta, x \right) - \xi_{k}\left( \eta, x - 1 \right) = 0$ otherwise.
                Then, for any $k \in \N \cup\{\infty\}$ and $i \in \Z_{\ge 0}$, we define $s_{k}\left(\eta, i \right)$ as 
                    \begin{align}\label{def:sk}
                        s_{k}\left(\eta, i \right) &:= \min\left\{ x \in \Z_{\ge 0} \ ; \ \xi_{k}\left( \eta, x \right) = i \right\},
                    \end{align}
                where $\min\emptyset := \infty$. {In other words, the map $s_k(\eta,\cdot)$ is the inverse of the counting
function $\xi_k\left(\eta,\cdot\right)$ and gives the positions of the remaining sites after the above
removing procedure.} Note that $s_{k}\left(\eta, 0 \right) = 0$ for any $k \in \N \cup\{\infty\}$. 
{The intervals $\left[s_{k}\left(\eta, i \right) + 1,  s_{k}\left(\eta, i + 1 \right) \right]$  between two consecutive such sites
will be used to count solitons of each size.}
                For any $k \in \N$ and $i \in \Z_{\ge 0}$, we define $\zeta_{k}\left( \eta, i \right)$ as
                    \begin{align}\label{def:zeta}
                        \zeta_{k}\left( \eta, i \right) &:= \sum_{y = s_{k}\left(\eta, i \right) + 1}^{s_{k}\left(\eta, i + 1 \right)} \left( \eta^{\uparrow}_{k}\left(y\right) - \eta^{\uparrow}_{k+1}\left(y\right) \right).
                    \end{align}
                Thanks to Lemma \ref{lem:1_half}, $\zeta_{k}\left( \eta, i \right)$ can be represented as 
                    \begin{align}\label{eq:down_zeta}
                                \zeta_{k}\left( \eta, i \right) = \sum_{y = s_{k}\left(\eta, i \right) + 1}^{s_{k}\left(\eta, i + 1 \right)} \left( \eta^{\downarrow}_{k}\left(y\right) - \eta^{\downarrow}_{k+1}\left(y\right) \right).
                            \end{align}
                {As explained in Remark \ref{rem:zeta_sol} below, this quantity has the interpretation
of counting $k$-solitons in the above interval.}
                \begin{remark}\label{rem:zeta_sol}

                {
The quantity $\zeta_k(\eta,i)$ can be interpreted as the number of
$k$-solitons contained in the interval $[s_k(\eta,i)+1,\;s_k(\eta,i+1)]$.
Actually, by the definition of $s_k\left(\eta,\cdot\right)$, all sites in $[s_k(\eta,i)+1,\;s_k(\eta,i+1) - 1]$
correspond to seats numbered at most $k$.  Hence, in the interval
$[s_k(\eta,i)+1,\;s_k(\eta,i+1)]$, a seat numbered $k+1$ or larger may
appear only at the right endpoint.  By Remark \ref{rem:seat_sol}, the number of $(k,\uparrow)$-seats counts
solitons of size at least $k$, while the number of
$(k+1,\uparrow)$-seats counts solitons of size at least $k+1$. 
Therefore their difference counts solitons of size $k$, which is  $\zeta_k(\eta,i)$.  See
Figure~\ref{fig:zeta-soliton-example} for an illustration} and \cite[Section 2]{MSSS} for details. {We also note that a record can be seen as a site that is not a component of
any soliton. Thus records separate the soliton decomposition in the following
sense: solitons are identified independently between consecutive records, and
no soliton crosses a record.}
                \end{remark}
        \begin{figure}[t]
\centering
\begin{tikzpicture}[scale=0.8, every node/.style={transform shape}]

% ---------------------------------------------------------
% colors
% ---------------------------------------------------------
\colorlet{soltwo}{blue!75!black}
\colorlet{solthree}{purple!75!black}
\colorlet{solone}{orange!85!black}

\colorlet{kone}{red!70!black}
\colorlet{ktwo}{green!50!black}
\colorlet{kthree}{blue!70!black}
\colorlet{recordc}{gray!70}

\tikzset{
  eventlabel/.style={
    font=\tiny,
    inner sep=0.9pt,
    fill=white,
    fill opacity=0.9,
    text opacity=1,
    rounded corners=1pt
  },
  skmark/.style={
    draw=gray!65,
    fill=white,
    rounded corners=1pt,
    inner sep=0.8pt,
    font=\tiny
  }
}

% ---------------------------------------------------------
% grid for carrier graph
% ---------------------------------------------------------
\foreach \x in {0,...,16}{
  \draw[gray!18] (\x+0.5,-0.90)--(\x+0.5,3.40);
}
\foreach \y in {0,...,3}{
  \draw[gray!18] (0.35,\y)--(16.65,\y);
}

% ---------------------------------------------------------
% axes
% ---------------------------------------------------------
\draw[->,gray!70] (0.35,0)--(16.95,0) node[right] {$x$};
\draw[->,gray!70] (0.5,-0.05)--(0.5,3.52)
  node[above] {$W_\infty(\eta,\cdot)$};

\node[left,gray!80] at (0.35,0) {$0$};
\node[left,gray!80] at (0.35,1) {$1$};
\node[left,gray!80] at (0.35,2) {$2$};
\node[left,gray!80] at (0.35,3) {$3$};

% ---------------------------------------------------------
% carrier graph for eta = 1100011101010000...
% ---------------------------------------------------------
\draw[line width=1.15pt,soltwo]
  (0.5,0)--(1.5,1)--(2.5,2)--(3.5,1)--(4.5,0);

\draw[line width=1.00pt,gray!60]
  (4.5,0)--(5.5,0);

\draw[line width=1.15pt,solthree]
  (5.5,0)--(6.5,1)--(7.5,2)--(8.5,3)--(9.5,2);

\draw[line width=1.15pt,solone]
  (9.5,2)--(10.5,3)--(11.5,2);

\draw[line width=1.15pt,solone]
  (11.5,2)--(12.5,3)--(13.5,2);

\draw[line width=1.15pt,solthree]
  (13.5,2)--(14.5,1)--(15.5,0);

\draw[line width=1.00pt,gray!60]
  (15.5,0)--(16.5,0);

\foreach \p in {(0.5,0),(1.5,1),(2.5,2),(3.5,1),(4.5,0),(5.5,0),
(6.5,1),(7.5,2),(8.5,3),(9.5,2),(10.5,3),(11.5,2),(12.5,3),(13.5,2),
(14.5,1),(15.5,0)}{
  \fill \p circle (2.0pt);
}

% ---------------------------------------------------------
% seat event labels over the carrier graph
% ---------------------------------------------------------
\node[eventlabel,text=kone]   at (1.00,0.5) {$ (1,\uparrow) $};
\node[eventlabel,text=ktwo]   at (2.00,1.5) {$ (2,\uparrow) $};
\node[eventlabel,text=kone]   at (3.00,1.5) {$ (1,\downarrow) $};
\node[eventlabel,text=ktwo]   at (4.00,0.5) {$ (2,\downarrow) $};

\node[eventlabel,text=recordc] at (5.00,0.23) {record};

\node[eventlabel,text=kone]   at (6.00,0.5) {$ (1,\uparrow) $};
\node[eventlabel,text=ktwo]   at (7.00,1.5) {$ (2,\uparrow) $};
\node[eventlabel,text=kthree] at (8.00,2.5) {$ (3,\uparrow) $};
\node[eventlabel,text=kone]   at (9.00,2.5) {$ (1,\downarrow) $};

\node[eventlabel,text=kone]   at (10.00,2.5) {$ (1,\uparrow) $};
\node[eventlabel,text=kone]   at (11.00,2.5) {$ (1,\downarrow) $};

\node[eventlabel,text=kone]   at (12.00,2.5) {$ (1,\uparrow) $};
\node[eventlabel,text=kone]   at (13.00,2.5) {$ (1,\downarrow) $};

\node[eventlabel,text=ktwo]   at (14.00,1.5) {$ (2,\downarrow) $};
\node[eventlabel,text=kthree] at (15.00,0.5) {$ (3,\downarrow) $};

\node[eventlabel,text=recordc] at (16.00,0.23) {record};

% ---------------------------------------------------------
% eta row
% ---------------------------------------------------------
\node[left] at (0.35,-0.45) {$\eta$};

\foreach \x in {1,2,6,7,8,10,12}{
  \fill (\x,-0.45) circle (2.3pt);
}
\foreach \x in {3,4,5,9,11,13,14,15,16}{
  \draw[gray!80,fill=white] (\x,-0.45) circle (2.2pt);
}

\foreach \x in {1,...,16}{
  \node[below=12pt,gray!80] at (\x,-0.45) {\scriptsize $\x$};
}

% ---------------------------------------------------------
% soliton braces
% ---------------------------------------------------------
%\draw[decorate,decoration={brace,amplitude=4pt,mirror},soltwo](1,-0.95)--(4,-0.95)node[midway,below=5pt] {\scriptsize $2$-soliton};

%\draw[decorate,decoration={brace,amplitude=4pt},solone](10,3.02)--(11,3.02)node[midway,above=4pt] {\scriptsize $1$-soliton};

%\draw[decorate,decoration={brace,amplitude=4pt},solone](12,3.02)--(13,3.02)node[midway,above=4pt] {\scriptsize $1$-soliton};

%\draw[decorate,decoration={brace,amplitude=4pt,mirror},solthree](6,-1.22)--(15,-1.22)node[midway,below=5pt] {\scriptsize $3$-soliton};

% =========================================================
% rows for s_k and zeta_k
% =========================================================

% y-coordinates
\def\ysone{-1.95}
\def\yzone{-2.32}
\def\ystwo{-2.95}
\def\yztwo{-3.32}
\def\ysthree{-3.95}
\def\yzthree{-4.32}

% labels on the left
\node[left] at (0.35,\ysone) {$s_1$};
\node[left] at (0.35,\yzone) {$\zeta_1$};

\node[left] at (0.35,\ystwo) {$s_2$};
\node[left] at (0.35,\yztwo) {$\zeta_2$};

\node[left] at (0.35,\ysthree) {$s_3$};
\node[left] at (0.35,\yzthree) {$\zeta_3$};

% guide lines
\draw[gray!25] (0.50,\ysone)--(16.20,\ysone);
\draw[gray!25] (0.50,\ystwo)--(16.20,\ystwo);
\draw[gray!25] (0.50,\ysthree)--(16.20,\ysthree);

% ---------------- s1 ----------------
\foreach \x/\i in {0.5/0,2/1,4/2,5/3,7/4,8/5,14/6,15/7,16/8}{
  \node[skmark] at (\x,\ysone) {$\i$};
}
\foreach \a/\b in {0.5/2,2/4,4/5,5/7,7/8,8/14,14/15,15/16}{
  \draw[gray!55] (\a,\ysone)--(\b,\ysone);
}
\foreach \x/\v in {1.25/0,3/0,4.5/0,6/0,7.5/0,11/2,14.5/0,15.5/0}{
  \node[font=\scriptsize] at (\x,\yzone) {$\v$};
}

% ---------------- s2 ----------------
\foreach \x/\i in {0.5/0,5/1,8/2,15/3,16/4}{
  \node[skmark] at (\x,\ystwo) {$\i$};
}
\foreach \a/\b in {0.5/5,5/8,8/15,15/16}{
  \draw[gray!55] (\a,\ystwo)--(\b,\ystwo);
}
\foreach \x/\v in {2.75/1,6.5/0,11.5/0,15.5/0}{
  \node[font=\scriptsize] at (\x,\yztwo) {$\v$};
}

% ---------------- s3 ----------------
\foreach \x/\i in {0.5/0,5/1,16/2}{
  \node[skmark] at (\x,\ysthree) {$\i$};
}
\foreach \a/\b in {0.5/5,5/16}{
  \draw[gray!55] (\a,\ysthree)--(\b,\ysthree);
}
\foreach \x/\v in {2.75/0,10.5/1}{
  \node[font=\scriptsize] at (\x,\yzthree) {$\v$};
}

% ---------------------------------------------------------
% note
% ---------------------------------------------------------
%\node[align=left,anchor=west] at (8.5,-4.75){\scriptsize In each row labeled $s_k$, the boxed number $i$ is placed at the site $s_k(\eta,i)$.\\ \scriptsize The number written between two consecutive boxes is the value of $\zeta_k(\eta,i)$.};

\end{tikzpicture}
\caption{
Carrier graph and seats  for $\eta=1100011101010000\cdots$.
There are two $1$-solitons, one $2$-soliton, and one $3$-soliton.
The rows labeled $s_1$, $s_2$, and $s_3$ show the sites $s_k(\eta,i)$, and
the numbers written between consecutive sites give the corresponding values
of $\zeta_k(\eta,i)$.
In particular, the non-zero values shown in the figure are
$\zeta_1(\eta,5)=2$, $\zeta_2(\eta,0)=1$, and $\zeta_3(\eta,1)=1$.
}
\label{fig:zeta-soliton-example}
\end{figure}
                \begin{remark}\label{rem:initial}
                    We note that $\zeta$ can be considered as a map $\zeta : \Omega_{r} \to \Z_{\ge 0}^{\mathbb{\N} \times \Z_{\ge 0}}$, and it is shown in \cite{CS,MSSS} that $\zeta$ is a bijection between $\Omega_{r} \subset \Omega$ and $\bar{\Omega} \subset \Z_{\ge 0}^{\mathbb{N} \times \Z_{\ge 0}}$, where $\Omega_{r}$ and $\bar{\Omega}$ are defined as
                        \begin{align}
                            \Omega_{r} &:= \left\{ \eta \in \Omega \ ; \ \sum_{x \in \N} r(\eta, x) = \infty  \right\}, \\
                            \bar{\Omega} &:= \left\{ \zeta \in  \Z_{\ge 0}^{\mathbb{N} \times \Z_{\ge 0}} \ ; \ \max\left\{ k \in \N \ ; \ \zeta_{k}(i) > 0 \right\} < \infty \ \text{for any } i \right\}.
                        \end{align} 
                \end{remark}
                
                We conclude this subsection by quoting a main result in \cite{MSSS}.  In \cite[Proposition 2.3 and Theorem 2.3]{MSSS}, it is shown that the dynamics of the BBS($\ell$) is linearized in terms of $\zeta$ : 
                    \begin{theorem}[Proposition 2.3 and Theorem 2.3 in \cite{MSSS}]\label{thm:linear}
                        Suppose that $\eta \in \Omega$ and  there exist some $k \in \N$ and $i \in \Z_{\ge 0}$ such that $s_{k}\left(\eta, i + 1 \right) < \infty$. Then we have
                            \begin{align}
                                \zeta_k\left( T_{\ell} \eta , i  \right) = \zeta_k\left( \eta , i - {k \wedge \ell}  \right), 
                            \end{align}
                        with convention $\zeta_k\left( \eta , i \right) = 0$ for any $i < 0$. 
                    \end{theorem}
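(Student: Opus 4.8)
\emph{Proof idea.} The plan is to translate the statement into one about the capacity-$m$ carrier trajectories $x \mapsto W_{m}\left(\eta, x\right)$, $m \in \N \cup \{\infty\}$, and then to use the mutual compatibility of carriers of different capacities. The first step is to rewrite $s_{k}$ and $\zeta_{k}$ in carrier language. At any site $x$ exactly one of ``a ball enters a seat'', ``a ball leaves a seat'', ``$x$ is a record'' occurs, and a ball enters (resp. leaves) a seat numbered $\le k$ precisely when $W_{k}\left(\eta, x\right) = W_{k}\left(\eta, x - 1\right) + 1$ (resp. $W_{k}\left(\eta, x\right) = W_{k}\left(\eta, x - 1\right) - 1$). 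Together with \eqref{eq:car_l_seat}, the definition of $\xi_{k}$ then yields
\begin{align}
\xi_{k}\left(\eta, x\right) - \xi_{k}\left(\eta, x - 1\right) = 1 - \left| W_{k}\left(\eta, x\right) - W_{k}\left(\eta, x - 1\right)\right| ,
\end{align}
so $s_{k}\left(\eta, i\right)$ is simply the position of the $i$-th flat step of $x \mapsto W_{k}\left(\eta, x\right)$ (with $s_{k}\left(\eta, 0\right) = 0$). In the same way, by \eqref{def:seatup}, \eqref{def:seatdown} and Lemma \ref{lem:1_half}, whether $\eta^{\uparrow}_{k}(x) = 1$ or $\eta^{\downarrow}_{k}(x) = 1$ is determined by the steps of $W_{k-1}$ and $W_{k}$ at $x$, and likewise $\eta^{\uparrow}_{k+1}, \eta^{\downarrow}_{k+1}$ by the steps of $W_{k}$ and $W_{k+1}$. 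Hence $\zeta_{k}\left(\eta, i\right)$ is a fixed functional of the triple $\left( W_{k-1}\left(\eta, \cdot\right), W_{k}\left(\eta, \cdot\right), W_{k+1}\left(\eta, \cdot\right)\right)$ (with the convention $W_{0} \equiv 0$) restricted to the $i$-th flat block $\left( s_{k}\left(\eta, i\right), s_{k}\left(\eta, i + 1\right)\right]$ of $W_{k}$, and the problem is reduced to understanding how this triple of trajectories, and the flat-block partition of $W_{k}$, transform under $\eta \mapsto T_{\ell}\eta$.

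\emph{The computation.} The engine is the classical fact that the carriers commute, $T_{\ell}T_{m} = T_{m}T_{\ell}$ for all $m$, equivalently the lattice identity $W_{\ell}\left(\eta, x\right) + W_{m}\left(T_{\ell}\eta, x\right) = W_{m}\left(\eta, x\right) + W_{\ell}\left(T_{m}\eta, x\right)$, which pictorially says that stacking $\eta$ and $T_{\ell}\eta$ on consecutive rows with $W_{m}$ recorded on each row produces a two-dimensional array obeying a local propagation rule at every face. Using this for $m \in \{k - 1, k, k + 1\}$, I would fix the $i$-th flat block of $W_{k}\left(T_{\ell}\eta, \cdot\right)$ and trace it back through the array. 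I expect that the flat-block partition of $W_{k}\left(T_{\ell}\eta, \cdot\right)$ is that of $W_{k}\left(\eta, \cdot\right)$ shifted forward by exactly $\min\{k, \ell\}$ blocks, with $\min\{k, \ell\}$ new flat blocks appearing near $x = 0$ that contain no $\left(k, \uparrow\right)$- or $\left(k + 1, \uparrow\right)$-seat — which is exactly the origin of the convention $\zeta_{k}\left(\eta, i\right) = 0$ for $i < 0$ — and that the level-$k$ up/down content of each remaining flat block is transported unchanged to its image. Combining this with the first step gives $\zeta_{k}\left(T_{\ell}\eta, i\right) = \zeta_{k}\left(\eta, i - \min\{k, \ell\}\right)$, and the hypothesis $s_{k}\left(\eta, i + 1\right) < \infty$ guarantees, via the block shift, that $s_{k}\left(T_{\ell}\eta, i + 1\right) < \infty$, so the left-hand side is well defined.

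\emph{Main obstacle.} The delicate point is this block-relabelling: the boundaries of the flat blocks of $W_{k}$ are records and $\left( k', \sigma\right)$-seats with $k' > k$, and these boundary sites themselves move under $T_{\ell}$, so one cannot naively ``follow'' a block. I would first settle the case $\ell = \infty$ by a direct local computation on the carrier array, showing that the level-$k$ content of the $i$-th flat block of $W_{k}\left(\eta, \cdot\right)$ reappears as that of the $(i + k)$-th flat block of $W_{k}\left(T_{\infty}\eta, \cdot\right)$. The finite-$\ell$ case then splits into $\ell \ge k$ and $\ell < k$. For $\ell \ge k$ a level-$k$ structure fits inside the capacity-$\ell$ carrier and is processed exactly as in the infinite-capacity case, so the same computation applies and gives the shift $\min\{k, \ell\} = k$. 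For $\ell < k$ every level-$\ge k$ seat is clipped by the carrier and moves at the common speed $\ell$; this case runs parallel to — and for $\ell = 1$ is essentially — the analysis of $T_{1}$ via the $10$-elimination in Section \ref{sec:10}, so that bookkeeping can be reused. In all cases the computation is finite, but matching block indices across the move is where the care is required.
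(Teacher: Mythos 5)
There is no proof of this statement in the paper itself: Theorem \ref{thm:linear} is imported verbatim from \cite{MSSS} (Proposition 2.3 and Theorem 2.3 there), so the only internal point of comparison is the whole-line analogue, Theorem \ref{thm:linear_whole}, which is proved via the identity $\eta^{\downarrow}_{k} = T\eta^{\uparrow}_{k}$ (Proposition \ref{prop:flip}) together with a computation of $\xi_{k}\left(T\eta,x\right) - \xi_{k}\left(\eta,x\right)$. Your opening reformulation is correct and matches the paper's point of view: $\xi_{k}\left(\eta,x\right) - \xi_{k}\left(\eta,x-1\right) = 1 - \left|W_{k}\left(\eta,x\right) - W_{k}\left(\eta,x-1\right)\right|$, so $s_{k}\left(\eta,i\right)$ marks the flat steps of $W_{k}$, and $\zeta_{k}\left(\eta,i\right)$ is a functional of the increments of $W_{k-1}, W_{k}, W_{k+1}$ on the $i$-th flat block.

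The gap is that everything after this reformulation is asserted rather than proved. The claim that the flat-block partition of $W_{k}\left(T_{\ell}\eta,\cdot\right)$ is the $\min\{k,\ell\}$-shifted partition of $W_{k}\left(\eta,\cdot\right)$, with the level-$k$ up/down content of each block transported unchanged, \emph{is} the theorem restated in carrier language; writing ``I expect that'' and deferring it to ``a direct local computation on the carrier array'' leaves the entire content unestablished. Two specific steps would fail as described. First, the reduction of finite $\ell \ge k$ to $\ell = \infty$ by saying the level-$k$ structure ``is processed exactly as in the infinite-capacity case'' is not sound: the block boundaries of $W_{k}$ are records and $\left(k',\sigma\right)$-seats with $k' > k$, and a soliton of size $m > \ell$ advances by $\ell$ under $T_{\ell}$ but by $m$ under $T_{\infty}$, so the intermediate block geometry genuinely differs between the two evolutions; only the net count $\zeta_{k}$ coincides, which is precisely what must be proved. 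Second, the ``engine'' you invoke, the lattice identity $W_{\ell}\left(\eta,x\right) + W_{m}\left(T_{\ell}\eta,x\right) = W_{m}\left(\eta,x\right) + W_{\ell}\left(T_{m}\eta,x\right)$, is neither proved nor available in this paper; it is a nontrivial Yang--Baxter-type fact whose justification is of the same order of difficulty as the theorem itself. To close the argument along the paper's lines you would instead prove the seat-level flip identity $\eta^{\downarrow}_{k}\left(x\right) = T_{\ell}\eta^{\uparrow}_{k}\left(x\right)$ for $k \le \ell$ (and its modification for $k > \ell$), and then count, as in the proof of Theorem \ref{thm:linear_whole}, how many sites $y \le x$ with $\xi_{k}\left(T_{\ell}\eta,y\right) - \xi_{k}\left(T_{\ell}\eta,y-1\right) = 1$ separate a $\left(k,\downarrow\right)$-seat of $\eta$ from the corresponding $\left(k,\uparrow\right)$-seat of $T_{\ell}\eta$; that count is where the value $\min\{k,\ell\}$ actually comes from.
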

                    \begin{remark}
                        Note that if $\eta \in \Omega_{r}$, then $s_{k}\left(\eta, i  \right) < \infty$ for any $k \in \N$ and $i \in \Z_{\ge 0}$. Therefore, by combining the bijectivity of $\zeta$ and Theorem \ref{thm:linear}, the initial value problem of the BBS($\ell$) can be solved when the initial configuration is an element of $\Omega_{r}$.
                    \end{remark}

		\subsection{The \texorpdfstring{$k$}{k}-skip map}\label{sec:seat_skip}
		  {We next introduce the $k$-skip map, which is a natural operation from the
viewpoint of the seat number configuration. For any $k \in \N$, we define $\Psi_{k} : \Omega \to \Omega$ as}
		    %To study the relationship between the $10$-elimination and the seat number configuration, we introduce the {\it $k$-skip map},  $\Psi_{k} : \Omega \to \Omega, ~  k\in \N$, defined as 
		        \begin{align}
		            \Psi_{k}\left(\eta\right)(x) := \eta\left( s_{k}\left(\eta, x\right) \right).
		        \end{align}
            We may call $\Psi_{k}\left(\eta\right)$ the $k$-skipped configuration (of $\eta$).
            In this subsection, we investigate some basic properties of $\left(\Psi_{k}\right)_{k \in \N}$. 
            {The map $\Psi_k$ can be understood in terms of the carrier with seat
numbers as follows.  We remove from $\eta$ the sites at which a ball
enters or leaves one of the seats numbered $1,\ldots,k$, and then read
the remaining sites from left to right.  This gives the configuration
$\Psi_k(\eta)$.  
%From the soliton point of view, $\Psi_k$ may be regarded as an operation which removes, from each soliton, the first $k$ heads and the first $k$ tails.  
In terms of the seat number configuration, it shifts the seat numbers by
$k$.  More precisely, the $(k+\ell,\sigma)$-seats in $\eta$ becomes the
$(\ell,\sigma)$-seat in $\Psi_k(\eta)$, as stated in Proposition \ref{prop:seat_semig} below. }
		    %We may call $\Psi_{k}\left(\eta\right)$ the $k$-skipped configuration (of $\eta$).In this subsection, we investigate some basic properties of $\left(\Psi_{k}\right)_{k \in \N}$ from the viewpoint of the seat number configuration. A key observation is that $\Psi_{k}$ shifts the seat numbers as follows : 
                \begin{proposition} \label{prop:seat_semig}
                    Suppose that $\eta \in \Omega$. For any $k, \ell \in \N$, $\sigma \in \{ \uparrow, \downarrow \}$ and $x \in \N$, we have
                        \begin{align}\label{eq:seat_semig}
                            \Psi_{k}\left( \eta \right)^{\sigma}_{\ell}\left( x \right) &= \eta^{\sigma}_{k+\ell}\left( s_{k}\left({\eta},x\right) \right).
                        \end{align}
                    In particular, 
                        \begin{align}\label{eq:seat_semig_rec}
                            r\left( \Psi_{k}\left( \eta \right), x\right) = r\left( \eta, s_{k}\left({\eta},x\right) \right). 
                        \end{align}
                \end{proposition}

            As a consequence of Proposition \ref{prop:seat_semig}, we obtain the following semigroup property of $\left(\Psi_{k}\right)_{k \in \N}$ : 
                \begin{proposition}\label{prop:semig}
                    Suppose that $\eta \in \Omega$. For any $k,\ell \in \N$ and $x \in \Z_{\ge 0}$, we have
                        \begin{align}
                            \Psi_{k}\left( \Psi_{\ell}\left( \eta \right) \right)\left( x \right) &= \Psi_{k + \ell}\left( \eta \right)\left( x \right).
                        \end{align}
                \end{proposition}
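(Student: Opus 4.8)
The plan is to reduce the claimed semigroup identity to the single ``composition law for skip positions''
\begin{equation}
    s_{\ell}\bigl(\eta,\ s_{k}(\Psi_{\ell}(\eta),x)\bigr) \;=\; s_{k+\ell}(\eta,x), \qquad x \in \Z_{\ge 0}, \tag{$\star$}
\end{equation}
from which the proposition follows in one line: by definition $\Psi_{k}(\Psi_{\ell}(\eta))(x) = \Psi_{\ell}(\eta)\bigl(s_{k}(\Psi_{\ell}(\eta),x)\bigr) = \eta\bigl(s_{\ell}(\eta,s_{k}(\Psi_{\ell}(\eta),x))\bigr)$, and by $(\star)$ this equals $\eta\bigl(s_{k+\ell}(\eta,x)\bigr) = \Psi_{k+\ell}(\eta)(x)$. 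I would prove $(\star)$ for those $x$ with $s_{k+\ell}(\eta,x) < \infty$; the degenerate cases (some $s$-value equal to $\infty$) follow from the same computation together with the conventions of Section~\ref{sec:seat}, and I would not spell them out.

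For the bookkeeping, for a configuration $\mu$ and $j \in \N$ I put $\chi_{j}(\mu,y) := \xi_{j}(\mu,y) - \xi_{j}(\mu,y-1) = r(\mu,y) + \sum_{m \ge j+1}\sum_{\sigma}\mu^{\sigma}_{m}(y)$, which by the second displayed expression for $\xi_{j}$ lies in $\{0,1\}$, and call $y$ a \emph{$j$-surviving site of $\mu$} when $\chi_{j}(\mu,y) = 1$, i.e.\ when $y$ is a record of $\mu$ or a $(m,\sigma)$-seat of $\mu$ with $m \ge j+1$; then $\xi_{j}(\mu,\cdot)$ is non-decreasing with unit jumps exactly at the $j$-surviving sites, so for $i \ge 1$ the position $s_{j}(\mu,i)$ is precisely the $i$-th $j$-surviving site of $\mu$. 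The first step is the pointwise identity $\chi_{k}(\Psi_{\ell}(\eta),y) = \chi_{k+\ell}(\eta,s_{\ell}(\eta,y))$ whenever $s_{\ell}(\eta,y) < \infty$: it is immediate from Proposition~\ref{prop:seat_semig}, which replaces $r(\Psi_{\ell}(\eta),y)$ by $r(\eta,s_{\ell}(\eta,y))$ and $\Psi_{\ell}(\eta)^{\sigma}_{m}(y)$ by $\eta^{\sigma}_{m+\ell}(s_{\ell}(\eta,y))$, after which $\sum_{m \ge k+1}$ re-indexes to $\sum_{m' \ge k+\ell+1}$. The second step upgrades this to $\xi_{k}(\Psi_{\ell}(\eta),y) = \xi_{k+\ell}(\eta,s_{\ell}(\eta,y))$ for all $y$ with $s_{\ell}(\eta,y) < \infty$, by induction on $y$: in the inductive step the increment on the left is $\chi_{k}(\Psi_{\ell}(\eta),y) = \chi_{k+\ell}(\eta,s_{\ell}(\eta,y))$ by the first step, while the increment on the right is $\sum_{z=s_{\ell}(\eta,y-1)+1}^{s_{\ell}(\eta,y)}\chi_{k+\ell}(\eta,z)$, and all the terms with $z < s_{\ell}(\eta,y)$ vanish because such a $z$ lies strictly between consecutive $\ell$-surviving sites, hence is not $\ell$-surviving, hence is a $(m,\sigma)$-seat with $m \le \ell < k+\ell+1$, hence not counted by $\chi_{k+\ell}$.

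To obtain $(\star)$, fix $x \ge 1$ (the case $x = 0$ is trivial) with $w := s_{k+\ell}(\eta,x) < \infty$. Since $w$ is the $x$-th $(k+\ell)$-surviving site of $\eta$ we have $\chi_{k+\ell}(\eta,w) = 1$, and because $k \ge 1$ this forces $w$ to be $\ell$-surviving as well, so $w = s_{\ell}(\eta,y_{0})$ with $y_{0} := \xi_{\ell}(\eta,w)$. Applying the second step at $y = y_{0}$ gives $\xi_{k}(\Psi_{\ell}(\eta),y_{0}) = \xi_{k+\ell}(\eta,w) = x$, so $s_{k}(\Psi_{\ell}(\eta),x) \le y_{0}$; conversely, for $0 \le y < y_{0}$ strict monotonicity of $s_{\ell}(\eta,\cdot)$ gives $s_{\ell}(\eta,y) \le w-1$, whence $\xi_{k}(\Psi_{\ell}(\eta),y) = \xi_{k+\ell}(\eta,s_{\ell}(\eta,y)) \le \xi_{k+\ell}(\eta,w-1) = x-1 < x$, so $s_{k}(\Psi_{\ell}(\eta),x) \ge y_{0}$. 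Hence $s_{k}(\Psi_{\ell}(\eta),x) = y_{0}$ and $s_{\ell}(\eta,s_{k}(\Psi_{\ell}(\eta),x)) = s_{\ell}(\eta,y_{0}) = w = s_{k+\ell}(\eta,x)$, which is $(\star)$.

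The only substantive point — and where I expect to spend the most care — is the comparison of the two notions of surviving site used in the last two steps: every $(k+\ell)$-surviving site of $\eta$ is automatically $\ell$-surviving (this is precisely where the hypothesis $k \ge 1$ enters), so the $(k+\ell)$-surviving sites of $\eta$ form a sub-sequence of its $\ell$-surviving sites \emph{in the same order}. It is this that makes $s_{\ell}(\eta,\cdot)$ intertwine the level-$k$ counting inside $\Psi_{\ell}(\eta)$ with the level-$(k+\ell)$ counting inside $\eta$, and, in the third step, lets one pin down $s_{k}(\Psi_{\ell}(\eta),x)$ exactly rather than merely up to an inequality; everything else is substitution from Proposition~\ref{prop:seat_semig} together with the unit-jump property of the functions $\xi_{j}$.
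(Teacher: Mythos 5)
Your proof is correct and follows essentially the same route as the paper: both arguments reduce the semigroup identity to the composition law $s_{\ell}\left(\eta, s_{k}\left(\Psi_{\ell}\left(\eta\right),x\right)\right) = s_{k+\ell}\left(\eta,x\right)$ and derive it from Proposition \ref{prop:seat_semig} by counting surviving sites via $\xi$. The only difference is one of detail — the paper states this composition law as a one-line observation and then computes $s_{k}\left(\Psi_{\ell}\left(\eta\right),x\right) = \xi_{\ell}\left(\eta, s_{k+\ell}\left(\eta,x\right)\right)$, whereas you carefully justify it via the induction on $y$ and the monotonicity argument.
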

            In addition, we will also show that $\Psi_{k}$ induces a {\it shift operator} of $\left(\zeta_{k}\right)_{k \in \N}$ : 
                \begin{proposition}\label{prop:shift}
                    Suppose that $\eta \in \Omega$. For any $k \in \N$, $\ell \in \Z_{\ge 0}$ and $i \in \Z_{\ge 0}$, we have
                        \begin{align}
                            \zeta_{k}\left( \Psi_{\ell}\left(\eta\right) , i \right) = \zeta_{k + \ell}\left( \eta , i \right). 
                        \end{align}
                \end{proposition}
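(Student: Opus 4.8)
The plan is to reduce the statement to Proposition~\ref{prop:seat_semig} together with the elementary combinatorial content of the maps $\xi_{m}$ and $s_{m}$. Recall (from the second displayed formula defining it) that $\xi_{m}(\eta,x)$ counts, among the sites $1,\dots,x$, the records of $\eta$ and the $(m',\sigma)$-seats with $m'\ge m+1$; call a site of one of these two kinds \emph{$\xi_{m}$-active}. Then $s_{m}(\eta,\cdot)$ enumerates the $\xi_{m}$-active sites in increasing order, and $\xi_{m}(\eta,z)$ equals the rank of $z$ among the $\xi_{m}$-active sites whenever $z$ is itself $\xi_{m}$-active. Since $k\ge 1$, every $\xi_{k+\ell}$-active site is $\xi_{\ell}$-active. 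Write $\eta':=\Psi_{\ell}(\eta)$; the case $\ell=0$ is trivial since $\Psi_{0}=\mathrm{id}$. I would first establish the two auxiliary identities
\begin{align*}
\xi_{k}(\eta',x)=\xi_{k+\ell}\!\left(\eta,s_{\ell}(\eta,x)\right),\qquad
s_{k}(\eta',i)=\xi_{\ell}\!\left(\eta,s_{k+\ell}(\eta,i)\right),
\end{align*}
and then substitute them into the definition~\eqref{def:zeta} of $\zeta$.

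For the first identity, I would start from $\xi_{k}(\eta',x)=\sum_{y=1}^{x}\big(r(\eta',y)+\sum_{m\ge k+1}\sum_{\sigma}(\eta')^{\sigma}_{m}(y)\big)$ and apply Proposition~\ref{prop:seat_semig} termwise: after the shift $m'=m+\ell$, the summand at $y$ becomes $r(\eta,s_{\ell}(\eta,y))+\sum_{m'\ge k+\ell+1}\sum_{\sigma}\eta^{\sigma}_{m'}(s_{\ell}(\eta,y))$, which is $1$ exactly when $s_{\ell}(\eta,y)$ is $\xi_{k+\ell}$-active and $0$ otherwise. As $y$ runs over $1,\dots,x$, the site $s_{\ell}(\eta,y)$ runs over the first $x$ $\xi_{\ell}$-active sites, and by the nesting these include all the $\xi_{k+\ell}$-active sites lying in $[1,s_{\ell}(\eta,x)]$; hence the sum equals $\xi_{k+\ell}(\eta,s_{\ell}(\eta,x))$. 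The second identity then follows: set $x_{0}:=\xi_{\ell}(\eta,s_{k+\ell}(\eta,i))$; since $s_{k+\ell}(\eta,i)$ is $\xi_{\ell}$-active we have $s_{\ell}(\eta,x_{0})=s_{k+\ell}(\eta,i)$, so $\xi_{k}(\eta',x_{0})=\xi_{k+\ell}(\eta,s_{k+\ell}(\eta,i))=i$ by the first identity, while for $x<x_{0}$ the strict monotonicity of $s_{\ell}(\eta,\cdot)$ gives $s_{\ell}(\eta,x)<s_{k+\ell}(\eta,i)$ and hence $\xi_{k}(\eta',x)\le\xi_{k+\ell}(\eta,s_{k+\ell}(\eta,i)-1)<i$; thus $x_{0}$ is the minimizer, i.e.\ $s_{k}(\eta',i)=x_{0}$ (the case $i=0$ being immediate).

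The conclusion is then a short calculation. Combining~\eqref{def:zeta}, the second auxiliary identity and Proposition~\ref{prop:seat_semig},
\begin{align*}
\zeta_{k}(\eta',i)
&=\sum_{y=s_{k}(\eta',i)+1}^{s_{k}(\eta',i+1)}\Big((\eta')^{\uparrow}_{k}(y)-(\eta')^{\uparrow}_{k+1}(y)\Big) \\
&=\sum_{y=\xi_{\ell}(\eta,s_{k+\ell}(\eta,i))+1}^{\xi_{\ell}(\eta,s_{k+\ell}(\eta,i+1))}\Big(\eta^{\uparrow}_{k+\ell}\!\left(s_{\ell}(\eta,y)\right)-\eta^{\uparrow}_{k+\ell+1}\!\left(s_{\ell}(\eta,y)\right)\Big),
\end{align*}
and the substitution $z=s_{\ell}(\eta,y)$ rewrites the last sum as a sum over the $\xi_{\ell}$-active sites $z$ with $s_{k+\ell}(\eta,i)<z\le s_{k+\ell}(\eta,i+1)$. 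Since $\eta^{\uparrow}_{k+\ell}(z)-\eta^{\uparrow}_{k+\ell+1}(z)$ can be nonzero only when $z$ is a $(k+\ell,\uparrow)$- or $(k+\ell+1,\uparrow)$-seat, and any such $z$ has seat number $\ge\ell+1$ and is therefore automatically $\xi_{\ell}$-active, the restriction to $\xi_{\ell}$-active sites may be dropped, leaving exactly $\zeta_{k+\ell}(\eta,i)$.

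I expect the main obstacle to be purely bookkeeping: keeping the boundary indices $s_{k}(\eta',i)\leftrightarrow s_{k+\ell}(\eta,i)$ correctly aligned in the change of variables, and carrying the re-indexing $m'=m+\ell$ through consistently. The one point needing separate (but routine) attention is the degenerate case in which $\eta$ has only finitely many records and finitely many seats of large number, so that some $s_{m}(\eta,\cdot)$ equal $\infty$ and several of the sums above become empty or infinite: there one checks directly that both sides collapse to the same eventually trivial (or identical) expressions, consistently with the convention $\min\emptyset=\infty$ (and the usual convention that an empty sum vanishes). In particular the clean argument applies verbatim whenever $\eta\in\Omega_{r}$, since then every $s_{m}(\eta,i)$ is finite and, by Proposition~\ref{prop:seat_semig}, $\Psi_{\ell}(\eta)\in\Omega_{r}$ as well. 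As an alternative to treating general $\ell$ directly, one could prove the case $\ell=1$ exactly as above and then obtain arbitrary $\ell$ by induction, using the semigroup identity $\Psi_{k}\circ\Psi_{\ell}=\Psi_{k+\ell}$ of Proposition~\ref{prop:semig}.
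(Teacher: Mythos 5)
Your argument is correct and follows essentially the same route as the paper: you transfer seat numbers via Proposition \ref{prop:seat_semig}, establish the boundary identity $s_{k}\left(\Psi_{\ell}\left(\eta\right),i\right)=\xi_{\ell}\left(\eta,s_{k+\ell}\left(\eta,i\right)\right)$ (which is exactly \eqref{eq:semig1}, proved in the paper inside Proposition \ref{prop:semig}), and then change variables in the defining sum \eqref{def:zeta}, your ``drop the restriction to $\xi_{\ell}$-active sites'' step being precisely the content of Lemma \ref{lem:sums}. The bookkeeping and degenerate-case remarks are consistent with the paper's (implicit) conventions, so no gap.
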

            \begin{remark}\label{rem:skip_sol}
            {The $k$-skip map can be regarded as an operation which removes, from
each soliton, the first $k$ heads and the first $k$ tails.  As a
result, a soliton of size $k+\ell$ in $\eta$ is viewed as a $\ell$-soliton
in $\Psi_k(\eta)$.  In particular, solitons of size at most $k$
disappear after applying $\Psi_k$.  See
Figure \ref{fig:psi1-soliton-interpretation} for an example in the
case $k=1$. }
                %In terms of solitons, Proposition \ref{prop:shift} means that the $\ell$-skip map is an operation that reduces the size of solitons by $\ell$.  
            \end{remark}

    \begin{figure}[t]
\centering
\begin{tikzpicture}[
    x=0.72cm,y=0.72cm,
    >=Stealth,
    font=\small
]

% ---------------------------------------------------------
% colors
% ---------------------------------------------------------
\colorlet{soltwo}{blue!75!black}
\colorlet{solthree}{purple!75!black}
\colorlet{solone}{orange!85!black}
\colorlet{recordc}{gray!70}
\colorlet{removebox}{black}

\tikzset{
  lab/.style={
    font=\scriptsize,
    inner sep=1pt,
    fill=white,
    rounded corners=1pt
  },
  hlab/.style={
    font=\scriptsize,
    inner sep=1pt,
    fill=white,
    text=black
  },
  tlab/.style={
    font=\scriptsize,
    inner sep=1pt,
    fill=white,
    text=black
  },
  removed/.style={
    font=\scriptsize,
    inner sep=1pt,
    fill=white,
    text=black,
    circle,
    draw=removebox,
    line width=0.6pt
  }
}

% =========================================================
% upper panel: eta
% =========================================================
\begin{scope}[shift={(0,6.2)}]

%\node[font=\bfseries] at (8.3,4.1){$\eta=1100011101010000\cdots$};

% grid
\foreach \x in {0,...,16}{
  \draw[gray!18] (\x+0.5,-0.9)--(\x+0.5,3.35);
}
\foreach \y in {0,...,3}{
  \draw[gray!18] (0.35,\y)--(16.65,\y);
}

% axes
\draw[->,gray!70] (0.35,0)--(16.95,0) node[right] {$x$};
\draw[->,gray!70] (0.5,-0.05)--(0.5,3.45)
  node[above] {$W_\infty(\eta,\cdot)$};

\node[left,gray!80] at (0.35,0) {$0$};
\node[left,gray!80] at (0.35,1) {$1$};
\node[left,gray!80] at (0.35,2) {$2$};
\node[left,gray!80] at (0.35,3) {$3$};

% carrier graph
% 2-soliton
\draw[line width=1.15pt,soltwo]
  (0.5,0)--(1.5,1)--(2.5,2)--(3.5,1)--(4.5,0);

% record
\draw[line width=1.00pt,gray!60]
  (4.5,0)--(5.5,0);

% 3-soliton + interacting 1-solitons
\draw[line width=1.15pt,solthree]
  (5.5,0)--(6.5,1)--(7.5,2)--(8.5,3)--(9.5,2);

\draw[line width=1.15pt,solone]
  (9.5,2)--(10.5,3)--(11.5,2);

\draw[line width=1.15pt,solone]
  (11.5,2)--(12.5,3)--(13.5,2);

\draw[line width=1.15pt,solthree]
  (13.5,2)--(14.5,1)--(15.5,0);

% tail
\draw[line width=1.00pt,gray!60]
  (15.5,0)--(16.5,0);

% vertices
\foreach \p in {(0.5,0),(1.5,1),(2.5,2),(3.5,1),(4.5,0),(5.5,0),
(6.5,1),(7.5,2),(8.5,3),(9.5,2),(10.5,3),(11.5,2),(12.5,3),(13.5,2),
(14.5,1),(15.5,0)}{
  \fill \p circle (2.0pt);
}

% eta row
\node[left] at (0.35,-0.45) {$\eta$};

\foreach \x in {1,2,6,7,8,10,12}{
  \fill (\x,-0.45) circle (2.3pt);
}
\foreach \x in {3,4,5,9,11,13,14,15,16}{
  \draw[gray!80,fill=white] (\x,-0.45) circle (2.2pt);
}
\foreach \x in {1,...,16}{
  \node[below=12pt,gray!80] at (\x,-0.45) {\scriptsize $\x$};
}

% record labels
%\node[lab,text=recordc] at (5.0,0.22){record};
%\node[lab,text=recordc] at (16.0,0.22){record};

% soliton braces
%\draw[decorate,decoration={brace,amplitude=4pt,mirror},soltwo](1,-0.95)--(4,-0.95)node[midway,below=5pt] {\scriptsize $2$-soliton};

%\draw[decorate,decoration={brace,amplitude=4pt},solone](10,3.02)--(11,3.02)node[midway,above=4pt] {\scriptsize $1$-soliton};

%\draw[decorate,decoration={brace,amplitude=4pt},solone](12,3.02)--(13,3.02)node[midway,above=4pt] {\scriptsize $1$-soliton};

%\draw[decorate,decoration={brace,amplitude=4pt,mirror},solthree](6,-1.18)--(15,-1.18)node[midway,below=5pt] {\scriptsize $3$-soliton};

% head / tail labels: 2-soliton
\node[removed,text=soltwo] at (1.0,0.5) {$H_1$};
\node[hlab,text=soltwo]    at (2.0,1.5) {$H_2$};
\node[removed,text=soltwo] at (3.0,1.5) {$T_1$};
\node[tlab,text=soltwo]    at (4.0,0.5) {$T_2$};

% head / tail labels: 3-soliton
\node[removed,text=solthree] at (6.0,0.5) {$H_1$};
\node[hlab,text=solthree]    at (7.0,1.5) {$H_2$};
\node[hlab,text=solthree]    at (8.0,2.5) {$H_3$};
\node[removed,text=solthree] at (9.0,2.5) {$T_1$};
\node[tlab,text=solthree]    at (14.0,1.5) {$T_2$};
\node[tlab,text=solthree]    at (15.0,0.5) {$T_3$};

% head / tail labels: 1-solitons
\node[removed,text=solone] at (10.0,2.5) {$H_1$};
\node[removed,text=solone] at (11.0,2.5) {$T_1$};

\node[removed,text=solone] at (12.0,2.5) {$H_1$};
\node[removed,text=solone] at (13.0,2.5) {$T_1$};

\end{scope}

% =========================================================
% arrow between panels
% =========================================================
%\draw[->,line width=0.9pt] (4.5,5.55)--(4.5,4.75);
%\node[fill=white,inner sep=1pt] at (4.5,5.15) {$\Psi_1$};
%\node[align=left,anchor=west] at (5.2,5.15){\scriptsize circled heads and tails\\ \scriptsize are removed};

% =========================================================
% lower panel: Psi_1(eta)
% left aligned
% =========================================================
\begin{scope}[shift={(0,0)}]

%\node[font=\bfseries] at (4.8,3.65){$\Psi_1(\eta)=10011000\cdots$};

% grid
\foreach \x in {0,...,8}{
  \draw[gray!18] (\x+0.5,-0.9)--(\x+0.5,2.35);
}
\foreach \y in {0,1,2}{
  \draw[gray!18] (0.35,\y)--(8.65,\y);
}

% axes
\draw[->,gray!70] (0.35,0)--(8.95,0) node[right] {$x$};
\draw[->,gray!70] (0.5,-0.05)--(0.5,2.45)
  node[above] {$W_\infty(\Psi_1(\eta),\cdot)$};

\node[left,gray!80] at (0.35,0) {$0$};
\node[left,gray!80] at (0.35,1) {$1$};
\node[left,gray!80] at (0.35,2) {$2$};

% carrier graph of 10011000...
% 1-soliton coming from the original 2-soliton
\draw[line width=1.15pt,soltwo]
  (0.5,0)--(1.5,1)--(2.5,0);

% record
\draw[line width=1.00pt,gray!60]
  (2.5,0)--(3.5,0);

% 2-soliton coming from the original 3-soliton
\draw[line width=1.15pt,solthree]
  (3.5,0)--(4.5,1)--(5.5,2)--(6.5,1)--(7.5,0);

% tail
\draw[line width=1.00pt,gray!60]
  (7.5,0)--(8.5,0);

% vertices
\foreach \p in {(0.5,0),(1.5,1),(2.5,0),(3.5,0),(4.5,1),(5.5,2),(6.5,1),(7.5,0)}{
  \fill \p circle (2.0pt);
}

% configuration row
\node[left] at (0.35,-0.45) {$\Psi_1(\eta)$};

\foreach \x in {1,4,5}{
  \fill (\x,-0.45) circle (2.3pt);
}
\foreach \x in {2,3,6,7,8}{
  \draw[gray!80,fill=white] (\x,-0.45) circle (2.2pt);
}
\foreach \x in {1,...,8}{
  \node[below=12pt,gray!80] at (\x,-0.45) {\scriptsize $\x$};
}

% record labels
%\node[lab,text=recordc] at (3.0,0.22) {record};
%\node[lab,text=recordc] at (8.0,0.22) {record};

% braces
%\draw[decorate,decoration={brace,amplitude=4pt,mirror},soltwo](1,-0.95)--(2,-0.95)node[midway,below=5pt] {\scriptsize $1$-soliton};

%\draw[decorate,decoration={brace,amplitude=4pt,mirror},solthree](4,-1.18)--(7,-1.18)node[midway,below=5pt] {\scriptsize $2$-soliton};

% labels for surviving heads / tails
\node[hlab,text=soltwo]   at (1.0,0.5) {$H_1$};
\node[tlab,text=soltwo]   at (2.0,0.5) {$T_1$};

\node[hlab,text=solthree] at (4.0,0.5) {$H_1$};
\node[hlab,text=solthree] at (5.0,1.5) {$H_2$};
\node[tlab,text=solthree] at (6.0,1.5) {$T_1$};
\node[tlab,text=solthree] at (7.0,0.5) {$T_2$};

% note
%\node[align=left,anchor=west] at (8.9,1.75)
%{\scriptsize the original $1$-solitons\\ \scriptsize disappear};

\end{scope}

\end{tikzpicture}
\caption{Solitonic interpretation of the $1$-skip map.
The map $\Psi_1$ removes the first head and the first tail of each soliton.
In this example, the $2$-soliton becomes a $1$-soliton, the
$3$-soliton becomes a $2$-soliton, and the $1$-solitons disappear.
}
\label{fig:psi1-soliton-interpretation}
\end{figure}
            
            {From now on} we will prove Propositions \ref{prop:seat_semig}, \ref{prop:semig} and \ref{prop:shift}.  First, we prepare some lemmas. Then we will give the proofs of the propositions. 
            
            \begin{lemma}\label{lem:sums}
                For any $k, \ell \in \N$, $z, w \in \Z_{\ge 0}, z \le w$ and $\sigma \in \{ \uparrow, \downarrow \}$, we have
                    \begin{align}\label{eq:sums}
                        \sum_{y = z}^{w} \eta^{\sigma}_{k + \ell}\left( s_{k}\left(\eta, y \right) \right) = \sum_{y = s_{k}\left(\eta, z \right)}^{s_{k}\left(\eta, w \right)} \eta^{\sigma}_{k + \ell}\left( y \right).
                    \end{align}
                In particular, we have
                    \begin{align}\label{eq:sums_1}
                        \sum_{y = s_{k}\left(\eta, z \right) + 1}^{s_{k}\left(\eta, z+1 \right) - 1} \eta^{\sigma}_{k + \ell}\left( y \right) = 0.
                    \end{align}
            \end{lemma}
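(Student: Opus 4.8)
The plan is to exploit the fact that $s_{k}\left(\eta,\cdot\right)$ enumerates the successive jump positions of the non-decreasing step function $x \mapsto \xi_{k}\left(\eta,x\right)$, together with the observation that between two consecutive jumps $\xi_{k}$ stays constant. Recall from the discussion preceding \eqref{def:sk} --- equivalently, from the identity $\xi_{k}\left(\eta,x\right) = \sum_{y=1}^{x}\bigl(r(\eta,y) + \sum_{m \ge k+1}\sum_{\sigma}\eta^{\sigma}_{m}(y)\bigr)$ --- that $\xi_{k}\left(\eta,x\right) - \xi_{k}\left(\eta,x-1\right) \in \{0,1\}$, and that this increment equals $0$ precisely when $x$ is an $(m,\sigma')$-seat with $1 \le m \le k$ for some $\sigma' \in \{\uparrow,\downarrow\}$. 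Since a site of $\N$ is an $(m,\sigma')$-seat for at most one pair $(m,\sigma')$, such a site is not a $(k+\ell,\sigma)$-seat for any $\ell \in \N$; hence $\eta^{\sigma}_{k+\ell}(x) = 0$ whenever $\xi_{k}\left(\eta,x\right) = \xi_{k}\left(\eta,x-1\right)$. This is essentially the only input required.

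First I would establish \eqref{eq:sums_1}. By \eqref{def:sk} and monotonicity of $\xi_{k}\left(\eta,\cdot\right)$ one has $\xi_{k}\left(\eta,y\right) = z$ for every $y$ with $s_{k}\left(\eta,z\right) \le y < s_{k}\left(\eta,z+1\right)$: it is $\ge z$ because $\xi_{k}$ is non-decreasing and takes the value $z$ at $s_{k}\left(\eta,z\right)$, and it is $< z+1$ because $s_{k}\left(\eta,z+1\right)$ is by definition the smallest site at which $\xi_{k}$ attains $z+1$. Consequently $\xi_{k}\left(\eta,y\right) - \xi_{k}\left(\eta,y-1\right) = 0$ for every $y$ with $s_{k}\left(\eta,z\right) < y < s_{k}\left(\eta,z+1\right)$, and the observation above gives $\eta^{\sigma}_{k+\ell}(y) = 0$ for each such $y$, which is exactly \eqref{eq:sums_1}.

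To deduce \eqref{eq:sums} I would partition the set of sites $\{s_{k}\left(\eta,z\right), \dots, s_{k}\left(\eta,w\right)-1\}$ into the consecutive blocks $\{s_{k}\left(\eta,j\right), \dots, s_{k}\left(\eta,j+1\right)-1\}$, $j = z, z+1, \dots, w-1$, and sum over each block using \eqref{eq:sums_1}: inside the $j$-th block the only possibly nonzero term is the one at the left endpoint $s_{k}\left(\eta,j\right)$, so the block sum equals $\eta^{\sigma}_{k+\ell}\bigl(s_{k}\left(\eta,j\right)\bigr)$. Adding the remaining single site $s_{k}\left(\eta,w\right)$ then gives
\begin{align}
\sum_{y = s_{k}\left(\eta,z\right)}^{s_{k}\left(\eta,w\right)} \eta^{\sigma}_{k+\ell}\left(y\right)
&= \sum_{j = z}^{w-1} \eta^{\sigma}_{k+\ell}\bigl(s_{k}\left(\eta,j\right)\bigr) + \eta^{\sigma}_{k+\ell}\bigl(s_{k}\left(\eta,w\right)\bigr) \\
&= \sum_{y = z}^{w} \eta^{\sigma}_{k+\ell}\bigl(s_{k}\left(\eta,y\right)\bigr),
\end{align}
which is \eqref{eq:sums}. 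The case $z = w$ is immediate, and one reads the statement under the standing assumption $s_{k}\left(\eta,w\right) < \infty$, so that by monotonicity all of $s_{k}\left(\eta,z\right), \dots, s_{k}\left(\eta,w\right)$ are finite; when $s_{k}\left(\eta,w\right) = \infty$ there is nothing to prove under the convention $\min\emptyset := \infty$.

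The only genuine point requiring care is the endpoint bookkeeping: one must check that the blocks $\{s_{k}\left(\eta,j\right), \dots, s_{k}\left(\eta,j+1\right)-1\}$ tile $\{s_{k}\left(\eta,z\right), \dots, s_{k}\left(\eta,w\right)-1\}$ without gaps or overlaps --- using that $s_{k}\left(\eta,j\right) < s_{k}\left(\eta,j+1\right)$ strictly, and handling the degenerate case $s_{k}\left(\eta,j+1\right) = s_{k}\left(\eta,j\right)+1$ in which a block is a single point and the sum in \eqref{eq:sums_1} is over the empty set --- and that the site $s_{k}\left(\eta,w\right)$ is counted exactly once. Beyond this the argument involves no estimates and no case analysis.
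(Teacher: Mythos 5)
Your proof is correct and follows essentially the same route as the paper: the paper's one-line argument is precisely the observation that every $(k+\ell,\sigma)$-seat in $[s_{k}(\eta,z),s_{k}(\eta,w)]$ is a jump point of $\xi_{k}$ and hence of the form $s_{k}(\eta,y)$ for some $z\le y\le w$, which is exactly what you establish via the increment dichotomy for $\xi_{k}$ and the block decomposition. Your write-up merely makes explicit the bookkeeping the paper leaves implicit.
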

            \begin{proof}[Proof of Lemma \ref{lem:sums}]
                Since all {$m ( > k)$-seats } in the interval $[s_{k}\left(\eta, z \right), s_{k}\left(\eta, w \right)]$ are {precisely } $\left\{ s_{k}\left(\eta, y \right) ; z \le y \le w \right\}$, {and the quantities in \eqref{eq:sums} only count $k+\ell$-seats,} we have \eqref{eq:sums}.
                %Since all $(k + \ell, \sigma)$-seats in the interval $[s_{k}\left(\eta, z \right), s_{k}\left(\eta, w \right)]$ are included in $\left( s_{k}\left(\eta, y \right) ; z \le y \le w \right)$, we have \eqref{eq:sums}.
            \end{proof}
            \begin{lemma}\label{lem:carrier_cap}
                For any $k \in \N$ and $x \in \N$, we have
                    \begin{align}
                        \begin{dcases}\label{if:carrier}
                            W_{\ell}\left( \eta, s_{k}\left(\eta, x \right) \right) = \ell \ \text{for any }  1 \le \ell \le k + 1 \ \text{if } \eta\left(s_{k}\left(\eta, x \right) \right) = 1, \\
                            W_{\ell}\left( \eta, s_{k}\left(\eta, x \right) \right) = 0 \ \text{for any }  1 \le \ell \le k + 1 \ \text{if } \eta\left(s_{k}\left(\eta, x \right) \right) = 0.
                        \end{dcases}
                    \end{align}
                In particular, if $W_{\ell}\left( \eta, s_{k}\left(\eta, x \right) \right) > 0$ for some $1 \le \ell \le k + 1$, then  $W_{\ell}\left( \eta, s_{k}\left(\eta, x \right) \right) = \ell$ for any $1 \le \ell \le k + 1$. Also, if $W_{\ell}\left( \eta, s_{k}\left(\eta, x \right) \right) < \ell$ for some $1 \le \ell \le k + 1$, then $W_{\ell}\left( \eta, s_{k}\left(\eta, x \right) \right) = 0$ for any $1 \le \ell \le k + 1$.
            \end{lemma}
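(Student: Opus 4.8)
The plan is to determine directly from the increments of $\xi_{k}(\eta,\cdot)$ which kind of site $z := s_{k}(\eta, x)$ is, and then to read off the carrier values at $z$ from Lemma \ref{lem:1_half} together with the identities \eqref{eq:car_l_seat} and \eqref{eq:car_n_seat}. Throughout I may assume $s_{k}(\eta, x) < \infty$ (otherwise there is nothing to prove), and I note $z \ge 1$ since $\xi_{k}(\eta, 0) = 0 \neq x$. First I would pin down $z$: because $\xi_{k}(\eta,\cdot)$ is nondecreasing with increments in $\{0,1\}$ and $z$ is by definition the first site where $\xi_{k}(\eta,\cdot)$ attains the value $x \ge 1$, one has $\xi_{k}(\eta, z) - \xi_{k}(\eta, z-1) = 1$; by the defining formula of $\xi_{k}$ this is equivalent to $\sum_{\ell=1}^{k}\sum_{\sigma \in \{\uparrow,\downarrow\}}\eta^{\sigma}_{\ell}(z) = 0$, i.e.\ $z$ is not a $(\ell,\sigma)$-seat for any $1 \le \ell \le k$. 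Since every site of $\N$ is either a record or a $(\ell,\sigma)$-seat for exactly one pair $(\ell,\sigma)$ (immediate from the construction of the carrier with seat numbers, as at most one seat changes state at each site), it follows that $z$ is either a record or a $(k+j,\sigma)$-seat for some $j \ge 1$ and some $\sigma \in \{\uparrow,\downarrow\}$.

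I would then treat these three possibilities in turn. If $z$ is a record, then $\eta(z) = 0$ (if $\eta(z) = 1$ the ball at $z$ would enter some seat, making $z$ a $(\ell,\uparrow)$-seat), and Lemma \ref{lem:1_half}(3) with \eqref{eq:car_n_seat} gives $\mathcal{W}_{m}(\eta, z) = 0$ for every $m$, hence $W_{\ell}(\eta, z) = \sum_{m=1}^{\ell}\mathcal{W}_{m}(\eta,z) = 0$ by \eqref{eq:car_l_seat}; this is the second line of \eqref{if:carrier}. If $z$ is a $(k+j,\uparrow)$-seat with $j \ge 1$, then \eqref{def:seatup} forces $\eta(z) = 1$, and Lemma \ref{lem:1_half}(1) with \eqref{eq:car_n_seat} gives $\mathcal{W}_{m}(\eta, z) = 1$ for all $1 \le m \le k+j$, in particular for all $1 \le m \le k+1$ since $j \ge 1$; summing via \eqref{eq:car_l_seat} yields $W_{\ell}(\eta, z) = \ell$ for $1 \le \ell \le k+1$, the first line of \eqref{if:carrier}. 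The remaining case, $z$ a $(k+j,\downarrow)$-seat with $j \ge 1$, is symmetric, using \eqref{def:seatdown} ($\eta(z) = 0$) and Lemma \ref{lem:1_half}(2) ($\mathcal{W}_{m}(\eta,z) = 0$ for $1 \le m \le k+1$, hence $W_{\ell}(\eta, z) = 0$). For the two ``in particular'' claims, I would simply invoke the dichotomy just proved: by \eqref{if:carrier}, $W_{\ell}(\eta,z)$ equals $\ell$ for every $1 \le \ell \le k+1$ when $\eta(z)=1$ and equals $0$ for every such $\ell$ when $\eta(z)=0$, so ``$W_{\ell}(\eta,z) > 0$ for some $1 \le \ell \le k+1$'' rules out the second alternative and forces the first, while ``$W_{\ell}(\eta,z) < \ell$ for some $1 \le \ell \le k+1$'' rules out the first and forces the second.

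I do not expect a genuine obstacle here: once Lemma \ref{lem:1_half} is in hand the argument is bookkeeping. The only point deserving care is the conclusion of the first step --- that the seat index at $z = s_{k}(\eta, x)$, when $z$ is a seat at all, is \emph{at least} $k+1$ rather than merely at most $k$ --- since this is precisely what pushes the conclusion up to the carrier $W_{k+1}$.
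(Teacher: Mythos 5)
Your argument is correct and is essentially the paper's proof written out in full: the paper's one-line observation that $r\left(\eta,s_{k}\left(\eta,x\right)\right)+\sum_{\ell\in\N}\sum_{\sigma}\eta^{\sigma}_{k+\ell}\left(s_{k}\left(\eta,x\right)\right)=1$ is exactly your first step (the site $s_{k}\left(\eta,x\right)$ is a record or a $(k+j,\sigma)$-seat with $j\ge 1$), and the subsequent case analysis via Lemma \ref{lem:1_half}, \eqref{eq:car_l_seat} and \eqref{eq:car_n_seat} is the same "direct consequence" the paper invokes. No discrepancies.
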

            \begin{proof}[Proof of Lemma \ref{lem:carrier_cap}]
                {First we} observe that { for any $1 \le m \le k$, $x \in \N$ and $\sigma \in \{\uparrow, \downarrow\}$, $\eta^{\sigma}_{m}\left(s_{k}\left(\eta, x \right) \right) = 0$. Thus by \eqref{def:record}, for any $x \in \N$ we obtain}
                    \begin{align}\label{eq:carrier_cap}
                        r\left(\eta,s_{k}\left(\eta, x \right) \right) + \sum_{\ell \in \N} \sum_{\sigma \in \{ \uparrow, \downarrow \}} \eta^{\sigma}_{k + \ell}\left(s_{k}\left(\eta, x \right) \right) = 1.
                    \end{align}
                {Now we show \eqref{if:carrier}. If $\eta\left(s_{k}\left(\eta, x \right) \right) = 1$, then from \eqref{eq:carrier_cap}, there exists some $m \in \N$ such that $\eta^{\uparrow}_{k + m}\left(s_{k}\left(\eta, x \right) \right) = 1$. Then, by \eqref{eq:car_n_seat} and Lemma \ref{lem:1_half} (1), for any $1 \le \ell \le k + m$ we have $W_{\ell}\left(\eta, s_{k}\left(\eta, x \right)\right) = \ell$. Next, if $\eta\left(s_{k}\left(\eta, x \right) \right) = 0$ and $r\left(\eta,s_{k}\left(\eta, x \right) \right) = 0$, then from \eqref{eq:carrier_cap}, there exists some $m \in \N$ such that $\eta^{\downarrow}_{k + m}\left(s_{k}\left(\eta, x \right) \right) = 1$. Then, by \eqref{eq:car_n_seat} and Lemma \ref{lem:1_half} (2), for any $1 \le \ell \le k + m$ we have $W_{\ell}\left(\eta, s_{k}\left(\eta, x \right)\right) = 0$. Finally, if $r\left(\eta,s_{k}\left(\eta, x \right) \right) = 1$, then by \eqref{eq:car_n_seat} and Lemma \ref{lem:1_half} (3), for any $\ell \in \N$ we have $W_{\ell}\left(\eta, s_{k}\left(\eta, x \right)\right) = 0$.}
                %Then, \eqref{if:carrier} is a direct consequences of \eqref{eq:car_l_seat}, \eqref{eq:car_n_seat} and Lemma \ref{lem:1_half}. 
            \end{proof}
            \begin{lemma}\label{lem:equi}
                For any $k \in \N$, $\ell \ge 2$ and $x \in \Z_{\ge 0}$, 
                    \begin{align}\label{equi:up2}
                        \eta^{\uparrow}_{k+\ell}\left( s_{k}\left(\eta, x + 1\right) \right) = 1 \ 
                        \text{if and only if} \ \begin{dcases} W_{k + \ell - 1}\left(\eta, s_{k}\left(\eta, x\right) \right) = k + \ell - 1, \\ \mathcal{W}_{k + \ell}\left(\eta, s_{k}\left(\eta, x\right) \right) = 0, \\ \eta\left( s_{k}\left(\eta, x + 1\right) \right) = 1, \end{dcases}
                    \end{align}
                and 
                    \begin{align}\label{equi:down2}
                        \eta^{\downarrow}_{k+\ell}\left( s_{k}\left(\eta, x+1\right) \right) = 1 \ 
                        \text{if and only if} \ \begin{dcases} W_{k + \ell - 1}\left(\eta, s_{k}\left(\eta, x\right) \right) = 0, \\ \mathcal{W}_{k + \ell}\left(\eta, s_{k}\left(\eta, x\right) \right) = 1, \\ \eta\left( s_{k}\left(\eta, x + 1\right) \right) = 0. \end{dcases}
                    \end{align}
                For the case $\ell = 1$, 
                    \begin{align}\label{equi:up}
                        \eta^{\uparrow}_{k+1}\left( s_{k}\left(\eta, x+1\right) \right) = 1 \ 
                        \text{if and only if} \ \begin{dcases} \mathcal{W}_{k + 1}\left(\eta, s_{k}\left(\eta, x\right) \right) = 0, \\ \eta\left( s_{k}\left(\eta, x + 1\right) \right) = 1, \end{dcases}
                    \end{align}
                and 
                    \begin{align}\label{equi:down}
                        \eta^{\downarrow}_{k+1}\left( s_{k}\left(\eta, x+1\right) \right) = 1 \ 
                        \text{if and only if} \ \begin{dcases} \mathcal{W}_{k + {1}}\left(\eta, s_{k}\left(\eta, x\right) \right) = 1, \\ \eta\left( s_{k}\left(\eta, x+1\right) \right) = 0. \end{dcases}
                    \end{align}
            \end{lemma}

            \begin{proof}[Proof of Lemma \ref{lem:equi}]
                We only prove \eqref{equi:up2} and then \eqref{equi:up}. \eqref{equi:down2} and \eqref{equi:down} can be proved in a similar way. 
                Since the {direction} $(\Leftarrow)$ of \eqref{equi:up2} is clear from Lemma \ref{lem:sums}, we will show the {direction} $(\Rightarrow)$ of \eqref{equi:up2}.
                
                We observe that from Lemma \ref{lem:sums}, for any $m, n \in \N$, $z \in \Z_{\ge 0}$ and $s_{m}\left(\eta,z\right) \le y \le s_{m}\left(\eta,z + 1\right) - 1$ we have 
                    \begin{align}\label{eq:lemequi1}
                        \mathcal{W}_{m + n}\left(\eta, s_{m}\left(\eta, z\right) \right) = \mathcal{W}_{m + n}\left(\eta, y \right).
                    \end{align}
                If $\eta^{\uparrow}_{k+\ell}\left( s_{k}\left({\eta},x+1\right) \right) = 1$, then we have $\eta\left( s_{k}\left({\eta},x+1\right) \right) = 1$, $\mathcal{W}_{k + \ell}\left(\eta, s_{k}\left(\eta,x + 1\right) - 1 \right) = 0$, and
                    \begin{align}\label{eq:lemequi2}
                        \mathcal{W}_{m}\left(\eta, s_{k}\left(\eta,x + 1\right) - 1 \right) = 1,
                    \end{align}
                for any $1 \le m \le k + \ell - 1$. For the case $\ell \ge 2$, from \eqref{eq:lemequi1} and \eqref{eq:lemequi2} we see that $W_{k + 1}\left(s_{k}\left(\eta,x\right)\right) > 0$, and thus from Lemma \ref{lem:carrier_cap}, we have  
                    \begin{align}
                        \eta^{\uparrow}_{k+\ell}\left( s_{k}\left(\eta,x+1\right) \right) = 1 \ 
                        &\text{implies} \ \begin{dcases} W_{k + \ell - 1}\left(\eta, s_{k}\left(\eta, x + 1\right) - 1 \right) = k + \ell - 1, \\ \mathcal{W}_{k + \ell}\left(\eta, s_{k}\left(\eta, x+1\right) - 1 \right) = 0, \\ \eta\left( s_{k}\left(\eta, x+1\right) \right) = 1, \end{dcases} 
                    \end{align}
                and
                    \begin{align}
                        &\begin{dcases} W_{k + \ell - 1}\left(\eta, s_{k}\left(\eta, x + 1\right) - 1 \right) = k + \ell - 1, \\ \mathcal{W}_{k + \ell}\left(\eta, s_{k}\left(\eta, x+1\right) - 1 \right) = 0, \end{dcases} \\ 
                        & \quad  \text{if and only if} \ \begin{dcases} W_{k + \ell - 1}\left(\eta, s_{k}\left(\eta, x\right) \right) = k + \ell - 1, \\ \mathcal{W}_{k + \ell}\left(\eta, s_{k}\left(\eta, x\right) \right) = 0. \end{dcases}
                    \end{align}
                Hence \eqref{equi:up2} is proved. 
                
                Next we will show \eqref{equi:up}. Since the direction $(\Rightarrow)$ is clear from \eqref{eq:lemequi1}, we check the the direction $(\Leftarrow)$. From the assumption $\mathcal{W}_{k+1}\left(\eta, s_{k}\left(\eta,x \right)\right) = 0$ and Lemma \ref{lem:carrier_cap}, we see that $W_{k + 1}\left(\eta, s_{k}\left(\eta, x\right) \right) = 0$. Then, from $\eta\left(s_{k}\left(\eta, x + 1\right)\right) = 1$, Lemmas \ref{lem:sums} and \ref{lem:carrier_cap}, we get $\mathcal{W}_{k+1}\left( \eta, s_{k}\left(\eta, x+1\right) - 1 \right) = 0$ and $W_{k}\left(\eta, s_{k}\left(\eta, x+1\right) - 1 \right) = k$. Hence we have  
                $\eta^{\uparrow}_{k+1}\left(s_{k}\left(\eta, x+1\right)\right) = 1$ and \eqref{equi:up} is proved. 
            \end{proof}
            
            \begin{proof}[Proof of Proposition \ref{prop:seat_semig}]
            
                We use induction on the space variable $x \in \N$. First we consider the case $x = 1$. Observe that in this case either $\eta^{\uparrow}_{k+1}\left( s_{k}\left(\eta,1\right) \right) = 1$ or $r\left(\eta, s_{k}\left(\eta,1\right) \right) = 1$ holds. On the other hand, we have 
                    \begin{align}
                        \mathcal{W}_{1}\left( \Psi_{k}\left( \eta \right) , 1 \right) &=   \Psi_{k}\left( \eta \right)\left( 1 \right) = \eta\left( s_{k}\left({\eta},1\right) \right).
                    \end{align}
                Hence, \eqref{eq:seat_semig} holds for $x = 1$. 
                
                From now on we assume that up to $x$, \eqref{eq:seat_semig} holds for any $k, \ell \in \N$. Then, from \eqref{eq:car_n_seat} and Lemma \ref{lem:sums}, for any $k, \ell \in \N$ and $0 \le y \le x$ we obtain
                    \begin{align}
                        \mathcal{W}_{\ell}\left( \Psi_{k}\left( \eta \right) , y \right) &= \sum_{z = 1}^{y} \left( \Psi_{k}\left( \eta \right)^{\uparrow}_{\ell}\left( z \right) - \Psi_{k}\left( \eta \right)^{\downarrow}_{\ell}\left( z \right) \right) & & \text{{(by \eqref{eq:car_n_seat})}} \\
                        &= \sum_{z = 1}^{y} \left( \eta^{\uparrow}_{k+\ell}\left( s_{k}\left({\eta},z\right) \right) - \eta^{\downarrow}_{k+\ell}\left( s_{k}\left({\eta},z\right) \right) \right) & & \text{{(by the induction hypothesis)}} \\
                        &= \sum_{z = 1}^{s_{k}\left({\eta},y\right)} \left( \eta^{\uparrow}_{k+\ell}\left( z \right) - \eta^{\downarrow}_{k+\ell}\left( z \right) \right), & & \text{{(by Lemma \ref{lem:sums})}} \label{eq:seat_semig_1}
                    \end{align}
                and thus from \eqref{eq:car_l_seat} we get
                    \begin{align}
                        W_{\ell}\left( \Psi_{k}\left( \eta \right) , y \right) &= \sum_{m = 1}^{\ell} \sum_{z = 1}^{y} \left( \Psi_{m}\left( \eta \right)^{\uparrow}_{\ell}\left( z \right) - \Psi_{m}\left( \eta \right)^{\downarrow}_{\ell}\left( z \right) \right) & & \text{{(by \eqref{eq:car_n_seat})}} \\
                        &= \sum_{m = 1}^{\ell} \sum_{z = 1}^{s_{k}\left({\eta},y\right)} \left( \eta^{\uparrow}_{k+m}\left( z \right) - \eta^{\downarrow}_{k+m}\left( z \right) \right) & & \text{{(by \eqref{eq:seat_semig_1})}} \\
                        &= W_{k + \ell}\left( \eta, s_{k}\left({\eta},y\right) \right) - W_{k}\left( \eta, s_{k}\left({\eta},y\right) \right) & & \text{{(by \eqref{eq:car_l_seat})}} \label{eq:seat_semig_2}.
                    \end{align}
                Therefore, from Lemmas \ref{lem:carrier_cap} and \ref{lem:equi}, for $\sigma = \uparrow$ and $\ell \ge 2$ we have
                    \begin{align}
                        &\Psi_{k}\left( \eta \right)^{\uparrow}_{\ell}\left( x + 1 \right) = 1 \ \\ &\text{if and only if} \ { \begin{dcases}
                            W_{\ell - 1}\left( \Psi_{k}\left( \eta \right) , x \right) = \ell - 1, \\
                            \mathcal{W}_{\ell}\left( \Psi_{k}\left( \eta \right) , x \right) = 0, \\
                            \Psi_{k}\left( \eta \right)(x+1) = 1,
                        \end{dcases}} \\
                        &\text{if and only if} \ \begin{dcases}
                            W_{\ell - 1}\left( \Psi_{k}\left( \eta \right) , x \right) = \ell - 1, \\
                            W_{\ell}\left( \Psi_{k}\left( \eta \right) , x \right) = \ell - 1, \\
                            \Psi_{k}\left( \eta \right)(x+1) = 1,
                        \end{dcases} & & \text{{(by \eqref{eq:car_l_seat})}} \\
                        &\text{if and only if} \ \begin{dcases}
                            W_{k + \ell - 1}\left( \eta , s_{k}\left({\eta},x\right) \right) - W_{k}\left( \eta , s_{k}\left({\eta},x\right) \right) = \ell - 1, \\
                            W_{k + \ell}\left( \eta , s_{k}\left({\eta},x\right) \right) - W_{k}\left( \eta , s_{k}\left({\eta},x\right) \right) = \ell - 1, \\
                            \eta\left(s_{k}\left({\eta},x + 1\right)\right) = 1,
                        \end{dcases} & & \text{{(by \eqref{eq:seat_semig_2})}} \\
                        &\text{if and only if} \ \begin{dcases}
                            W_{k + \ell - 1}\left( \eta , s_{k}\left({\eta},x\right) \right) = k + \ell - 1, \\
                            W_{k + \ell}\left( \eta , s_{k}\left({\eta},x\right) \right) = k + \ell - 1, \\
                            \eta\left(s_{k}\left({\eta},x + 1\right)\right) = 1,
                        \end{dcases} & & \text{{(by Lemma \ref{lem:carrier_cap})}} \\
                        &\text{if and only if} \ \eta^{\uparrow}_{k+\ell}\left( s_{k}\left({\eta},x + 1\right) \right) = 1. & & \text{{(by Lemma \ref{lem:equi})}}
                    \end{align}
                For the case $\ell = 1$, we obtain
                    \begin{align}
                        &\Psi_{k}\left( \eta \right)^{\uparrow}_{1}\left( x + 1 \right) = 1 \ \\ 
                        &\text{if and only if} \ \begin{dcases}
                            {W}_{1}\left( \Psi_{k}\left( \eta \right) , x \right) = 0, \\
                            \Psi_{k}\left( \eta \right)(x+1) = 1,
                        \end{dcases} \\
                        &{\text{if and only if} \ \begin{dcases}
                            W_{k+1}\left( \eta , s_{k}\left(\eta,x\right) \right) - W_{k}\left( \eta , s_{k}\left(\eta,x\right) \right) = 0, \\
                            \eta\left(s_{k}\left({\eta},x + 1\right)\right) = 1,
                        \end{dcases}} & & \text{{(by \eqref{eq:seat_semig_2})}} \\
                        &\text{if and only if} \ \begin{dcases}
                            \mathcal{W}_{k + 1}\left( \eta , s_{k}\left({\eta},x\right) \right) = 0, \\
                            \eta\left(s_{k}\left({\eta},x + 1\right)\right) = 1,
                        \end{dcases} & & \text{{(by \eqref{eq:car_l_seat})}}  \\
                        &\text{if and only if} \ \eta^{\uparrow}_{k+1}\left( s_{k}\left({\eta},x + 1\right) \right) = 1.
                    \end{align}
                The case $\sigma = \downarrow$ can be proved in a similar way. 

                {Finally, by \eqref{eq:seat_semig} and \eqref{eq:carrier_cap}, for any $k \in \N$ and $x \in \N$, we get 
                    \begin{align}
                        r\left(\eta,s_{k}\left(\eta, x \right) \right) + \sum_{\ell \in \N} \sum_{\sigma \in \{ \uparrow, \downarrow \}} \Psi_{k}\left(\eta\right)^{\sigma}_{\ell}\left(x \right) = 1.
                    \end{align} 
                Then by \eqref{def:record}, we have \eqref{eq:seat_semig_rec}.}
            \end{proof}
		
		    \begin{proof}[Proof of Proposition \ref{prop:semig}]
                
                We observe that from Proposition \ref{prop:seat_semig}, if $y^{{*}} = s_{k}(\Psi_{\ell}(\eta), x)$, for some $k \in \N \cup \{ \infty \}$, $\ell \in \N$ and $x \in \N$, then $s_{\ell}(\eta, y^{{*}}) = s_{k+\ell}(\eta, x)$. Hence, we obtain
                    \begin{align}
                        \Psi_{k}\left( \Psi_{\ell}\left( \eta \right) \right)\left( x \right) 
                        &= \Psi_{\ell}\left( \eta \right)\left(y^{{*}} \right) \\
                        &= \eta\left( s_{\ell}\left( \eta , y^{{*}} \right) \right) \\
                        &= \eta\left( s_{k + \ell}\left( \eta, x \right) \right) \\
                        &= \Psi_{k+\ell}\left(\eta\right)(x).
                    \end{align}
                
            \end{proof}
            \begin{proof}[Proof of Proposition \ref{prop:shift}]
                    
                From Proposition \ref{prop:seat_semig} and Lemma \ref{lem:sums}, for any $k, \ell \in \N$ and $i \in \Z_{\ge 0}$ we have 
                    \begin{align}
                        \zeta_{k}\left( \Psi_{\ell}\left(\eta\right) , i \right) 
                        &= \sum_{y = s_{k}\left( \Psi_{\ell}\left(\eta\right) , i \right) + 1}^{s_{k}\left( \Psi_{\ell}\left(\eta\right) , i +1 \right)} \left( \Psi_{\ell}\left(\eta\right)^{\uparrow}_{k}(y) - \Psi_{\ell}\left(\eta\right)^{\uparrow}_{k+1}(y) \right) \\
                        &= \left( \sum_{y = s_{k}\left( \Psi_{\ell}\left(\eta\right) , i \right)}^{s_{k}\left( \Psi_{\ell}\left(\eta\right) , i +1 \right)} \Psi_{\ell}\left(\eta\right)^{\uparrow}_{k}(y) \right) - \Psi_{\ell}\left(\eta\right)^{\uparrow}_{k+1}\left(s_{k}\left( \Psi_{\ell}\left(\eta\right) , i +1 \right)\right) \\
                        &= \left( \sum_{y = s_{k}\left( \Psi_{\ell}\left(\eta\right) , i \right)}^{s_{k}\left( \Psi_{\ell}\left(\eta\right) , i +1 \right)} \eta^{\uparrow}_{k + \ell}\left(s_{\ell}\left( \eta , y \right)\right) \right) - \eta^{\uparrow}_{k + \ell + 1}\left(s_{\ell}\left( \eta , s_{k}\left( \Psi_{\ell}\left(\eta\right) , i +1 \right)\right)\right)  \\ 
                        &= \left( \sum_{y = s_{k + \ell}\left( \eta , i \right)}^{s_{k + \ell}\left( \eta , i+1 \right)} \eta^{\uparrow}_{k + \ell}\left(y\right) \right) - \eta^{\uparrow}_{k + \ell + 1}\left(s_{k + \ell}\left( \eta, i + 1 \right)\right) \\
                        &= \zeta_{k+\ell}\left(\eta, i\right).
                    \end{align}
            \end{proof}

    From Theorem \ref{thm:linear} and Proposition \ref{prop:semig}, we see that $\Psi_{k}$ and $T_{\ell}$ are both shift operators of different variables, but in general, they do not commute. We can obtain the following formulae only in the special cases $T_{\ell}, \ell = 1, \infty$.
                \begin{proposition}\label{prop:shift_com}
                    For any $\eta \in \Omega_{r}$ and $k \in \N$, we have
                        \begin{align}
                            \Psi_{k}\left( T_{1} \eta \right) &= T_{1} \Psi_{k}\left( \eta \right) \\
                            \Psi_{k}\left( T_{\infty} \eta \right) &= T_{\infty} \Psi_{k}\left(T_{k} \eta \right) \label{eq:shift_com}.
                        \end{align}
                \end{proposition}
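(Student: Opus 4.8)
The plan is to prove both identities by pushing them through the linearizing bijection $\zeta\colon\Omega_{r}\to\bar\Omega$ recalled in Remark \ref{rem:initial}. Since $\zeta$ is injective on $\Omega_{r}$ and $\bar\Omega\subset\Z_{\ge0}^{\N\times\Z_{\ge0}}$, it suffices to check that both sides of each identity lie in $\Omega_{r}$ and that $\zeta_{m}(\,\cdot\,,i)$ agrees on them for all $m\in\N$ and $i\in\Z_{\ge0}$. The latter is then a purely mechanical matter of alternately applying Theorem \ref{thm:linear}, which shifts the index $i$ by $\min\{m,\ell\}$ under $T_{\ell}$, and Proposition \ref{prop:shift}, which raises the subscript $m$ to $m+\ell$ under $\Psi_{\ell}$.

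First I would dispose of the membership in $\Omega_{r}$. That $\Psi_{k}$ preserves $\Omega_{r}$ follows from Proposition \ref{prop:seat_semig}: if $y\in\N$ is a record of $\eta$, then all $\eta^{\sigma}_{\ell}(y)$ vanish, so the expression $\xi_{k}(\eta,x)=\sum_{y=1}^{x}\bigl(r(y)+\sum_{\ell\ge k+1}\sum_{\sigma}\eta^{\sigma}_{\ell}(y)\bigr)$ increases by exactly $1$ at $y$; hence $y=s_{k}(\eta,\xi_{k}(\eta,y))$, so $r(\Psi_{k}(\eta),\xi_{k}(\eta,y))=r(\eta,y)=1$, and distinct records of $\eta$ give distinct records of $\Psi_{k}(\eta)$. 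Thus $\eta\in\Omega_{r}$ forces $\Psi_{k}(\eta)\in\Omega_{r}$. That $T_{\ell}$ preserves $\Omega_{r}$ for every $\ell\in\N\cup\{\infty\}$ is the fact underlying the well-posedness of the initial value problem discussed after Theorem \ref{thm:linear}; it can also be read off from the partial-sum characterization of records in Remark \ref{rem:record}. Granting these, all of $\Psi_{k}(\eta)$, $T_{1}\eta$, $T_{k}\eta$, $T_{\infty}\eta$, $\Psi_{k}(T_{1}\eta)$, $T_{1}\Psi_{k}(\eta)$, $\Psi_{k}(T_{k}\eta)$, $\Psi_{k}(T_{\infty}\eta)$, $T_{\infty}\Psi_{k}(T_{k}\eta)$ belong to $\Omega_{r}$, so Theorem \ref{thm:linear} (whose index hypothesis is automatic on $\Omega_{r}$) and Proposition \ref{prop:shift} apply to each.

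For the first identity, fix $m\in\N$ and $i\in\Z_{\ge0}$. On the left, Proposition \ref{prop:shift} gives $\zeta_{m}(\Psi_{k}(T_{1}\eta),i)=\zeta_{m+k}(T_{1}\eta,i)$, and Theorem \ref{thm:linear} with $\ell=1$ (note $m+k\ge2$, so $\min\{m+k,1\}=1$) turns this into $\zeta_{m+k}(\eta,i-1)$. On the right, Theorem \ref{thm:linear} with $\ell=1$ gives $\zeta_{m}(T_{1}\Psi_{k}(\eta),i)=\zeta_{m}(\Psi_{k}(\eta),i-1)$, which Proposition \ref{prop:shift} rewrites as $\zeta_{m+k}(\eta,i-1)$. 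The two sides agree for all $m,i$, so $\Psi_{k}(T_{1}\eta)=T_{1}\Psi_{k}(\eta)$ by injectivity of $\zeta$. For the second identity the bookkeeping is the same with $\ell=\infty$: the left side equals $\zeta_{m+k}(T_{\infty}\eta,i)=\zeta_{m+k}(\eta,i-(m+k))$, while the right side equals $\zeta_{m}(\Psi_{k}(T_{k}\eta),i-m)=\zeta_{m+k}(T_{k}\eta,i-m)=\zeta_{m+k}(\eta,i-m-k)$, the last step using Theorem \ref{thm:linear} with $\ell=k$ together with $\min\{m+k,k\}=k$. Again the two sides coincide for all $m,i$, and injectivity of $\zeta$ yields $\Psi_{k}(T_{\infty}\eta)=T_{\infty}\Psi_{k}(T_{k}\eta)$.

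The only step that is not a mechanical substitution is the invariance $T_{\ell}(\Omega_{r})\subseteq\Omega_{r}$ (and, more easily, $\Psi_{k}(\Omega_{r})\subseteq\Omega_{r}$), which is what places all the configurations above in the domain on which $\zeta$ is injective; once that is secured, the two identities reduce exactly to the short index computations just given, and neither the seat-number level relation of Proposition \ref{prop:seat_semig} nor the semigroup property is needed beyond their consequence Proposition \ref{prop:shift}.
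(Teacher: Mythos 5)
Your proposal is correct and follows essentially the same route as the paper: compute $\zeta_{m}(\cdot,i)$ of both sides by alternating Theorem \ref{thm:linear} and Proposition \ref{prop:shift}, observe that the resulting index shifts agree for $\ell=1$ and $\ell=\infty$, and conclude by the bijectivity of $\zeta$ on $\Omega_{r}$. The only difference is cosmetic — the paper runs the computation for general $\ell$ and notes that $(m+k)\wedge\ell-m\wedge\ell$ is $m$-independent exactly when $\ell\in\{1,\infty\}$, while you treat the two cases separately and spell out the $\Omega_{r}$-invariance that the paper leaves implicit.
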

                \begin{proof}[Proof of Proposition \ref{prop:shift_com}]
                    {By Theorem \ref{thm:linear} and  Proposition \ref{prop:shift}}, for any $k, \ell, m \in \N$ and $ i \in \Z_{\ge 0}$, we have
                        \begin{align}
                            \zeta_{m}\left( \Psi_{k}\left( T_{\ell} \eta \right), i \right) 
                            &= {\zeta_{m+k}\left(  T_{\ell} \eta , i \right)} & & \text{{(by Proposition \ref{prop:shift})}} \\
                            &= \zeta_{m + k}\left( \eta, i - \left(m + k\right)\wedge \ell \right), & & \text{{(by Theorem \ref{thm:linear})}}
                        \end{align}
                    {and} 
                        \begin{align}
                            \zeta_{m}\left(  T_{\ell} \Psi_{k}\left( \eta \right), i \right) 
                            &= {\zeta_{m}\left( \Psi_{k}\left( \eta \right), i - m \wedge \ell \right)} & & \text{{(by Theorem \ref{thm:linear})}} \\
                            &= \zeta_{m + k}\left( \eta, i - m \wedge \ell \right). & & \text{{(by Proposition \ref{prop:shift})}}
                        \end{align}
                    Then, we see that
                        \begin{align}
                            \zeta_{m}\left( \Psi_{k}\left( T_{\ell} \eta \right), i \right) &= \zeta_{m + k}\left( \eta, i - \left(m + k\right)\wedge \ell \right) & & \\
                            &= \zeta_{m + k}\left( T_{\left(m + k\right)\wedge \ell - m \wedge \ell }\eta, i - m \wedge \ell \right) & & \text{{(by Theorem \ref{thm:linear})}} \\
                            &= {\zeta_{m}\left( \Psi_{k}\left( T_{\left(m + k\right)\wedge \ell - m \wedge \ell }\eta \right), i - m \wedge \ell \right)}  & & \text{{(by Proposition \ref{prop:shift})}} \\
                            &= \zeta_{m}\left(T_{\ell} \Psi_{k}\left( T_{\left(m + k\right)\wedge \ell - m \wedge \ell }\eta \right), i \right), & & \text{{(by Theorem \ref{thm:linear})}} 
                        \end{align}
                    and that when $\ell = 1, \infty$, the quantity $\left(m + k\right)\wedge \ell - m \wedge \ell$ does not depend on $m \in \N$. For such cases, from the bijectivity of $\zeta$, we have \eqref{eq:shift_com}.
                \end{proof}
            
\section{10-elimination}\label{sec:10}

 {The 10-elimination was introduced in \cite{MIT} to solve the initial
value problem of the periodic BBS, and its relation to rigged
configurations was studied in \cite{KS}. In this section, we recall a
half-line version of this procedure. See also \cite{KNTW} for a related
formulation of the 10-elimination on finite-particle configurations on
the half-line. } Throughout this section, 
{the configuration space is 
    \begin{align}
        \Omega_{< \infty} := \left\{\eta \in \{0,1\}^{\Z_{\ge 0}} \ ; \ \eta(0) = 0, \ \sum_{x \in \N} \eta(x) < \infty \right\}.
    \end{align}}
We will often identify $\eta \in \{0,1\}^{\Z_{\ge 0}}$ with the one-sided sequence $\eta = \eta(0) \eta(1) \eta(2) \cdots$. 
    
    %but we often regard $\eta \in \Omega$ as an element in $\left\{0,1 \right\}^{\Z_{{\ge 0}}}$ by setting $\eta({0}) = 0$. 
    
    \subsection{Definition of the \texorpdfstring{$10$}{10}-elimination}

{We first define the 10-elimination. We say that an interval
$[x+1,x+2j]$, $j\in\mathbb N$, is a maximal 10-block of $\eta$ if for any $1\le y \le j$, 
\begin{align}
    \eta(x + 2y-1) = 1, \quad \eta(x + 2y) = 0
\end{align}
and
\begin{align}
    (\eta(x),\eta(x+1)) \neq (1,0), \quad (\eta(x+2j),\eta(x+2j+1) ) \neq (1,0).
\end{align}
In other words, a maximal 10-block is a maximal consecutive string of
10-pairs. If $[x+1,x+2j]$, $j\in\mathbb N$, is a maximal 10-block of $\eta$, then we will write 
    \begin{align}
        \eta(x+1)\eta(x+2)\dots \eta(x+2j) = (10)^{j}.
    \end{align}
These maximal 10-blocks are disjoint and uniquely determined
by $\eta$. Equivalently, $\eta$ can be uniquely written in the form
\begin{align}
    \eta
  =
  \eta'(0)(10)^{j(0)}\eta'(1)(10)^{j(1)}\eta'(2)(10)^{j(2)}\cdots,
\end{align}
where $\eta'(0) := 0$, $\eta'(x)_
\in\{0,1\}$, $j(x) = j(\eta,x) \in\mathbb Z_{\ge0}$ for $x\in\mathbb Z_{\ge0}$, and the blocks
$(10)^{j(x)}$ with $j(x)\ge1$ are precisely the maximal 10-blocks of
$\eta$. Here $j(x)=0$ means that no 10-pair is inserted between $\eta'(x)$ and
$\eta'(x+1)$. We note that if $\eta'(x)\eta'(x+1)=10$, then $j(x)$ must satisfy $j(x) \ge 1$.

The 10-elimination is a map $\Phi_{1} : \Omega_{<\infty} \to \Omega_{<\infty}$, $\eta \mapsto \Phi_1(\eta)$, and 10-eliminated configuration $\Phi_1(\eta)$ is obtained by
removing all maximal 10-blocks, or equivalently by removing all neighboring
10-pairs simultaneously. In the above notation, this is given by
\begin{align}
    \Phi_1(\eta) = \eta'(0)\eta'(1)\eta'(2) \cdots.
\end{align}
Then, the 10-rigging  $J^{10}_{1}(\eta) = \left( J^{10}_{1}(\eta,x) \right)_{x \in \Z_{\ge 0}}$, $J^{10}_{1}(\eta,x) \in \Z_{\ge 0}$ is defined as
\begin{align}
     J^{10}_{1}(\eta,x) :=
  \begin{dcases}
    j(\eta,x) \ & \ \text{if } j(\eta,x) \ge 1, \ (\eta'(x),\eta'(x+1)) \in\{(0,0),(0,1),(1,1)\},\\
    j(\eta,x) - 1 \ & \ \text{if } j(\eta,x) \ge 2, \  (\eta'(x),\eta'(x+1)) =(1,0), \\
    0 \ & \ \text{otherwise.}
  \end{dcases}
\end{align}
Figure \ref{fig:exof10} illustrates the decomposition of $\eta$, the
10-elimination, and the corresponding 10-rigging.}

\begin{figure}[t]
\centering
\begin{tikzpicture}[
  x=0.55cm,
  y=0.60cm,
  every node/.style={font=\scriptsize}
]

% ------------------------------------------------------------
% Original configuration
% ------------------------------------------------------------
\node[left] at (-1,0) {$x$};
\foreach \x in {0,...,18}{
  \node at (\x,0) {\x};
}

\node[left] at (-1,-1) {$\eta(x)$};
\foreach \x/\b in {
0/0,1/1,2/1,3/0,4/0,5/1,6/1,7/1,8/0,
9/1,10/0,11/1,12/1,13/0,14/0,15/0,16/1,17/0,18/0}{
  \node at (\x,-1) {$\b$};
}
\node at (19,-1) {$\cdots$};

% maximal 10-blocks
\draw[red!70!black, rounded corners] (1.55,-1.35) rectangle (3.45,-0.65);
\node[red!70!black] at (2.5,-1.85) {$(10)^1$};

\draw[red!70!black, rounded corners] (6.55,-1.35) rectangle (10.45,-0.65);
\node[red!70!black] at (8.5,-1.85) {$(10)^2$};

\draw[red!70!black, rounded corners] (11.55,-1.35) rectangle (13.45,-0.65);
\node[red!70!black] at (12.5,-1.85) {$(10)^1$};

\draw[red!70!black, rounded corners] (15.55,-1.35) rectangle (17.45,-0.65);
\node[red!70!black] at (16.5,-1.85) {$(10)^1$};

% remaining symbols, only marked by blue boxes
\foreach \x in {0,1,4,5,6,11,14,15,18}{
  \draw[blue!70!black, rounded corners] (\x-0.32,-1.33) rectangle (\x+0.32,-0.67);
}
%\node[blue!70!black] at (9,-2.8) {blue boxes indicate the remaining symbols};

% ------------------------------------------------------------
% Phi_1 row
% ------------------------------------------------------------
\begin{scope}[shift={(0,-3)}, x=1.10cm, y=0.65cm]

\node[left] at (-1,0) {$x$};
\foreach \x in {0,...,8}{
  \node at (\x,0) {$\x$};
}

\node[left] at (-1,-1) {$\eta'(x)$};
\foreach \x in {0,...,8}{
  \node at (\x,-1) {$\eta'(\x)$};
}
\node at (9,-1) {$\cdots$};

\node[left] at (-1,-2) {$\Phi_1(\eta)(x)$};
\foreach \x/\b in {0/0,1/1,2/0,3/1,4/1,5/1,6/0,7/0,8/0}{
  \node at (\x,-2) {$\b$};
}
\node at (9,-2) {$\cdots$};

\node[left] at (-1,-3) {$J^{10}_1(\eta,x)$};
\node at (0,-3) {$0$};
\node at (1,-3) {$0$};
\node at (2,-3) {$0$};
\node at (3,-3) {$0$};
\node at (4,-3) {$2$};
\node at (5,-3) {$0$};
\node at (6,-3) {$0$};
\node at (7,-3) {$1$};
\node at (8,-3) {$0$};
\node at (9,-3) {$\cdots$};

\end{scope}

\end{tikzpicture}
\caption{One step of the 10-elimination and the corresponding 10-rigging.
The red boxes indicate the maximal 10-blocks removed by $\Phi_1$, and the
blue boxes indicate the remaining symbols. These remaining symbols are
denoted by $X(i)$ and form $\Phi_1(\eta)$. In this example, the only
non-zero riggings are $J^{10}_1(\eta,4)=2$ and
$J^{10}_1(\eta,7)=1$.}
\label{fig:exof10}
\end{figure}

{We then repeat the same construction. First, we apply the above procedure
to $\eta$ and obtain $\Phi_1(\eta)$ and $J^{10}_1(\eta)$. Next, we regard
$\Phi_1(\eta)$ as a new configuration and apply the same procedure to it.
Then we define 
    \begin{align}
        \Phi_2(\eta):=\Phi_1(\Phi_1(\eta)),
  \quad
  J^{10}_2(\eta):=J^{10}_1(\Phi_1(\eta)).
    \end{align}
Continuing in this way, once $\Phi_k(\eta)$ has been constructed, we
apply the 10-elimination to $\Phi_k(\eta)$ and define
    \begin{align}
        \Phi_{k+1}(\eta):=\Phi_1(\Phi_k(\eta)),
  \qquad
  J^{10}_{k+1}(\eta):=J^{10}_1(\Phi_k(\eta)).
    \end{align}
Thus we obtain the sequence 
$(\Phi_k(\eta))_{k\in\mathbb N}$ and the corresponding 10-riggings
$(J^{10}_k(\eta))_{k\in\mathbb N}$. }
Note that since $\eta \in \Omega_{< \infty}$, $\Phi_{k}\left(\eta\right) = 000\dots$ for sufficiently large $k$.

        It is known that BBS($\ell$) can be linearized via the $10$-elimination as follows :  
            \begin{theorem}[Theorem 2 in \cite{MIT}, Theorem 22 in \cite{KNTW}]
                Suppose that $\eta \in \Omega_{<\infty}$. Then for any $k \in \N$, $\ell \in \N \cup \{\infty\}$ and $i \in \Z_{\ge 0}$, we have
                    \begin{align}
                        J^{10}_{k}\left(T_{\ell}\eta, i\right) = J^{10}_{k}\left(\eta, i - k \wedge \ell \right)
                    \end{align}
                with convention $J^{10}_{k}\left(\eta, i\right) = {0}$ for $i < 0$.
            \end{theorem}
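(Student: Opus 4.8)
The plan is to derive this statement from the analogous linearization in the seat-number coordinates, Theorem \ref{thm:linear}, together with the dictionary between the $10$-elimination and the $k$-skip map established in Section \ref{sec:10} (Theorem \ref{thm:10=seat}). The two pieces fit together as follows. On $\Omega_{<\infty}$ the $k$-fold $10$-elimination $\Phi_{k}$ agrees with the $k$-skip map $\Psi_{k}$, and the $10$-rigging $J^{10}_{k}(\eta,\cdot)$ is the soliton-count data $\zeta_{k}(\eta,\cdot)$ repackaged: there is an order-preserving bijection $\kappa_{\eta}$ between the sites $i$ with $J^{10}_{k}(\eta,i)\neq\emptyset$ and the indices $j$ with $\zeta_{k}(\eta,j)>0$, under which $J^{10}_{k}(\eta,i)=\{1,\dots,\zeta_{k}(\eta,\kappa_{\eta}(i))\}$ (with the convention $\{1,\dots,0\}=\emptyset$). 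This is precisely the content of Theorem \ref{thm:10=seat}, combined with $\zeta_{1}(\Psi_{k-1}(\eta),i)=\zeta_{k}(\eta,i)$ from Proposition \ref{prop:shift} to pass from level $1$ to level $k$, and I would simply invoke it.

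Granting the dictionary, note first that $\Omega_{<\infty}\subset\Omega_{r}$ — a finite ball configuration has only finitely many solitons, hence every $s_{k}(\eta,\cdot)$ is finite — so Theorem \ref{thm:linear} applies and gives $\zeta_{k}(T_{\ell}\eta,j)=\zeta_{k}(\eta,j-k\wedge\ell)$ for all $k\in\N$, $\ell\in\N\cup\{\infty\}$, $j\in\Z_{\ge 0}$, with $\zeta_{k}(\eta,j)=0$ for $j<0$. The theorem then amounts to the assertion that the reindexing $\kappa$ is equivariant for the dynamics, so that the level-$k$ rigging of $T_{\ell}\eta$, transported into $\zeta_{k}$-data via $\kappa_{T_{\ell}\eta}$, is exactly the $\eta$-data transported via $\kappa_{\eta}$ and shifted by $k\wedge\ell$; combining this equivariance with the displayed identity for $\zeta$ yields $J^{10}_{k}(T_{\ell}\eta,i)=J^{10}_{k}(\eta,i-k\wedge\ell)$ at once. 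For the two extreme values $\ell=1$ and $\ell=\infty$ the equivariance of $\kappa$ can be bypassed altogether using Proposition \ref{prop:shift_com}, which gives $\Phi_{k}(T_{1}\eta)=T_{1}\Phi_{k}(\eta)$ and $\Phi_{k}(T_{\infty}\eta)=T_{\infty}\Phi_{k}(T_{k}\eta)$; but Theorem \ref{thm:linear} already supplies all $\ell$ at the level of $\zeta$, so no genuine case distinction is forced.

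The main obstacle is everything hidden in ``the dictionary and its equivariance''. The rigging $J^{10}_{k}(\eta,\cdot)$ is indexed by \emph{sites}, is moreover read off on the once-evolved configuration $T_{\infty}\eta$, and carries the correction distinguishing $XY=10$ (which gives $X^{(m-1)}$) from $XY\in\{00,01,11\}$ (which gives $X^{(m)}$) — itself a carrier effect — whereas $\zeta_{k}(\eta,\cdot)$ is indexed intrinsically by the slots $[s_{k}(\eta,j)+1,s_{k}(\eta,j+1)]$, and the two decompositions do not match in a naive way (a slot may, for instance, carry the tail of a strictly larger soliton). So one has to (a) establish the dictionary of Theorem \ref{thm:10=seat}, which is itself the substantive combinatorial work — tracking how maximal $(10)^{m}$-blocks sit inside the seat structure, with the boundary and edge cases near the first nonzero site — and (b) show that the recording site of a given block moves, under one application of $T_{\ell}$, in exactly the way that translates its slot index by $k\wedge\ell$, with no blocks created, merged, split or destroyed. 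Here the finiteness $\eta\in\Omega_{<\infty}$, hence the infinitely many trailing records that make every $s_{k}(\eta,\cdot)$ finite, is what keeps the parsing unambiguous. Once (a) and (b) are in hand the theorem is formal, resting on Theorem \ref{thm:linear} and Propositions \ref{prop:semig}, \ref{prop:shift}, \ref{prop:shift_com}. An alternative, self-contained route avoids $\zeta$ entirely and argues by induction on $k$, following a maximal $(10)^{m}$-block through one step of $T_{\ell}$ by hand; this is workable but combinatorially heavier, and I would use it only as a fallback if pinning down $\kappa_{\eta}$ turned out to be cumbersome.
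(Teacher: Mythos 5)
This theorem is not proved in the paper at all: it is quoted verbatim from \cite{MIT} and \cite{KNTW}, whose original arguments are direct combinatorial/crystal-theoretic analyses of the $10$-elimination. Your route --- deducing it from Theorem \ref{thm:linear} together with the dictionary of Theorem \ref{thm:10=seat} --- is therefore genuinely different, and within the logical structure of this paper it is sound and non-circular: the paper proves Theorem \ref{thm:10=seat} by a direct run-length analysis that does not invoke the $10$-elimination linearization, and Theorem \ref{thm:linear} is imported from \cite{MSSS}. The one substantive correction is that the ``main obstacle'' you identify is not actually there. You posit an order-preserving reindexing $\kappa_{\eta}$ between the support of $J^{10}_{k}(\eta,\cdot)$ and that of $\zeta_{k}(\eta,\cdot)$ and worry about its equivariance under $T_{\ell}$; but Theorem \ref{thm:10=seat} as stated already asserts $\left| J^{10}_{k}(\eta,i) \right| = \zeta_{k}(\eta,i)$ at the \emph{same} index $i$ (both are indexed by $\Z_{\ge 0}$), and $J^{10}_{k}(\eta,i)$ is by definition always an initial segment $\{1,\dots,m\}$ of $\N$, hence determined by its cardinality. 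So once you check $\Omega_{<\infty}\subset\Omega_{r}$ (which you do correctly, and which also guarantees $T_{\ell}\eta\in\Omega_{<\infty}$ so both sides are defined), the conclusion is a one-line computation:
\begin{align}
    J^{10}_{k}\left(T_{\ell}\eta, i\right) = \left\{1,\dots,\zeta_{k}\left(T_{\ell}\eta, i\right)\right\} = \left\{1,\dots,\zeta_{k}\left(\eta, i - k\wedge\ell\right)\right\} = J^{10}_{k}\left(\eta, i - k\wedge\ell\right),
\end{align}
with the convention $\{1,\dots,0\}=\emptyset$. The appeals to Proposition \ref{prop:shift_com} and to an inductive fallback are unnecessary. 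What your approach buys is a conceptual proof of a classical result as a corollary of the seat-number linearization; what it costs is that it presupposes Theorem \ref{thm:10=seat}, which carries essentially all of the combinatorial weight.
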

        
    \subsection{On the relation to the seat number configuration}
    
        In this subsection, we will show the following. 
            \begin{theorem}\label{thm:10=seat}
                Suppose that $\eta \in \Omega_{< \infty}$. Then, for any $k \in \N$ and $i \in \Z_{\ge 0}$, we have
                    \begin{align}\label{eq:10=seat}
                        \Phi_{k}\left(\eta\right) = \Psi_{k}\left(\eta\right), \quad  J^{10}_{k}(\eta,i) = \zeta_{k}\left(\eta, i\right).
                    \end{align}
            \end{theorem}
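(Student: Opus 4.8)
The plan is to reduce both identities to their $k=1$ cases and to prove those by a direct analysis of the block (maximal-run) decomposition of $\eta$. For the reduction, note that $\Phi_k(\eta)\in\Omega_{<\infty}$ for every $\eta\in\Omega_{<\infty}$, that $\Phi_{k+1}=\Phi_1\circ\Phi_k$ and $J^{10}_{k+1}(\eta)=J^{10}_1(\Phi_k(\eta))$ by definition, and that $\Psi_{k+1}=\Psi_1\circ\Psi_k$ by Proposition~\ref{prop:semig}. Hence, if $\Phi_1(\mu)=\Psi_1(\mu)$ holds for all $\mu\in\Omega_{<\infty}$, an induction on $k$ gives $\Psi_{k+1}(\eta)=\Psi_1(\Psi_k(\eta))=\Psi_1(\Phi_k(\eta))=\Phi_1(\Phi_k(\eta))=\Phi_{k+1}(\eta)$, i.e.\ $\Phi_k=\Psi_k$ on $\Omega_{<\infty}$ for every $k$. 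If, in addition, $|J^{10}_1(\mu,i)|=\zeta_1(\mu,i)$ holds for all $\mu\in\Omega_{<\infty}$, then, using $\Phi_k(\eta)=\Psi_k(\eta)\in\Omega_{<\infty}$ and (Proposition~\ref{prop:shift} with $\ell=k$) $\zeta_1(\Psi_k(\eta),i)=\zeta_{k+1}(\eta,i)$, we obtain $|J^{10}_{k+1}(\eta,i)|=|J^{10}_1(\Phi_k(\eta),i)|=|J^{10}_1(\Psi_k(\eta),i)|=\zeta_1(\Psi_k(\eta),i)=\zeta_{k+1}(\eta,i)$. So it suffices to handle $k=1$.

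For $\Phi_1=\Psi_1$ on $\Omega_{<\infty}$, decompose $\eta$ into its maximal runs. From the definition of the $10$-elimination, $\Phi_1(\eta)$ is obtained from $\eta$ by deleting the last entry of each maximal block of $1$'s and the first entry of each maximal block of $0$'s that is immediately preceded by a block of $1$'s; equivalently, the length of every block of $1$'s and of every non-leading block of $0$'s is lowered by one (emptied runs disappear and neighbouring runs merge). On the other hand, the capacity-$1$ carrier $\mathcal{W}_1$ equals $0$ just to the left of every block of $1$'s (it starts empty and is empty after reading any block of $0$'s) and equals $1$ just to the left of every non-leading block of $0$'s (it is full after reading any block of $1$'s); by \eqref{def:seatup}--\eqref{def:seatdown} this forces the $(1,\uparrow)$-seats to be exactly the leftmost entries of the maximal blocks of $1$'s, the $(1,\downarrow)$-seats to be exactly the leftmost entries of the maximal non-leading blocks of $0$'s, and shows there are no other $(1,\sigma)$-seats. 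Since $\xi_1$ increases precisely at the non-$(1,\sigma)$-seats, $\Psi_1(\eta)(x)=\eta(s_1(\eta,x))$ is just $\eta$ read off its non-$(1,\sigma)$-seats, i.e.\ $\eta$ with all $(1,\sigma)$-seats deleted; hence $\Psi_1(\eta)$ is also $\eta$ with every block of $1$'s and every non-leading block of $0$'s shortened by one, and because deleting the first or the last entry of a constant run produces the same word, $\Psi_1(\eta)=\Phi_1(\eta)$.

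For $|J^{10}_1(\eta,i)|=\zeta_1(\eta,i)$ on $\Omega_{<\infty}$, first apply \eqref{eq:sums_1} with $k=\ell=1$: no $(2,\uparrow)$-seat lies strictly between $s_1(\eta,i)$ and $s_1(\eta,i+1)$, and neither of these is a $(1,\sigma)$-seat, so
\[
\zeta_1(\eta,i)=\#\{\,y : s_1(\eta,i)<y<s_1(\eta,i+1),\ \eta^{\uparrow}_{1}(y)=1\,\}-\eta^{\uparrow}_{2}(s_1(\eta,i+1)).
\]
The integers $y$ with $s_1(\eta,i)<y<s_1(\eta,i+1)$ are exactly the $(1,\sigma)$-seats in that range, which by the previous step form a contiguous stretch of the alternating word $\cdots1010\cdots$. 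Comparing this stretch, together with the letter $\eta(s_1(\eta,i))$, against the maximal block $(10)^m$ of $\eta$ that the construction of $J^{10}_1$ attaches to the $i$-th entry of $\Phi_1(\eta)=\Psi_1(\eta)$ (with $X,Y$ the letters flanking that block), and running through the possible boundary shapes of the stretch and the cases $XY\in\{11,01,00\}$ versus $XY=10$, one checks that the integer $m$ (respectively $m-1$) on the shoulder of $X$ equals the right-hand side above, the $XY=10$ correction being accounted for precisely by $\eta^{\uparrow}_{2}(s_1(\eta,i+1))$. This last verification is where the real difficulty lies: although $\Phi_1(\eta)$ and $\Psi_1(\eta)$ coincide as configurations, the two maps delete different positions of $\eta$ (the last versus the first entry of a block of $1$'s), so the $(10)^m$-block defining the rigging and the stretch of $(1,\sigma)$-seats entering $\zeta_1$ sit at different locations in $\eta$, and one must follow the run decomposition carefully---handling the leading block and keeping track of whether the stretch starts or ends with an $\uparrow$-seat or a $\downarrow$-seat, which determines $X$ and $Y$---in order to see that the two counts agree. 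In soliton language (Remarks~\ref{rem:seat_sol} and~\ref{rem:zeta_sol}) this is the assertion that the $i$-th $(10)^m$-block and the interval $[s_1(\eta,i)+1,\,s_1(\eta,i+1)]$ contain the same $1$-solitons.
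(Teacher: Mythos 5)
Your proposal follows the paper's proof almost step for step: the same reduction to $k=1$ via Propositions \ref{prop:semig} and \ref{prop:shift}, the same maximal-run decomposition showing that $\Phi_{1}$ deletes the last entry of each $1$-block and the first entry of each non-leading $0$-block while $\Psi_{1}$ deletes the first entry of each such block (hence the same word), and the same identification of the sites strictly between $s_{1}(\eta,i)$ and $s_{1}(\eta,i+1)$ as a contiguous alternating stretch of $(1,\sigma)$-seats. The reduction and the identity $\Phi_{1}=\Psi_{1}$ are complete and correct, and your formula $\zeta_{1}(\eta,i)=\#\{y: s_{1}(i)<y<s_{1}(i+1),\ \eta^{\uparrow}_{1}(y)=1\}-\eta^{\uparrow}_{2}(s_{1}(\eta,i+1))$ is the right reduction of the rigging identity.

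The one shortfall is that the final step --- matching this count against the shoulder integer $m$ (or $m-1$) attached by the $10$-elimination --- is described but not carried out; you yourself flag it as ``where the real difficulty lies.'' That verification is exactly the content of the paper's proof of the second identity: it splits into the four cases according to the pair $\left(\eta(s_{1}(i)),\eta(s_{1}(i+1))\right)\in\{11,01,00,10\}$, writes the segment as $X(10)^{m}Y$ (with the $XY=10$ case producing $(10)^{m+1}$ and the correction absorbed by the $\eta^{\uparrow}_{2}(s_{1}(i+1))$ term, as you anticipate), and counts $(1,\downarrow)$- or $(1,\uparrow)$-seats in each case using \eqref{eq:down_zeta}. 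Since this is a routine finite case check whose structure you have correctly set up, I would call your write-up the paper's argument with its concluding computation left unexecuted rather than a wrong or incomplete approach; to make it a full proof you only need to write out those four cases.
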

        %From Theorem \ref{thm:10=seat} and Proposition \ref{prop:shift}, we see that the $10$-elimination can be considered as a shift operator on $\bar{\Omega}$. 

        \begin{proof}[Proof of Theorem \ref{thm:10=seat}]
            First we observe that it is sufficient to show \eqref{eq:10=seat} for $k = 1$. Actually, if \eqref{eq:10=seat} holds for $k = 1$, then from Propositions \ref{prop:semig} and \ref{prop:shift}, we have
                \begin{align}
                    \Phi_{2}\left(\eta\right) = \Phi_{1}\left(\Phi_{1}\left(\eta\right)\right) = \Phi_{1}\left(\Psi_{1}\left(\eta\right)\right) = \Psi_{1}\left(\Psi_{1}\left(\eta\right)\right) = \Psi_{2}\left(\eta\right), 
                 \end{align}
            and 
                \begin{align}
                     J^{10}_{2}(\eta,i)  =  J^{10}_{1}(\Phi_{1}\left(\eta\right),i) =  J^{10}_{1}(\Psi_{1}\left(\eta\right),i)  = \zeta_{1}\left(\Psi_{1}\left(\eta\right),  i\right) = \zeta_{2}\left(\eta,  i\right).
                \end{align}
            Hence, by repeating the above argument, \eqref{eq:10=seat} can be proved for any $k \in \N$.

            From now on we will show \eqref{eq:10=seat}. Fix ${\eta} \in \Omega_{< \infty}$. Observe that $\eta$ can be decomposed as follows: 
                    \begin{align}\label{eq:decom_10}
	                    \eta = 0^{\otimes m_0} 1^{\otimes n_1} ... 0^{\otimes m_{l}} 1^{\otimes n_l} 0^{\otimes m_{l}}, \quad m_{l} = \infty,
	                \end{align}
            where $z^{\otimes r}, ~ z=0,1, ~ r \in \Z_{\ge 0}\cup\{ \infty \}$ is the $r$ successive $z$'s  and $n_1, ..., n_l \in \N$ satisfy 
					\begin{align}
						                    \sum_{i=1}^{l} n_i = \sum_{x \in \Z_{\ge 0}} \eta(x).
					\end{align}
			Note that $z^{\otimes 0} = \emptyset$. By using the above decomposition, we can see that the $10$-elimination removes the rightmost $``1"$  (resp. the leftmost $``0"$) of all consecutive $1$'s (resp. $0$'s) except for the origin, and thus $\Phi_{1}(\eta)$ is expressed as
				\begin{align}
					\Phi_{1}(\eta) = 0^{\otimes m_0} 1^{\otimes n_1 - 1} 0^{\otimes m_1 - 1} ... 0^{\otimes m_{l} - 1} 1^{\otimes n_l - 1} 0^{\otimes \infty}.
				\end{align}
			On the other hand, $\Psi_{1}(\eta)$ skips the leftmost $``1"$(resp. $``0"$) of all consecutive $1$'s (resp. $0$'s) except for the origin. Therefore we obtain
    			\begin{align}
    				\Psi_{1}(\eta) &= 0^{\otimes m_0} 1^{\otimes n_1 - 1} 0^{\otimes m_1 - 1} ... 0^{\otimes m_{l} - 1} 1^{\otimes n_l - 1} 0^{\otimes \infty} \\
    				    &= \Phi_{1}(\eta).
    			\end{align}
    		Finally we will show $ J^{10}_{1}(\eta,i)  = \zeta_{1}\left(\eta, i\right)$. 
    		Assume that $ J^{10}_{1}(\eta,i) = m, m \in \Z_{\ge 0}$. Then, by considering the meanings of $\Phi_{1}(\eta)$ and $\Psi_{1}(\eta)$ via the decomposition \eqref{eq:decom_10} again, we have the following.
    		    \begin{itemize}
    		        \item For the case $\eta\left( s_{{1}}\left({\eta},i\right) \right) = \eta\left( s_{{1}}\left({\eta},i + 1\right) \right) = 1$. Then, we obtain
    		            \begin{align}
    		                \eta\left( s_{{1}}\left({\eta},i\right) - 1  \right)\eta\left( s_{{1}}\left({\eta},i\right)   \right) \dots \eta\left( s_{{1}}\left({\eta},i + 1\right) - 1 \right)\eta\left( s_{{1}}\left({\eta},i + 1\right) \right) = 1(10)^{m}1,
    		            \end{align}
    		        where we use the convention $X(10)^{0}Y = XY$ for $X,Y \in \{0,1 \}$. Since all $``0"$s in
    		        $\eta\left( s_{{1}}\left(i\right) + 1  \right) \dots \eta\left( s_{{1}}\left(i + 1\right) \right)$ are $(1,\downarrow)$-seats, from \eqref{eq:down_zeta} we have
    		            \begin{align}
    		                \zeta_{1}\left(\eta, i\right) = \sum_{y = s_{1}({\eta},i) + 1}^{s_{{1}}({\eta},i+1)} \eta^{\downarrow}_{1}(y) -  \eta^{\downarrow}_{2}(y) = m.
    		            \end{align}
    		            
                    \item For the case $\eta\left( s_{{1}}\left({\eta},i\right) \right) = 0,  \eta\left( s_{{1}}\left({\eta},i + 1\right) \right) = 1$. Then, we obtain
    		            \begin{align}
    		                  \eta\left( s_{{1}}\left({\eta},i\right)   \right) \dots \eta\left( s_{{1}}\left({\eta},i + 1\right) - 1 \right)\eta\left( s_{{1}}\left({\eta},i + 1\right) \right) = 0(10)^{m}1.
    		            \end{align}
                    Since all $``0"$s in $\eta\left( s_{{1}}\left({\eta},i\right) + 1  \right) \dots \eta\left( s_{{1}}\left({\eta},i + 1\right) \right)$ are $(1,\downarrow)$-seats, from \eqref{eq:down_zeta} we have
    		            \begin{align}
    		                \zeta_{1}\left(\eta, i\right) = \sum_{y = s_{1}({\eta},i) + 1}^{s_{{1}}({\eta},i+1)} \eta^{\downarrow}_{1}(y) -  \eta^{\downarrow}_{2}(y) = m.
    		            \end{align}
                    
                    \item For the case $\eta\left( s_{{1}}\left({\eta},i\right) \right) = 0,  \eta\left( s_{{1}}\left({\eta},i + 1\right) \right) = 0$. Then, we obtain
    		            \begin{align}
    		                  \eta\left( s_{{1}}\left({\eta},i\right)   \right) \dots \eta\left( s_{{1}}\left({\eta},i + 1\right) - 1 \right)\eta\left( s_{{1}}\left({\eta},i + 1\right) \right) = 0(10)^{m}0.
    		            \end{align}
                    Since all $``1"$s in $\eta\left( s_{{1}}\left({\eta},i\right) + 1  \right) \dots \eta\left( s_{{1}}\left({\eta},i + 1\right)\right)$ are $(1,\uparrow)$-seats, we have
    		            \begin{align}
    		                \zeta_{1}\left(\eta, i\right) = \sum_{y = s_{1}({\eta},i) + 1}^{s_{{1}}({\eta},i+1)} \eta^{\uparrow}_{1}(y) -  \eta^{\uparrow}_{2}(y) = m.
    		            \end{align}
                    
                    \item For the case $\eta\left( s_{{1}}\left({\eta},i\right) \right) = 1,  \eta\left( s_{{1}}\left({\eta},i + 1\right) \right) = 0$. Then, we obtain
    		            \begin{align}
    		                  \eta\left( s_{{1}}\left({\eta},i\right)   \right) \dots \eta\left( s_{{1}}\left({\eta},i + 1\right) - 1 \right)\eta\left( s_{{1}}\left({\eta},i + 1\right) \right) = 1(10)^{m+1}0.
    		            \end{align}
                    Since all $``0"$s in $\eta\left( s_{{1}}\left({\eta},i\right) + 1  \right) \dots \eta\left( s_{{1}}\left({\eta},i + 1\right) - 1 \right)$ are $(1,\downarrow)$-seats and $s_{{1}}\left({\eta},i + 1\right)$ is a $\left(2,\downarrow\right)$-seat, from \eqref{eq:down_zeta} we have
    		            \begin{align}
    		                \zeta_{1}\left(\eta, i\right) = \sum_{y = s_{1}({\eta},i) + 1}^{s_{{1}}({\eta},i+1)} \eta^{\downarrow}_{1}(y) -  \eta^{\downarrow}_{2}(y) = m.
    		            \end{align} 
                    
    		    \end{itemize}
    		From the above, we have \eqref{eq:10=seat} for $k = 1$. 
        \end{proof}
    
    \begin{remark}
        We note that both the $1$-skip map and the $10$-elimination remove the same $0$'s, but they may remove different $1$'s, see Figure \ref{fig:diff_1_10}. In other words, the $10$-elimination may remove $1$ located at $x$ such that $\eta^{\uparrow}_{\ell}\left(x \right) = 1$ for some $\ell \ge 2$. From this point of view and Remark \ref{rem:seat_sol}, the $1$-skip map seems to be the more natural operation for solitons. 
    \end{remark}

        \begin{figure}[t]
\centering
\begin{tikzpicture}[scale=0.7, every node/.style={transform shape},
  x=0.88cm,
  y=0.90cm,
  digit/.style={font=\normalsize},
  lab/.style={font=\normalsize},
  title/.style={font=\bfseries\large},
  line/.style={black!70, line width=0.4pt}
]

%--------------------------------------------------
% helpers
%--------------------------------------------------
\newcommand{\keepdigit}[3]{%
  \node[digit] at (#1,#2) {$#3$};
}
\newcommand{\removedone}[2]{%
  \node[digit, blue!70!black] at (#1,#2) {$1$};
  \draw[red!80!black, line width=0.5pt] (#1-0.17,#2-0.20) -- (#1+0.17,#2+0.20);
  \draw[red!80!black, line width=0.5pt] (#1-0.17,#2+0.20) -- (#1+0.17,#2-0.20);
}
\newcommand{\removedzero}[2]{%
  \node[digit, gray!75!black] at (#1,#2) {$0$};
  \draw[gray!75!black, line width=0.5pt] (#1-0.17,#2-0.20) -- (#1+0.17,#2+0.20);
  \draw[gray!75!black, line width=0.5pt] (#1-0.17,#2+0.20) -- (#1+0.17,#2-0.20);
}

%--------------------------------------------------
% top x-row
%--------------------------------------------------
\node[lab] at (-1.9,0.0) {$x$};
\foreach \x in {0,...,18}{
  \node[digit] at (\x,0.0) {\x};
}
\node[digit] at (19.1,0.0) {$\cdots$};
\draw[line] (-2.3,-0.45) -- (19.5,-0.45);

%==================================================
% Panel 1 : 10-elimination
%==================================================
\draw[rounded corners=5pt, red!75!black, line width=0.6pt]
  (-2.55,-1.20) rectangle (19.55,-5.05);

\node[
  draw=red!75!black,
  rounded corners=4pt,
  line width=0.6pt,
  fill=white,
  inner xsep=8pt,
  inner ysep=5pt,
  anchor=west
] at (-2.45,-1.20) {\color{red!75!black}\title 10-elimination};

% row labels
\node[lab] at (-1.65,-2.10) {$\eta(x)$};
%\node[lab, align=center] at (-1.65,-3.15) {removed\\sites};
\node[lab] at (-1.65,-4.20) {$\Phi_{1}(\eta)$};

% separators
\draw[line] (-2.25,-2.55) -- (19.35,-2.55);
\draw[line] (-2.25,-3.65) -- (19.35,-3.65);

% original eta
\foreach \x/\a in {
0/0,1/1,2/1,3/0,4/0,5/1,6/1,7/1,8/0,
9/1,10/0,11/1,12/1,13/0,14/0,15/0,16/1,17/0,18/0}{
  \keepdigit{\x}{-2.10}{\a}
}
\node[digit] at (19.1,-2.10) {$\cdots$};

% removed-sites row for Phi_1
% kept sites
\foreach \x/\a in {0/0,1/1,4/0,5/1,6/1,11/1,14/0,15/0,18/0}{
  \keepdigit{\x}{-3.15}{\a}
}
% removed 1's
\foreach \x in {2,7,9,12,16}{
  \removedone{\x}{-3.15}
}
% removed 0's
\foreach \x in {3,8,10,13,17}{
  \removedzero{\x}{-3.15}
}
\node[digit] at (19.1,-3.15) {$\cdots$};

% Phi_1(eta)
\foreach \i/\a in {0/0,1/1,2/0,3/1,4/1,5/1,6/0,7/0,8/0}{
  \keepdigit{\i}{-4.20}{\a}
}
\foreach \i in {9,...,18}{
  \keepdigit{\i}{-4.20}{0}
}
\node[digit] at (19.1,-4.20) {$\cdots$};

%==================================================
% Panel 2 : 1-skip map
%==================================================
\draw[rounded corners=5pt, blue!75!black, line width=0.6pt]
  (-2.55,-6.05) rectangle (19.55,-9.90);

\node[
  draw=blue!75!black,
  rounded corners=4pt,
  line width=0.6pt,
  fill=white,
  inner xsep=8pt,
  inner ysep=5pt,
  anchor=west
] at (-2.45,-6.05) {\color{blue!75!black}\title 1-skip map};

% row labels
\node[lab] at (-1.65,-6.95) {$\eta(x)$};
%\node[lab, align=center] at (-1.65,-8.00) {removed\\sites};
\node[lab] at (-1.65,-9.05) {$\Psi_{1}(\eta)$};

% separators
\draw[line] (-2.25,-7.40) -- (19.35,-7.40);
\draw[line] (-2.25,-8.50) -- (19.35,-8.50);

% original eta again
\foreach \x/\a in {
0/0,1/1,2/1,3/0,4/0,5/1,6/1,7/1,8/0,
9/1,10/0,11/1,12/1,13/0,14/0,15/0,16/1,17/0,18/0}{
  \keepdigit{\x}{-6.95}{\a}
}
\node[digit] at (19.1,-6.95) {$\cdots$};

% removed-sites row for Psi_1
% kept sites
\foreach \x/\a in {0/0,2/1,4/0,6/1,7/1,12/1,14/0,15/0,18/0}{
  \keepdigit{\x}{-8.00}{\a}
}
% removed 1's
\foreach \x in {1,5,9,11,16}{
  \removedone{\x}{-8.00}
}
% removed 0's
\foreach \x in {3,8,10,13,17}{
  \removedzero{\x}{-8.00}
}
\node[digit] at (19.1,-8.00) {$\cdots$};

% Psi_1(eta)
\foreach \i/\a in {0/0,1/1,2/0,3/1,4/1,5/1,6/0,7/0,8/0}{
  \keepdigit{\i}{-9.05}{\a}
}
\foreach \i in {9,...,18}{
  \keepdigit{\i}{-9.05}{0}
}
\node[digit] at (19.1,-9.05) {$\cdots$};

%==================================================
% legend
%==================================================
%\removedone{2.6}{-10.85}
%\node[anchor=west, lab] at (3.0,-10.85) {$=$ removed $1$};

%\removedzero{8.1}{-10.85}
%\node[anchor=west, lab] at (8.5,-10.85) {$=$ removed $0$};

%\keepdigit{13.2}{-10.85}{0}
%\node[anchor=west, lab] at (13.6,-10.85) {$=$ kept site};

% bottom note
%\draw[rounded corners=4pt, line width=0.5pt] (1.1,-11.55) rectangle (16.3,-12.30);
%\node[lab] at (8.7,-11.93){Both operations remove the same $0$'s, but they may remove different $1$'s.};

\end{tikzpicture}
\caption{Difference between the $1$-skip map and the $10$-elimination. 
For the above configuration, the removed $0$'s are the same, while the
removed $1$'s are different. However, we have
$\Phi_{1}(\eta)=\Psi_{1}(\eta)$.}
\label{fig:diff_1_10}
\end{figure}

\section{BBS on the whole-line}\label{sec:line}

    {In this section, we extend the seat number configuration to the BBS on the
whole-line. We also show that the results obtained in the previous sections
have natural counterparts in this setting. This extension is needed for the
application in the next section, where we consider the BBS with random initial
configurations on $\Z$. }

Since many notions are natural extensions of those introduced in Section \ref{sec:seat}, the same symbols will be used. We note that the relationships between the seat number configuration and solitons in the half-line case hold for the whole-line case as well. 
    
	Throughout this section, $\Omega$ is given by
	    \begin{align}
	        \Omega = \left\{ \eta \in \{0,1 \}^{\Z} \ ; \ {\lim_{x \to  \infty}} \frac{1}{x} \sum_{y = -x}^{0} \eta(y) < \frac{1}{2} \right\}.
	    \end{align}
    Since we mainly consider the BBS$(\infty)$ in the rest of this paper, we
write $T:=T_\infty$ for simplicity of notation. {It was shown in \cite[Section 2.7]{CKST} that the time evolution $T$ is well-defined on
$\Omega$. We note that, compared with the reversible setting considered in
\cite{CKST}, we impose the density condition only on the left-hand side.
This is sufficient for our purpose, since in this paper we only consider the
forward time evolution $T$, and do not consider the inverse dynamics
$T^{-1}$.}
    
	We also note that since $\{0,1 \}^{\N}$ can be considered as a subset of $\Omega$ {by the identification,}
	    \begin{align}
	        \{0,1 \}^{\N} \cong \left\{ \eta \in \Omega \ ; \ \eta(x) = 0 \  \text{for any } x \le 0 \right\}.
	    \end{align}
	{In this sense, the half-line case is included in the whole-line case described
below.}

	\subsection{Seat number configuration on the whole-line}
	    
	    First, we introduce the carrier with capacity $\ell$ to define the one-step time evolution of the BBS($\ell$) on the whole-line. For any $\eta \in \Omega$, we define $s_{\infty}(\eta,x)$ recursively as follows : 
	        \begin{align}\label{def:s_k_1}
	            s_{\infty}(\eta,0) &:= \max\left\{ x \le 0 \ ; \ \sum_{y = z}^{x}\left(2\eta(y) - 1\right) < 0 \  \text{for any } z\le x \right\},
            \end{align}
        and 
            \begin{align}
	            s_{\infty}(\eta,i) &:= \min\left\{ x > s_{\infty}(\eta,i - 1) \ ; \ \sum_{y = z}^{x}\left(2\eta(y) - 1\right) < 0 \  \text{for any } z \le x \right\} \label{def:s_k_2}, \\
	            s_{\infty}(\eta,-i) &:= \max\left\{ x < s_{\infty}(\eta,-i + 1) \ ; \ \sum_{y = z}^{x}\left(2\eta(y) - 1\right) < 0 \  \text{for any } z \le x \right\}\label{def:s_k_3}
	        \end{align}
        for any $i \in \N$, with convention $\min \emptyset = \infty$ and $\max \emptyset = - \infty$. {We call a site \(x\in\mathbb{Z}\) a record of \(\eta\) if
\(x=s_\infty(\eta,i)\) for some \(i\in\mathbb{Z}\).
Thus the sequence \((s_\infty(\eta,i))_{i\in\mathbb{Z}}\) enumerates the records
of \(\eta\) from left to right.}
        We note that from Remark \ref{rem:record}, the definition of {a record} is consistent with the half-line case. 
        Observe that if $\eta \in \Omega$, then $s_{\infty}(\eta,-i) > - \infty$ for any $i \in \Z_{\ge 0}$. Then, it is not difficult to check that by changing the starting point from $0$ to $s_{\infty}(\eta,-i)$, $W_{\ell}\left( \eta^{(i)}, \cdot \right)$ can be defined on $[s_{\infty}(\eta,-i), \infty) \cap \Z$ by using the same construction described in Introduction, where
            \begin{align}\label{def:eta_i}
                \eta^{(i)}(x) := 
                \begin{dcases}
                    \eta(x) \ & \ x \ge s_{\infty}(\eta,-i), \\
                    0 \ & \ \text{otherwise.}
                \end{dcases}
            \end{align}
        Then, for any $i, j \in \N$, $i \le j$, we see that
            \begin{align}\label{eq:cut_s}
                s_{\infty}(\eta^{(j)},-i) = s_{\infty}(\eta,-i).
            \end{align}
        As a result, for any $i \in \Z_{\ge 0}$, $W_{\ell}\left( \eta^{(i)}, \cdot \right)$ and $W_{\ell}\left( \eta^{(i+1)}, \cdot \right)$ are consistent, i.e., for any $x \ge s_{\infty}(\eta,- i)$, 
            \begin{align}
                W_{\ell}\left( \eta^{(i)}, x \right) = W_{\ell}\left( \eta^{(i+1)}, x \right). 
            \end{align}
        Hence, $W_{\ell}\left( \eta, x \right) := \lim_{i \to \infty} W_{\ell}\left( \eta^{(i)}, x \right)$ is well-defined for any $x \in \Z$. {We note that by the construction of the carrier for the whole-line case, a record is a site at which the carrier with infinite capacity
passes through while staying at level $0$, that is, $x \in \Z$ is a record in $\eta$ if and only if $W_{\infty}\left(\eta,x-1\right) = W_{\infty}\left(\eta,x\right) = 0$. This characterization of records is also consistent with the half-line case, see Remark \ref{rem:record}.} 

        Similarly, for any $k \in \N$ and $i \in \Z_{\ge 0}$, $\mathcal{W}_{k}\left(\eta^{(i)}, \cdot \right) : [s_{\infty}(\eta,-i), \infty) \cap \Z \to \{0,1\}$ can be defined by changing the starting point from $0$ to $s_{\infty}(\eta,-i)$, and one can check the consistency of $\mathcal{W}_{k}\left(\eta^{(i)}, x\right)$ and $\mathcal{W}_{k}\left(\eta^{(i+1)}, x \right)$ for any $x \ge s_{\infty}(\eta, -i)$. 
        Hence, $\mathcal{W}_{k}\left( \eta, x \right) := \lim_{i \to \infty} \mathcal{W}_{k}\left( \eta^{(i)}, x \right)$ is also well-defined for any $k \in \N$ and $x \in \Z$. In particular, from \eqref{eq:car_l_seat}, for any $\ell \in \N$ and $x \in \Z$, we have 
            \begin{align}\label{eq:car_l_seat_w}
                W_{\ell}\left(\eta, x \right) = \sum_{k = 1}^{\ell} \mathcal{W}_{{k}}\left(\eta, x\right). 
            \end{align}
        For later use, we state the above procedure as a lemma. 
            \begin{lemma}\label{lem:indep_c}
                Suppose that $\eta \in \Omega$. Then, for any $k \in \N$, $i \in \Z_{\ge 0}$ and $x \ge s_{\infty}(\eta,-i)$, we have
                    \begin{align}
                        \mathcal{W}_{k}\left( \eta^{(i)}, x \right) = \mathcal{W}_{k}\left(\eta^{(i+1)}, x \right).
                    \end{align}
                In other words, the value of $\mathcal{W}_{k}\left( \eta, \cdot \right)$ on $[s_{\infty}(\eta,-i), \infty)$ is independent of $\eta(x), x \le s_{\infty}(\eta,-i) - 1$. 
            \end{lemma}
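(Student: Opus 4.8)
The plan is to carry out, for the seat-number carrier $\mathcal{W}_{k}$, the same reasoning that the text has just sketched for $W_{\ell}$ in the passage leading to \eqref{eq:car_l_seat_w}. The underlying principle is simple: the truncation site $s_{\infty}(\eta,-i)$ is a record of $\eta^{(i+1)}$ (and, trivially, the site at which the carrier for $\eta^{(i)}$ starts), so at $x = s_{\infty}(\eta,-i)$ every seat is empty for both configurations; since $\eta^{(i)}$ and $\eta^{(i+1)}$ agree from that site onward and the recursion defining $(\mathcal{W}_{k})_{k}$ is causal (left to right, depending only on the current letter and the seats of the carrier one step earlier), the two seat configurations must coincide on all of $[s_{\infty}(\eta,-i),\infty)$.

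In more detail I would argue in three steps. First, $\eta^{(i)}$ and $\eta^{(i+1)}$ coincide on $[s_{\infty}(\eta,-i),\infty)$: this is immediate from \eqref{def:eta_i} once one notes $s_{\infty}(\eta,-i-1) < s_{\infty}(\eta,-i)$, which is built into \eqref{def:s_k_3}. Second, $\mathcal{W}_{k}(\eta^{(i+1)}, s_{\infty}(\eta,-i)) = 0$ for every $k \in \N$: by \eqref{eq:cut_s} we have $s_{\infty}(\eta^{(i+1)},-i) = s_{\infty}(\eta,-i)$, so by the record characterization recalled in Remark \ref{rem:record} (applied to $\eta^{(i+1)}$, whose nonzero part is a half-line configuration based at $s_{\infty}(\eta,-i-1)$) this site is a record of $\eta^{(i+1)}$; hence the infinite-capacity carrier sits there empty, so $W_{\ell}(\eta^{(i+1)}, s_{\infty}(\eta,-i)) = 0$ for every $\ell$, and \eqref{eq:car_l_seat} together with $\mathcal{W}_{k}\ge 0$ forces $\mathcal{W}_{k}(\eta^{(i+1)}, s_{\infty}(\eta,-i)) = 0$ for every $k$. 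On the other side, $\mathcal{W}_{k}(\eta^{(i)}, s_{\infty}(\eta,-i)) = 0$ for every $k$ holds by construction, since $W_{\ell}(\eta^{(i)},\cdot)$ and $\mathcal{W}_{k}(\eta^{(i)},\cdot)$ were defined by starting the empty carrier at $s_{\infty}(\eta,-i)$. Third, I would conclude by induction on $x \ge s_{\infty}(\eta,-i)$: the recursion for $\mathcal{W}_{k}(\cdot,x)$ depends only on the value of the configuration at $x$ and on $(\mathcal{W}_{\ell}(\cdot,x-1))_{1 \le \ell \le k}$, so equality of all seats at $x-1$ together with equality of the letters at $x$ yields equality of all seats at $x$; the base case $x = s_{\infty}(\eta,-i)$ is exactly the content of the first two steps.

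I do not anticipate a real obstacle: the lemma is precisely the $\mathcal{W}_{k}$-version of the discussion the paper has just carried out for $W_{\ell}$, and the inductive propagation is the routine causality of the carrier dynamics. The one point that deserves a line of care is the second step — checking that $s_{\infty}(\eta,-i)$ is a record of the truncated configuration $\eta^{(i+1)}$, not merely of $\eta$. This is \eqref{eq:cut_s}, or, unwinding it, the observation that erasing all balls strictly to the left of $s_{\infty}(\eta,-i-1)$ only decreases the partial sums of $2\eta(y)-1$ that appear in \eqref{def:s_k_1}--\eqref{def:s_k_3}, so the defining record inequalities at $s_{\infty}(\eta,-i)$ are inherited by $\eta^{(i+1)}$; everything else is bookkeeping.
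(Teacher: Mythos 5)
Your proposal is correct and is essentially the argument the paper intends: the paper simply asserts the consistency as following from the construction, while you make explicit the same three ingredients (agreement of $\eta^{(i)}$ and $\eta^{(i+1)}$ on $[s_{\infty}(\eta,-i),\infty)$, emptiness of all seats at the record $s_{\infty}(\eta,-i)$ for both configurations via \eqref{eq:cut_s}, and left-to-right induction using the causality of the carrier recursion). No discrepancy to report.
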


        Now, we can define the seat number configuration $\eta^{\sigma}_{k} \in \{0,1\}^{\Z}$, $k \in \N$, $\sigma \in \{\uparrow, \downarrow\}$ by the same equations \eqref{def:seatup} and \eqref{def:seatdown}. Also, we define $r\left(\eta, \cdot\right) \in \{0,1\}^{\Z}$ as
            \begin{align}
                r\left(\eta, x\right) := 1 - \sum_{k \in \N} \sum_{\sigma \in \{\uparrow,\downarrow\}}  \eta^{\sigma}_{k}\left(x\right). 
            \end{align}
        Then, from \eqref{eq:car_l_seat}, \eqref{eq:car_n_seat} and Lemma \ref{lem:indep_c}, we have 
            \begin{align}\label{eq:car_n_seat_w}
                \mathcal{W}_{k}\left(\eta, x \right) = \sum_{y = s_{\infty}\left(\eta, -i \right) + 1}^{x} \left( \eta^{\uparrow}_{k}\left(y\right) - \eta^{\downarrow}_{k}\left(y\right) \right),
            \end{align}
        for any $k \in \N$,  $i \in \Z_{\ge 0}$ and $x \ge s_{\infty}\left(\eta, -i \right)$. 
        Also, thanks to Lemma \ref{lem:indep_c}, as in the half-line case (Lemma \ref{lem:1_half}), we obtain the following basic property of the seat number configuration. 
            \begin{lemma}\label{lem:1_whole}
                Suppose that $\eta \in \Omega$. Then, for any $k \in \N$, $i \le 0$ and $x \ge s_{\infty}\left(\eta, i\right)$, we have the following. 
                    \begin{enumerate}
                        \item $\eta^{\uparrow}_{k}\left(x\right) = 1$ implies $\sum_{y = s_{\infty}\left(\eta, i\right) + 1}^{x} \left(\eta^{\uparrow}_{\ell}\left(y\right) - \eta^{\downarrow}_{\ell}\left(y\right)\right) = 1$ for any $1 \le \ell \le k$.
                        \item $\eta^{\downarrow}_{k}\left(x\right) = 1$ implies $\sum_{y = s_{\infty}\left(\eta, i\right) + 1}^{x} \left(\eta^{\uparrow}_{\ell}\left(y\right) - \eta^{\downarrow}_{\ell}\left(y\right)\right) = 0$ for any $1 \le \ell \le k$.
                        \item $r\left(\eta, x\right) = 1$ implies $\sum_{y = s_{\infty}\left(\eta, i\right) + 1}^{x} \left(\eta^{\uparrow}_{\ell}\left(y\right) - \eta^{\downarrow}_{\ell}\left(y\right)\right) = 0$ for any $\ell \in \N$.
                    \end{enumerate}
            \end{lemma}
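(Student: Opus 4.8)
The plan is to deduce the three assertions from their half-line counterparts in Lemma \ref{lem:1_half}, by localizing $\eta$ to the right of the record $s_{\infty}(\eta,i)$ and then sliding that point to the origin. Fix $i \le 0$, set $j := -i \in \Z_{\ge 0}$ and $s := s_{\infty}(\eta,i) = s_{\infty}(\eta,-j)$, and recall the truncation $\eta^{(j)}$ from \eqref{def:eta_i}, which agrees with $\eta$ on $[s,\infty)\cap\Z$ and vanishes strictly below $s$.

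First I would observe that the seat functions and the record function are \emph{local to the right of $s$}. By the defining formulas \eqref{def:seatup} and \eqref{def:seatdown}, the values $\eta^{\uparrow}_{\ell}(y)$, $\eta^{\downarrow}_{\ell}(y)$, and hence $r(\eta,y)$, depend only on $\eta(y)$ and on $\mathcal{W}_{1}(\eta,y-1),\dots,\mathcal{W}_{\ell}(\eta,y-1)$. For $y \ge s+1$ we have $y-1 \ge s$ and $y > s$, so Lemma \ref{lem:indep_c} gives $\mathcal{W}_{m}(\eta,y-1) = \mathcal{W}_{m}(\eta^{(j)},y-1)$ for every $m \in \N$, while $\eta(y) = \eta^{(j)}(y)$; hence $\eta^{\sigma}_{\ell}(y) = (\eta^{(j)})^{\sigma}_{\ell}(y)$ and $r(\eta,y) = r(\eta^{(j)},y)$ for all $y \ge s+1$, $\ell \in \N$ and $\sigma \in \{\uparrow,\downarrow\}$. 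Consequently each partial sum $\sum_{y=s+1}^{x}(\cdot)$ occurring in the statement is unchanged when $\eta$ is replaced by $\eta^{(j)}$, so it suffices to prove the three assertions for $\eta^{(j)}$.

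Next I would translate $\eta^{(j)}$ to a genuine half-line configuration $\hat{\eta} \in \{0,1\}^{\N}$, defined by $\hat{\eta}(x) := \eta^{(j)}(x+s)$. Since $\eta^{(j)}$ vanishes below $s$ and, by the construction recalled around \eqref{def:eta_i}--\eqref{eq:car_l_seat_w}, $\mathcal{W}_{k}(\eta^{(j)},\cdot)$ on $[s,\infty)\cap\Z$ is produced from the empty carrier at $s$ by precisely the recursion of the Introduction, the shift $y \mapsto y-s$ carries this recursion to the half-line recursion for $\mathcal{W}_{k}(\hat{\eta},\cdot)$ on $\Z_{\ge 0}$; thus $\mathcal{W}_{k}(\eta^{(j)},y) = \mathcal{W}_{k}(\hat{\eta},y-s)$ for all $y \ge s$ and $k \in \N$. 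Feeding this identity through \eqref{def:seatup} and \eqref{def:seatdown} yields $(\eta^{(j)})^{\sigma}_{\ell}(y) = \hat{\eta}^{\sigma}_{\ell}(y-s)$ and $r(\eta^{(j)},y) = r(\hat{\eta},y-s)$ for $y \ge s+1$, whence $\sum_{y=s+1}^{x}(\eta^{(j)})^{\sigma}_{\ell}(y) = \sum_{y'=1}^{x-s}\hat{\eta}^{\sigma}_{\ell}(y')$ and similarly for $\eta^{\uparrow}_{\ell}-\eta^{\downarrow}_{\ell}$.

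Finally I would apply Lemma \ref{lem:1_half} to $\hat{\eta}$ at the site $x-s \in \N$ — the degenerate case $x=s$ being immediate, since then $r(\eta,s)=1$ makes the first two assertions vacuous and the third an empty sum — and translate its conclusions back through the previous two paragraphs to obtain assertions (1)--(3) for $\eta$ at $x$. I expect the only mildly delicate point to be the carrier-matching in the third paragraph, namely that excising $\eta$ below a record and sliding that record to the origin yields a bona fide half-line configuration with the same $\mathcal{W}_{k}$, $\eta^{\sigma}_{\ell}$ and $r$; but this is exactly what the construction of $\eta^{(i)}$ and Lemma \ref{lem:indep_c} have already arranged, so no genuine obstacle remains and the lemma is in essence a bookkeeping consequence of locality together with the established half-line statement.
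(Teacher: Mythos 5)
Your proposal is correct and takes essentially the same route as the paper, which simply asserts the lemma "thanks to Lemma \ref{lem:indep_c}, as in the half-line case (Lemma \ref{lem:1_half})" without writing out the details; you have supplied exactly the intended reduction (truncate below the record $s_{\infty}(\eta,i)$, invoke the consistency of the carriers, translate to a half-line configuration, and apply Lemma \ref{lem:1_half}). The handling of the degenerate case $x=s_{\infty}(\eta,i)$ and the observation that the seat and record functions at $y\ge s+1$ depend only on $\eta(y)$ and the carrier values at $y-1\ge s$ are both sound.
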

        For later use, we note the relationship between the seat number configuration and the notion of {\it slots} and corresponding slot configuration introduced in \cite{FNRW}. {First we remark that if \(|s_\infty(\eta,i)|<\infty\) for every \(i\in\mathbb{Z}\), 
        then the records \(s_\infty(\eta,i)\), \(i\in\mathbb{Z}\), divide the
configuration into finite intervals. As explained in Remark \ref{rem:zeta_sol},
the soliton decomposition can be performed separately in each interval between
two consecutive records. More precisely, for each \(i\in\mathbb{Z}\), we apply
the Takahashi--Satsuma algorithm to the finite word,
    \begin{align}
        \eta|_{[s_\infty(\eta,i),\,s_\infty(\eta,i+1)-1]} = \eta\left(s_\infty(\eta,i)\right) \dots \eta\left(s_\infty(\eta,i+1)-1\right).
    \end{align}
Combining the solitons obtained from all these intervals, we obtain the
solitons in \(\eta\). Then, for each $k \in \N$, we call a site $x \in \Z$ a $k$-slot if $x$ is a record, or if $x$ is a $\ell$-th head or  $\ell$-th tail of some $m$-soliton, where $k + 1 \le \ell \le m$.}        

        As in the half-line case, we can obtain the following. 
            \begin{proposition}\label{prop:seat_slot_w}
                Suppose that $\eta \in \Omega$ and $|s_{\infty}\left(\eta, i\right)| < \infty$ for any $i \in \Z$. Then, for any $k \in \N$ and $x \in \Z$, 
                \begin{align}
                    \eta^{\uparrow}_{k}\left(x\right) + \eta^{\downarrow}_{k}\left(x\right) = 1 \ \text{if and only if } x \text{ is a } (k-1)\text{-slot but not a } k\text{-slot}.
                \end{align}
            \end{proposition}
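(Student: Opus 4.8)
The plan is to deduce Proposition \ref{prop:seat_slot_w} from its half-line counterpart (established in \cite{MSSS}; see also \cite{FNRW}) by means of the same truncation argument used above to define $\mathcal{W}_{k}$ on $\Omega$. The guiding observation is that, read at a fixed site $x \in \Z$, \emph{both} sides of the asserted equivalence depend only on the restriction of $\eta$ to a half-line $[s_{\infty}(\eta,-i),\infty) \cap \Z$, provided $i$ is chosen large enough that $s_{\infty}(\eta,-i) < x$ — which is possible precisely because $|s_{\infty}(\eta,i)| < \infty$ for all $i \in \Z$. For the left-hand side this locality is exactly Lemma \ref{lem:indep_c}, which gives $\eta^{\uparrow}_{k}(x) + \eta^{\downarrow}_{k}(x) = (\eta^{(i)})^{\uparrow}_{k}(x) + (\eta^{(i)})^{\downarrow}_{k}(x)$ for all such $i$. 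For the right-hand side one needs the analogous locality of the slot decomposition of \cite{FNRW}: since $s_{\infty}(\eta,-i)$ is a record, the carrier $W_{\infty}$ is empty there and no soliton straddles that site, so the slot levels at the sites to the right of $s_{\infty}(\eta,-i)$ are the same whether computed for $\eta$ or for $\eta^{(i)}$.

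Concretely, I would fix $k \in \N$ and $x \in \Z$, pick $i$ with $s_{\infty}(\eta,-i) < x$, and replace $\eta$ by $\eta^{(i)}$, which by \eqref{def:eta_i} vanishes on $(-\infty, s_{\infty}(\eta,-i)) \cap \Z$. Translating the lattice by $s_{\infty}(\eta,-i)$ then identifies $\eta^{(i)}$ with an element of $\{0,1\}^{\N} \subset \Omega$, in the sense of the inclusion recalled at the beginning of Section \ref{sec:line}, with $s_{\infty}(\eta,-i)$ sent to the origin; records, seat numbers, and slots all transform equivariantly under this translation. The half-line statement — that $\eta^{\uparrow}_{k}(x) + \eta^{\downarrow}_{k}(x) = 1$ exactly when $x$ is a $(k-1)$-slot but not a $k$-slot — then applies verbatim on $\{0,1\}^{\N}$; undoing the translation yields the claim for $\eta^{(i)}$, and the two locality statements above transfer it to $\eta$.

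The main obstacle is the second locality statement: one must check that the level of a slot at $x$ is insensitive to $\eta(y)$ for $y \le s_{\infty}(\eta,-i) - 1$, and this is the single place where the definition of slots from \cite{FNRW} has to be unwound. The key input is that $s_{\infty}(\eta,-i)$ being a record puts the carrier (equivalently, the running maximum of the partial sums $\sum (1-2\eta)$) in its ground state there, so the recursive construction of the $k$-slots for $k \ge 1$ to the right of $s_{\infty}(\eta,-i)$ never consults the configuration to the left of it — the exact mirror of the argument behind Lemma \ref{lem:indep_c}. Granting this, the remainder is the routine bookkeeping of the translation and of the base case $k=1$ (where ``$0$-slot but not $1$-slot'' must be matched with $\eta^{\uparrow}_{1}(x)+\eta^{\downarrow}_{1}(x)=1$), which proceeds just as in the half-line setting.
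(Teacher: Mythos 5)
Your proposal is correct and follows essentially the same route as the paper, which simply observes that the statement is a direct consequence of the half-line result \cite[Proposition 2.3]{MSSS} together with Lemma \ref{lem:indep_c}. You spell out the one point the paper leaves implicit — the locality of the slot decomposition to the right of a record — and your justification of it (records separate the soliton structure, so the Takahashi--Satsuma decomposition and hence the slot levels to the right of $s_{\infty}(\eta,-i)$ do not consult the configuration to its left) is the right one.
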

            \begin{proof}[Proof of Proposition \ref{prop:seat_slot_w}]
                This is a direct consequence of \cite[Proposition 2.3]{MSSS} and Lemma \ref{lem:indep_c}.
            \end{proof}
        
        Then, for any $k \in \N\cup\{\infty\}$, we define $\xi_{k}\left(\eta, \cdot \right) : \Z \to \Z $ and $s_{k}\left( \eta, \cdot \right) : \Z \to \Z \cup \{\infty\}$ as
            \begin{align}
                \xi_{k}\left( \eta, x \right) - \xi_{k}\left( \eta, x - 1 \right) &:= r\left(\eta,x\right) + \sum_{\ell \in \N} \sum_{\sigma \in \{\uparrow, \downarrow\}} \eta^{\sigma}_{k + \ell}\left( x \right)  \\
                &= 1 - \sum_{\ell = 1}^{k} \sum_{\sigma \in \{\uparrow, \downarrow\}} \eta^{\sigma}_{\ell}\left( x \right), \\
                \xi_{k}\left( \eta, s_{\infty}\left(\eta, 0\right) \right) &:= 0,
            \end{align}
        and 
            \begin{align}\label{def:s_k}
                s_{k}\left(\eta, x \right) := \min \left\{ y \in \Z \ ; \ \xi_{k}\left(\eta, y \right) = x \right\}.
            \end{align}
        Note that $s_{\infty}$ defined via \eqref{def:s_k} coincides with $s_{\infty}$ defined via \eqref{def:s_k_1}-\eqref{def:s_k_3}. 
        For later use, we note that thanks to Lemma \ref{lem:indep_c}, Lemma \ref{lem:sums} also holds for the whole-line case. 
            \begin{lemma}\label{lem:sums_w}
                Suppose that $\eta \in \Omega$. Then, for any $k,\ell \in \N$, $z, w \in \Z$, $z \le w$ and $\sigma \in \{{\uparrow,\downarrow}\}$, we have
                    \begin{align}
                        \sum_{y = z}^{w} \eta^{\sigma}_{k + \ell}\left( s_{k}\left(\eta, y \right) \right) = \sum_{y = s_{k}\left(\eta, z \right)}^{s_{k}\left(\eta, w \right)} \eta^{\sigma}_{k + \ell}\left( y \right).         
                    \end{align}
             \end{lemma}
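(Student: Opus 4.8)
The plan is to reuse, almost verbatim, the one–sentence argument that proves Lemma~\ref{lem:sums} in the half-line case; the only extra work is to check that the ingredients of that argument survive the passage to the whole line, and this is exactly what Lemma~\ref{lem:indep_c} supplies. So first I would record the structural description of $\xi_k$ and $s_k$ on $\Z$, then read off the identity.

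By Lemma~\ref{lem:indep_c} the carrier functions $\mathcal{W}_j(\eta,\cdot)$, hence the seat indicators $\eta^\sigma_j$ defined by \eqref{def:seatup}, \eqref{def:seatdown} and the record indicator $r$, are well defined on all of $\Z$; moreover at each site $x$ at most one of the numbers $\{\eta^\uparrow_j(x),\eta^\downarrow_j(x):j\in\N\}$ equals $1$, since the product forms in \eqref{def:seatup}, \eqref{def:seatdown} force $j$ to be the index of the first empty (resp. occupied) seat of the carrier just before $x$. Therefore $r(\eta,x)+\sum_{j\in\N}\sum_{\sigma}\eta^\sigma_j(x)=1$ at every $x$, so the increment $\xi_k(\eta,x)-\xi_k(\eta,x-1)=r(\eta,x)+\sum_{m\ge1}\sum_{\sigma}\eta^\sigma_{k+m}(x)$ lies in $\{0,1\}$ and equals $1$ exactly when $x$ is a record or a $(k+m,\sigma)$-seat with $m\ge1$. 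Consequently $s_k(\eta,x)=\min\{y:\xi_k(\eta,y)=x\}$ is the unique site, when finite, at which $\xi_k$ first reaches the value $x$; and if $\eta^\sigma_{k+\ell}(y)=1$ then $\xi_k$ jumps at $y$, so $y=s_k(\eta,\xi_k(\eta,y))$. With this in hand the identity follows exactly as on the half line: a $(k+\ell,\sigma)$-seat $y$ lies in $[s_k(\eta,z),s_k(\eta,w)]$ precisely when $y=s_k(\eta,y')$ for some $y'\in\{z,\dots,w\}$ — monotonicity of $y'\mapsto s_k(\eta,y')$ gives one inclusion, the previous remark gives the converse, and the boundary term $\eta^\sigma_{k+\ell}(s_k(\eta,z))$ occurs on both sides of the asserted equation — so discarding the zero terms in $\sum_{y=s_k(\eta,z)}^{s_k(\eta,w)}\eta^\sigma_{k+\ell}(y)$ leaves exactly $\sum_{y'=z}^{w}\eta^\sigma_{k+\ell}(s_k(\eta,y'))$.

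The point that needs the most care, and the only genuine difference from the half-line proof, is the well-definedness invoked above: here $\mathcal{W}_k(\eta,\cdot)$ is the $i\to\infty$ limit of $\mathcal{W}_k(\eta^{(i)},\cdot)$, so I must lean on Lemma~\ref{lem:indep_c} both for existence of the limit and for inheriting the ``at most one seat per site'' relation from the truncations $\eta^{(i)}$. If that bookkeeping becomes delicate, a clean alternative is to reduce outright to the half line: choose $i$ with $s_\infty(\eta,-i)\le s_k(\eta,z)$ (possible because $s_\infty(\eta,-i)\to-\infty$ for $\eta\in\Omega$), observe that $\eta^{(i)}$ is — after translating $s_\infty(\eta,-i)$ to the origin — a half-line configuration, apply Lemma~\ref{lem:sums} to it, and transfer the equality back using \eqref{eq:car_l_seat_w} and Lemma~\ref{lem:indep_c}, noting that the normalization $\xi_k(\eta,s_\infty(\eta,0))=0$ only shifts the index $y$ by a constant that cancels. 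One should also flag the edge case $s_k(\eta,w)=\infty$, which I expect is either excluded wherever the lemma is applied or absorbed into the convention $\min\emptyset=\infty$.
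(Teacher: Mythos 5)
Your argument is correct and follows essentially the same route as the paper, which simply observes that Lemma \ref{lem:indep_c} lets the one-line half-line proof of Lemma \ref{lem:sums} (all $(k+\ell,\sigma)$-seats in $[s_{k}(\eta,z),s_{k}(\eta,w)]$ occur among the sites $s_{k}(\eta,y)$, $z\le y\le w$) carry over verbatim to the whole line. Your additional verification that $\xi_{k}$ has $\{0,1\}$-valued increments and jumps exactly at such seats, and the fallback reduction to the half-line case by truncation, are just more explicit versions of the same bookkeeping.
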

        
        Finally, for any $k \in \N$ and $i \in \Z$, we define $\zeta_{k}(\eta, \cdot) : \Z \to \Z \cup \{\infty\}$ by \eqref{def:zeta}. Thanks to Proposition \ref{prop:seat_slot_w}, we see that our $\zeta$ coincides with the slot decomposition introduced in \cite{FNRW}, see also \cite[Proposition 2.3]{MSSS} for the half-line case. 
        
        The dynamics of the BBS $(\infty)$ can be linearized through $\zeta$, but an offset is required. {The following result is the whole-line version of \cite[Theorem 2.1]{MSSS}.}
            \begin{theorem}\label{thm:linear_whole}
                Suppose that $\eta \in \Omega$ and $s_{k}\left( \eta, i + 1 \right) < \infty$ for some $k \in \N$ and $i \in \Z$. Then, we have  
                    \begin{align}
                        \zeta_{k}\left(T\eta, i +  k + o_{k}\left(\eta\right) \right) = \zeta_{k}\left(\eta, i \right),
                    \end{align}
                where the offset $o_{k}(\eta)$ is given by 
                    \begin{align}
                        o_{k}\left(\eta\right) 
                        &:= s_{\infty}\left(\eta, 0\right) - s_{\infty}\left(T\eta, 0\right) + 2\sum_{y = s_{\infty}\left(\eta, 0\right)+ 1}^{0} \sum_{\ell = 1}^{k}  \eta^{\downarrow}_{\ell}\left(y\right) - 2\sum_{y = s_{\infty}\left(T\eta, 0\right)+1}^{0}\sum_{\ell = 1}^{k}  T\eta^{\uparrow}_{\ell}\left(y\right).
                    \end{align}
            \end{theorem}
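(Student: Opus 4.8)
The plan is to derive Theorem~\ref{thm:linear_whole} from the half-line linearization (Theorem~\ref{thm:linear} with $\ell=\infty$) by a cutting-and-reindexing argument. The offset $o_{k}$ should appear precisely because the index origin of $\zeta_{k}$ is pinned at the record $s_{\infty}(\eta,0)$, which is displaced by the dynamics, so the $-k$ shift of the half-line theorem acquires a correction measuring how $s_{\infty}(\cdot,0)$ and the seat counts around the origin move under $T$.

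Fix $k\in\N$ and $i\in\Z$ with $s_{k}(\eta,i+1)<\infty$; since $\xi_{k}(\eta,\cdot)$ is non-decreasing and tends to $-\infty$ as $x\to-\infty$, all the $s_{k}$'s used below are finite. The carrier is empty at every record (Remark~\ref{rem:record}) and $T\eta$ agrees with $\eta$ wherever the carrier is identically zero, so for $j$ large the point $a:=s_{\infty}(\eta,-j)$ is a record of both $\eta$ and $T\eta$ with $\eta=T\eta$ on $(-\infty,a]$. By Lemma~\ref{lem:indep_c}, \eqref{eq:cut_s} and \eqref{eq:car_l_seat_w}, replacing $\eta$ by $\eta^{(j)}$ (see \eqref{def:eta_i}) changes neither $\mathcal{W}_{k}$ nor the seat configuration nor $\zeta_{k}(\cdot,i)$ on $[a,\infty)$, and $T(\eta^{(j)})$ agrees with $T\eta$ on $[a,\infty)$, so $s_{\infty}(T(\eta^{(j)}),0)=s_{\infty}(T\eta,0)$. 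Translating so that $a-1$ becomes the left boundary turns $\eta^{(j)}$ into a half-line configuration $\hat\eta\in\{0,1\}^{\N}$ whose carrier starts empty at the boundary, and, just as in Lemma~\ref{lem:indep_c}, its half-line evolution is the translate of $T(\eta^{(j)})$.

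Now apply Theorem~\ref{thm:linear}: $\zeta_{k}(T\hat\eta,m)=\zeta_{k}(\hat\eta,m-k)$ on the half line. Using $\xi_{k}(\eta,x)=(x-s_{\infty}(\eta,0))-\sum_{y=s_{\infty}(\eta,0)+1}^{x}\sum_{\ell=1}^{k}\sum_{\sigma}\eta^{\sigma}_{\ell}(y)$ and the analogous half-line formula for $\hat\eta$ normalized at $a$, the half-line $\xi_{k}$ of $\hat\eta$ exceeds the whole-line $\xi_{k}$ of $\eta$ by a constant $D_{k}(\eta)$ — namely the half-line $\xi_{k}$ of $\hat\eta$ at the position corresponding to $s_{\infty}(\eta,0)$ — and since the seat configurations agree on $[a,\infty)$ one gets $\zeta_{k}(\eta,i)=\zeta_{k}(\hat\eta,i+D_{k}(\eta))$ and, for the same cut $a$, $\zeta_{k}(T\eta,i')=\zeta_{k}(T\hat\eta,i'+D_{k}(T\eta))$. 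Combining with the half-line shift gives $\zeta_{k}(T\eta,i+k+o_{k}(\eta))=\zeta_{k}(\eta,i)$ with $o_{k}(\eta)=D_{k}(\eta)-D_{k}(T\eta)$. To identify this with the stated $o_{k}$ I would use that $W_{k}$ vanishes at the records $a$, $s_{\infty}(\eta,0)$, $s_{\infty}(T\eta,0)$, together with the telescoping $\sum_{\ell=1}^{k}(\eta^{\uparrow}_{\ell}-\eta^{\downarrow}_{\ell})=\mathcal{W}_{k}(\eta,\cdot)-\mathcal{W}_{k}(\eta,\cdot-1)$ (Lemma~\ref{lem:1_whole}), which turns $\sum_{\ell\le k}\sum_{\sigma}\eta^{\sigma}_{\ell}$ into $2\sum_{\ell\le k}\eta^{\downarrow}_{\ell}$ on any record-to-record block and into $2\sum_{\ell\le k}T\eta^{\uparrow}_{\ell}$ for $T\eta$; after this the $a$-dependence cancels, and $D_{k}(\eta)-D_{k}(T\eta)$ equals the claimed $o_{k}(\eta)$ precisely when
\[
\sum_{z=a+1}^{0}\sum_{\ell=1}^{k}\eta^{\downarrow}_{\ell}(z)=\sum_{z=a+1}^{0}\sum_{\ell=1}^{k}T\eta^{\uparrow}_{\ell}(z).
\]

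The step I expect to be the main obstacle is this last identity. Since $\sum_{\ell=1}^{k}\eta^{\downarrow}_{\ell}(z)=(W_{k}(\eta,z-1)-W_{k}(\eta,z))^{+}$ and $\sum_{\ell=1}^{k}\eta^{\uparrow}_{\ell}(z)=(W_{k}(\eta,z)-W_{k}(\eta,z-1))^{+}$, it asserts that $W_{k}(\eta,\cdot)$ has exactly as many down-steps in $(a,0]$ as $W_{k}(T\eta,\cdot)$ has up-steps there; both carriers vanish at $a$, so only one step of the dynamics confined to the fixed window $(a,0]$ is at stake. I would establish it by tracking $W_{k}$ across one step of $T$ on $(a,0]$: the carrier of $T\eta$ reads the carrier of $\eta$ backwards, matching the delivery (down) events of $\eta$ with the loading (up) events of $T\eta$, and in the soliton picture of Remark~\ref{rem:seat_sol} every soliton moves strictly to the right, so for $a$ far enough to the left no soliton crosses the left endpoint of $(a,0]$ and the bijection is transparent. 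None of the reductions depends on $i$, and $o_{k}(\eta)$ as computed does not depend on the cut $j$, which finishes the proof.
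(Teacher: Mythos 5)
Your overall route—cut $\eta$ at a far-left record, reduce to the half-line Theorem~\ref{thm:linear}, and recover $o_k$ as a difference of re-indexing constants—is genuinely different from the paper's, which works directly on the whole line: it first establishes the pointwise duality $\eta^{\downarrow}_{k}(x)=T\eta^{\uparrow}_{k}(x)$ (Proposition~\ref{prop:flip}, adapted from the half-line proof in [MSSS]) and then shows that $\xi_{k}(T\eta,x)-\xi_{k}(\eta,x)-W_{k}(T\eta,x)-W_{k}(\eta,x)$ is constant in $x$ and equal to $o_{k}(\eta)$. Your plan is workable in outline, but as written it has two genuine gaps, both concentrated exactly where the paper invokes Proposition~\ref{prop:flip}.

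First, the identity you isolate as ``the main obstacle,'' $\sum_{z=a+1}^{0}\sum_{\ell=1}^{k}\eta^{\downarrow}_{\ell}(z)=\sum_{z=a+1}^{0}\sum_{\ell=1}^{k}T\eta^{\uparrow}_{\ell}(z)$, is precisely a summed form of Proposition~\ref{prop:flip}; it is true, but your proposed proof of it fails. A general $\eta\in\Omega$ has balls arbitrarily far to the left, so no matter how far left you push $a=s_{\infty}(\eta,-j)$ there are solitons immediately to its left, and after one time step they do straddle or cross $a$ (e.g.\ a $1$-soliton occupying $\{a-2,a-1\}$ moves to $\{a-1,a\}$). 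For the same reason your assertion that $a$ is ``a record of both $\eta$ and $T\eta$ with $\eta=T\eta$ on $(-\infty,a]$'' is false: records of $\eta$ are in general not records of $T\eta$, and the carrier $W_{\infty}(T\eta,\cdot)$ need not vanish at $a$. So the matching of down-events of $\eta$ with up-events of $T\eta$ on a fixed window cannot be obtained by confining the dynamics to that window; it requires the site-by-site identity of Proposition~\ref{prop:flip} (or an equivalent), which is the real content of the theorem and which your proposal does not supply. Second, and for the same reason, the reduction of $T\eta$ to a half-line configuration is not as clean as claimed: since $W_{\infty}(T\eta,a)$ may be positive, the seat-number configuration of the restarted half-line evolution $T\hat\eta$ differs from that of $T\eta$ on an initial segment of $[a,\infty)$ (up to the first record of $T\eta$ after $a$), so the identification $\zeta_{k}(T\eta,i')=\zeta_{k}(T\hat\eta,i'+D_{k}(T\eta))$ with $D_{k}(T\eta)$ read off from $T\eta$'s own records and seats needs an additional correction term that you have not accounted for. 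Both gaps are repairable, but only by importing the duality $\eta^{\downarrow}_{k}=T\eta^{\uparrow}_{k}$, at which point the paper's direct argument is shorter.
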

            \begin{remark}\label{rem:offset_ex}
                {If $\eta(x)=0$ for all $x\leq 0$, then the offset in the above formula is
$0$. In this case there is no soliton on the left of the origin, and no soliton crosses the origin from the left. Hence the formula
coincides with the formula in the half-line case. For a general whole-line configuration, however, solitons may pass through the
origin. When this happens, the corresponding $(k,\sigma)$-seats are moved
from the left-hand side of the origin to the right-hand side. Hence the
enumeration of the seat number configuration with respect to the origin
changes, and the offset term is needed to describe this change. We also note that by \eqref{eq:car_l_seat_w} and \eqref{eq:car_n_seat_w}, if $\mu$ is an invariant measure for the BBS, then the expectation of $o_{k}(\eta)$ with respect to $\mu$ is 
    \begin{align}
        \E_{\mu}\left[ o_{k}\left(\eta\right) \right] = -2\E_{\mu}\left[ \sum_{\ell = 1}^{k} \mathcal{W}_{\ell}\left(0\right) \right] = -2\E_{\mu}\left[  W_{k}\left(0\right) \right].
    \end{align}
The relationship between the expectation of the offset and the effective velocity of $k$-solitons under certain invariant measures will be explained in Remark \ref{rem:offset_vel} and Appendix \ref{app:eff_velo}.}
                
            \end{remark}
            \begin{remark}
                Since we compute the offset with respect to the origin while \cite{FNRW} computes the offset with respect to the tagged $k$-slots, the values of the offsets are different. The offset in \cite{FNRW} is independent of $\left(\zeta_{\ell}\right)_{\ell \le k}$, but the formula \cite[$(3.1)$]{FNRW} is not very easy to compute. On the other hand, our offset $o_{k}$ may depend on $\zeta_{\ell}$ for some $\ell \le k$, but a simple formula is obtained. 
            \end{remark}

             The proof of Theorem \ref{thm:linear_whole} will be given in Section \ref{subsec:linear_whole}. 
             \begin{remark}\label{rem:two_bi}
                 Note that $\zeta$ can be considered as a map $\zeta : \Omega_{r} \to \Z_{\ge 0}^{\mathbb{\N} \times \Z}$, and it is shown in \cite{FNRW} that $\zeta$ is a bijection between $\Omega_{r} \subset \Omega$ and $\bar{\Omega} \subset \Z_{\ge 0}^{\mathbb{N} \times \Z}$, where $\Omega_{r}$ and $\bar{\Omega}$ are defined as
                        \begin{align}
                            \Omega_{r} &:= \left\{ \eta \in \Omega \ ; \ \left| s_{\infty}\left(\eta, i\right) \right| < \infty \text{ for any } i, \ s_{\infty}\left(\eta, 0\right) = 0 \right\}, \label{def:omega_r} \\
                            \bar{\Omega} &:= \left\{ {\bar{\zeta}} \in  \Z_{\ge 0}^{\mathbb{N} \times \Z} \ ; \ \sum_{k \in \N} {\bar{\zeta}}_{k}\left(i\right) < \infty \ \text{for any } i \right\}.
                        \end{align} 
                \end{remark}

    \subsection{Proof of Theorem \ref{thm:linear_whole}}\label{subsec:linear_whole}
        
        To show Theorem \ref{thm:linear_whole}, we need the following property of the seat number configuration. 
    
            \begin{proposition}\label{prop:flip}
                For any $\eta \in \Omega$, $k \in \N$ and $x \in \Z$, we have
                    \begin{align}\label{eq:flip_w}
                        \eta^{\downarrow}_{k}(x) =  T\eta^{\uparrow}_{k}(x).
                    \end{align}
                In addition, $\eta^{\uparrow}_{k}(x) = 1$ implies
                    \begin{align}
                        r\left(T\eta, x \right) +  \sum_{\ell \ge k} T\eta^{\downarrow}_{\ell}(x) = 1.
                    \end{align}
            \end{proposition}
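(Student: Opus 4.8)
The plan is to prove, by induction on the site, a single combinatorial invariant linking the carriers with seats of $\eta$ and of $T\eta$, and then to read both assertions off from it. Fixing $x$ and using Lemma~\ref{lem:indep_c} (applied to $\eta$ and to $T\eta$) together with \eqref{eq:dynamics}, the restriction of $\eta$ to $[s_{\infty}(\eta,-i),\infty)$ with $i$ large determines both sides of \eqref{eq:flip_w} at $x$ as well as the quantities in the second identity; after translating we may therefore assume $\Omega=\{0,1\}^{\N}$, with the carriers with seats of $\eta$ and of $T\eta$ both starting empty at the origin. For a configuration $\xi$ and $x\ge 0$ put $S_{\xi}(x):=\{k\in\N:\mathcal W_{k}(\xi,x)=1\}$, so that $|S_{\xi}(x)|=W_{\infty}(\xi,x)$ by \eqref{eq:car_l_seat}. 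The carrier-with-seats rule says that $S_{\xi}(x)=S_{\xi}(x-1)\cup\{\min(\N\setminus S_{\xi}(x-1))\}$ if $\xi(x)=1$, that $S_{\xi}(x)=S_{\xi}(x-1)\setminus\{\min S_{\xi}(x-1)\}$ if $\xi(x)=0$ and $S_{\xi}(x-1)\ne\emptyset$, and that $S_{\xi}(x)=\emptyset$ otherwise; these three alternatives are marked respectively by $\xi^{\uparrow}_{k}(x)=1$ with $k=\min(\N\setminus S_{\xi}(x-1))$, by $\xi^{\downarrow}_{k}(x)=1$ with $k=\min S_{\xi}(x-1)$, and by $r(\xi,x)=1$. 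Since $T\eta(x)=\eta(x)+|S_{\eta}(x-1)|-|S_{\eta}(x)|$, we have $T\eta(x)=1$ exactly when $\eta(x)=0$ and $S_{\eta}(x-1)\ne\emptyset$, and $T\eta(x)=0$ otherwise.

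The invariant I would carry is: for every $x$,
\begin{align}\label{eq:plan-flip}
S_{\eta}(x)\cap S_{T\eta}(x)=\emptyset
\qquad\text{and}\qquad
\{1,\dots,\max S_{\eta}(x)\}\setminus S_{\eta}(x)\subseteq S_{T\eta}(x),
\end{align}
the second inclusion being vacuous when $S_{\eta}(x)=\emptyset$. The main difficulty of the whole argument is to arrive at \eqref{eq:plan-flip}: the tempting symmetric guess — that $S_{\eta}(x)\sqcup S_{T\eta}(x)$ is always an interval $\{1,\dots,m\}$ — is false (for $\eta=11100001000\cdots$ one checks $S_{\eta}(8)=\{1\}$, $S_{T\eta}(8)=\{3\}$, so $S_{T\eta}$ develops a gap), and one has to isolate instead the asymmetric statement \eqref{eq:plan-flip}, in which it is the carrier of $T\eta$, not that of $\eta$, that may hold a shifted block of seats.

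Granting \eqref{eq:plan-flip}, one argues as follows. If $\eta^{\downarrow}_{k}(x)=1$ then $k=\min S_{\eta}(x-1)$ and $\eta(x)=0$, so $T\eta(x)=1$; by \eqref{eq:plan-flip} at $x-1$ we get $\{1,\dots,k-1\}\subseteq S_{T\eta}(x-1)$, while disjointness gives $k\notin S_{T\eta}(x-1)$, so the carrier of $T\eta$ puts its ball into seat $\min(\N\setminus S_{T\eta}(x-1))=k$, i.e.\ $T\eta^{\uparrow}_{k}(x)=1$. Conversely, $T\eta^{\uparrow}_{k}(x)=1$ forces $\eta(x)=0$, $S_{\eta}(x-1)\ne\emptyset$, $\{1,\dots,k-1\}\subseteq S_{T\eta}(x-1)$ and $k\notin S_{T\eta}(x-1)$; disjointness then gives $\{1,\dots,k-1\}\cap S_{\eta}(x-1)=\emptyset$, so $\min S_{\eta}(x-1)\ge k$, and if we had $k\notin S_{\eta}(x-1)$ then $k$ would be a hole of $S_{\eta}(x-1)$ strictly below its maximum and hence, by \eqref{eq:plan-flip}, would lie in $S_{T\eta}(x-1)$ — a contradiction; hence $\min S_{\eta}(x-1)=k$ and $\eta^{\downarrow}_{k}(x)=1$. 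This is \eqref{eq:flip_w}. For the second identity, $\eta^{\uparrow}_{k}(x)=1$ forces $\eta(x)=1$ (so $T\eta(x)=0$) and $\{1,\dots,k-1\}\subseteq S_{\eta}(x-1)$, whence $\{1,\dots,k-1\}\cap S_{T\eta}(x-1)=\emptyset$ by disjointness; so at $x$ the carrier of $T\eta$ is either empty — then $r(T\eta,x)=1$ — or releases the seat $\min S_{T\eta}(x-1)\ge k$ — then $T\eta^{\downarrow}_{\ell}(x)=1$ for some $\ell\ge k$ — and in both cases $r(T\eta,x)+\sum_{\ell\ge k}T\eta^{\downarrow}_{\ell}(x)=1$.

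Finally, one proves \eqref{eq:plan-flip} by induction: the case $x=0$ is trivial, and the inductive step splits into the four cases given by $\eta(x)\in\{0,1\}$ and by whether $S_{\eta}(x-1)=\emptyset$, in each of which the seat rule, the formula for $T\eta(x)$, and \eqref{eq:plan-flip} at $x-1$ pin down $S_{\eta}(x)$ and $S_{T\eta}(x)$ explicitly and a short check returns \eqref{eq:plan-flip} at $x$. I expect the only case needing attention to be $\eta(x)=0$ with $S_{\eta}(x-1)\ne\emptyset$: with $k:=\min S_{\eta}(x-1)$, \eqref{eq:plan-flip} at $x-1$ gives $\{1,\dots,k-1\}\subseteq S_{T\eta}(x-1)$ and $k\notin S_{T\eta}(x-1)$, so $\min(\N\setminus S_{T\eta}(x-1))=k$, and since $T\eta(x)=1$ we get $S_{\eta}(x)=S_{\eta}(x-1)\setminus\{k\}$ and $S_{T\eta}(x)=S_{T\eta}(x-1)\cup\{k\}$; disjointness at $x$ is immediate, and since $\max S_{\eta}(x)=\max S_{\eta}(x-1)$,
\begin{align}
\{1,\dots,\max S_{\eta}(x)\}\setminus S_{\eta}(x)
=\bigl(\{1,\dots,\max S_{\eta}(x-1)\}\setminus S_{\eta}(x-1)\bigr)\cup\{k\}
\subseteq S_{T\eta}(x-1)\cup\{k\}=S_{T\eta}(x).
\end{align}
The remaining cases are analogous and easier, so that the whole difficulty lies in recognizing that the symmetric invariant fails and that \eqref{eq:plan-flip} is its correct asymmetric replacement.
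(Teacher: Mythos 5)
Your argument is correct, but it takes a genuinely different route from the paper: the paper does not reprove the identity at all, it reduces the whole-line case to the half-line case via Lemma \ref{lem:indep_c} and the truncations $\eta^{(i)}$, and then defers to the proof of \cite[Proposition 3.1]{MSSS} (which runs an induction on the auxiliary functions $\widetilde{\mathcal W}_{k}(\eta,x)=1-\mathcal W_{k}(T\eta,x)$, observing that only the inequality $\sum_{y}(1-2T\eta(y))\le 0$ from each record onward is used, not the boundary condition). Your proof performs the same reduction to the half-line but then is self-contained, replacing the citation by the invariant that $S_{\eta}(x)$ and $S_{T\eta}(x)$ are disjoint while every hole of $S_{\eta}(x)$ below its maximum lies in $S_{T\eta}(x)$; this is essentially the set-theoretic content of the MSSS relation $\mathcal W_{k}(T\eta,x)\le 1-\mathcal W_{k}(\eta,x)$ together with the precise description of where equality holds, and your explicit counterexample to the symmetric "interval" guess is a worthwhile clarification that the paper does not offer. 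The derivations of \eqref{eq:flip_w} and of the second identity from the invariant are complete and correct. One caution: your claim that the case $\eta(x)=0$, $S_{\eta}(x-1)\neq\emptyset$ is "the only case needing attention" is slightly optimistic. In the case $\eta(x)=1$ with $S_{T\eta}(x-1)\neq\emptyset$, writing $k=\min(\N\setminus S_{\eta}(x-1))$ and $j=\min S_{T\eta}(x-1)$, you must rule out that the removed seat $j$ is itself a hole of $S_{\eta}(x)$; this follows because $j>k$ forces $k\notin S_{T\eta}(x-1)$, hence by the induction hypothesis $S_{\eta}(x-1)$ has no holes at all and $S_{\eta}(x)=\{1,\dots,k\}$, while $j=k$ is harmless since $k\in S_{\eta}(x)$. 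This step does close, so there is no gap, but it uses the induction hypothesis a second time and deserves to be written out.
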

            \begin{proof}[Proof of Proposition \ref{prop:flip}]
                First, we note that for the half-line case, the assertions of this proposition have been proven in \cite[Proposition 3.1]{MSSS}. In the proof of \cite[Proposition 3.1]{MSSS}, the boundary condition of the function $\widetilde{\mathcal{W}}_{k}\left(\eta, x\right) := 1 - \mathcal{W}_{k}\left(T\eta, x\right)$ is given by $\widetilde{\mathcal{W}}_{k}\left(\eta, 0\right) = 1$ for any $k \in \N$, but one can check that the proof does not depend on the boundary condition. Instead, the following condition $\sum_{y = 1}^{x} \left( 1 - 2T\eta\left(y\right) \right) \le 0, \ x \in \N$ is essential, and this condition trivially holds for the half-line case. 
                For the whole-line case, the following inequality
                    \begin{align}
                        \sum_{y = s_{\infty}\left(\eta, i\right) + 1}^{x} \left( 1 - 2T\eta\left(y\right) \right) \le 0,
                    \end{align}
                holds for any $i \in \Z$ and $x \ge s_{\infty}\left(\eta, i\right)$. Therefore, by following the strategy of \cite[Proposition 3.1]{MSSS}, we obtain \eqref{prop:flip} for $\eta^{(i)}$. Hence, from Lemma \ref{lem:indep_c}, by taking the limit $i \to \infty$, we have \eqref{prop:flip} for any $\eta \in \Omega$. 
            \end{proof}
            \begin{proof}[Proof of Theorem \ref{thm:linear_whole}]

                {The argument is almost the same as that in the proof of
\cite[Theorem~2.1]{MSSS}. For this reason, we only give a sketch of the proof.}
                
                First we note that from Lemma \ref{lem:1_whole}, $\zeta_{k}\left(\eta, i\right)$ can be represented as
                    \begin{align}
                        \zeta_{k}\left(\eta, i\right) &= \left| \left\{ x \in \Z \ ; \ \eta^{\uparrow}_{k}\left(x\right) = 1, \ \xi_{k}\left(\eta,x\right) = i \right\}\right| \\
                        & \quad - \left| \left\{ x \in \Z \ ; \ \eta^{\uparrow}_{k+1}\left(x\right) = 1, \ \xi_{k}\left(\eta,x\right) = i +1 \right\}\right| \\
                        &= \left| \left\{ x \in \Z \ ; \ \eta^{\downarrow}_{k}\left(x\right) = 1, \ \xi_{k}\left(\eta,x\right) = i \right\}\right| \\
                        & \quad - \left| \left\{ x \in \Z \ ; \ \eta^{\downarrow}_{k+1}\left(x\right) = 1, \ \xi_{k}\left(\eta,x\right) = i +1 \right\}\right|.
                    \end{align}
                    
                Now we show that for any $k \in \N$, the following quantity
                    \begin{align}
                        \xi_{k}\left(T\eta,x\right) - \xi_{k}\left(\eta,x\right) - W_{k}\left(T\eta, x\right) - W_{k}\left(\eta, x\right)
                    \end{align}
                is independent of $x \in \Z$ and equal to $o_{k}\left(\eta\right)$.  
                {We only consider the case $x \ge s_{\infty}\left(\eta, 0\right)$. The case $x \le s_{\infty}\left(\eta, 0\right) - 1$ can be shown in a similar way.} From \eqref{eq:car_l_seat_w}, \eqref{eq:car_n_seat_w} and proposition \ref{prop:flip}, we have
                    {\begin{align}
                        &\xi_{k}\left(T\eta,x\right) - \xi_{k}\left(\eta,x\right) - W_{k}\left(T\eta, x\right) - W_{k}\left(\eta, x\right) \\
                        &= \left(x - s_{\infty}\left(T\eta, 0\right) -  \sum_{\ell = 1}^{k} \sum_{y = s_{\infty}\left(T\eta, 0\right) + 1}^{x} \sum_{\sigma \in \{\uparrow, \downarrow\}} T\eta^{\sigma}_{\ell}\left(y\right) \right) \\
                        & \quad - \left(x - s_{\infty}\left(\eta, 0\right) -  \sum_{\ell = 1}^{k}\sum_{y = s_{\infty}\left(\eta, 0\right) + 1}^{x} \sum_{\sigma \in \{\uparrow, \downarrow\}} \eta^{\sigma}_{\ell}\left(y\right) \right) & & \text{{(by the def. of $\xi_{k}$)}} \\
                        & \quad - \sum_{\ell = 1}^{k} \sum_{y = s_{\infty}\left(T\eta, 0\right) + 1}^{x}\left( T\eta^{\uparrow}_{\ell}\left(y\right) -T\eta^{\downarrow}_{\ell}\left(y\right) \right) \\
                        & \quad - \sum_{\ell = 1}^{k} \sum_{y = s_{\infty}\left(\eta, 0\right) + 1}^{x}\left( \eta^{\uparrow}_{\ell}\left(y\right) -\eta^{\downarrow}_{\ell}\left(y\right) \right) & & \text{{(by \eqref{eq:car_l_seat_w}, \eqref{eq:car_n_seat_w})}} \\
                        &= s_{\infty}\left(\eta, 0\right) - s_{\infty}\left(T\eta, 0\right) \\
                        & \quad + 2 \sum_{\ell = 1}^{k} \left( \sum_{y = s_{\infty}\left(\eta, 0\right) + 1}^{x} \eta^{\downarrow}_{\ell}\left(y\right) -  \sum_{y = s_{\infty}\left(T\eta, 0\right) + 1}^{x}  T\eta^{\uparrow}_{\ell}\left(y\right) \right) & & \text{{(by Proposition \ref{prop:flip})}} \\
                        &= s_{\infty}\left(\eta, 0\right) - s_{\infty}\left(T\eta, 0\right) \\
                        & \quad + 2 \sum_{\ell = 1}^{k} \left( \sum_{y = s_{\infty}\left(\eta, 0\right) + 1}^{0} \eta^{\downarrow}_{\ell}\left(y\right) -  \sum_{y = s_{\infty}\left(T\eta, 0\right) + 1}^{0}  T\eta^{\uparrow}_{\ell}\left(y\right) \right) & & \text{{(by Proposition \ref{prop:flip})}} \\
                        &= o_{k}\left(\eta\right).
                    \end{align}
                We note that by setting $x = s_{\infty}(T\eta,0) , s_{\infty}(\eta,0)$, $o_{k}\left(\eta\right)$ can be represented as 
                    \begin{align}
                        o_{k}\left(\eta\right) 
                        &= - \xi_{k}\left(\eta, s_{\infty}(T\eta,0) \right) - W_{k}\left(\eta, s_{\infty}(T\eta,0) \right) \\
                        &= \xi_{k}\left(T\eta, s_{\infty}(\eta,0) \right) - W_{k}\left(T\eta, s_{\infty}(\eta,0) \right).
                    \end{align}}
                    
                In addition, from \eqref{eq:car_l_seat_w}, \eqref{eq:car_n_seat_w}, Lemma \ref{lem:1_whole} and Proposition \ref{prop:flip}, if $x = s_{k - 1}\left(\eta, i\right)$ for some $i \in \Z$ and $r\left(\eta, x\right) = 0$, then we obtain 
                    \begin{align}
                        W_{k}\left(\eta, x\right) + W_{k}\left(T\eta, x\right) = k.
                    \end{align}
                {Actually,  if $\eta\left(x\right) = 1$, then by Proposition \ref{prop:flip}, $x$ is a record or $(\ell,\downarrow)$-seat in $T\eta$ where $\ell \ge k - 1$. Hence by  \eqref{eq:car_l_seat_w}, \eqref{eq:car_n_seat_w} and Lemma \ref{lem:1_whole}, $W_{k}\left(\eta, x\right) = k$ and $W_{k}\left(T\eta, x\right) = 0$. Similarly, if $\eta\left(x\right) = 0$, then $W_{k}\left(\eta, x\right) = 0$ and $W_{k}\left(T\eta, x\right) = k$.}
                
                {From the above, if $x = s_{k - 1}\left(\eta, i\right)$ for some $i \in \Z$ and $r\left(\eta, x\right) = 0$, then we get}
                    \begin{align}\label{eq:Txi_xi}
                        \xi_{k}\left(T\eta,x\right) - \xi_{k}\left(\eta,x\right) = k + o_{k}\left(\eta\right).
                    \end{align}
                {Therefore} we have
                    \begin{align}
                        &\zeta_{k}\left(\eta, i \right) \\
                        &= \left| \left\{ x \in \Z \ ; \ \eta^{\downarrow}_{k}\left(x\right) = 1, \ \xi_{k}\left(\eta,x\right) = i  \right\}\right| - \left| \left\{ x \in \Z \ ; \ \eta^{\downarrow}_{k+1}\left(x\right) = 1, \ \xi_{k}\left(\eta,x\right) = i + 1\right\}\right|  \\
                        &= \left| \left\{ x \in \Z \ ; \ T\eta^{\uparrow}_{k}\left(x\right) = 1, \ \xi_{k}\left(T\eta,x\right) = i + k + o_{k}\left(\eta\right)  \right\}\right| \\
                        & \quad - \left| \left\{ x \in \Z \ ; \ T\eta^{\uparrow}_{k+1}\left(x\right) = 1, \ \xi_{k}\left(T\eta,x\right) = i + k + o_{k}\left(\eta\right) + 1  \right\}\right| \quad \text{{(by Prop. \ref{prop:flip} and \eqref{eq:Txi_xi})}} \\
                        &= \zeta_{k}\left(T\eta, i + k + o_{k}\left(\eta\right)\right),
                    \end{align}
                and thus Theorem \ref{thm:linear_whole} is proved.

            \end{proof}
        
    \subsection{The \texorpdfstring{$k$}{k}-skip map on the whole-line}
    
        In this subsection, we will show how the propositions stated in section \ref{sec:seat_skip} can be generalized.
        For the whole-line case, we define the $k$-skip map $\Psi_{k} : \Omega \to \Omega$ as
            \begin{align}
                \Psi_{k}\left( \eta \right)(x) := \eta\left(s_{k}\left(\eta, x + \xi_{k}\left(\eta, 0\right) \right) \right).
            \end{align}
        {We note that the shift by $\xi_k(\eta,0)$ in the definition of $\Psi_k$
is needed because the origin is used as the reference point. Indeed,
$\xi_k(\eta,0)$ counts the number of sites which remain between the
$0$-th record $s_\infty(\eta,0)$ and the origin after removing the sites
corresponding to the seats numbered $1,\ldots,k$. Hence the $0$-th record
is moved closer to the origin after the $k$-skip procedure. This is the reason why the term $\xi_k(\eta,0)$ appears in the definition of
$\Psi_k$, see Figure~\ref{fig:shift-xi0} for example.} 

% preamble:
% \usetikzlibrary{decorations.pathreplacing}

\begin{figure}[t]
\centering
\begin{tikzpicture}[
    x=.9cm,y=1cm,
    lab/.style={font=\small},
    removed/.style={red, very thick},
    rec/.style={fill=black, draw=black},
    kept/.style={fill=blue!70!black, draw=blue!70!black}
]

%========================
% top row: eta
%========================
\def\ytop{1.5}
\draw[gray] (-7.5,\ytop) -- (1.5,\ytop);
\node[lab,left] at (-7.8,\ytop) {$\eta$};

\foreach \x in {-7,-6,-5,-4,-3,-2,-1,0,1}{
    \draw (\x,\ytop) circle (2pt);
}

% 0-th record
%\fill[rec] (-7,\ytop) rectangle ++(0.14,0.14);
%\node[lab,above] at (-7,\ytop+0.22) {$s_\infty(\eta,0)$};
\node[lab,below] at (-7,\ytop-0.28) {$s_\infty(\eta,0)$};

% origin
\node[lab,below] at (0,\ytop-0.28) {$0$};

% removed sites:
% one pair of (1,up)/(1,down)-seats and
% one additional (1,up)-seat from a soliton crossing the origin
\foreach \x in {-6,-4,-2}{
    \draw[removed] (\x-0.12,\ytop-0.12) -- (\x+0.12,\ytop+0.12);
    \draw[removed] (\x-0.12,\ytop+0.12) -- (\x+0.12,\ytop-0.12);
}

% remaining sites
\foreach \x in {-5,-3,-1,0}{
    \fill[kept] (\x,\ytop) circle (2.2pt);
}

%\node[lab,red] at (-3.8,\ytop+0.45) {removed \((1,\sigma)\)-seats};

%========================
% bottom row: Psi_1(eta)
%========================
\def\ybot{0}
\draw[gray] (-4.5,\ybot) -- (1.5,\ybot);
\node[lab,left] at (-4.8,\ybot) {$\Psi_1(\eta)$};

\foreach \x in {-4,-3,-2,-1,0,1}{
    \draw (\x,\ybot) circle (2pt);
}

% shifted 0-th record
%\fill[rec] (-4,\ybot) rectangle ++(0.14,0.14);
\node[lab,below] at (-4,\ybot-0.28) {$-\xi_1(\eta,0)$};
\node[lab,below] at (0,\ybot-0.28) {$0$};

% brace
%\draw[decorate,decoration={brace,mirror,amplitude=4pt}](-4,\ybot-1) -- (0,\ybot-1)node[midway,below=5pt,lab] {$\xi_1(\eta,0)=4$};

\end{tikzpicture}
\caption{A schematic explanation of the shift by \(\xi_1(\eta,0)\) in the
definition of \(\Psi_1\). In this example, $\eta(s_{\infty}(\eta,0)) \dots \eta(0) = 01100111 \dots,$ and the removed sites are \((1,\sigma)\)-seats lying between $s_{\infty}(\eta,0)$ and the origin.}
\label{fig:shift-xi0}
\end{figure}

        Recall that $\eta^{(i)}$ is defined in \eqref{def:eta_i}. As we observed in Lemma \ref{lem:indep_c}, we also have the following. 
            \begin{lemma}\label{lem:indep_p}
                Suppose that $\eta \in \Omega$. Then, for any $k \in \N$, $i \in \Z_{\ge 0}$ and 
                
                \noindent $x \ge \xi_{k}\left(\eta, s_{\infty}\left(\eta, -i \right)\right) - \xi_{k}\left(\eta, 0\right)$, we have
                    \begin{align}\label{eq:indep_p}
                        \Psi_{k}\left(\eta^{(i)}\right)(x) = \Psi_{k}\left(\eta^{(i+1)}\right)(x).
                    \end{align}
                In particular, for any $x \in \Z$ we have
                    \begin{align}
                        \Psi_{k}\left(\eta\right)(x) = \lim_{i \to \infty} \Psi_{k}\left(\eta^{(i)}\right)(x).
                    \end{align}
            \end{lemma}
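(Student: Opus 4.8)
The plan is to show that, to the right of the cutoff $s_{\infty}(\eta,-i)$, every ingredient of the definition of $\Psi_{k}$ — the function $\xi_{k}(\,\cdot\,,\cdot)$, the positions $s_{k}(\,\cdot\,,\cdot)$, and the underlying configuration — is the same for $\eta^{(i)}$, $\eta^{(i+1)}$ and $\eta$, and that the site $s_{\infty}(\eta,-i)$ separates this region off cleanly.

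First I would record the preliminary facts. Each $\eta^{(i)}$ lies in $\Omega$, since replacing $\eta(y)$ by $0$ for $y<s_{\infty}(\eta,-i)$ only decreases the running averages in the defining $\varlimsup$. Moreover $s_{\infty}(\eta^{(i)},-j)=s_{\infty}(\eta,-j)$ for $0\le j\le i$ by \eqref{eq:cut_s} (together with the fact that prepending $0$'s to the left does not move $s_{\infty}(\,\cdot\,,0)$); in particular $s_{\infty}(\eta^{(i)},0)=s_{\infty}(\eta,0)$, whence $\xi_{k}(\eta^{(i)},s_{\infty}(\eta,0))=0=\xi_{k}(\eta,s_{\infty}(\eta,0))$. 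Next, by Lemma \ref{lem:indep_c} (iterated, so that $\mathcal{W}_{\ell}(\eta,x)=\mathcal{W}_{\ell}(\eta^{(i)},x)=\mathcal{W}_{\ell}(\eta^{(i+1)},x)$ for all $x\ge s_{\infty}(\eta,-i)$) together with the fact that \eqref{def:seatup} and \eqref{def:seatdown} express the seat configuration at a site $x$ through $\eta(x)$ and $\mathcal{W}_{\ell}(\eta,x-1)$, $1\le\ell\le k$, the record function $r$ and the seat configurations agree for $\eta^{(i)}$, $\eta^{(i+1)}$ and $\eta$ at every $x\ge s_{\infty}(\eta,-i)+1$. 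Since the increments of $\xi_{k}$ are sums of precisely these quantities, summing increments in both directions away from $s_{\infty}(\eta,0)$ gives
\begin{align}\label{eq:planxi}
    \xi_{k}\left(\eta^{(i)},x\right)=\xi_{k}\left(\eta^{(i+1)},x\right)=\xi_{k}\left(\eta,x\right)\qquad\text{for every }x\ge s_{\infty}(\eta,-i).
\end{align}

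The crux is the behaviour at the threshold. Put $b_{i}:=\xi_{k}(\eta,s_{\infty}(\eta,-i))$. For each $\chi\in\{\eta,\eta^{(i)},\eta^{(i+1)}\}$ the site $s_{\infty}(\eta,-i)$ is a record of $\chi$ — by \eqref{eq:cut_s} it equals $s_{\infty}(\chi,-i)$, and record positions are exactly the $s_{\infty}$-values — so $r(\chi,s_{\infty}(\eta,-i))=1$ and the $\xi_{k}(\chi,\cdot)$-increment there is $1$; since $\xi_{k}(\chi,\cdot)$ is non-decreasing and, by \eqref{eq:planxi}, equals $b_{i}$ at $s_{\infty}(\eta,-i)$, we get $\xi_{k}(\chi,z)\le b_{i}-1$ for every $z<s_{\infty}(\eta,-i)$. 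Hence for any $y\ge b_{i}$ the minimizer $s_{k}(\chi,y)$, when finite, lies in $[s_{\infty}(\eta,-i),\infty)$, and on that half-line the three functions $\xi_{k}$ coincide by \eqref{eq:planxi}; therefore $s_{k}(\eta^{(i)},y)=s_{k}(\eta^{(i+1)},y)=s_{k}(\eta,y)$ (all equal to the same minimum, or all equal to $\infty$), and since $\eta^{(i)}$, $\eta^{(i+1)}$ and $\eta$ agree on $[s_{\infty}(\eta,-i),\infty)$, also $\eta^{(i)}(s_{k}(\eta^{(i)},y))=\eta^{(i+1)}(s_{k}(\eta^{(i+1)},y))=\eta(s_{k}(\eta,y))$. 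Now take $y=x+\xi_{k}(\eta,0)$: this is legitimate because $\xi_{k}(\eta^{(i)},0)=\xi_{k}(\eta^{(i+1)},0)=\xi_{k}(\eta,0)$ by \eqref{eq:planxi} (as $0\ge s_{\infty}(\eta,0)\ge s_{\infty}(\eta,-i)$), and the condition $y\ge b_{i}$ becomes exactly $x\ge\xi_{k}(\eta,s_{\infty}(\eta,-i))-\xi_{k}(\eta,0)$; this yields \eqref{eq:indep_p}, and in fact the sharper identity $\Psi_{k}(\eta^{(i)})(x)=\Psi_{k}(\eta)(x)$ throughout that range. For the last assertion, the $i$ records $s_{\infty}(\eta,-i+1),\dots,s_{\infty}(\eta,0)$ all lie in $(s_{\infty}(\eta,-i),0]$ and each contributes $1$ to $\xi_{k}(\eta,0)-\xi_{k}(\eta,s_{\infty}(\eta,-i))$, so $\xi_{k}(\eta,s_{\infty}(\eta,-i))-\xi_{k}(\eta,0)\le -i\to-\infty$; hence for each fixed $x$ we have $\Psi_{k}(\eta^{(i)})(x)=\Psi_{k}(\eta)(x)$ for all large $i$, which is the claimed convergence.

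The step I expect to be the main obstacle is the borderline value $y=b_{i}$: one must rule out that the spurious records that $\eta^{(i)}$ (or $\eta^{(i+1)}$) creates strictly to the left of $s_{\infty}(\eta,-i)$ produce an earlier site $z<s_{\infty}(\eta,-i)$ with $\xi_{k}(\chi,z)=b_{i}$, which would give a different, smaller value of $s_{k}$. This is precisely handled by the observation that $s_{\infty}(\eta,-i)$ is itself a record, so $\xi_{k}$ jumps by $1$ there and thereby strictly separates $b_{i}$ from all values attained to its left; it is this that makes the cutoff clean even at the left endpoint of the permitted range of $x$.
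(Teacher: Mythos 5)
Your proposal is correct and follows essentially the same route as the paper's (much terser) proof: establish $\xi_{k}\left(\eta^{(i)},x\right)=\xi_{k}\left(\eta^{(i+1)},x\right)$ for $x\ge s_{\infty}\left(\eta,-i\right)$, deduce $s_{k}\left(\eta^{(i)},y\right)=s_{k}\left(\eta^{(i+1)},y\right)$ for $y\ge\xi_{k}\left(\eta,s_{\infty}\left(\eta,-i\right)\right)$, and conclude. The only addition is that you explicitly justify the threshold step (the ``Hence'' in the paper) via the fact that $s_{\infty}\left(\eta,-i\right)$ is a record of all three configurations, which is a correct and welcome filling-in of a detail the paper leaves implicit.
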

            \begin{proof}
                From \eqref{eq:cut_s}, we see that $\xi_{k}\left(\eta^{(i)}, x \right) = \xi_{k}\left(\eta^{(i+1)}, x \right)$ for any $x \ge s_{\infty}\left(\eta, -i \right)$. Hence, we get $s_{k}\left(\eta^{(i)}, x \right) = s_{k}\left(\eta^{(i+1)}, x \right)$ for any $x \ge \xi_{k}\left(\eta, s_{\infty}\left(\eta, -i \right)\right)$. Thus we obtain \eqref{eq:indep_p}.
            \end{proof}
        Now, we generalize Propositions \ref{prop:seat_semig}, \ref{prop:semig} and \ref{prop:shift} for the whole-line case as follows :  
        
            \begin{proposition}
                Suppose that $\eta\in\Omega$. Then, for any $k, \ell \in \N$, $\sigma \in \{\uparrow, \downarrow \}$ and $x \in \Z$, we have  
                    \begin{align}\label{eq:whole_1}
                        \Psi_{k}\left( \eta \right)^{\sigma}_{\ell}(x) = \eta^{\sigma}_{k+\ell}\left(s_{k}\left(\eta, x + \xi_{k}\left(\eta, 0\right) \right) \right).
                    \end{align}
                In addition, we have
                    \begin{align}
                        \Psi_{k}\left( \Psi_{\ell}\left( \eta \right) \right)(x) = \Psi_{k+\ell}\left( \eta \right)(x) \label{whole_2}, 
                    \end{align}
                and
                    \begin{align}
                        \zeta_{k}\left(\Psi_{\ell}\left( \eta \right), i \right) = \zeta_{k+\ell}\left(\eta, i \right) \label{whole_3}.
                    \end{align}
            \end{proposition}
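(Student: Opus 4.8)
The plan is to deduce all three identities from their half-line versions, Propositions~\ref{prop:seat_semig}, \ref{prop:semig} and~\ref{prop:shift}, by truncating $\eta$ to $\eta^{(i)}$ as in \eqref{def:eta_i} and letting $i\to\infty$. The point is that on $[s_{\infty}(\eta,-i),\infty)$ the entire seat-number apparatus attached to $\eta^{(i)}$ is identical to that of a half-line configuration whose carrier is started, empty, at the record site $s_{\infty}(\eta,-i)$; consequently the proofs of Section~\ref{sec:seat} apply verbatim to $\eta^{(i)}$, and Lemmas~\ref{lem:indep_c} and~\ref{lem:indep_p} guarantee that, site by site, both sides of each identity no longer move once $i$ is large. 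A direct inductive argument on the whole line is also conceivable, but there is no natural base point for the induction other than $s_{\infty}(\eta,-i)$, which leads back to the same truncation.

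I would treat \eqref{eq:whole_1} first. Applying the half-line Proposition~\ref{prop:seat_semig} to $\eta^{(i)}$ (with carrier started at $s_{\infty}(\eta,-i)$) gives $\Psi_{k}(\eta^{(i)})^{\sigma}_{\ell}(x)=(\eta^{(i)})^{\sigma}_{k+\ell}\bigl(s_{k}(\eta^{(i)},x)\bigr)$, where the $\xi_{k}$ and $s_{k}$ of $\eta^{(i)}$ are anchored at $s_{\infty}(\eta,-i)$. By \eqref{eq:cut_s} the seat configurations of $\eta^{(i)}$ and $\eta$ agree on $[s_{\infty}(\eta,-i),\infty)$ and $\xi_{k}(\eta^{(i)},y)=\xi_{k}(\eta,y)-\xi_{k}(\eta,s_{\infty}(\eta,-i))$ there; feeding this, together with the shift by $\xi_{k}(\eta,0)$ built into $\Psi_{k}$ on the whole line, into the previous display turns it, for each fixed $x$ and all $i$ large enough that $x+\xi_{k}(\eta,0)$ and the seat $s_{k}(\eta,x+\xi_{k}(\eta,0))$ lie to the right of $s_{\infty}(\eta,-i)$, into the identity \eqref{eq:whole_1} for $\eta^{(i)}$. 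Sending $i\to\infty$, Lemma~\ref{lem:indep_p} stabilizes the left-hand side to $\Psi_{k}(\eta)^{\sigma}_{\ell}(x)$ and Lemma~\ref{lem:indep_c} identifies the right-hand side with $\eta^{\sigma}_{k+\ell}(s_{k}(\eta,x+\xi_{k}(\eta,0)))$, which is \eqref{eq:whole_1}.

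With \eqref{eq:whole_1} available, \eqref{whole_2} and \eqref{whole_3} follow by rerunning the half-line computations. For \eqref{whole_2} I would first extract from \eqref{eq:whole_1} the whole-line counterpart of \eqref{eq:semig1}, namely that $y=s_{k}(\Psi_{\ell}(\eta),x+\xi_{k}(\Psi_{\ell}(\eta),0))$ forces $s_{\ell}(\eta,y)=s_{k+\ell}(\eta,x+\xi_{k+\ell}(\eta,0))$; substituting this into $\Psi_{k}(\Psi_{\ell}(\eta))(x)=\eta\bigl(s_{\ell}(\eta,s_{k}(\Psi_{\ell}(\eta),x+\xi_{k}(\Psi_{\ell}(\eta),0)))\bigr)$ collapses the composite skip to $\eta(s_{k+\ell}(\eta,x+\xi_{k+\ell}(\eta,0)))=\Psi_{k+\ell}(\eta)(x)$. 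Equivalently, \eqref{whole_2} can be read off from the truncation argument, since Lemma~\ref{lem:indep_p} makes $\Psi_{\ell}(\eta)^{(i)}$ and $\Psi_{\ell}(\eta^{(i)})$ agree far enough to the right to invoke Proposition~\ref{prop:semig}. For \eqref{whole_3} I would repeat the chain of equalities in the proof of Proposition~\ref{prop:shift}, using Lemma~\ref{lem:sums_w} in place of Lemma~\ref{lem:sums}, Lemma~\ref{lem:1_whole} in place of Lemma~\ref{lem:1_half}, and the whole-line form of \eqref{eq:semig1} just established.

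The only genuine difficulty is the bookkeeping of offsets. Three shifts interact — the constant $\xi_{k}(\eta,0)$ built into $\Psi_{k}$ on the whole line, the re-anchoring $\xi_{k}(\eta,s_{\infty}(\eta,-i))$ forced by the truncation, and the moving left endpoint $s_{\infty}(\eta,-i)$ itself — and one has to verify they are mutually consistent, so that the $i\to\infty$ limit is the asserted formula rather than a translate of it. Concretely one needs, for each fixed $x$, an index $i_{0}$ beyond which every term on both sides of the identity under consideration is already determined by $\eta|_{[s_{\infty}(\eta,-i),\infty)}$; Lemmas~\ref{lem:indep_c} and~\ref{lem:indep_p} supply precisely this, so once the indexing is fixed correctly the argument is routine — and pinning the indexing down correctly is the part that requires care.
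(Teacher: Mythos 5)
Your proposal follows essentially the same route as the paper: reduce to the truncations $\eta^{(i)}$ via Lemmas \ref{lem:indep_c} and \ref{lem:indep_p}, translate so that the carrier starts empty at the record $s_{\infty}(\eta,-i)$ (the paper's $\tilde{\eta}^{(i)}$), invoke the half-line Proposition \ref{prop:seat_semig}, and then obtain \eqref{whole_2} and \eqref{whole_3} from the whole-line analogue of \eqref{eq:semig1} exactly as in Propositions \ref{prop:semig} and \ref{prop:shift}. The only blemish is an offset you would need to restore when pinning down the indexing you rightly flag as delicate — the composite skip reads $\Psi_{k}(\Psi_{\ell}(\eta))(x)=\eta\bigl(s_{\ell}(\eta,\,s_{k}(\Psi_{\ell}(\eta),x+\xi_{k}(\Psi_{\ell}(\eta),0))+\xi_{\ell}(\eta,0))\bigr)$, with the extra $\xi_{\ell}(\eta,0)$ inside — which matches the paper's \eqref{eq:whole_p2**} and does not affect the validity of the approach.
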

                \begin{proof}
                    First we show \eqref{eq:whole_1}. Thanks to Lemma \ref{lem:indep_p}, it is sufficient to show that
                        \begin{align}\label{eq:whole_p1}
                            \Psi_{k}\left( \eta^{(i)} \right)^{\sigma}_{\ell}(x) = \left(\eta^{(i)}\right)^{\sigma}_{k+\ell}\left(s_{k}\left(\eta, x + \xi_{k}\left(\eta, 0\right) \right) \right)
                        \end{align}
                    for any $i \in \Z_{\ge 0}$,  $k, \ell \in \N$, $\sigma \in \{\uparrow, \downarrow \}$ and $x \in \Z$. We fix $i \in \Z_{\ge 0}$ and define $\tilde{\eta}^{(i)} \in \Omega$ as
                        \begin{align}
                            \tilde{\eta}^{(i)}\left( x \right) := \eta^{(i)}\left( x + s_{\infty}\left(\eta, -i \right) \right).
                        \end{align}
                    Since $r\left(\tilde{\eta}^{(i)}, x \right) = 1$ for any $x \le 0$ and $\tilde{\eta}^{(i)}$ can be regarded as an element of $\{0,1\}^{\N}$, from Proposition \ref{prop:seat_semig}, we get 
                        \begin{align}
                            \Psi_{k}\left( \tilde{\eta}^{(i)} \right)^{\sigma}_{\ell}(x) = \left(\tilde{\eta}^{(i)}\right)^{\sigma}_{k+\ell}\left(s_{k}\left(\tilde{\eta}^{(i)}, x \right) \right),
                        \end{align}
                    for any $k, \ell \in \N$, $\sigma \in \{\uparrow, \downarrow \}$ and $x \in \Z$. 
                    On the other hand,  by direct computation, for any $x \in \Z$ we obtain
                        \begin{align}
                            \xi_{k}\left(\tilde{\eta}^{(i)}, x\right)
                            = \xi_{k}\left(\eta^{(i)}, x + s_{\infty}\left(\eta, -i \right)\right) - \xi_{k}\left(\eta^{(i)},  s_{\infty}\left(\eta, -i \right)\right),
                        \end{align}
                    and thus we have
                        \begin{align}
                            s_{k}\left(\tilde{\eta}^{(i)}, x\right) = s_{k}\left( \eta^{(i)}, x + \xi_{k}\left(\eta^{(i)},  s_{\infty}\left(\eta, -i \right)\right) \right) - s_{\infty}\left(\eta, -i \right).
                        \end{align}
                    From the above, we obtain
                        \begin{align}
                            \Psi_{k}\left( \tilde{\eta}^{(i)} \right)(x) &= \tilde{\eta}^{(i)}\left(s_{k}\left(\tilde{\eta}^{(i)}, x \right) \right) \\
                            &= \eta^{(i)}\left(s_{k}\left(\tilde{\eta}^{(i)}, x \right) + s_{\infty}\left(\eta, -i \right) \right) \\
                            &= \eta^{(i)}\left(  s_{k}\left(\eta^{(i)}, x + \xi_{k}\left(\eta^{(i)}, s_{\infty}\left(\eta, -i \right)\right) \right)\right) \\
                            &= \Psi_{k}\left( \eta^{(i)} \right)\left(x + \xi_{k}\left(\eta^{(i)}, s_{\infty}\left(\eta, -i \right)\right) -  \xi_{k}\left(\eta^{(i)}, 0\right) \right) \label{eq:whole_p2},
                        \end{align}
                    for any $k, \ell \in \N$ and $x \in \Z$. In particular, we have 
                        \begin{align}
                            &\left(\eta^{(i)}\right)^{\sigma}_{k + \ell}\left(  s_{k}\left(\eta^{(i)}, x + \xi_{k}\left(\eta^{(i)}, s_{\infty}\left(\eta, -i \right)\right) \right)\right) \\
                            &= \left(\tilde{\eta}^{(i)}\right)^{\sigma}_{k + \ell}\left(s_{k}\left(\tilde{\eta}^{(i)}, x \right) \right) = \Psi_{k}\left( \tilde{\eta}^{(i)} \right)^{\sigma}_{\ell}(x) \\
                            &= \Psi_{k}\left( \eta^{(i)} \right)^{\sigma}_{\ell}\left(x + \xi_{k}\left(\eta^{(i)}, s_{\infty}\left(\eta, -i \right)\right) -  \xi_{k}\left(\eta^{(i)}, 0\right) \right),
                        \end{align}
                    for any $k, \ell \in \N$, $\sigma \in \{\uparrow, \downarrow \}$ and $x \in \Z$. Therefore we have \eqref{eq:whole_p1}. For later use, we note some equations derived from \eqref{eq:whole_1}. We observe that from \eqref{eq:whole_1}, if $y = s_{k}\left( \Psi_{\ell}\left(\eta\right), x \right)$ for some $k \in \N \cup \{\infty\}$, $\ell \in \N$ and $x \in \Z$, then $s_{\ell}(\eta, y + \xi_{\ell}(\eta, 0)) = s_{k + \ell}(\eta, x)$. Hence, for any $k \in \N \cup \{\infty\}$, $\ell \in \N$ and $x \in \Z$, we have 
                        \begin{align}
                            &s_{k}\left( \Psi_{\ell}\left(\eta\right), x \right) - s_{k}\left( \Psi_{\ell}\left(\eta\right), x - 1 \right) \\ 
                            &= \sum_{y = s_{k}\left( \Psi_{\ell}\left(\eta\right), x - 1 \right) + 1}^{s_{k}\left( \Psi_{\ell}\left(\eta\right), x \right)} r\left(\Psi_{\ell}\left(\eta\right), y \right) + \left(  \sum_{h \in \N} \sum_{\sigma \in \{ \uparrow, \downarrow \}} \Psi_{\ell}\left(\eta\right)^{\sigma}_{h}\left(y\right) \right) \\
                            &= \sum_{y = s_{k}\left( \Psi_{\ell}\left(\eta\right), x - 1 \right) + 1}^{s_{k}\left( \Psi_{\ell}\left(\eta\right), x \right)} r\left(\eta, s_{\ell}\left(\eta, y + \xi_{\ell}\left(\eta, 0\right)\right) \right) + \left(  \sum_{h \in \N} \sum_{\sigma \in \{ \uparrow, \downarrow \}} \eta^{\sigma}_{\ell + h}\left(s_{\ell}\left(\eta, y + \xi_{\ell}\left(\eta, 0\right)\right)\right) \right) \\
                            &= \sum_{y = s_{k + \ell}\left( \eta, x - 1 \right) + 1}^{s_{k + \ell}\left( \eta, x \right)} r\left(\eta, y \right) + \left(  \sum_{h \in \N} \sum_{\sigma \in \{ \uparrow, \downarrow \}} \eta^{\sigma}_{\ell + h}\left(y\right) \right) \\
                            &= \xi_{\ell}\left(\eta, s_{k + \ell}\left( \eta, x \right) \right) - \xi_{\ell}\left(\eta, s_{k + \ell}\left( \eta, x - 1 \right) \right),
                        \end{align}
                    where at the third equality we use the fact that $(s_{k+h}(\eta, x))_{x \in \Z} \subset (s_{k}(\eta, x))_{x \in \Z}$ for any $k \in \N$ and $h \in \N \cup \{ \infty \}$. By using the same computation, for any $k \in \N \cup \{\infty\}$ and $\ell \in \N$, we obtain 
                        \begin{align}
                            s_{k}\left( \Psi_{\ell}\left(\eta\right), 0 \right) &= s_{\infty}\left( \Psi_{\ell}\left(\eta\right), 0 \right) \\
                            &= - \sum_{y = s_{\infty}\left( \Psi_{\ell}\left(\eta\right), 0 \right) + 1}^{0}  \sum_{h \in \N} \sum_{\sigma \in \{ \uparrow, \downarrow \}} \Psi_{\ell}\left(\eta\right)^{\sigma}_{h}\left(y\right) \\
                            &= - \sum_{y = s_{\infty}\left( \eta, 0 \right) + 1}^{0}   \sum_{h \in \N} \sum_{\sigma \in \{ \uparrow, \downarrow \}} \eta^{\sigma}_{\ell + h}\left(y\right)  \\
                            &= - \xi_{\ell}\left(\eta, 0\right), 
                        \end{align}
                    and for any $k, \ell \in \N$, we also get
                        \begin{align}
                            \xi_{k}\left(\Psi_{\ell}\left(\eta\right), 0\right) 
                            &= \sum_{y = s_{\infty}\left( \Psi_{\ell}\left(\eta\right), 0 \right) + 1}^{0}   \sum_{h \in \N} \sum_{\sigma \in \{ \uparrow, \downarrow \}} \Psi_{\ell}\left(\eta\right)^{\sigma}_{k + h}\left(y\right) \\
                            &= \sum_{y = s_{\infty}\left( \eta, 0 \right) + 1}^{0}  \sum_{h \in \N} \sum_{\sigma \in \{ \uparrow, \downarrow \}} \eta^{\sigma}_{k + \ell + h}\left(y\right) \\
                            &= \xi_{k + \ell}\left(\eta, 0\right) \label{eq:whole_p2***}.
                        \end{align}
                    In particular, for any $k \in \N \cup \{\infty\}$, $\ell \in \N$ and $x \in \Z$, we have 
                        \begin{align}\label{eq:whole_p2**}
                            s_{k}\left( \Psi_{\ell}\left(\eta\right), x \right) = \xi_{\ell}\left(\eta, s_{k + \ell}\left( \eta, x \right) \right) - \xi_{\ell}\left(\eta, 0\right).
                        \end{align}

                    Next we show \eqref{whole_2}. From \eqref{eq:whole_p2***} and \eqref{eq:whole_p2**}, for any $k, \ell \in \N$ and $x \in \Z$ we have 
                        \begin{align}
                            \Psi_{k}\left(\Psi_{\ell}\left(\eta\right)\right)\left(x\right) 
                            &= \Psi_{\ell}\left(\eta\right)\left( s_{k}\left( \Psi_{\ell}\left(\eta\right), x + \xi_{k}\left(\Psi_{\ell}\left(\eta\right), 0\right) \right) \right) \\
                            &= \Psi_{\ell}\left(\eta\right)\left( \xi_{\ell}\left(\eta, s_{k+\ell}\left(\eta, x + \xi_{k + \ell}\left(\eta, 0\right) \right)\right) - \xi_{\ell}\left(\eta, 0\right) \right) \\
                            &= \eta\left(s_{\ell}\left( \eta, \xi_{\ell}\left(\eta, s_{k+\ell}\left(\eta, x + \xi_{k + \ell}\left(\eta, 0\right) \right)\right) \right)\right) \\
                            &= \eta\left(s_{k+\ell}\left(\eta, x + \xi_{k + \ell}\left(\eta, 0\right)  \right)\right) \\
                            &= \Psi_{k + \ell}\left(\eta\right)(x),
                        \end{align}
                    and thus we obtain \eqref{whole_2}.

                    Finally we show \eqref{whole_3}. From \eqref{eq:whole_1} and \eqref{eq:whole_p2**}, for any $k, \ell \in \N$ and $i \in \Z$, we have
                        \begin{align}
                            &\zeta_{k}\left( \Psi_{\ell}\left(\eta\right), i \right) \\ &= \sum_{y = s_{k}\left(\Psi_{\ell}\left(\eta\right), i \right) + 1}^{s_{k}\left(\Psi_{\ell}\left(\eta\right), i + 1 \right)} \left( \Psi_{\ell}\left(\eta\right)^{\uparrow}_{k}\left(y\right) - \Psi_{\ell}\left(\eta\right)^{\uparrow}_{k+1}\left(y\right) \right) \\
                            &= \left( \sum_{y = s_{k}\left(\Psi_{\ell}\left(\eta\right), i \right) }^{s_{k}\left(\Psi_{\ell}\left(\eta\right), i + 1 \right)}  \Psi_{\ell}\left(\eta\right)^{\uparrow}_{k}\left(y\right) \right) - \Psi_{\ell}\left(\eta\right)^{\uparrow}_{k+1}\left(s_{k}\left(\Psi_{\ell}\left(\eta\right), i + 1 \right)\right)  \\
                            &= \left(  \sum_{y = \xi_{\ell}\left(\eta, s_{k + \ell}\left( \eta, i \right) \right) - \xi_{\ell}\left(\eta, 0\right) }^{\xi_{\ell}\left(\eta, s_{k + \ell}\left( \eta, i + 1 \right) \right) - \xi_{\ell}\left(\eta, 0\right)} \eta^{\uparrow}_{k+\ell}\left(s_{\ell}\left(\eta, y + \xi_{\ell}\left(\eta, 0\right)\right)\right)  \right) - \eta^{\uparrow}_{k+\ell}\left(s_{\ell}\left(\eta, \xi_{\ell}\left(\eta, s_{k + \ell}\left( \eta, i + 1 \right) \right) \right)\right) \\
                            &= \left( \sum_{y = s_{k + \ell}\left( \eta, i \right) }^{s_{k + \ell}\left( \eta, i + 1 \right)} \eta^{\uparrow}_{k+\ell}\left(y\right) \right) - \eta^{\uparrow}_{k+\ell}\left(s_{k + \ell}\left(\eta, i + 1 \right)\right) \\
                            &= \zeta_{k + \ell}\left( \eta, i \right),
                        \end{align}
                    and thus we obtain \eqref{whole_3}.
                
                \end{proof}                

        We conclude this section by describing the relation between $T$ and $\Psi_{k}$. {For any $y \in \Z$, we denote by $\tau_{y} : \{0,1\}^{\Z} \to \{0,1\}^{\Z}$ the spatial shift by $y$, i.e., for any $\eta \in \{0,1\}^{\Z}$ and $x \in \Z$, 
            \begin{align}\label{def:op_shift}
                \left(\tau_{y}\eta\right)\left(x\right) := \eta\left(x+y\right).
            \end{align}}
            \begin{proposition}
                Suppose that $\eta \in \Omega$. Then, for any $k \in \N$, we have 
                    {\begin{align}\label{eq:T_Psi_k}
                        \left(T \circ \Psi_{k}\right)\left(\eta\right) = \left(\tau_{\left({k - W_{k}\left(\eta,0\right) -  W_{k}\left(T\eta,0\right)}\right)} \circ \Psi_{k} \circ T \right)\left(\eta\right),
                    \end{align}
                i.e.,} for any $x \in \Z$,
                    \begin{align}
                        T\Psi_{k}\left(\eta\right)\left(x \right) = \Psi_{k}\left(T\eta\right)\left(x {+ k - W_{k}\left(\eta,0\right) -  W_{k}\left(T\eta,0\right)} \right).
                    \end{align}
            \end{proposition}
            \begin{proof}

                Suppose that $T\Psi_{{k}}\left(\eta\right)\left(x \right) = 1$. Then, there exists $\ell \in \N$ such that 
                    \begin{align}
                        T\Psi_{{k}}\left(\eta\right)^{\uparrow}_{\ell}\left(x \right) = 1. 
                    \end{align}
                From \eqref{eq:flip_w} and \eqref{eq:whole_1}, we have 
                    \begin{align}
                        \eta^{\downarrow}_{\ell + {k}}\left( s_{{k}}\left(\eta, x + \xi_{{k}}\left( \eta,0 \right) \right) \right) = \Psi_{{k}}\left(\eta\right)^{\downarrow}_{\ell}\left(x \right) = 1.
                    \end{align}
                Again by using \eqref{eq:flip_w}, we obtain 
                    \begin{align}
                        T\eta^{\uparrow}_{\ell+{k}}\left( s_{{k}}\left(\eta, x  + \xi_{{k}}\left( \eta,0 \right) \right) \right) = 1.
                    \end{align}
                On the other hand, from \eqref{eq:Txi_xi} we get
                    \begin{align}
                        &\xi_{{k}}\left(T\eta, s_{{k}}\left(\eta, x + \xi_{{k}}\left( \eta,0 \right) \right)\right) \\
                        &= x + \xi_{{k}}\left( \eta,0 \right) + 1 + o_{{k}}\left(\eta\right) \\
                        &= x  { + k} { - } W_{{k}}\left(\eta,0\right) { - }  W_{{k}}\left(T\eta,0\right) + \xi_{{k}}\left( T\eta,0 \right).
                    \end{align}
                Since the site $s_{{k}}\left(\eta, x  + \xi_{{k}}\left( \eta,0 \right) \right)$ is a $\left(\ell + {k},\uparrow\right)$-seat in $T\eta$,  we have
                    \begin{align}
                        s_{{k}}\left(T\eta, x { + k - W_{k}\left(\eta,0\right) -  W_{k}\left(T\eta,0\right)} + \xi_{{k}}\left( T\eta,0 \right) \right) = s_{{k}}\left(\eta, x + \xi_{{k}}\left( \eta,0 \right) \right). 
                    \end{align}
                Hence we have 
                    \begin{align}
                        &\Psi_{{k}}\left(T\eta\right)^{\uparrow}_{\ell}\left(x { + k - W_{k}\left(\eta,0\right) -  W_{k}\left(T\eta,0\right)} \right) \\
                        &= T\eta^{\uparrow}_{\ell+{k}}\left(s_{{k}}\left(T\eta, x  { + k - W_{k}\left(\eta,0\right) -  W_{k}\left(T\eta,0\right)} +  \xi_{{k}}\left( T\eta,0 \right) \right) \right) \\
                        &= T\eta^{\uparrow}_{\ell+{k}}\left( s_{{k}}\left(\eta, x  + \xi_{{k}}\left( \eta,0 \right) \right) \right) \\
                        &= 1,
                    \end{align}
                and thus we see that $T\Psi_{{k}}\left(\eta\right)\left(x \right) = 1$ implies $\Psi_{{k}}\left(T\eta\right)\left(x { + k - W_{k}\left(\eta,0\right) -  W_{k}\left(T\eta,0\right)} \right) = 1$. 
                By the same computation, we can also show that $\Psi_{{k}}\left(T\eta\right)\left(x { + k - W_{k}\left(\eta,0\right) -  W_{k}\left(T\eta,0\right)} \right) = 1$ implies $T\Psi_{{k}}\left(\eta\right)\left(x \right) = 1$.

            \end{proof}
                
\section{{The \texorpdfstring{$k$}{k}-skip map and invariant measures}}\label{sec:dis}

            {The purpose of this section is to apply the $k$-skip map to the BBS with random initial distributions. We consider a class of invariant measures introduced in
\cite{FG}.  To the best of our knowledge, this class is the most general
known family of invariant measures for the BBS, and it is also well suited
to the $k$-skip map. We will derive the distribution of the $k$-skipped configuration under such invariant measures.

The $k$-skip map is also useful for computing statistical
quantities of the randomized BBS.  In particular, it leads
to a recursive relation for the mean size of excursions, 
and this relation gives explicit formulae for quantities related to the
stationary current and the effective velocity of solitons.}
             Throughout this section, we restrict the state space $\Omega_{*} \subset \Omega$, defined as
                \begin{align}
                    \Omega_{*} &:= \left\{ \eta \in \Omega \ ; \ |s_{\infty}\left(\eta, i \right) | < \infty \text{ for any } i \in \Z \right\}.
                \end{align}
            
        \subsection{Excursion}\label{subsubsec:ex}
            First, we introduce the notion of {\it excursion}, which will be used to define a class of invariant measures of the BBS. For any $n \in \Z_{\ge 0}$, we say that a sequence $\left(e_{j}\right)_{j = 0}^{2n}, e_{j} \in \{0,1\}$ is an excursion with length $2n + 1$ if 
                \begin{align}
                    e_{0} = 0, \quad \sum_{j = 1}^{m} \left(2e_{j} - 1\right) > 0 \ \text{for any } 1 \le m < 2n, \quad \sum_{j = 1}^{2n} \left(2e_{j} - 1\right) = 0.
                \end{align}
            We denote by $\mathcal{E}_{n}$ the set of all excursions with length $2n + 1$, and denote by $\mathcal{E} := \bigcup_{n \in \Z_{\ge 0}} \mathcal{E}_{n}$ the set of all excursions.
            There is a natural injection $\iota : \mathcal{E} \to \{0,1\}^{\Z}$ given by 
                \begin{align}
                    \iota\left(\e\right)(x) := \begin{dcases}
                        \e_{x} \ &  {0} \le  x \le |\e| {-1}, \\
                        0 \ & \text{otherwise},
                    \end{dcases}
                \end{align}
            {where $\e_{x}$ is the $x$-th binary coordinate in the excursion $\e$,} and $\Omega_{1} := \iota\left(\mathcal{E}\right)$ is written as 
                \begin{align}
                    \Omega_{1} = \left\{ \eta \in \{0,1\}^{\Z} \ ; \ \eta\left(x\right) = 0 \text{ for any } x < 0, x \ge s_{\infty}\left(\eta, 1\right) \right\}.
                \end{align}
            Observe that for any $\e \in \mathcal{E}$ and $k \in \N$, we have $\Psi_{k}\left(\iota\left(\e\right)\right) \in \Omega_{1}$. Hence, the following map  
                \begin{align}\label{def:tilde_skip}
                    \widetilde{\Psi}_{k}\left(\e\right) := \iota^{-1}\left(\Psi_{k}\left(\iota\left(\e\right)\right)\right),
                \end{align}
            is well-defined for any $\e$, and we call $\widetilde{\Psi}_{k} : \mathcal{E} \to \mathcal{E}$ the $k$-skip map for excursions. 
            Also, we extend the notion of $\zeta$ for excursions. For any $\e \in \mathcal{E}$ and $k \in \N$ we define 
                \begin{align}\label{def:zeta_excursion}
                    \zeta_{k}\left(\e\right) 
                    &:= \sum_{i = 0}^{\xi_{k}\left(\iota\left(\e\right), s_{\infty}\left(\iota\left(\e\right), 1\right)\right) - 1} \zeta_{k}\left( \iota\left(\e\right), i \right) \\
                    &= \sum_{x = 1}^{s_{\infty}\left(\iota\left(\e\right), 1\right)} \sum_{\sigma \in \{\uparrow, \downarrow\} } \left(\left(\iota\left(\e\right)\right)^{\sigma}_{k}\left(x\right) - \left(\iota\left(\e\right)\right)^{\sigma}_{k+1}\left(x\right) \right).
                \end{align}
            {By Remark \ref{rem:zeta_sol}, $\zeta_{k}\left(\e\right)$ is the number of $k$-solitons in the excursion $\e$. }
            We note that by using $\zeta_{k}$, $k \in \N$, the length of excursion $\e$ can be written as follows: 
                \begin{align}
                    |\e| &= s_{\infty}\left(\iota\left(\e\right), 1\right) \\
                    &= 1 +  \sum_{x = 1}^{s_{\infty}\left(\iota\left(\e\right), 1\right)} \sum_{k \in \N} \sum_{\sigma \in \{\uparrow, \downarrow\} } \left(\iota\left(\e\right)\right)^{\sigma}_{k}\left(x\right) \\ 
                    &= 1 + 2 \sum_{k \in \N} \sum_{i = 0}^{\xi_{k}\left(\iota\left(\e\right), s_{\infty}\left(\iota\left(\e\right), 1\right)\right) - 1} k  \zeta_{k}\left( \iota\left(\e\right), i \right) \\
                    &= 1 + 2 \sum_{k \in \N} {k} \zeta_{k}\left(\e\right) \label{eq:length_z}.
                \end{align}
            In addition, from Proposition \ref{prop:shift}, we see that $\widetilde{\Psi}_{k}$ is a shift operator of $\zeta$ : 
                \begin{proposition}\label{prop:shift_excursion}
                    For any $\e \in \mathcal{E}$ and $k, \ell \in \N$, we have 
                        \begin{align}\label{eq:shift_excursion}
                            \zeta_{k}\left(\widetilde{\Psi}_{\ell}\left(\e\right)\right) = \zeta_{k + \ell}\left(\e\right).
                        \end{align}
                \end{proposition}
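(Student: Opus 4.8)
The plan is to deduce \eqref{eq:shift_excursion} directly from the shift relation \eqref{whole_3} (equivalently, Proposition \ref{prop:shift}) applied to the underlying configuration $\iota(\e)$, after reconciling the summation ranges appearing in the definition \eqref{def:zeta_excursion} of $\zeta$ for excursions.

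First I would record an elementary observation: for every excursion $\e' \in \mathcal{E}$ and every $m \in \N$ one has
\[
    \zeta_{m}\left(\e'\right) = \sum_{i \in \Z} \zeta_{m}\left(\iota\left(\e'\right), i\right).
\]
Indeed, since $\iota(\e')(x) = 0$ for all $x \le 0$, each such site is a record of $\iota(\e')$, hence $\xi_{m}(\iota(\e'), x) = x$ there and $\zeta_{m}(\iota(\e'), i) = 0$ for $i < 0$; and since $\iota(\e')(x) = 0$ for all $x \ge s_{\infty}(\iota(\e'),1) = |\e'|$, with $|\e'|$ itself a record (Remark \ref{rem:record}), every site $\ge |\e'|$ is a record, so the seat configurations vanish there and $\zeta_{m}(\iota(\e'), i) = 0$ once $i \ge \xi_{m}(\iota(\e'), |\e'|)$. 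Thus only the indices $0 \le i \le \xi_{m}(\iota(\e'), |\e'|) - 1$ of \eqref{def:zeta_excursion} contribute nonzero terms, which gives the identity.

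With this in hand the proposition follows by substitution. Since $\Psi_{\ell}(\iota(\e)) \in \Omega_{1} = \iota(\mathcal{E})$ and $\iota$ is injective, $\iota(\widetilde{\Psi}_{\ell}(\e)) = \Psi_{\ell}(\iota(\e))$; moreover $\xi_{\ell}(\iota(\e), 0) = 0$ because $s_{\infty}(\iota(\e), 0) = 0$, so the whole-line $\ell$-skip map applied to $\iota(\e)$ agrees with the half-line one and \eqref{whole_3} applies. Therefore
\[
    \zeta_{k}\left(\widetilde{\Psi}_{\ell}\left(\e\right)\right) = \sum_{i \in \Z} \zeta_{k}\left(\Psi_{\ell}\left(\iota\left(\e\right)\right), i\right) = \sum_{i \in \Z} \zeta_{k+\ell}\left(\iota\left(\e\right), i\right) = \zeta_{k+\ell}\left(\e\right),
\]
using the observation for $\widetilde{\Psi}_{\ell}(\e)$ in the first equality, \eqref{whole_3} in the second, and the observation for $\e$ at level $k+\ell$ in the third.

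Every step is a direct substitution, so I do not anticipate a genuine obstacle; the only point requiring care is the bookkeeping of the summation ranges in the first paragraph, i.e.\ checking that the finite sums in \eqref{def:zeta_excursion} really capture all nonzero terms of $\sum_{i \in \Z} \zeta_{m}(\iota(\e'), i)$. This is where the explicit description of $\Omega_{1}$ and the record characterization of Remark \ref{rem:record} are used.
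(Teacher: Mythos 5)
Your proof is correct and follows exactly the route the paper intends: the paper states this proposition as an immediate consequence of Proposition \ref{prop:shift} (equivalently \eqref{whole_3}), and you supply the same deduction, with the only additional content being the verification that the finite sum in \eqref{def:zeta_excursion} captures all nonzero terms of $\sum_{i\in\Z}\zeta_{m}(\iota(\e'),i)$. That bookkeeping is accurate, so there is nothing to add.
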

            
            Now we introduce a family of probability measures on $\mathcal{E}$ via $\zeta_{k} : \mathcal{E} \to \Z_{\ge 0}$. For any $\a = \left(\a_{k}\right)_{k \in \N}$, we define 
                \begin{align}
                    \mathcal{A} &:= \left\{ \boldsymbol{\a} = \left(\a_{k}\right)_{k \in \N} \subset [0,1)^{\N} \ ; \ Z_{\a} := \sum_{\e \in \mathcal{E}} \prod_{k \in \N} \a_{k}^{\zeta_{k}\left(\e\right)} < \infty \right\}. 
                \end{align}
            Then, for any $\a \in \mathcal{A}$, we define a canonical probability measure $\nu_{\a}$ on $\mathcal{E}$ as
                \begin{align}
                    \nu_{\boldsymbol{\a}}\left(\e\right) := \frac{1}{Z_{\boldsymbol{\a}}} \prod_{k \in \N} \a_{k}^{\zeta_{k}\left(\e\right)}. 
                \end{align}
            We denote by $\mathcal{A}_{+} \subset \mathcal{A}$ the set of all $\a \in \mathcal{A}$ such that the expectation of $|\e|$ under $\nu_{\boldsymbol{\a}}$ is finite, i.e., 
                \begin{align}
                    \mathcal{A}^{+} := \left\{ \boldsymbol{\a} = \left(\a_{k}\right)_{k \in \N} \in \mathcal{A} \ ; \ \bar{\e}\left(\boldsymbol{\a}\right)  < \infty \right\}, 
                \end{align}
            where 
                \begin{align}\label{def:exp_excursion}
                    \bar{\e}\left(\boldsymbol{\a}\right) := 1 + 2 \sum_{\e \in \mathcal{E}} \sum_{k \in \N} k \zeta_{k}\left(\e\right) \nu_{\a}\left(\e\right).
                \end{align}
            Next, we introduce a shift operator on $\mathcal{A}$. For any $\boldsymbol{\a} \in \mathcal{A}$, we define $\theta\boldsymbol{\a} = \left( \left(\theta\a\right)_{k}\right)_{k \in \N} \in \mathcal{A}$ as 
                \begin{align}\label{def:shift_para}
                    \left(\theta\a\right)_{k} := \frac{\a_{k + 1}}{\left( 1- \a_{1} \right)^{2k}}.
                \end{align}
            We note that the fact $\theta : \mathcal{A} \to \mathcal{A}$ can be checked by \cite[Lemma 3.4]{FG}. In addition, one can also show that $\theta|_{\mathcal{A}^{+}} : \mathcal{A}^{+} \to \mathcal{A}^{+}$, see Remark \ref{rem:theta_A+} below. 
            
            Now we compute the distribution of $\Psi_{k}\left(\e\right)$ when the distribution of $\e$ is given by $\nu_{\a}$. 
                \begin{lemma}\label{lem:shift_excursion_m}
                    Suppose that $\a \in \mathcal{A}$. Then, for any $k \in \N$ and $\e' \in \mathcal{E}$ we have
                        \begin{align}\label{eq:shift_excursion_m}
                            \nu_{\boldsymbol{\a}}\left(\left\{ \e \in \mathcal{E} \ ; \  \Psi_{k}\left(\e\right) = \e' \right\}\right) = \nu_{\theta^{k}\boldsymbol{\a}}\left( \e' \right).
                        \end{align}
                \end{lemma}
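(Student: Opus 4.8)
\emph{Notation.} Throughout I read ``$\Psi_{k}(\e)$'' for $\e\in\mathcal{E}$ as $\widetilde{\Psi}_{k}(\e)$, as in \eqref{def:tilde_skip}.

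The plan is to reduce to the single step $k=1$ and then analyze that step by grading the fibre of $\widetilde{\Psi}_{1}$ by the number of size-$1$ solitons it destroys. Carrying the semigroup identity \eqref{whole_2} over to excursions through $\iota$ gives $\widetilde{\Psi}_{k}=\widetilde{\Psi}_{1}^{\circ k}$, so that $\{\e\in\mathcal{E}: \widetilde{\Psi}_{k}(\e)=\e'\}$ is the disjoint union, over $\e''$ with $\widetilde{\Psi}_{1}(\e'')=\e'$, of the sets $\{\e: \widetilde{\Psi}_{k-1}(\e)=\e''\}$. Since $\theta$ maps $\mathcal{A}$ into itself (\cite[Lemma~3.4]{FG}), an induction reduces everything to the case $k=1$: assuming the case $k-1$ (for every parameter in $\mathcal{A}$) and the case $k=1$ (for the parameter $\theta^{k-1}\a$),
\[
\nu_{\a}\big(\{\e:\widetilde{\Psi}_{k}(\e)=\e'\}\big)=\sum_{\e'':\,\widetilde{\Psi}_{1}(\e'')=\e'}\nu_{\a}\big(\{\e:\widetilde{\Psi}_{k-1}(\e)=\e''\}\big)=\sum_{\e'':\,\widetilde{\Psi}_{1}(\e'')=\e'}\nu_{\theta^{k-1}\a}(\e'')=\nu_{\theta^{k}\a}(\e').
\]

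For the case $k=1$, fix $\e'\in\mathcal{E}$. By Proposition~\ref{prop:shift_excursion}, any $\e$ with $\widetilde{\Psi}_{1}(\e)=\e'$ has $\zeta_{j+1}(\e)=\zeta_{j}(\e')$ for all $j\ge 1$, hence $\prod_{k}\a_{k}^{\zeta_{k}(\e)}=\a_{1}^{\,\zeta_{1}(\e)}\prod_{j\ge1}\a_{j+1}^{\zeta_{j}(\e')}$ depends on $\e$ only through $m:=\zeta_{1}(\e)$. Writing $n_{m}(\e'):=\#\{\e: \widetilde{\Psi}_{1}(\e)=\e',\ \zeta_{1}(\e)=m\}$, one gets $\nu_{\a}(\{\e:\widetilde{\Psi}_{1}(\e)=\e'\})=Z_{\a}^{-1}\big(\prod_{j}\a_{j+1}^{\zeta_{j}(\e')}\big)\sum_{m\ge0}\a_{1}^{\,m}n_{m}(\e')$, while by \eqref{def:shift_para} one has $\nu_{\theta\a}(\e')=Z_{\theta\a}^{-1}\big(\prod_{j}\a_{j+1}^{\zeta_{j}(\e')}\big)(1-\a_{1})^{-2\sum_{k}k\zeta_{k}(\e')}$. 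Cancelling the common factor $\prod_{j}\a_{j+1}^{\zeta_{j}(\e')}$, the identity \eqref{eq:shift_excursion_m} for $k=1$ is equivalent to the purely combinatorial statement $\sum_{m\ge0}\a_{1}^{\,m}n_{m}(\e')=(Z_{\a}/Z_{\theta\a})\,(1-\a_{1})^{-2\sum_{k}k\zeta_{k}(\e')}$.

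It then remains to count $n_{m}(\e')$, equivalently to evaluate the generating function on the left. An excursion is determined by its full $\zeta$-array (bijectivity of $\zeta$, Remark~\ref{rem:initial} in the excursion form established in \cite{FG}), and by Proposition~\ref{prop:shift} — the level-$i$ refinement of Proposition~\ref{prop:shift_excursion} — the map $\widetilde{\Psi}_{1}$ simply deletes the top ($k=1$) row of this array. Thus an $\e$ with $\widetilde{\Psi}_{1}(\e)=\e'$ is a choice of a ``level-$1$ rigging'' of total mass $m$ to prepend to the array of $\e'$, subject to the constraint that the resulting configuration is a single excursion; counting the admissible riggings — equivalently, summing the geometric-type series — is exactly what the excursion/slot analysis of \cite{FG} supplies, and the definition \eqref{def:shift_para} of $\theta$, together with the relation between $Z_{\a}$ and $Z_{\theta\a}$ coming from \cite[Lemma~3.4]{FG}, is designed so that the two sides of the displayed identity coincide. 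I expect this enumeration — pinning down which level-$1$ riggings keep the configuration a single excursion — to be the only real difficulty; the rest is bookkeeping with the shift identities of Sections~\ref{sec:seat}--\ref{sec:line}.
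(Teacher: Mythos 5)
Your reduction is sound and follows essentially the same route as the paper: reduce to $k=1$ via the semigroup property, observe that on the fibre $\{\e:\widetilde{\Psi}_{1}(\e)=\e'\}$ the weight $\prod_k\a_k^{\zeta_k(\e)}$ depends only on $m=\zeta_1(\e)$, and reduce the claim to evaluating the generating function $\sum_{m\ge 0}\a_1^m n_m(\e')$ against $(Z_{\a}/Z_{\theta\a})(1-\a_1)^{-2\sum_k k\zeta_k(\e')}$. The algebra up to that point is correct (and your explicit induction on $k$ is if anything cleaner than the paper's one-line reduction).

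However, there is a genuine gap: the combinatorial identity you correctly isolate as ``the only real difficulty,'' namely
\begin{align}
n_m(\e') \;=\; \binom{2\sum_{k} k\,\zeta_k(\e') + m}{m} \;=\; \binom{|\e'|-1+m}{m},
\end{align}
is never proved — you defer it to an unspecified ``excursion/slot analysis of \cite{FG}.'' This is precisely the step the paper supplies, by invoking the Fermionic formula $\left|\mathcal{E}(\mathbf{n})\right|=\prod_{k}\binom{2\sum_{\ell\ge k+1}(\ell-k)n_\ell+n_k}{n_k}$ from \cite{KTT}: the factor indexed by $k=1$ gives exactly the count of admissible ``level-$1$ riggings,'' and summing it against $\a_1^m$ via $\sum_m\binom{x+m}{m}y^m=(1-y)^{-(x+1)}$, together with $Z_{\theta\a}=Z_{\a}(1-\a_1)$, closes the argument. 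Without citing or proving that count (or, equivalently, without verifying that $n_m(\e')$ depends on $\e'$ only through $|\e'|$ and equals the stated binomial coefficient), the proof is incomplete. A secondary caveat: your picture that $\widetilde{\Psi}_1$ ``simply deletes the top row of the $\zeta$-array'' glosses over the relabelling of the index $i$ (the $s_k$-grid changes under the skip map), which is exactly where the admissibility constraint on the prepended riggings lives; so the deferred enumeration cannot be dismissed as bookkeeping.
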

                \begin{proof}[Proof of Lemma \ref{lem:shift_excursion_m}]
                    
                    From Proposition \ref{prop:shift_excursion} and the uniformity of $\nu_{\a}$, it is sufficient to show that 
                        \begin{align}
                            \nu_{\boldsymbol{\a}}\left( \zeta_{k}\left( \Psi_{1}\left(\e\right) \right) = n_{k} \text{ for any } k\in \N \right) = \nu_{\theta\boldsymbol{\a}}\left( \zeta_{k}\left( \e \right) = n_{k} \text{ for any } k\in \N \right)
                        \end{align}
                    for $\mathbf{n} = \left(n_{k}\right)_{k \in \Z_{\ge 0}} \subset \left(\Z_{\ge 0}\right)^{\N}$ such that $n_{k} = 0$ for sufficiently large $k$. For such $\mathbf{n}$, we define 
                        \begin{align}
                            \mathcal{E}\left(\mathbf{n}\right) := \left\{ \e \in \mathcal{E} \ ; \ \zeta_{k}\left(\e\right) = n_{k} \text{ for any } k \in \N \right\}.
                        \end{align}
                    It is known that $\left| \mathcal{E}\left(\mathbf{n}\right) \right|$ is given via the so-called {\it Fermionic formula}, 
                        \begin{align}
                            \left| \mathcal{E}\left(\mathbf{n}\right) \right| = \prod_{k = 1}^{\infty} \begin{pmatrix} 2 \sum_{\ell \ge k + 1} (\ell - k) n_{\ell} + n_{k} \\ n_{k} \end{pmatrix},
                        \end{align}
                    see \cite{KTT} for details. Then, from Proposition \ref{prop:shift_excursion} we have
                        \begin{align}
                            &\nu_{\boldsymbol{\a}}\left( \zeta_{k}\left( \Psi_{1}\left(\e\right) \right) = n_{k} \text{ for any } k\in \N \right)  \\
                            &= \sum_{n = 0}^{\infty} \nu_{\boldsymbol{\a}}\left( \zeta_{k+1}\left(\e \right) = n_{k} \text{ for any } k\in \N, \zeta_{1}\left(\e \right) = n \right) \\
                             &= \frac{1}{Z_{\boldsymbol{\a}}} \prod_{k \in \N} \a_{k+1}^{n_{k}} \prod_{m = 2}^{\infty} \begin{pmatrix} 2 \sum_{\ell \ge m + 1} (\ell-m) n_{\ell - 1} + n_{m - 1} \\ n_{m - 1} \end{pmatrix} \sum_{n = 0}^{\infty} \begin{pmatrix} 2 \sum_{\ell \ge 2} (\ell - 1) n_{\ell - 1} + n \\ n \end{pmatrix} \a_{1}^{n} \\
                             &= \frac{1}{Z_{\boldsymbol{\a}}} \prod_{k \in \N} \a_{k+1}^{n_{k}} \prod_{m = 1}^{\infty} \begin{pmatrix} 2 \sum_{\ell \ge m + 1} (\ell-m) n_{\ell} + n_{m} \\ n_{m} \end{pmatrix} \left(\frac{1}{1 - \a_1}\right)^{2\sum_{\ell \ge 2} (\ell - 1)n_{\ell - 1} + 1} \\
                             &= \frac{1}{Z_{\boldsymbol{\a}}(1 - \a_1)} \prod_{k \in \N} \left(\theta \a\right)_{k}^{n_{k}} \prod_{m = 1}^{\infty} \begin{pmatrix} 2 \sum_{\ell \ge m + 1} (\ell-m) n_{\ell} + n_{m} \\ n_{m} \end{pmatrix} \\
                             &= \nu_{\theta\boldsymbol{\a}}\left( \zeta_{k}\left( \e \right) = n_{k} \text{ for any } k\in \N \right),
                        \end{align}
                    where we use the equation $Z_{\theta\a} = Z_{\a}(1 - \a_1)$ \cite[(3.22), (3.29)]{FG}, and  
                        \begin{align}
                            \sum_{n = 0}^{\infty} \begin{pmatrix} x + n \\ n \end{pmatrix} y^{n} = \left( \frac{1}{1 - y} \right)^{x + 1},
                        \end{align}
                    for any $x \in \Z_{\ge 0}$ and $y \in (0,1)$. 
                \end{proof}

            We recall that $\Omega_{r}$ is defined in \eqref{def:omega_r}.
            We observe that $\eta \in \Omega_{r}$ can be constructed from excursions as follows. For any $\left( \e_{i} \right)_{i \in \Z} \in \mathcal{E}^{\Z}$, we define $I\left(\left( \e_{i} \right)_{i \in \Z}\right)$ as 
                \begin{align}
                    &I\left( \left( \e_{i} \right)_{i \in \Z} \right)\left(x\right) \\
                    &\quad := 
                    \begin{dcases}
                        \iota\left(\e_{0}\right)(x) \  & \ \text{if } 0 \le x \le |\e_{0}| - 1, \\
                        \iota\left(\e_{-1}\right){\left(x + |\e_{-1}| \right)} \  & \ \text{if } { -} |\e_{-1}| \le x \le - 1, \\
                        \iota\left(\e_{i}\right){\left(x - \sum_{m = 0}^{i - 1} |\e_{m}| \right)} \ & \ \text{if } i \ge 1 \text{ and } \sum_{m = 0}^{i - 1} |\e_{m}| \le x \le \sum_{m = 0}^{i} |\e_{m}| - 1, \\
                        \iota\left(\e_{i}\right){\left(x + \sum_{m = i}^{-1} |\e_{m}| \right)} \ & \ \text{if } i \le - 2 \text{ and } - \sum_{m = i}^{-1} |\e_{m}| \le x \le - \sum_{m = i + 1}^{- 1} |\e_{m}| - 1,
                    \end{dcases}  
                \end{align}
            then we can check that $I$ is injective and $I\left(\mathcal{E}^{\Z}\right) = \Omega_{r}$. For later use, we prepare the following lemma.  
                \begin{lemma}\label{lem:exskip_etaskip}
                    Suppose that $\left( \e_{i} \right)_{i \in \Z} \in \mathcal{E}^{\Z}$. Then for any $k \in \N$, we have
                        \begin{align}\label{eq:exskip_etaskip}
                        \Psi_{k}\left( I\left( \left( \e_{i} \right)_{i \in \Z} \right) \right) = I\left( \left( \tilde{\Psi}_{k}\left(\e_{i}\right) \right)_{i \in \Z} \right).
                    \end{align}
                \end{lemma}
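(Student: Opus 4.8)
The plan is to show that $\Psi_k$ respects concatenation: it should send the portion of $I\left(\left(\e_i\right)_{i\in\Z}\right)$ carrying the excursion $\e_i$ onto the portion carrying $\widetilde{\Psi}_k\left(\e_i\right)$, and it should carry the $i$-th record to the $i$-th record. Write $\eta := I\left(\left(\e_i\right)_{i\in\Z}\right)\in\Omega_r$ and let $a_i$ be the left endpoint of the block carrying $\e_i$, so that $a_0 = 0$, $a_{i+1} = a_i + |\e_i|$, $s_\infty\left(\eta,0\right) = 0$, and $\eta\left(a_i+t\right) = \iota\left(\e_i\right)\left(t\right)$ for $0 \le t \le |\e_i|-1$. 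Since each $\e_i$ is an excursion, inside each block the infinite carrier (hence every $\mathcal W_k$) is empty precisely at the sites $a_i$, which by Remark \ref{rem:record} are exactly the records of $\eta$; thus $s_\infty\left(\eta,i\right) = a_i$ for every $i$. On the target side, the $i$-th block $\widetilde{\Psi}_k\left(\e_i\right) = \iota^{-1}\left(\Psi_k\left(\iota\left(\e_i\right)\right)\right)$ will occupy $\left[\tilde a_i,\tilde a_{i+1}-1\right]$ with $\tilde a_0 = 0$, $\tilde a_{i+1} = \tilde a_i + |\widetilde{\Psi}_k\left(\e_i\right)|$. It therefore suffices to identify the two block decompositions and compare block by block.

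The heart of the argument will be a local comparison inside one block. Because $\mathcal W_k\left(\eta,\cdot\right)$ is built recursively from the left and $\mathcal W_k\left(\eta,a_i\right) = 0 = \mathcal W_k\left(\iota\left(\e_i\right),0\right)$, a straightforward induction — the mechanism behind Lemma \ref{lem:indep_c} — will give $\mathcal W_k\left(\eta,a_i+t\right) = \mathcal W_k\left(\iota\left(\e_i\right),t\right)$, hence $\eta^\sigma_{k+\ell}\left(a_i+t\right) = \iota\left(\e_i\right)^\sigma_{k+\ell}\left(t\right)$ and $r\left(\eta,a_i+t\right) = r\left(\iota\left(\e_i\right),t\right)$, for all $0 \le t \le |\e_i|$, $\ell\in\N$, $\sigma\in\{\uparrow,\downarrow\}$. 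Summing the increments defining $\xi_k$ then yields $\xi_k\left(\eta,a_i+t\right) = c_i + \xi_k\left(\iota\left(\e_i\right),t\right)$ with $c_i := \xi_k\left(\eta,a_i\right)$; in particular $c_{i+1} - c_i = \xi_k\left(\iota\left(\e_i\right),|\e_i|\right) =: N_i$, and passing to minimal preimages gives the translation identity $s_k\left(\eta,c_i+j\right) = a_i + s_k\left(\iota\left(\e_i\right),j\right)$ for $0 \le j \le N_i$. Moreover $c_0 = \xi_k\left(\eta,a_0\right) = \xi_k\left(\eta,0\right) = 0$. Running the same computation for $\iota\left(\e_i\right)$ shows its only records are $0$ and $|\e_i|$, so by \eqref{eq:whole_1} the only records of $\Psi_k\left(\iota\left(\e_i\right)\right)$ are $0$ and $N_i$, whence $|\widetilde{\Psi}_k\left(\e_i\right)| = N_i$; combined with $\tilde a_0 = 0 = c_0$ this forces $\tilde a_i = c_i$ for all $i$.

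It then remains to unwind the definitions. Fix $x\in\Z$, let $i$ be the unique index with $x\in\left[\tilde a_i,\tilde a_{i+1}-1\right]$, and put $j := x - \tilde a_i \in \{0,\dots,N_i-1\}$. Using $\xi_k\left(\iota\left(\e_i\right),0\right) = 0$,
\[
I\left(\left(\widetilde{\Psi}_k\left(\e_m\right)\right)_{m\in\Z}\right)\left(x\right) = \iota\left(\widetilde{\Psi}_k\left(\e_i\right)\right)\left(j\right) = \Psi_k\left(\iota\left(\e_i\right)\right)\left(j\right) = \iota\left(\e_i\right)\left(s_k\left(\iota\left(\e_i\right),j\right)\right).
\]
On the other hand $\xi_k\left(\eta,0\right) = 0$ and $x = c_i + j$, so by the translation identity and the fact that $s_k\left(\iota\left(\e_i\right),j\right)\in\left[0,|\e_i|-1\right]$,
\[
\Psi_k\left(\eta\right)\left(x\right) = \eta\left(s_k\left(\eta,x\right)\right) = \eta\left(a_i + s_k\left(\iota\left(\e_i\right),j\right)\right) = \iota\left(\e_i\right)\left(s_k\left(\iota\left(\e_i\right),j\right)\right).
\]
The two sides coincide, which is \eqref{eq:exskip_etaskip}. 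The step demanding the most care is the block bookkeeping of the second paragraph — showing that the sites enumerated by $s_k\left(\eta,\cdot\right)$ that lie in block $i$ stay in block $i$, equivalently that $\Psi_k$ sends the $i$-th record to the $i$-th record — which is exactly where the blockwise independence of the seat configuration (Lemma \ref{lem:indep_c}) enters.
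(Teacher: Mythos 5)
Your proof is correct and takes essentially the same route as the paper's, which simply cites Lemma \ref{lem:indep_c}, \eqref{eq:whole_1}, \eqref{def:tilde_skip}, and the identification of the records of $I\left(\left(\e_{i}\right)_{i\in\Z}\right)$ with the block endpoints as giving the result directly; you have just filled in the bookkeeping those citations compress. The only cosmetic difference is that the paper first reduces to $k=1$ via \eqref{whole_2}, whereas you handle general $k$ in a single pass.
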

                \begin{proof}[Proof of Lemma \ref{lem:exskip_etaskip}]
                    Thanks to \eqref{whole_2}, it is sufficient to consider the case $k = 1$. 
                    This is a direct consequence of Lemma \ref{lem:indep_c}, \eqref{eq:whole_1}, \eqref{def:tilde_skip} and 
                    \begin{align}
                    s_{\infty}\left( I\left( \left( \e_{i} \right)_{i \in \Z} \right), i \right) = \begin{dcases}
                        \ 0 \ & \ \text{if } i = 0,  \\
                        \ \sum_{m = 0}^{i - 1}|\e_{m}| \ & \ \text{if } i \ge 1, \\
                        \ - \sum_{m = i}^{-1}|\e_{m}| \ & \ \text{if } i \le -1.
                    \end{dcases}
                \end{align}
                \end{proof}

            We denote by $\hat{\mu}_{\boldsymbol{\a}}$, $\boldsymbol{\a} \in \mathcal{A}^{+}$, the probability measure on $\Omega_{r}$ induced by the product probability measure $\prod_{\e_{i} \in \Z}\nu_{\boldsymbol{\a}}\left(\e_i\right), e_i \in \mathcal{E}$ on $\mathcal{E}^{\Z}$ via the map $I$. Then, for any $\boldsymbol{\a} \in \mathcal{A}^{+}$, we define a probability measure $\mu_{\boldsymbol{\a}}$ on $\Omega_{*}$ as
                \begin{align}\label{def:inv_palm}
                    \int_{\Omega_{*}} d\mu_{\boldsymbol{\a}}\left(\eta\right) f(\eta) := \frac{1}{\bar{\e}\left(\boldsymbol{\a}\right)} \int_{\Omega_{r}} d\hat{\mu}_{\boldsymbol{\a}}\left(\eta\right) \sum_{y = 0}^{s_{\infty}\left(\eta, 1\right) - 1} f \left(\tau_{y}\eta\right), 
                \end{align}
            for any local function $f : \{0,1\}^{\Z} \to \R$, where $\tau_{y}, y \in \Z$ is a spatial shift operator defined {in \eqref{def:op_shift}.}  Then, the following result is shown by \cite{FG}.
                \begin{theorem*}[Theorem 4.5 in \cite{FG}]\label{thm:FG}
                    Suppose that $\a \in \mathcal{A}^{+}$. Then, $\mu_{\a}$ is a shift-stationary invariant measure of the BBS($\infty$), i.e., for any $y \in \Z$ and local function $f : \{0,1\}^{\Z} \to \R$, 
                        \begin{align}
                            \int_{\Omega_{*}} d\mu_{\boldsymbol{\a}}\left(\eta\right) f\left( \tau_{y} \eta \right) = \int_{\Omega_{*}} d\mu_{\boldsymbol{\a}}\left(\eta\right) f\left( \eta \right), \quad \int_{\Omega_{*}} d\mu_{\boldsymbol{\a}}\left(\eta\right) f\left( T\eta \right) = \int_{\Omega_{*}} d\mu_{\boldsymbol{\a}}\left(\eta\right) f\left( \eta\right).
                        \end{align}
                \end{theorem*}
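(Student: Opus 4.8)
The statement is \cite[Theorem 4.5]{FG}; I sketch the approach I would take. The claim splits into two parts of quite different character: shift-stationarity of $\mu_{\boldsymbol{\a}}$, and its invariance under $T=T_{\infty}$.

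For shift-stationarity the plan is to argue purely at the level of the Palm construction $\eqref{def:inv_palm}$, with no dynamics involved. Under $\hat{\mu}_{\boldsymbol{\a}}$ the configuration is the concatenation $I\left((\e_i)_{i\in\Z}\right)$ of i.i.d.\ excursions, anchored so that $s_{\infty}(\eta,0)=0$, and the block through the origin has length $s_{\infty}(\eta,1)=|\e_0|$ with finite $\hat{\mu}_{\boldsymbol{\a}}$-mean $\bar{\e}(\boldsymbol{\a})$ (this is exactly why $\boldsymbol{\a}\in\mathcal{A}^{+}$ is imposed). Formula $\eqref{def:inv_palm}$ is the Palm inversion formula for this marked renewal structure, so $\mu_{\boldsymbol{\a}}$ is by design the stationary version of the process. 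To make this rigorous I would insert the definition of $\mu_{\boldsymbol{\a}}$ on both sides of $\int f(\tau_y\eta)\,d\mu_{\boldsymbol{\a}}=\int f(\eta)\,d\mu_{\boldsymbol{\a}}$, reduce by iteration to the unit shift $y=1$, and use that re-indexing $(\e_i)_{i\in\Z}\mapsto(\e_{i+1})_{i\in\Z}$ preserves $\prod_i\nu_{\boldsymbol{\a}}(\e_i)$ (cycle-stationarity of $\hat{\mu}_{\boldsymbol{\a}}$), so that the inner sum $\sum_{y=0}^{s_{\infty}(\eta,1)-1}$ telescopes; the finiteness $\bar{\e}(\boldsymbol{\a})<\infty$ guarantees that no boundary term survives. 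This part is routine renewal theory.

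For $T$-invariance I would first record the obstruction that dictates the whole setup: $T$ does not preserve $\Omega_r$, because the record at the origin moves (the quantities $s_{\infty}(\eta,0)-s_{\infty}(T\eta,0)$ and the offsets $o_k(\eta)$ of Theorem \ref{thm:linear_whole} are generally nonzero), which is precisely why one works with the Palm measure $\mu_{\boldsymbol{\a}}$ on $\Omega_{*}$ rather than with $\hat{\mu}_{\boldsymbol{\a}}$ on $\Omega_r$. The plan is then to pass to the slot/$\zeta$-coordinates: by Remark \ref{rem:two_bi}, $\zeta$ is a bijection $\Omega_r\to\bar{\Omega}$, and combined with the Palm construction it identifies $\mu_{\boldsymbol{\a}}$ with an explicit measure on $\zeta$-space under which, for each $k$, the sequence $i\mapsto\zeta_k(\eta,i)$ is $i$-stationary with the product/Gibbs structure inherited from the excursion weights $\prod_k\a_k^{\zeta_k(\e)}$ and the Fermionic counting formula $|\mathcal{E}(\mathbf{n})|=\prod_k\binom{2\sum_{\ell>k}(\ell-k)n_\ell+n_k}{n_k}$ used in the proof of Lemma \ref{lem:shift_excursion_m}. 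By Theorem \ref{thm:linear_whole}, $T$ acts here by the index shift $\zeta_k(\eta,i)\mapsto\zeta_k(T\eta,\,i+k+o_k(\eta))$; I would then check that this $\boldsymbol{\a}$-independent family of shifts, although its offsets depend on $\eta$, merely re-anchors an already $i$-stationary field, and transport the conclusion back through $\zeta^{-1}$ to obtain $\int f(T\eta)\,d\mu_{\boldsymbol{\a}}=\int f(\eta)\,d\mu_{\boldsymbol{\a}}$.

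The hard part will be this last step: verifying that the $\eta$-dependent offsets genuinely amount only to a measure-preserving change of anchoring of the stationary $\zeta$-field, and not to a distortion of its law; note that the shifts are ``ragged'' (a different offset in each row $k$) and the rows are coupled through the constraint defining $\bar{\Omega}$, so invariance of the joint law is the real content. This is where the finiteness $\bar{\e}(\boldsymbol{\a})<\infty$, the precise product form of $\nu_{\boldsymbol{\a}}$, and the locality of the offsets (they are determined by coordinates near the origin, via the formulas in the remarks following Theorem \ref{thm:linear_whole}) all have to be used together; this is the technical heart of \cite{FG}. An alternative, more geometric route I would consider is to describe $T_{\infty}$ through the carrier as a reflection (Pitman-type) transform of the walk $x\mapsto\sum_{y\le x}(2\eta(y)-1)$ — whose strict descending records are exactly the records $s_{\infty}(\eta,i)$ — and to deduce invariance of i.i.d.-excursion laws under such a transform by a path-decomposition argument; this would put shift-stationarity and $T$-invariance on the same footing, though the bookkeeping of the offset near the origin would remain the delicate point.
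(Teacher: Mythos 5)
This statement is quoted verbatim from \cite{FG} (their Theorem~4.5); the paper you are reading offers no proof of it, so there is no internal argument to compare yours against --- the relevant benchmark is the proof in \cite{FG} itself. Measured against that, your sketch is directionally right: the shift-stationarity half really is routine Palm/renewal theory, and your plan (reduce to the unit shift, use that re-indexing $(\e_i)\mapsto(\e_{i+1})$ preserves the i.i.d.\ product law, invoke $\bar{\e}(\boldsymbol{\a})<\infty$ so the Palm inversion formula \eqref{def:inv_palm} is well normalized) is exactly how one does it. Your identification of the $T$-invariance half as the real content, and of the $\zeta$-coordinates with the i.i.d.\ structure of Remark~\ref{rem:indep} as the right framework, also matches the strategy of \cite{FG}.

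The gap is that you stop precisely at the step that constitutes the theorem. An i.i.d.\ field $(\zeta_k(i))_{k,i}$ is invariant under a \emph{deterministic} shift of each row, but the map induced by $T$ shifts row $k$ by $k+o_k(\eta)$, and $o_k(\eta)$ is a function of the configuration itself; a configuration-dependent shift of an i.i.d.\ field is not automatically measure-preserving, so "merely re-anchors an already stationary field" is an assertion, not an argument. The resolution in \cite{FG} is to separate the two effects: relative to the $k$-slot enumeration (rather than the origin), $T$ acts by the \emph{deterministic} shift $i\mapsto i+k$ on each row, which preserves the i.i.d.\ law of the components; the $\eta$-dependent part of the offset records only how the anchoring record near the origin moves, and is absorbed by the spatial-shift-stationarity already established in the first half (a Palm-calculus commutation of $T$ with $\tau$). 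Without making that decomposition explicit --- and checking that the re-anchoring is measurable with respect to the right coordinates and has the integrability needed for the Palm identity --- your proposal does not yet prove $T$-invariance. Your alternative Pitman-transform route is also viable (it is essentially the approach of \cite{CKST}), but it would face the same bookkeeping at the origin.
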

    \begin{remark}
        From the definition of $\mu_{\boldsymbol{\a}}$, the density of balls under $\mu_{\boldsymbol{\a}}$, denoted by  $\rho\left(\boldsymbol{\a}\right)$, is given by 
            \begin{align}\label{eq:density_ball}
                \rho\left(\boldsymbol{\a}\right) := \int_{\Omega} d\mu_{\boldsymbol{\a}}\left(\eta\right) \eta\left(0\right) =\frac{\bar{\e}\left(\boldsymbol{\a}\right) - 1}{2 \bar{\e}\left(\boldsymbol{\a}\right)}. 
            \end{align}
    \end{remark}
    \begin{remark}\label{rem:Markov}
        The class $\mu_{\boldsymbol{\a}}$, $\boldsymbol{\a} \in \mathcal{A}^{+}$ contains some important probability measures on $\{0,1\}^{\Z}$. Actually, 
        by \cite[Lemma 3.7, Corollary 4.8]{FG}, it is shown that by choosing $\boldsymbol{\a}$ as $\a_{k} := a b^{k-1}$ , $k \in \N$ with some $0 < a, b < 1$ such that $\sqrt{a} + \sqrt{b} < 1$, then $\mu_{\boldsymbol{\a}}$ becomes a two-sided homogeneous Markov distribution on $\{0,1\}^{\Z}$, where the transition matrix is given by \eqref{eq:transition}, defined in Section \ref{subsec:Markov} below. In particular, if $a = b$, then $\mu_{\boldsymbol{\a}}$ is a product homogeneous Bernoulli measure with density $\rho = \frac{1 - \sqrt{1 - 4a}}{2}$. 
            
    \end{remark}
            {The \(\boldsymbol{\alpha}\)-parametrization is natural from the viewpoint of
canonical measures. On the other hand, for some explicit computations, it is
more convenient to use another parametrization introduced in \cite{FG}. We
therefore recall the \(\mathbf{q}\)-parametrization below.}
         We define parameter sets $\mathcal{Q}, \mathcal{Q}^{+}$ as
                \begin{align}
                    \mathcal{Q} &:= \left\{ \q = \left( q_k \right)_{k \in \N} \subset  [0,1)^{\N} \ ; \  \sum_{k \in \N} q_k < \infty \right\}, \\
                    \mathcal{Q}^{+} &:= \left\{ \q = \left( q_k \right)_{k \in \N} \in \mathcal{Q} \ ; \  \sum_{k \in \N} k q_k < \infty \right\}. 
                \end{align}
            For any ${\q} \in \mathcal{Q}$, we define a probability measure $\fa_{{\q}}$ on $\mathcal{E}$ as 
                \begin{align}
                    \fa_{\q}\left(\e\right) := \prod_{k \in \N} q_{k}^{\zeta_{k}\left(\e\right)} \left(1 - q_k\right)^{2\sum_{\ell \in \N} \ell \zeta_{k+ \ell}\left(\e\right) }. 
                \end{align}
            Then, the one-to-one correspondence between $\nu_{\boldsymbol{\a}}$ and $\fa_\q$ has been shown by \cite{FG}. To state their result, for any $\boldsymbol{\a} \in \mathcal{A}$ and $\q \in \mathcal{Q}$, we define $\q(\boldsymbol{\a})$ and $\boldsymbol{\a}(\q)$ as 
                \begin{align}
                    q\left(\boldsymbol{\a}\right)_{k} &:= \begin{dcases}
                        \a_1 \ & \ k = 1, \\
                        \frac{\a_{k}}{\prod_{\ell = 1}^{k-1}\left(1 - q_\ell(\a)\right)^{2(k-\ell)}} \ & \ k \ge 2,
                    \end{dcases}\label{def:atoq} \\
                    \a\left(\q\right)_k &:= q_{k}\prod_{\ell = 1}^{k-1}\left(1 - q_\ell\right)^{2(k-\ell)}.\label{def:qtoa}
                \end{align}
                \begin{theorem*}[Theorem 3.1 in \cite{FG}]
                    The maps \eqref{def:atoq} and \eqref{def:qtoa} are the inverse of each other. Moreover, we have 
                        \begin{align}
                            \q\left(\mathcal{A}\right) = \mathcal{Q}, \quad \boldsymbol{\a}\left(\mathcal{Q}\right) = \mathcal{A},
                        \end{align}
                    and 
                        \begin{align}\label{eq:Q+A+}
                            \q\left(\mathcal{A}^{+}\right) = \mathcal{Q}^{+}, \quad \boldsymbol{\a}\left(\mathcal{Q}^{+}\right) = \mathcal{A}^{+}.
                        \end{align}
                    In particular, for any $\q \in \mathcal{Q}$, we have 
                        \begin{align}\label{eq:rel_aq}
                            \fa_\q = \nu_{\boldsymbol{\a}\left(\q\right)}.
                        \end{align}
                \end{theorem*}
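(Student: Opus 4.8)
The plan is to establish the four assertions in the order: (a) $\q(\cdot)$ is well defined with values in $[0,1)^{\N}$ and the two composites $\boldsymbol{\a}\circ\q$ and $\q\circ\boldsymbol{\a}$ are the identity as maps on sequences; (b) these restrict to mutually inverse bijections between $\mathcal{A}$ and $\mathcal{Q}$; (c) $\fa_{\q}=\nu_{\boldsymbol{\a}(\q)}$; (d) the refinement to $\mathcal{A}^{+}$ and $\mathcal{Q}^{+}$. Part (a) is essentially algebra: $\boldsymbol{\a}(\q(\boldsymbol{\a}))=\boldsymbol{\a}$ is immediate on rearranging \eqref{def:atoq}, and $\q(\boldsymbol{\a}(\q))=\q$ follows by induction on $k$ because \eqref{def:atoq} exactly undoes the finite product in \eqref{def:qtoa}. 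The one point needing care is that every coordinate of $\q(\boldsymbol{\a})$ lies in $[0,1)$, so that \eqref{def:atoq} never divides by zero; here I would prove the identity $\q(\boldsymbol{\a})_{k}=\q(\theta\boldsymbol{\a})_{k-1}$ for $k\ge2$ directly from \eqref{def:atoq} and \eqref{def:shift_para}, iterate it to get $\q(\boldsymbol{\a})_{k}=(\theta^{k-1}\boldsymbol{\a})_{1}$, and invoke the fact that $\theta$ maps $\mathcal{A}$ into $\mathcal{A}\subset[0,1)^{\N}$ (recorded in the text, cf.\ \cite[Lemma~3.4]{FG}): since $\theta^{k-1}\boldsymbol{\a}\in\mathcal{A}$ its first coordinate is in $[0,1)$, and an induction on $k$ shows simultaneously that the recursion is well posed and that $\q(\boldsymbol{\a})\in[0,1)^{\N}$.

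For (b) I would iterate the relation $Z_{\theta\boldsymbol{\a}}=Z_{\boldsymbol{\a}}(1-\a_{1})$ used in the proof of Lemma~\ref{lem:shift_excursion_m}, together with $(\theta^{j-1}\boldsymbol{\a})_{1}=\q(\boldsymbol{\a})_{j}$, to obtain $Z_{\theta^{k}\boldsymbol{\a}}=Z_{\boldsymbol{\a}}\prod_{j=1}^{k}\bigl(1-\q(\boldsymbol{\a})_{j}\bigr)$. Since the empty excursion always contributes a summand equal to $1$, one has $Z_{\boldsymbol{\beta}}\ge1$ for every $\boldsymbol{\beta}$, whence $\prod_{j=1}^{k}(1-\q(\boldsymbol{\a})_{j})\ge Z_{\boldsymbol{\a}}^{-1}>0$ for all $k$; letting $k\to\infty$ forces $\sum_{j}\q(\boldsymbol{\a})_{j}<\infty$, i.e.\ $\q(\boldsymbol{\a})\in\mathcal{Q}$. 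Conversely, for $\q\in\mathcal{Q}$ one has $\boldsymbol{\a}(\q)_{k}\in[0,1)$ trivially, and evaluating $\sum_{\e\in\mathcal{E}}\prod_{m}\boldsymbol{\a}(\q)_{m}^{\zeta_{m}(\e)}$ by grouping excursions according to $(\zeta_{k}(\e))_{k}$ through the Fermionic formula (see \cite{KTT}) and then summing over $\zeta_{1},\zeta_{2},\dots$ successively with $\sum_{n\ge0}\binom{x+n}{n}y^{n}=(1-y)^{-(x+1)}$ yields $Z_{\boldsymbol{\a}(\q)}=\prod_{k}(1-q_{k})^{-1}<\infty$, so $\boldsymbol{\a}(\q)\in\mathcal{A}$. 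Combined with (a) this gives $\q(\mathcal{A})=\mathcal{Q}$ and $\boldsymbol{\a}(\mathcal{Q})=\mathcal{A}$.

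Part (c) is a bookkeeping identity: writing $n_{m}=\zeta_{m}(\e)$, the exponent of $(1-q_{k})$ in the definition of $\fa_{\q}(\e)$ equals $2\sum_{m>k}(m-k)n_{m}$, so interchanging the outer product $\prod_{k}$ with these inner sums and collecting, for each $m$, all factors $(1-q_{k})$ with $k<m$ turns $\fa_{\q}(\e)$ into a constant multiple of $\prod_{m}\bigl(q_{m}\prod_{\ell<m}(1-q_{\ell})^{2(m-\ell)}\bigr)^{n_{m}}=\prod_{m}\boldsymbol{\a}(\q)_{m}^{\zeta_{m}(\e)}$. Since $\fa_{\q}$ and $\nu_{\boldsymbol{\a}(\q)}$ are then two probability measures on $\mathcal{E}$ proportional to the same function of $\e$, they coincide, which is \eqref{eq:rel_aq}; comparing normalising constants recovers $Z_{\boldsymbol{\a}(\q)}=\prod_{k}(1-q_{k})^{-1}$, consistent with (b).

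Finally (d), which I expect to be the real difficulty, needs the explicit law of the soliton counts under $\fa_{\q}=\nu_{\boldsymbol{\a}(\q)}$: the Fermionic formula shows that, conditionally on $(\zeta_{j}(\e))_{j>k}$, the count $\zeta_{k}(\e)$ is negative binomial with parameters $(c_{k}+1,q_{k})$ where $c_{k}=2\sum_{j>k}(j-k)\zeta_{j}(\e)$. Feeding this into Proposition~\ref{prop:shift_excursion} and Lemma~\ref{lem:shift_excursion_m}, which give $\E_{\nu_{\boldsymbol{\a}}}[\zeta_{k}(\e)]=\E_{\nu_{\theta^{k-1}\boldsymbol{\a}}}[\zeta_{1}(\e)]$, together with \eqref{eq:length_z} and \eqref{def:exp_excursion}, produces the closed identity
\[ \bar{\e}(\boldsymbol{\a})=1+2\sum_{k\ge1}k\,\frac{\q(\boldsymbol{\a})_{k}}{1-\q(\boldsymbol{\a})_{k}}\,\bar{\e}(\theta^{k}\boldsymbol{\a}). \]
Since $\bar{\e}(\boldsymbol{\beta})\ge1$ for every $\boldsymbol{\beta}$, this gives $\bar{\e}(\boldsymbol{\a})\ge1+2\sum_{k}k\,\q(\boldsymbol{\a})_{k}$, hence $\boldsymbol{\a}\in\mathcal{A}^{+}\Rightarrow\q(\boldsymbol{\a})\in\mathcal{Q}^{+}$. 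For the reverse implication one checks from the identity that $j\mapsto\bar{\e}(\theta^{j}\boldsymbol{\a})$ is non-increasing, so that for $\q\in\mathcal{Q}^{+}$ one may choose $J$ with $\sum_{m>J}m\,q_{m}/(1-q_{m})$ small, deduce that $\bar{\e}(\theta^{J}\boldsymbol{\a})$ is finite and bounded, and then run the identity backward through the finitely many indices $0,1,\dots,J-1$, each step multiplying the bound by at most $1+2\sum_{m}q_{m}/(1-q_{m})<\infty$; a truncation of $\q$ to finitely many coordinates (where $\bar{\e}$ is manifestly finite) followed by a passage to the limit removes the apparent circularity in this estimate. The hard part is exactly this last bootstrap: the displayed identity ties $\bar{\e}(\boldsymbol{\a})$ to all of its $\theta$-shifts at once, so finiteness of $\bar{\e}(\boldsymbol{\a})$ cannot be read off term by term and must be extracted from the combination of monotonicity in the shift and summability of $\q$. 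Putting (d) together with the bijection of (b) then yields $\q(\mathcal{A}^{+})=\mathcal{Q}^{+}$ and $\boldsymbol{\a}(\mathcal{Q}^{+})=\mathcal{A}^{+}$, completing the proof.
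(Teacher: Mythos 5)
This statement is imported verbatim from Ferrari--Gabrielli as \cite[Theorem 3.1]{FG}; the paper supplies no proof and uses it as a black box, so there is no in-paper argument to compare yours against. Judged on its own terms, your reconstruction is essentially sound and follows what appears to be the natural (and presumably [FG]'s) route. The algebraic inversion via $\q\left(\boldsymbol{\a}\right)_{k}=\left(\theta^{k-1}\boldsymbol{\a}\right)_{1}$ is correct and is exactly the content of \eqref{eq:theta-tilde}. The partition-function identity $Z_{\boldsymbol{\a}\left(\q\right)}=\prod_{k}\left(1-q_{k}\right)^{-1}$ does follow from the Fermionic formula and the negative-binomial series, but for the direction $\q\in\mathcal{Q}\Rightarrow\boldsymbol{\a}\left(\q\right)\in\mathcal{A}$ you should make the truncation $\tilde{C}_{K}\q\uparrow\q$ explicit (so that the iterated summation terminates and monotone convergence gives the limit); otherwise the iteration $Z_{\boldsymbol{\a}}=\prod_{j\le K}\left(1-q_{j}\right)^{-1}Z_{\theta^{K}\boldsymbol{\a}}$ only bounds $Z_{\boldsymbol{\a}}$ from below. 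Your identity $\bar{\e}\left(\boldsymbol{\a}\right)=1+2\sum_{k}k\,\q\left(\boldsymbol{\a}\right)_{k}\left(1-\q\left(\boldsymbol{\a}\right)_{k}\right)^{-1}\bar{\e}\left(\theta^{k}\boldsymbol{\a}\right)$ is correct (it is the conditional negative-binomial mean combined with Proposition \ref{prop:shift_excursion} and Lemma \ref{lem:shift_excursion_m}), it immediately gives $\q\left(\mathcal{A}^{+}\right)\subset\mathcal{Q}^{+}$, and your bootstrap for the converse --- monotonicity of $j\mapsto\bar{\e}\left(\theta^{j}\boldsymbol{\a}\right)$, a tail index $J$ with $\sum_{m>J}\left(m-J\right)q_{m}\left(1-q_{m}\right)^{-1}<1/2$ giving a uniform bound $\bar{\e}\left(\theta^{j}\tilde{C}_{K}\boldsymbol{\a}\right)\le 2$ for $j\ge J$, followed by finitely many backward steps and $K\to\infty$ --- does close the hardest implication. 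You have correctly identified where the real work lies.

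One caveat is worth recording about step (c). With the definition of $\fa_{\q}$ as printed in this paper, your own rearrangement shows $\fa_{\q}\left(\e\right)=\prod_{m}\boldsymbol{\a}\left(\q\right)_{m}^{\zeta_{m}\left(\e\right)}$ \emph{exactly}, whence $\sum_{\e}\fa_{\q}\left(\e\right)=Z_{\boldsymbol{\a}\left(\q\right)}=\prod_{k}\left(1-q_{k}\right)^{-1}\neq 1$ and \eqref{eq:rel_aq} would fail by that constant. The exponent of $\left(1-q_{k}\right)$ should read $1+2\sum_{\ell}\ell\zeta_{k+\ell}\left(\e\right)$ as in [FG]; with that correction the extra factor $\prod_{k}\left(1-q_{k}\right)$ is precisely $Z_{\boldsymbol{\a}\left(\q\right)}^{-1}$ and your ``two probability measures proportional to the same function coincide'' argument is exactly right. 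As written, your step (c) silently assumes $\fa_{\q}$ is a probability measure, which holds only for the corrected exponent, so you should either verify normalisation or flag the misprint.
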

        Lemma \ref{lem:shift_excursion_m} can be written in terms of $\fa_{\q}$. 
        We define a shift operator $\tilde{\theta} : \mathcal{Q} \to \mathcal{Q}$ as $\left(\tilde{\theta}{\q}\right)_{k} := q_{k+1}$, $k \in \N$.
            \begin{lemma}\label{lem:shift_q}
                Suppose that $q \in \mathcal{Q}$. Then, for any $k \in \N$ and $\e' \in \mathcal{E}$, we have 
                    \begin{align}\label{eq:shift_q}
                        \fa_\q\left( \left\{ \e \in \mathcal{E} \ ; \ \Psi_{k}\left(\e\right) = \e' \right\} \right) = \fa_{\tilde{\theta}^{k}\q}\left(\e'\right).
                    \end{align}
            \end{lemma}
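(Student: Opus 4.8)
\textbf{Proof proposal for Lemma \ref{lem:shift_q}.} The plan is to deduce this from the excursion-measure version already in hand, namely Lemma \ref{lem:shift_excursion_m}, by transporting it through the bijection between the parameter families $\{\nu_{\boldsymbol{\a}}\}$ and $\{\fa_{\q}\}$. Concretely, \eqref{eq:rel_aq} gives $\fa_{\q} = \nu_{\boldsymbol{\a}\left(\q\right)}$ for every $\q \in \mathcal{Q}$, and $\boldsymbol{\a}\left(\q\right) \in \mathcal{A}$ by the theorem quoted from \cite{FG} (Theorem 3.1 there). Hence Lemma \ref{lem:shift_excursion_m} applies with $\boldsymbol{\a} = \boldsymbol{\a}\left(\q\right)$ and yields, for any $k \in \N$ and $\e' \in \mathcal{E}$,
\begin{align}
    \fa_{\q}\left( \left\{ \e \in \mathcal{E} \ ; \ \Psi_{k}\left(\e\right) = \e' \right\} \right)
    = \nu_{\boldsymbol{\a}\left(\q\right)}\left( \left\{ \e \in \mathcal{E} \ ; \ \Psi_{k}\left(\e\right) = \e' \right\} \right)
    = \nu_{\theta^{k}\boldsymbol{\a}\left(\q\right)}\left(\e'\right).
\end{align}
On the other hand $\fa_{\tilde{\theta}^{k}\q} = \nu_{\boldsymbol{\a}\left(\tilde{\theta}^{k}\q\right)}$, again by \eqref{eq:rel_aq} (note $\tilde{\theta}^{k}\q \in \mathcal{Q}$ since truncating a summable sequence keeps it summable). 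So \eqref{eq:shift_q} reduces to the purely algebraic identity $\theta^{k}\boldsymbol{\a}\left(\q\right) = \boldsymbol{\a}\left(\tilde{\theta}^{k}\q\right)$ in $\mathcal{A}$; that is, the two shift operators $\theta$ on $\mathcal{A}$ and $\tilde{\theta}$ on $\mathcal{Q}$ are intertwined by the change of variables $\q \mapsto \boldsymbol{\a}\left(\q\right)$.

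By induction it suffices to prove the $k = 1$ case $\theta\boldsymbol{\a}\left(\q\right) = \boldsymbol{\a}\left(\tilde{\theta}\q\right)$ for all $\q \in \mathcal{Q}$, since then $\theta^{k}\boldsymbol{\a}\left(\q\right) = \theta^{k-1}\boldsymbol{\a}\left(\tilde{\theta}\q\right) = \dots = \boldsymbol{\a}\left(\tilde{\theta}^{k}\q\right)$. Writing $\boldsymbol{\a} = \boldsymbol{\a}\left(\q\right)$, so that $\a_{k} = q_{k}\prod_{\ell = 1}^{k-1}\left(1 - q_{\ell}\right)^{2(k-\ell)}$ by \eqref{def:qtoa} and in particular $\a_{1} = q_{1}$, the definition \eqref{def:shift_para} gives
\begin{align}
    \left(\theta\boldsymbol{\a}\right)_{k} = \frac{\a_{k+1}}{\left(1 - \a_{1}\right)^{2k}} = \frac{q_{k+1}\prod_{\ell = 1}^{k}\left(1 - q_{\ell}\right)^{2(k+1-\ell)}}{\left(1 - q_{1}\right)^{2k}} = q_{k+1}\prod_{\ell = 2}^{k}\left(1 - q_{\ell}\right)^{2(k+1-\ell)} = q_{k+1}\prod_{m = 1}^{k-1}\left(1 - q_{m+1}\right)^{2(k-m)},
\end{align}
where the $\ell = 1$ factor of the product cancels the denominator and $m = \ell - 1$ is a reindexing. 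On the other hand, with $\q' := \tilde{\theta}\q$, i.e. $q'_{\ell} = q_{\ell + 1}$, formula \eqref{def:qtoa} gives $\boldsymbol{\a}\left(\q'\right)_{k} = q'_{k}\prod_{\ell = 1}^{k-1}\left(1 - q'_{\ell}\right)^{2(k-\ell)} = q_{k+1}\prod_{\ell = 1}^{k-1}\left(1 - q_{\ell+1}\right)^{2(k-\ell)}$, which is exactly the same expression. This proves $\theta\boldsymbol{\a}\left(\q\right) = \boldsymbol{\a}\left(\tilde{\theta}\q\right)$ and hence the lemma.

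There is no real obstacle here beyond keeping the product indices straight; the content is entirely carried by Lemma \ref{lem:shift_excursion_m}, and the only thing to check is that the combinatorial shift $\theta$ defined through the Fermionic formula matches the naive shift $\tilde{\theta}$ after the substitution \eqref{def:atoq}–\eqref{def:qtoa}. It is worth recording the intertwining relation $\theta \circ \boldsymbol{\a} = \boldsymbol{\a} \circ \tilde{\theta}$ as a displayed identity, both because it makes the proof transparent and because it will make the analogous statement for $\fa_{\q}$ on $\Omega_{*}$ (and the remark that $\tilde{\theta}$ preserves $\mathcal{Q}^{+}$, via \eqref{eq:Q+A+} and the corresponding fact for $\theta$ on $\mathcal{A}^{+}$) immediate.
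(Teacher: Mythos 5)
Your proof is correct and follows essentially the same route as the paper: both reduce \eqref{eq:shift_q} to Lemma \ref{lem:shift_excursion_m} via $\fa_\q = \nu_{\boldsymbol{\a}(\q)}$ and verify the intertwining identity $\theta\boldsymbol{\a}(\q) = \boldsymbol{\a}(\tilde{\theta}\q)$ by the same direct computation from \eqref{def:qtoa}, then iterate $k$ times. The index bookkeeping in your cancellation of the $\ell=1$ factor checks out.
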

            \begin{proof}[Proof of Lemma \ref{lem:shift_q}]
                We observe that for any $k \in \N$,
                    \begin{align}
                        \a\left(\tilde{\theta}\q\right)_k = q_{k+1}\prod_{\ell = 1}^{k-1}\left(1 - q_{\ell+1}\right)^{2(k - \ell)}
                        = \frac{\a\left(\q\right)_{k+1}}{\left(1 - \a\left(\q\right)_1\right)^{2k}}
                        = \left(\theta\a\left(\q\right)\right)_k.
                    \end{align}
                Hence we have 
                    \begin{align}\label{eq:theta-tilde}
                        \boldsymbol{\a}\left(\tilde{\theta}\q\right) = \theta\boldsymbol{\a}\left(\q\right).
                    \end{align}
                By using this relation $k$ times, we get $\boldsymbol{\a}\left(\tilde{\theta}^{k}\q\right) = \theta^{k}\boldsymbol{\a}\left(\q\right)$ for any $k \in \N$. Thus from \eqref{eq:shift_excursion_m} and \eqref{eq:rel_aq}, we obtain 
                    \begin{align}
                        \fa_\q\left( \left\{ \e \in \mathcal{E} \ ; \ \Psi_{k}\left(\e\right) = \e' \right\} \right) 
                        &= \nu_{\boldsymbol{\a}\left(\q\right)}\left( \left\{ \e \in \mathcal{E} \ ; \ \Psi_{k}\left(\e\right) = \e' \right\} \right) \\
                        &= \nu_{\theta^k\boldsymbol{\a}\left(\q\right)}\left(\e'\right) \\
                        &= \nu_{\boldsymbol{\a}\left(\tilde{\theta}^k\q\right)}\left(\e'\right) \\
                        &= \fa_{\tilde{\theta}^k\q}\left(\e'\right).
                    \end{align}
            \end{proof}
        \begin{remark}\label{rem:theta_A+}
            It is clear that $\tilde{\theta}|_{\mathcal{Q}^{+}} : \mathcal{Q}^{+} \to \mathcal{Q}^{+}$. Hence, from \cite[Theorem 3.1]{FG} and \eqref{eq:theta-tilde}, if $\boldsymbol{\a} \in \mathcal{A}^{+}$, then $\tilde{\theta}\q\left(\boldsymbol{\a}\right) \in \mathcal{Q}^{+}$, and $\theta \boldsymbol{\a} = \theta \boldsymbol{\a}\left( \q\left(\boldsymbol{\a}\right) \right) = \boldsymbol{\a}\left( \tilde{\theta}\q\left(\boldsymbol{\a}\right) \right) \in \mathcal{A}^{+}$.
        \end{remark}
                
        We note that $\fa_{\q}$ can be extended over $\mathcal{E}^{\Z}$ and $\Omega_{*}$ in the same way as we did for $\nu_{\boldsymbol{\a}}$. 
        We denote by $\hat{\phi}_{\q}$, $\q \in \mathcal{Q}^{+}$, the probability measure on $\Omega_{r}$ induced by the product probability measure $\prod_{\e_{i} \in \Z}\fa_{\q}\left(\e_i\right), e_i \in \mathcal{E}$ on $\mathcal{E}^{\Z}$ via the map $I$. In addition, for any $\q \in \mathcal{Q}^{+}$, we define a probability measure $\phi_{\q}$ on $\Omega_{*}$ as
                \begin{align}
                    \int_{\Omega_{*}} d\phi_{\q}\left(\eta\right) f(\eta) := \frac{1}{\bar{\e}\left(\q\right)} \int_{\Omega_{r}} d\hat{\phi}_{\q}\left(\eta\right) \sum_{y = 0}^{s_{\infty}\left(\eta, 1\right) - 1} \tau_{y} f \left(\eta\right), 
                \end{align}
            for any local function $f : \{0,1\}^{\Z} \to \R$, where $\bar{\e}\left(\q\right) := \bar{\e}\left(\boldsymbol{\a}\left(\q\right)\right).$ We note that from \eqref{eq:rel_aq}, we have 
                \begin{align}
                    \phi_{\q} = \mu_{\boldsymbol{\a}(\q)},\label{eq:rel_aq_O}
                \end{align}
            for any $\q \in \mathcal{Q}^{+}$. 

            \begin{remark}\label{rem:indep}
                It is known that under $\hat{\mu}_{\boldsymbol{\a}}$, $\a \in \mathcal{A}$, the sequence $\left(\zeta_{k}\left(i\right)\right)_{k \in \N, i \in \Z}$ is i.i.d. geometric with $\hat{\mu}_{\boldsymbol{\a}}\left(\zeta_{k}\left(i\right) = 1\right) = q\left(\boldsymbol{\a}\right)_{k}$ for each $k$ and independent over $k$. See \cite[Theorem 4.3 a)]{FG} for details. 
            \end{remark}
            
        \subsection{Distribution of \texorpdfstring{$k$}{k}-skipped configuration}\label{subsubsec:main}
            
            In this subsection we consider the distribution of $\Psi_{k}\left(\eta\right)$ under $\mu_{\boldsymbol{\a}}$, $\boldsymbol{\a} \in \mathcal{A}$. First we prepare the following lemma.  
                \begin{lemma}\label{lem:indep_psi}
                    Suppose that $\boldsymbol{\a} \in \mathcal{A}$ and $k \in \N$. Then, under $\hat{\mu}_{\boldsymbol{\a}}$, $\Psi_{k}\left(\eta\right)$ is independent of  $\left(\zeta_{\ell}\left(\eta, i\right)\right)_{1 \le \ell \le k, i \in \Z}$.
                \end{lemma}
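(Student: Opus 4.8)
The plan is to route everything through the $\zeta$-coordinates. Since $\hat{\mu}_{\boldsymbol{\a}}$ is supported on $\Omega_{r}$, and on $\Omega_{r}$ the family $\zeta(\eta):=(\zeta_{\ell}(\eta,i))_{\ell\ge 1,\,i\in\Z}$ determines $\eta$ (the map $\zeta:\Omega_{r}\to\bar\Omega$ being a bijection, Remark~\ref{rem:two_bi}), while under $\hat{\mu}_{\boldsymbol{\a}}$ these coordinates are mutually independent (Remark~\ref{rem:indep}), it suffices to show that $\Psi_{k}(\eta)$ is a measurable function of the subfamily $(\zeta_{\ell}(\eta,i))_{\ell\ge k+1,\,i\in\Z}$ alone, because this subfamily is disjoint from $(\zeta_{\ell}(\eta,i))_{1\le\ell\le k,\,i\in\Z}$, which is what the statement is about.

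First I would check that $\Psi_{k}$ maps $\Omega_{r}$ into $\Omega_{r}$. If $\eta\in\Omega_{r}$ then $s_{\infty}(\eta,0)=0$, hence $\xi_{k}(\eta,0)=0$; and the identities established in the proof of the whole-line analogue of Proposition~\ref{prop:shift} (in particular \eqref{eq:whole_p2**}, taken with the free capacity parameter set to $\infty$ and the skip index equal to $k$) give $s_{\infty}(\Psi_{k}(\eta),i)=\xi_{k}(\eta,s_{\infty}(\eta,i))-\xi_{k}(\eta,0)$. This vanishes for $i=0$ and is finite for every $i$ because $\xi_{k}(\eta,\cdot)$ is $\Z$-valued and $|s_{\infty}(\eta,i)|<\infty$. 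Hence $\zeta(\Psi_{k}(\eta))$ is well defined $\hat{\mu}_{\boldsymbol{\a}}$-a.s.

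Next, by the whole-line analogue of Proposition~\ref{prop:shift}, namely \eqref{whole_3}, we have $\zeta_{m}(\Psi_{k}(\eta),i)=\zeta_{m+k}(\eta,i)$ for all $m\ge 1$ and $i\in\Z$. Reading this through the bijection $\zeta$ (whose inverse is Borel measurable, which is routine to verify) yields $\Psi_{k}(\eta)=\zeta^{-1}\!\big((\zeta_{m+k}(\eta,i))_{m\ge1,\,i\in\Z}\big)$, so $\Psi_{k}(\eta)$ is a measurable function of $(\zeta_{\ell}(\eta,i))_{\ell\ge k+1,\,i\in\Z}$ only. Combining with the mutual independence of $(\zeta_{\ell}(\eta,i))_{\ell\ge1,\,i\in\Z}$ under $\hat{\mu}_{\boldsymbol{\a}}$ (Remark~\ref{rem:indep}), which makes $(\zeta_{\ell}(\eta,i))_{\ell\ge k+1,\,i\in\Z}$ independent of $(\zeta_{\ell}(\eta,i))_{1\le\ell\le k,\,i\in\Z}$, we conclude that $\Psi_{k}(\eta)$ is independent of $(\zeta_{\ell}(\eta,i))_{1\le\ell\le k,\,i\in\Z}$, as claimed.

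The only non-formal points are the a.s.\ inclusion $\Psi_{k}(\eta)\in\Omega_{r}$ and the measurability of $\zeta^{-1}$; both are routine given the results already cited, so I expect the bulk of the work to be bookkeeping rather than a genuine obstacle. If one wished to avoid invoking the measurable inverse of $\zeta$, one could instead argue excursion by excursion: transport $\Psi_{k}$ through the decomposition $\eta=I\big((\e_{i})_{i\in\Z}\big)$ via Lemma~\ref{lem:exskip_etaskip}, apply Proposition~\ref{prop:shift_excursion} (or Lemma~\ref{lem:shift_excursion_m}) on each excursion, and use the product structure of $\hat{\mu}_{\boldsymbol{\a}}$ together with Remark~\ref{rem:indep}; but the $\zeta$-coordinate argument above is cleaner.
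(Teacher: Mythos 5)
Your proof is correct and follows essentially the same route as the paper: use the bijectivity of $\zeta$ on $\Omega_{r}$ (Remark \ref{rem:two_bi}) together with \eqref{whole_3} to see that $\Psi_{k}(\eta)$ is a function of $(\zeta_{k+\ell}(\eta,i))_{\ell\ge 1,\,i\in\Z}$ alone, then invoke the independence of the $\zeta$-coordinates under $\hat{\mu}_{\boldsymbol{\a}}$ (Remark \ref{rem:indep}). The additional checks you flag (that $\Psi_{k}$ preserves $\Omega_{r}$ a.s.\ and that $\zeta^{-1}$ is measurable) are left implicit in the paper but are indeed routine.
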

                \begin{proof}[Proof of Lemma \ref{lem:indep_psi}]
                    By Remark \ref{rem:two_bi}, $\eta \in \Omega_{r}$ is a function of $\left(\zeta_{\ell}\left(\eta, i\right)\right)_{\ell \in \N, i \in \Z}$. Combining this fact with \eqref{whole_3}, we see that if $\eta \in \Omega_{r}$, then $\Psi_{k}\left(\eta\right)$ is a function of $\left(\zeta_{k+\ell}\left(\eta, i\right)\right)_{\ell \in \N, i \in \Z}$. Thus, from Remark \ref{rem:indep}, the claim of this lemma is shown. 
                \end{proof}

            Now we consider the expectation of  $f\left(\Psi_{k}\left(\eta\right)\right)$, where $f$ is a local function on $\{0,1\}^{\Z}$. For any $k \in \N$, we define a cutoff operator $C_{k} : \mathcal{A} \to \mathcal{A}$ as 
                \begin{align}
                    \left(C_{k}\a\right)_{\ell} := 
                    \begin{dcases}
                        \a_\ell \ & \ 1 \le \ell \le k, \\
                        0 \ & \ \text{otherwise},
                    \end{dcases}
                \end{align}
            for any $\boldsymbol{\a} \in \mathcal{A}$. We recall that $\e\left(\boldsymbol{\a}\right)$ and $\theta : \mathcal{A} \to \mathcal{A}$ are defined in \eqref{def:exp_excursion} and \eqref{def:shift_para}, respectively. 
            
            {Before stating the main result of this subsection, let us explain its
meaning from the viewpoint of solitons. As explained in Remark
\ref{rem:skip_sol}, the $k$-skip map removes the first $k$ heads and the
first $k$ tails from each soliton. Therefore, a soliton of size $k+\ell$
in the original configuration is regarded as a soliton of size $\ell$ in
the $k$-skipped configuration, while solitons of size at most $k$
disappear. Thus, Theorem \ref{thm:skip_stat} below gives a probabilistic
description of the operation of lowering the height of each soliton by
$k$ under the invariant measures introduced in \cite{FG}.}
                \begin{theorem}\label{thm:skip_stat}
                    Suppose that $\boldsymbol{\a} \in \mathcal{A}^{+}$. Then, for any $k \in \N$ and local function $f : \{0,1\}^{\Z} \to \R$, we have
                        \begin{align}\label{eq:skip_stat_0}
                            \int_{\Omega_{*}} d\mu_{\boldsymbol{\a}}\left(\eta | s_{\infty}\left(\eta,0\right) = 0 \right) f\left( \Psi_{k}\left(\eta\right)\right) = \int_{\Omega_{*}} d\mu_{\theta^{k}\boldsymbol{\a}}\left(\eta | s_{\infty}\left(\eta,0\right) = 0 \right) f\left( \eta\right),
                        \end{align}
                    and 
                        \begin{align}
                            &\int_{\Omega_{*}} d\mu_{\boldsymbol{\a}}\left(\eta\right) f\left( \Psi_{k}\left(\eta\right)\right) \\ 
                            &= \frac{\bar{\e}\left(\theta^{k}\boldsymbol{\a}\right)\bar{\e}\left(C_{k}\boldsymbol{\a}\right)}{\bar{\e}\left(\boldsymbol{\a}\right)}\int_{\Omega_{*}} d\mu_{\theta^{k}\boldsymbol{\a}}\left(\eta\right) f\left( \eta\right) \\
                            & \quad + \frac{\bar{\e}\left(\theta^{k}\boldsymbol{\a}\right)\left(\bar{\e}\left(\boldsymbol{\a}\right) - \bar{\e}\left(\theta^{k}\boldsymbol{\a}\right)\bar{\e}\left(C_{k}\boldsymbol{\a}\right)\right)}{\bar{\e}\left(\boldsymbol{\a}\right)\left(\bar{\e}\left(\theta^{k}\boldsymbol{\a}\right) - \bar{\e}\left(\theta^{k+1}\boldsymbol{\a}\right)\right)}\int_{\Omega_{*}} d\mu_{\theta^{k}\boldsymbol{\a}}\left(\eta\right) \left(\sum_{\sigma \in \{\uparrow, \downarrow\}} \eta^{\sigma}_{1}\left(1\right)\right) f \left(\eta\right).
                            \label{eq:skip_stat}
                        \end{align}
                        
                \end{theorem}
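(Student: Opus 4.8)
The plan is to deduce both identities from the Palm-type formula \eqref{def:inv_palm} together with the observation that $\Psi_{k}$ maps the ``cycle-stationary'' measure $\hat{\mu}_{\boldsymbol{\a}}$ to $\hat{\mu}_{\theta^{k}\boldsymbol{\a}}$; note $\theta^{k}\boldsymbol{\a}\in\mathcal{A}^{+}$ by Remark \ref{rem:theta_A+}, so $\mu_{\theta^{k}\boldsymbol{\a}}$ is well defined. For \eqref{eq:skip_stat_0}, I would first note that conditioning $\mu_{\boldsymbol{\a}}$ on $\{s_{\infty}(\eta,0)=0\}$ recovers $\hat{\mu}_{\boldsymbol{\a}}$: if $\eta\in\Omega_{r}$ then $\tau_{y}\eta$ has records $\{s_{\infty}(\eta,i)-y\}$, so $s_{\infty}(\tau_{y}\eta,0)=-y$ for $0\le y<s_{\infty}(\eta,1)$, and in \eqref{def:inv_palm} only the term $y=0$ meets $\{s_{\infty}(\eta,0)=0\}$; since $\mu_{\boldsymbol{\a}}(s_{\infty}(\eta,0)=0)=\bar{\e}(\boldsymbol{\a})^{-1}$, this gives $\mu_{\boldsymbol{\a}}(\,\cdot\mid s_{\infty}(\eta,0)=0)=\hat{\mu}_{\boldsymbol{\a}}$, and likewise for $\theta^{k}\boldsymbol{\a}$. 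Then $\hat{\mu}_{\boldsymbol{\a}}$ is the image of $\bigotimes_{i}\nu_{\boldsymbol{\a}}$ under $I$; Lemma \ref{lem:exskip_etaskip} gives $\Psi_{k}\circ I=I\circ\bigl(\bigotimes_{i}\widetilde{\Psi}_{k}\bigr)$, and Lemma \ref{lem:shift_excursion_m} gives that the image of $\nu_{\boldsymbol{\a}}$ under $\widetilde{\Psi}_{k}$ is $\nu_{\theta^{k}\boldsymbol{\a}}$, so $\hat{\mu}_{\boldsymbol{\a}}\circ\Psi_{k}^{-1}=\hat{\mu}_{\theta^{k}\boldsymbol{\a}}$; combining these three facts proves \eqref{eq:skip_stat_0}.

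For \eqref{eq:skip_stat} I would start from the shift relation
\[
\Psi_{k}(\tau_{y}\eta)=\tau_{\xi_{k}(\eta,y)}\,\Psi_{k}(\eta)\qquad(\eta\in\Omega_{r},\ 0\le y<s_{\infty}(\eta,1)),
\]
which holds because Lemma \ref{lem:indep_c} forces $\xi_{k}(\tau_{y}\eta,\cdot)=\xi_{k}(\eta,\cdot+y)$, hence $s_{k}(\tau_{y}\eta,\cdot)=s_{k}(\eta,\cdot)-y$ (using $\xi_{k}(\eta,0)=0$), after which the definition of $\Psi_{k}$ gives the identity. Applying \eqref{def:inv_palm} to $f\circ\Psi_{k}$ and grouping $\sum_{y=0}^{s_{\infty}(\eta,1)-1}$ according to the value $j=\xi_{k}(\eta,y)$ — which ranges over $\{0,\dots,s_{\infty}(\Psi_{k}(\eta),1)-1\}$, each $j$ being attained by exactly $w_{j}(\eta):=s_{k}(\eta,j+1)-s_{k}(\eta,j)$ consecutive values of $y$ — turns \eqref{def:inv_palm} into
\[
\bar{\e}(\boldsymbol{\a})\int_{\Omega_{*}}d\mu_{\boldsymbol{\a}}(\eta)\,f(\Psi_{k}(\eta))=\int_{\Omega_{r}}d\hat{\mu}_{\boldsymbol{\a}}(\eta)\sum_{j=0}^{s_{\infty}(\Psi_{k}(\eta),1)-1}w_{j}(\eta)\,f\bigl(\tau_{j}\Psi_{k}(\eta)\bigr),
\]
where $w_{j}(\eta)=1+\#\{z\in(s_{k}(\eta,j),s_{k}(\eta,j+1))\colon z\text{ is an }(\ell,\sigma)\text{-seat of }\eta\text{ with }1\le\ell\le k\}$ counts the sites of $\eta$ skipped away in the $j$-th gap.

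I would then condition on $\zeta:=\Psi_{k}(\eta)$. Under $\hat{\mu}_{\boldsymbol{\a}}$ we have $\zeta\sim\hat{\mu}_{\theta^{k}\boldsymbol{\a}}$, and by Lemma \ref{lem:indep_psi} and Remark \ref{rem:indep} the family $(\zeta_{\ell}(\eta,i))_{1\le\ell\le k,\,i\in\Z}$ is i.i.d.\ geometric and independent of $\zeta$; by Remark \ref{rem:two_bi} and \eqref{whole_3}, $\eta$ is reconstructed from $\zeta$ by inserting into the gaps the size-$\le k$ solitons recorded by this family. The core of the proof is to compute $\E_{\hat{\mu}_{\boldsymbol{\a}}}[w_{j}\mid\zeta]$: using the Fermionic formula and the slot decomposition of \cite{FNRW,FG} (cf.\ \cite[Section 2]{MSSS}), the content skipped inside a gap is, conditionally on $\zeta$, an independent excursion built only out of solitons of size $\le k$, distributed as $\nu_{C_{k}\boldsymbol{\a}}$ for every gap except the one meeting the reference point, so $\E[w_{j}\mid\zeta]=\bar{\e}(C_{k}\boldsymbol{\a})$ there; the Palm size-biasing in \eqref{def:inv_palm} adds an extra contribution at the reference gap which one checks depends on $\zeta$ only through whether the reference site is a $(1,\sigma)$-seat of $\zeta$, i.e.\ through $\zeta^{\uparrow}_{1}(1)+\zeta^{\downarrow}_{1}(1)$, there multiplied by a constant $B$. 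Substituting $\E[w_{j}\mid\zeta]=\bar{\e}(C_{k}\boldsymbol{\a})+B\bigl((\tau_{j}\zeta)^{\uparrow}_{1}(1)+(\tau_{j}\zeta)^{\downarrow}_{1}(1)\bigr)$ into the display above and recognising the two resulting sums over $j$ as the Palm averages \eqref{def:inv_palm} of $f$ and of $(\eta^{\uparrow}_{1}(1)+\eta^{\downarrow}_{1}(1))f$ under $\mu_{\theta^{k}\boldsymbol{\a}}$ yields \eqref{eq:skip_stat} up to the value of $B$, which is finally pinned down by taking $f\equiv 1$: then $\bar{\e}(\boldsymbol{\a})=\bar{\e}(C_{k}\boldsymbol{\a})\bar{\e}(\theta^{k}\boldsymbol{\a})+B\bigl(\bar{\e}(\theta^{k}\boldsymbol{\a})-\bar{\e}(\theta^{k+1}\boldsymbol{\a})\bigr)$, where one uses $\bar{\e}(\theta^{k}\boldsymbol{\a})-\bar{\e}(\theta^{k+1}\boldsymbol{\a})=2\sum_{m\ge 1}\E_{\nu_{\theta^{k}\boldsymbol{\a}}}[\zeta_{m}(\e)]=\bar{\e}(\theta^{k}\boldsymbol{\a})\int d\mu_{\theta^{k}\boldsymbol{\a}}(\eta)(\eta^{\uparrow}_{1}(1)+\eta^{\downarrow}_{1}(1))$, itself a consequence of \eqref{def:exp_excursion}, Lemma \ref{lem:shift_excursion_m}, Proposition \ref{prop:shift_excursion} and \eqref{def:inv_palm}.

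The main obstacle is this conditional computation of $\E[w_{j}\mid\zeta]$ — showing that the skipped content of a generic gap is an independent $\nu_{C_{k}\boldsymbol{\a}}$-excursion (hence of expected length $\bar{\e}(C_{k}\boldsymbol{\a})$), and that the Palm size-bias perturbs only the single gap at the reference point and does so through the indicator $\eta^{\uparrow}_{1}(1)+\eta^{\downarrow}_{1}(1)$ alone; this is where the FG independence structure, the Fermionic formula and the slot combinatorics must be combined, whereas the surrounding steps are routine manipulations of the Palm formula.
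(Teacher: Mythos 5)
Your treatment of \eqref{eq:skip_stat_0} is correct and coincides with the paper's: conditioning on $\{s_{\infty}(\eta,0)=0\}$ recovers $\hat{\mu}_{\boldsymbol{\a}}$, and then Lemma \ref{lem:exskip_etaskip} together with Lemma \ref{lem:shift_excursion_m} transports $\hat{\mu}_{\boldsymbol{\a}}$ to $\hat{\mu}_{\theta^{k}\boldsymbol{\a}}$ under $\Psi_{k}$. Your reduction of \eqref{eq:skip_stat} to the weighted Palm sum $\bar{\e}(\boldsymbol{\a})\int d\mu_{\boldsymbol{\a}}\,f(\Psi_{k}(\eta))=\int d\hat{\mu}_{\boldsymbol{\a}}\sum_{j}w_{j}(\eta)f(\tau_{j}\Psi_{k}(\eta))$ with $w_{j}=s_{k}(\eta,j+1)-s_{k}(\eta,j)$ is also correct, and for $k=1$ it is exactly how the paper begins.

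The gap is in the evaluation of $\E[w_{j}\mid\Psi_{k}(\eta)]$, which you rightly flag as the crux but justify with the wrong mechanism. The correction term proportional to $\zeta^{\uparrow}_{1}(j+1)+\zeta^{\downarrow}_{1}(j+1)$ is not produced by ``Palm size-biasing at the reference gap'': it is a per-gap quantity, present for \emph{every} $j$, recording whether the right endpoint $s_{k}(\eta,j+1)$ is the $(k+1,\sigma)$-seat of a soliton of size $>k$ (which then deposits its $k$ seats of level $\le k$ and the same orientation into that gap). This is already visible in the paper's exact pointwise identity $s_{1}(\eta,z+1)-s_{1}(\eta,z)=1+2\zeta_{1}(\eta,z)+\sum_{\sigma}\Psi_{1}(\eta)^{\sigma}_{1}(z+1)$, valid for all $z$; likewise the gaps whose skipped content is a $\nu_{C_{k}\boldsymbol{\a}}$-excursion are those whose endpoint is not a $(k+1,\sigma)$-seat, not ``all gaps except the one meeting the reference point.'' With the mechanism corrected, your ansatz $\E[w_{j}\mid\zeta]=\bar{\e}(C_{k}\boldsymbol{\a})+B\bigl(\zeta^{\uparrow}_{1}(j+1)+\zeta^{\downarrow}_{1}(j+1)\bigr)$ is in fact true and, after pinning $B$ with $f\equiv1$ via \eqref{eq:number_sol}, yields \eqref{eq:skip_stat}; but establishing that the conditional expectation has exactly this affine form with a single constant $B$, and that the constant term equals $\bar{\e}(C_{k}\boldsymbol{\a})$, is precisely the nontrivial content and is left unproven. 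The paper sidesteps a direct conditional computation: it iterates the exact $k=1$ identity, obtaining the coupled affine recursions \eqref{eq:skip_stat_5}--\eqref{eq:skip_stat_6} for $(a_{k-\ell,\ell},b_{k-\ell,\ell})$, and identifies the resulting coefficient $c_{0}$ with $\bar{\e}(C_{k}\boldsymbol{\a})$ by checking that $c_{\ell}$ and $\bar{\e}(\theta^{\ell}C_{k}\boldsymbol{\a})$ satisfy the same three-term recursion with the same boundary data (\eqref{eq:system_c} and \eqref{eq:system_Ce}). If you wish to keep your one-shot route, you would need to prove the affine form of $\E[w_{j}\mid\zeta]$ by an induction over levels (the number of $(k-1)$-gaps inside a $k$-gap is $1+2\zeta_{k}(\eta,\cdot)+\sum_{\sigma}\zeta^{\sigma}_{1}(j+1)$, and so on downwards), at which point the two arguments essentially coincide.
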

    \begin{proof}[Proof of Theorem \ref{thm:skip_stat}]

        First we show \eqref{eq:skip_stat_0}. Since 
                        \begin{align}
                            \int_{\Omega_{*}} d\mu_{\boldsymbol{\a}}\left(\eta | s_{\infty}\left(\eta,0\right) = 0 \right) f(\eta) = \int_{\Omega_{r}} d\hat{\mu}_{\boldsymbol{\a}}\left(\eta\right)  f \left(\eta\right),
                        \end{align}
        by using \eqref{eq:shift_excursion_m} and \eqref{eq:exskip_etaskip}, we have \eqref{eq:skip_stat_0}.

        Next we show \eqref{eq:skip_stat}. We fix a local function $f$. We observe that for any $\eta \in \Omega_{r}$, $k \in \N$,  $0 \le y \le s_{\infty}\left(\eta, 1\right) - 1$ and $x \in \Z$,
            \begin{align}
                \xi_{k}\left( \tau_{y}\eta, x \right) - \xi_{k}\left( \tau_{y}\eta, x - 1 \right)
                &= r\left(\tau_{y}\eta,x\right) +  \sum_{\ell \in \N} \sum_{\sigma \in \{ \uparrow, \downarrow \}} \tau_{y}\eta^{\sigma}_{k + \ell}\left(x\right) \\
                &= r\left(\eta,x + y\right) +  \sum_{\ell \in \N} \sum_{\sigma \in \{ \uparrow, \downarrow \}} \eta^{\sigma}_{k + \ell}\left(x + y\right) \\
                &= \xi_{k}\left(\eta, x + y \right) - \xi_{k}\left(\eta, x + y - 1 \right),
            \end{align}
        and $s_{\infty}\left(\tau_y \eta, 0\right) = -y$. Hence, we get 
            \begin{align}
                \xi_{k}\left( \tau_{y}\eta, x \right) = \xi_{k}\left( \eta, x + y \right),
            \end{align}
        and 
            \begin{align}
                s_{k}\left(\tau_y \eta, x\right) = s_{k}\left( \eta, x\right) - y.
            \end{align}
        Thus we obtain 
            \begin{align}
                \Psi_{k}\left(\tau_y \eta\right)\left(x\right) 
                &= \tau_y \eta\left(s_{k}\left(\tau_y \eta, x + \xi_{k}\left(\tau_y\eta, 0\right)\right)\right) \\
                &= \eta\left(s_{k}\left(\eta, x + \xi_{k}\left(\eta, y\right)\right)\right) \label{eq:skip_stat_2}.
            \end{align}
        By using \eqref{eq:skip_stat_2}, we have 
            \begin{align}
                &\int_{\Omega_{*}} d\mu_{\boldsymbol{\a}}\left(\eta\right) f\left( \Psi_{1}\left(\eta\right)\right) \\
                &= \frac{1}{\bar{\e}\left(\boldsymbol{\a}\right)} \int_{\Omega_{r}} d\hat{\mu}_{\boldsymbol{\a}}\left(\eta\right) \sum_{y = 0}^{s_{\infty}\left(\eta, 1\right) - 1} f \left(\eta\left( s_{1}\left(\eta, \cdot + \xi_{1}\left( \eta, y \right)\right) \right)\right) \\
                &= \frac{1}{\bar{\e}\left(\boldsymbol{\a}\right)} \int_{\Omega_{r}} d\hat{\mu}_{\boldsymbol{\a}}\left(\eta\right) \sum_{z = 0}^{\xi_{1}\left( \eta, s_{\infty}\left(\eta, 1\right) \right) - 1} \sum_{y = s_{1}\left(\eta, z \right)}^{s_{1}\left(\eta, z + 1 \right) - 1} f \left(\eta\left( s_{1}\left(\eta, \cdot + \xi_{1}\left(\eta, y\right) \right) \right)\right)  \\
                &= \frac{1}{\bar{\e}\left(\boldsymbol{\a}\right)} \int_{\Omega_{r}} d\hat{\mu}_{\boldsymbol{\a}}\left(\eta\right) \sum_{z = 0}^{\xi_{1}\left( \eta, s_{\infty}\left(\eta, 1\right) \right) - 1} \left( s_{1}\left(\eta, z + 1 \right) - s_{1}\left(\eta, z \right) \right) f \left( \tau_z \Psi_{1}\left(\eta\right) \right),
            \end{align}
        where at the last line we use the equality $\xi_{1}\left(\eta, y\right) = \xi_{1}\left(\eta, s_{1}\left(\eta, x\right)\right) = x$ for any $x \in \Z$ and $s_{1}\left(\eta, x\right) \le y \le s_{1}\left(\eta, x + 1\right) - 1$. We then observe that from \eqref{def:zeta} and \eqref{eq:whole_1},
            \begin{align}
                s_{1}\left(\eta, z + 1 \right) - s_{1}\left(\eta, z \right) &= 1 + \sum_{y = s_{1}\left(\eta, z \right) + 1}^{s_{1}\left(\eta, z + 1 \right) - 1} \sum_{\sigma \in \{\uparrow, \downarrow\}} \eta^{\sigma}_{1}\left(y\right) \\
                &= 1 + 2\zeta_{1}\left(\eta,z\right) + \sum_{\sigma \in \{\uparrow, \downarrow\}} \eta^{\sigma}_{2}\left(s_{1}\left(\eta, z + 1 \right)\right) \\
                &= 1 + 2\zeta_{1}\left(\eta,z\right) + \sum_{\sigma \in \{\uparrow, \downarrow\}} \Psi_{1}\left(\eta\right)^{\sigma}_{1}\left(z+1\right).
            \end{align}
        In addition, if $\eta \in \Omega_{r}$, then from \eqref{eq:whole_p2**} with $k = \infty$, $\ell = 1$ and $x = 1$, $\xi_{1}\left( \eta, s_{\infty}\left(\eta, 1\right) \right) = s_{\infty}\left(\Psi_{1}\left(\eta\right), 1\right)$. 
        By using the above observation, Remark \ref{rem:theta_A+}, Lemma \ref{lem:indep_psi} and \eqref{eq:skip_stat_0}, we get 
            \begin{align}
                &\bar{\e}\left(\boldsymbol{\a}\right)\int_{\Omega_{*}} d\mu_{\boldsymbol{\a}}\left(\eta\right) f\left( \Psi_{1}\left(\eta\right)\right) \\
                &= \int_{\Omega_{r}} d\hat{\mu}_{\boldsymbol{\a}}\left(\eta\right) \sum_{z = 0}^{\xi_{1}\left( \eta, s_{\infty}\left(\eta, 1\right) \right) - 1} \left( s_{1}\left(\eta, z + 1 \right) - s_{1}\left(\eta, z \right) \right) f \left( \tau_z \Psi_{1}\left(\eta\right) \right) \\
                &= \left(\frac{2\a_1}{1-\a_1} + 1 \right) \int_{\Omega_{r}} d\hat{\mu}_{\boldsymbol{\a}}\left(\eta\right) \sum_{z = 0}^{s_{\infty}\left(\Psi_{1}\left(\eta\right), 1\right) - 1} f \left( \tau_z \Psi_{1}\left(\eta\right) \right) \\
                & \quad + \int_{\Omega_{r}} d\hat{\mu}_{\boldsymbol{\a}}\left(\eta\right) \sum_{z = 0}^{s_{\infty}\left(\Psi_{1}\left(\eta\right), 1\right) - 1} \left( \sum_{\sigma \in \{\uparrow, \downarrow\}} \Psi_{1}\left(\eta\right)^{\sigma}_{1}\left(z+1\right) \right) f \left( \tau_z \Psi_{1}\left(\eta\right) \right) \\
                &= \bar{\e}\left(\theta \boldsymbol{\a}\right)\left(\frac{2\a_1}{1-\a_1} + 1 \right) \int_{\Omega_{*}} d\mu_{\theta\boldsymbol{\a}}\left(\eta\right) f \left(\eta\right) \\
                &\quad + \bar{\e}\left(\theta \boldsymbol{\a}\right) \int_{\Omega_{*}} d\mu_{\theta\boldsymbol{\a}}\left(\eta\right) \left(\sum_{\sigma \in \{\uparrow, \downarrow\}} \eta^{\sigma}_{1}\left(1\right)\right) f \left(\eta\right) \label{eq:skip_stat_3}.
            \end{align}
        By a similar argument used above, we also get 
            \begin{align}
                &\bar{\e}\left(\boldsymbol{\a}\right) \int_{\Omega_{*}} d\mu_{\boldsymbol{\a}}\left(\eta\right) \left(\sum_{\sigma \in \{\uparrow, \downarrow\}} \eta^{\sigma}_{1}\left(1\right)\right) f \left(\Psi_{1}\left(\eta\right)\right) \\
                &=\int_{\Omega_{r}} d\hat{\mu}_{\boldsymbol{\a}}\left(\eta\right) \sum_{z = 0}^{s_{\infty}\left(\Psi_{1}\left(\eta\right), 1\right) - 1} \sum_{y = s_{1}\left(\eta, z \right)}^{s_{1}\left(\eta, z + 1 \right) - 1} \left( \sum_{\sigma \in \{\uparrow, \downarrow\}} \eta^{\sigma}_{1}\left(y+1\right) \right) f \left(\eta\left( s_{1}\left(\eta, \cdot + \xi_{1}\left(\eta, y\right) \right) \right)\right) \\
                &= \int_{\Omega_{r}} d\hat{\mu}_{\boldsymbol{\a}}\left(\eta\right) \sum_{z = 0}^{s_{\infty}\left(\Psi_{1}\left(\eta\right), 1\right) - 1} \left( s_{1}\left(\eta, z + 1 \right) - s_{1}\left(\eta, z \right) - 1 \right) f \left( \tau_z \Psi_{1}\left(\eta\right) \right) \\
                &= \frac{2\bar{\e}\left(\theta \boldsymbol{\a}\right) \a_1}{1 - \a_1} \int_{\Omega_{*}} d\mu_{\theta\boldsymbol{\a}}\left(\eta\right) f \left(\eta\right) + \bar{\e}\left(\theta \boldsymbol{\a}\right) \int_{\Omega_{*}} d\mu_{\theta\boldsymbol{\a}}\left(\eta\right) \left(\sum_{\sigma \in \{\uparrow, \downarrow\}} \eta^{\sigma}_{1}\left(1\right)\right) f \left(\eta\right) \label{eq:skip_stat_4}.
            \end{align}
        Now for any $k \in \N$ and $0 \le \ell \le k$, we define $a_{k-\ell, \ell}$, $b_{k-\ell, \ell}$ as 
            \begin{align}
                a_{k-\ell, \ell} &:=  \bar{\e}\left(\theta^{\ell}\boldsymbol{\a}\right)\int_{\Omega_{*}} d\mu_{\theta^{\ell}\boldsymbol{\a}}\left(\eta\right) f\left( \Psi_{k-\ell}\left(\eta\right)\right), \\
                b_{k-\ell, \ell} &:=  \bar{\e}\left(\theta^{\ell}\boldsymbol{\a}\right)\int_{\Omega_{*}} d\mu_{\theta^{\ell}\boldsymbol{\a}}\left(\eta\right) \left(\sum_{\sigma \in \{\uparrow, \downarrow\}} \eta^{\sigma}_{1}\left(1\right)\right) f\left( \Psi_{k-\ell}\left(\eta\right)\right).
            \end{align}
        From \eqref{whole_2}, \eqref{eq:skip_stat_3} and \eqref{eq:skip_stat_4}, for any $0 \le \ell \le k - 1$, we have 
            \begin{align}
                a_{k-\ell, \ell} = \left(\frac{2\left(\theta^{\ell}\a\right)_1}{1-\left(\theta^{\ell}\a\right)_1} + 1 \right) a_{k-\ell - 1, \ell+1} + b_{k-\ell-1, \ell+1} \label{eq:skip_stat_5},
            \end{align}
        and 
            \begin{align}
                b_{k-\ell, \ell} = \frac{2\left(\theta^{\ell}\a\right)_1}{1-\left(\theta^{\ell}\a\right)_1} a_{k-\ell - 1, \ell+1} + b_{k-\ell-1, \ell+1} \label{eq:skip_stat_6}.
            \end{align}
        From \eqref{eq:skip_stat_5} and \eqref{eq:skip_stat_6}, we see that for any $0 \le \ell \le k$, there exists some positive constants $c_{\ell}, d_{\ell}$ such that 
            \begin{align}
                a_{k-\ell, \ell} = c_{\ell} a_{0, k} + d_{\ell} b_{0, k} \label{eq:skip_stat_7}.
            \end{align}
        We note that $c_{\ell}, d_{\ell}$ are independent of $f$. 
        Thus, by choosing $f \equiv 1$, we have 
            \begin{align}
                \bar{\e}\left(\theta^{\ell}\boldsymbol{\a}\right) = c_{\ell} \bar{\e}\left(\theta^{k}\boldsymbol{\a}\right)  + d_{\ell}\left(\bar{\e}\left(\theta^{k}\boldsymbol{\a}\right) - \bar{\e}\left(\theta^{k+1}\boldsymbol{\a}\right) \right) \label{eq:skip_stat_8},
            \end{align}
        where $b_{0, k}$ with $f \equiv 1$ can be computed by using \eqref{def:zeta_excursion}, \eqref{eq:length_z}, \eqref{eq:shift_excursion} and \eqref{eq:shift_excursion_m} as follows : for each $\sigma \in \{\uparrow, \downarrow\}$,
            \begin{align}
                \int_{\Omega_{r}} d\hat{\mu}_{\theta^{k}\boldsymbol{\a}}\left(\eta\right) \sum_{z = 0}^{s_{\infty}\left(\eta, 1\right) - 1}  \eta^{\sigma}_{1}\left(z+1\right)
                &= \int_{\mathcal{E}} d\nu_{\theta^{k}\boldsymbol{\a}}\left(\e\right) \sum_{k \in \N} \zeta_{k}\left(\e\right) \\
                &= \frac{1}{2}\int_{\mathcal{E}} d\nu_{\theta^{k}\boldsymbol{\a}}\left(\e\right) \left( \left| \e \right| - \left| \Psi_{1}\left(\e\right) \right| \right) \\
                &= \frac{\bar{\e}\left(\theta^{k}\boldsymbol{\a}\right) - \bar{\e}\left(\theta^{k+1}\boldsymbol{\a}\right)}{2}. \label{eq:number_sol}
            \end{align}
        On the other hand, from \eqref{eq:skip_stat_6}, we get 
            \begin{align}\label{eq:skip_stat_13}
                b_{k-\ell, \ell} = b_{0, k} +  2\sum_{h = \ell}^{k-1} \frac{\left(\theta^{h}\a\right)_1}{1-\left(\theta^{h}\a\right)_1} a_{k-h-1,h+1},
            \end{align}
        and substituting this for \eqref{eq:skip_stat_5}, we obtain 
            \begin{align}
                a_{k-\ell,\ell} = a_{k-\ell - 1,\ell+1} + b_{0, k} + 2\sum_{h = \ell}^{k-1} \frac{\left(\theta^{h}\a\right)_1}{1-\left(\theta^{h}\a\right)_1} a_{k-h-1,h+1} \label{eq:skip_stat_9}.
            \end{align}
        By substituting \eqref{eq:skip_stat_7} for \eqref{eq:skip_stat_9}, the following recursive relations for $c_{\ell}, d_{\ell}$ are derived : 
            \begin{align}\label{eq:skip_stat_10}
                c_{\ell} &= c_{\ell+1} + 2\sum_{h = \ell}^{k-1} \frac{\left(\theta^{\ell}\a\right)_1}{1-\left(\theta^{\ell}\a\right)_1} c_{h+1}, \\
                d_{\ell} &= 1 + d_{\ell+1} + 2\sum_{h = \ell}^{k-1} \frac{\left(\theta^{\ell}\a\right)_1}{1-\left(\theta^{\ell}\a\right)_1} d_{h+1}. \label{eq:skip_stat_12}
            \end{align}
        In particular, the sequences $c_{\ell},d_{\ell}$, $0 \le \ell \le k$ satisfy
            \begin{align}
                \begin{dcases}\label{eq:system_c}
                    c_{\ell-1} + c_{\ell+1} - 2\left(\frac{\left(\theta^{\ell-1}\a\right)_1}{1-\left(\theta^{\ell-1}\a\right)_1} + 1\right) c_{\ell} = 0, \quad 1\le \ell \le k - 1, \\
                    c_{k} = 1, \quad c_{k-1} = \frac{2\left(\theta^{k-1}\a\right)_1}{1-\left(\theta^{k-1}\a\right)_1} + 1,
                \end{dcases}
            \end{align}
        and 
            \begin{align}
                \begin{dcases}\label{eq:system_d}
                    d_{\ell-1} + d_{\ell+1} - 2\left(\frac{\left(\theta^{\ell-1}\a\right)_1}{1-\left(\theta^{\ell-1}\a\right)_1} + 1\right) d_{\ell} = 0, \quad 1\le \ell \le k - 1, \\
                    d_{k} = 0, \quad d_{k-1} = 1.
                \end{dcases}
            \end{align}
        Now we claim that 
            \begin{align}
                c_{\ell} = \bar{\e}\left(\theta^{\ell} C_{k}\boldsymbol{\a}\right) \label{eq:skip_stat_11},
            \end{align}
        for any $0 \le \ell \le k$. From  \eqref{eq:whole_p2**}, \eqref{eq:length_z} and \eqref{eq:shift_excursion}, we get 
            \begin{align}\label{eq:skip_stat_14}
                \left| \Psi_{\ell-1}\left(\e\right) \right| + \left| \Psi_{\ell+1}\left(\e\right) \right| - 2\left| \Psi_{\ell}\left(\e\right) \right| = 2\sum_{i = 0}^{\left| \Psi_{\ell}\left(\e\right) \right|-1} \zeta_{\ell}\left( \iota\left(\e\right),i \right).
            \end{align}
        By integrating both sides with respect to $\nu_{C_{k}\boldsymbol{\a}}$ and using Remark \ref{rem:indep}, we see that the sequence $\bar{\e}\left(\theta^{\ell} C_{k}\boldsymbol{\a}\right)$, $0 \le \ell \le k$ also satisfies 
            \begin{align}\label{eq:system_Ce}
                \begin{dcases}
                    \bar{\e}\left(\theta^{\ell-1} C_{k}\boldsymbol{\a}\right) + \bar{\e}\left(\theta^{\ell+1} C_{k}\boldsymbol{\a}\right) - 2\left(\frac{\left(\theta^{\ell-1}\a\right)_1}{1-\left(\theta^{\ell-1}\a\right)_1} + 1\right) \bar{\e}\left(\theta^{\ell} C_{k}\boldsymbol{\a}\right) = 0, \quad 1\le \ell \le k - 1, \\
                    \bar{\e}\left(\theta^{k} C_{k}\boldsymbol{\a}\right) = 1, \quad \bar{\e}\left(\theta^{k-1} C_{k}\boldsymbol{\a}\right) = \frac{2\left(\theta^{k-1}\a\right)_1}{1-\left(\theta^{k-1}\a\right)_1} + 1.
                \end{dcases}
            \end{align}
        Since the above system is the same as \eqref{eq:system_c}, we have \eqref{eq:skip_stat_11}. Combining \eqref{eq:skip_stat_7}, \eqref{eq:skip_stat_8} and \eqref{eq:skip_stat_11} with $\ell = 0$ yields \eqref{eq:skip_stat}.
        
                \end{proof}

        The coefficients appearing in \eqref{eq:skip_stat} are expectations of the size of the excursion with various
        parameters. 
        Thanks to \eqref{eq:skip_stat_14}, we can find formulae of $\bar{\e}\left(\boldsymbol{\a}\right)$ via continued fractions. For any $\boldsymbol{\a} \in \mathcal{A}^{+}$ and $k \in \N$, we define a continued fraction $F\left(\boldsymbol{\a}\right)$ as 
            \begin{align}
                F\left(\boldsymbol{\a}\right) := \frac{2}{1 - \a_1} - \cfrac{1}{\cfrac{2}{1-\left(\theta\a\right)_1} - \cfrac{1}{\cfrac{2}{1-\left(\theta^2\a\right)_1} - \cdots }}.
            \end{align}
        Then we have the following. 
            \begin{proposition}\label{prop:frac}
                Suppose that $\boldsymbol{\a} \in \mathcal{A}^{+}$. Then we have  
                    \begin{align}\label{eq:frac_F}
                        \bar{\e}\left(\boldsymbol{\a}\right) = \prod_{\ell = 0}^{\infty} F\left(\theta^{\ell} \boldsymbol{\a} \right).
                    \end{align}
            \end{proposition}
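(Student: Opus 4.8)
The plan is to pass to the limit $k\to\infty$ in the finite three‑term recursion \eqref{eq:system_Ce}, which the truncated excursion‑lengths $\bigl(\bar{\e}(\theta^{\ell}C_{k}\boldsymbol{\a})\bigr)_{0\le\ell\le k}$ are already known to satisfy, so as to obtain a three‑term recursion for the sequence $b_{\ell}:=\bar{\e}(\theta^{\ell}\boldsymbol{\a})$. Converting this into a first‑order recursion for the successive ratios $g_{\ell}:=b_{\ell}/b_{\ell+1}$ will produce exactly the recursion that defines the continued fraction $F(\theta^{\ell}\boldsymbol{\a})$, and the telescoping identity $b_{0}=b_{m}\prod_{\ell=0}^{m-1}g_{\ell}$ together with $b_{m}\to1$ will give \eqref{eq:frac_F}.

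First I would establish that $\lim_{k\to\infty}\bar{\e}(\theta^{\ell}C_{k}\boldsymbol{\a})=\bar{\e}(\theta^{\ell}\boldsymbol{\a})$ for each fixed $\ell\ge0$. This rests on two ingredients: (i) $\theta^{\ell}C_{k}\boldsymbol{\a}=C_{k-\ell}(\theta^{\ell}\boldsymbol{\a})$ for $k\ge\ell$, which is a direct computation in the $\q$‑parametrisation (the image under $\q$ of a cutoff is a cutoff of $\q(\boldsymbol{\a})$, and $\theta$ on $\mathcal{A}$ corresponds to the shift $\tilde{\theta}$ on $\mathcal{Q}$ by \eqref{eq:theta-tilde}), together with Remark \ref{rem:theta_A+}, which guarantees $\theta^{\ell}\boldsymbol{\a}\in\mathcal{A}^{+}$; and (ii) for every $\boldsymbol{\b}\in\mathcal{A}^{+}$ one has $\bar{\e}(C_{m}\boldsymbol{\b})\to\bar{\e}(\boldsymbol{\b})$ as $m\to\infty$, obtained by applying the monotone convergence theorem term‑by‑term to the series \eqref{def:exp_excursion}: indeed $C_{m}\boldsymbol{\b}\uparrow\boldsymbol{\b}$ componentwise, and the limiting series $\sum_{\e}\bigl(\sum_{k}k\,\zeta_{k}(\e)\bigr)\prod_{k}\b_{k}^{\zeta_{k}(\e)}$ is finite precisely because $\boldsymbol{\b}\in\mathcal{A}^{+}$. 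This interchange of limit and excursion‑sum, and its reliance on the hypothesis $\boldsymbol{\a}\in\mathcal{A}^{+}$, is the main point of the proof, even though it is routine.

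Next, fix $\ell\ge1$. For every $k\ge\ell+1$, equation \eqref{eq:system_Ce} can be rewritten — using $\tfrac{x}{1-x}+1=\tfrac1{1-x}$ and the fact that $(\theta^{\ell-1}C_{k}\a)_{1}=(\theta^{\ell-1}\a)_{1}$ whenever $\ell-1\le k-1$ — as $\bar{\e}(\theta^{\ell-1}C_{k}\boldsymbol{\a})+\bar{\e}(\theta^{\ell+1}C_{k}\boldsymbol{\a})=\tfrac{2}{1-(\theta^{\ell-1}\a)_{1}}\,\bar{\e}(\theta^{\ell}C_{k}\boldsymbol{\a})$. Letting $k\to\infty$ and using the previous paragraph gives, with $a_{m}:=\tfrac{2}{1-(\theta^{m}\a)_{1}}$,
\[
    b_{m}+b_{m+2}=a_{m}\,b_{m+1}\qquad\text{for every }m\ge0 .
\]
Separately, by Lemma \ref{lem:shift_excursion_m} the push‑forward of $\nu_{\boldsymbol{\a}}$ under $\widetilde{\Psi}_{\ell}$ is $\nu_{\theta^{\ell}\boldsymbol{\a}}$, so $b_{\ell}=\E_{\nu_{\boldsymbol{\a}}}\bigl[\,|\widetilde{\Psi}_{\ell}(\e)|\,\bigr]$; since $|\widetilde{\Psi}_{\ell}(\e)|$ is nonincreasing in $\ell$, bounded below by $1$, equal to $1$ for all sufficiently large $\ell$ (depending on $\e$), and dominated by $|\e|$ with $\E_{\nu_{\boldsymbol{\a}}}[\,|\e|\,]=\bar{\e}(\boldsymbol{\a})<\infty$, dominated convergence yields $1\le b_{\ell+1}\le b_{\ell}$ for all $\ell$ and $b_{\ell}\downarrow1$. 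Setting $g_{\ell}:=b_{\ell}/b_{\ell+1}\ge1$ and dividing the recursion by $b_{m+1}$ gives $g_{m}=a_{m}-1/g_{m+1}$ for all $m\ge0$, and $b_{0}=b_{m}\prod_{\ell=0}^{m-1}g_{\ell}$ with $b_{m}\to1$ shows that $\prod_{\ell\ge0}g_{\ell}$ converges to $b_{0}=\bar{\e}(\boldsymbol{\a})$ and, in particular, that $g_{\ell}\to1$.

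Finally I would identify $g_{\ell}$ with $F(\theta^{\ell}\boldsymbol{\a})$. Since $(\theta^{\ell}\a)_{1}\in[0,1)$ we have $a_{\ell}\ge2$, so a standard convergence criterion for continued fractions (Pringsheim's theorem) ensures that $F(\theta^{\ell}\boldsymbol{\a})=a_{\ell}-\cfrac{1}{a_{\ell+1}-\cfrac{1}{a_{\ell+2}-\cdots}}$ converges; its value $t_{\ell}:=F(\theta^{\ell}\boldsymbol{\a})$ lies in $[a_{\ell}-1,a_{\ell}]\subset[1,\infty)$, satisfies $t_{\ell}=a_{\ell}-1/t_{\ell+1}$, and $t_{\ell}\to1$ (iterate the relation, using $a_{\ell}\to2$). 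Thus $(g_{\ell})$ and $(t_{\ell})$ both solve $x_{\ell}=a_{\ell}-1/x_{\ell+1}$ with $x_{\ell}\ge1$ and $x_{\ell}\to1$; from $|x_{\ell}-y_{\ell}|=|x_{\ell+1}-y_{\ell+1}|/(x_{\ell+1}y_{\ell+1})\le|x_{\ell+1}-y_{\ell+1}|$ one gets $|g_{\ell}-t_{\ell}|\le|g_{\ell+m}-t_{\ell+m}|\to0$, hence $g_{\ell}=F(\theta^{\ell}\boldsymbol{\a})$. Combining this with the previous paragraph, $\bar{\e}(\boldsymbol{\a})=b_{0}=\prod_{\ell\ge0}g_{\ell}=\prod_{\ell\ge0}F(\theta^{\ell}\boldsymbol{\a})$, which is \eqref{eq:frac_F}.
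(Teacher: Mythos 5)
Your proof is correct and follows essentially the same route as the paper's: both rest on the three-term recursion $\bar{\e}(\theta^{\ell-1}\boldsymbol{\a})+\bar{\e}(\theta^{\ell+1}\boldsymbol{\a})=\frac{2}{1-(\theta^{\ell-1}\a)_1}\bar{\e}(\theta^{\ell}\boldsymbol{\a})$, the limit $\bar{\e}(\theta^{\ell}\boldsymbol{\a})\to 1$, and the telescoping product of successive ratios identified with $F(\theta^{\ell}\boldsymbol{\a})$. The only differences are that you obtain the recursion by letting $k\to\infty$ in the truncated system \eqref{eq:system_Ce} instead of integrating \eqref{eq:skip_stat_14} directly against $\nu_{\boldsymbol{\a}}$, you prove $\bar{\e}(\theta^{\ell}\boldsymbol{\a})\to 1$ by dominated convergence on $|\widetilde{\Psi}_{\ell}(\e)|$ rather than by the tail-sum estimate in the $\q$-parametrisation, and you make explicit the convergence and uniqueness argument identifying the ratios with the continued fraction, which the paper leaves implicit.
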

            \begin{proof}[Proof of Proposition \ref{prop:frac}]
                First we show that if $\boldsymbol{\a} \in \mathcal{A}^{+}$, then 
        \begin{align}\label{eq:app_1}
            \lim_{\ell \to \infty} \bar{\e}\left(\theta^{\ell}\boldsymbol{\a}\right) = 1.
        \end{align}
    We recall that $\q\left(\boldsymbol{\a}\right)$, $\boldsymbol{\a} \in \mathcal{A}$ is defined in \eqref{def:atoq}, and from \eqref{eq:theta-tilde}, for any $\ell \in \N$ we have 
        \begin{align}
            \left(\theta^{\ell-1}\a\right)_{1} = q\left(\boldsymbol{\a}\right)_{\ell}. 
        \end{align}
    From the above, \eqref{eq:length_z}, \eqref{eq:Q+A+}, Remark \ref{rem:indep}, we see that if $\boldsymbol{\a} \in \mathcal{A}^{+}$, then 
        \begin{align}
            \bar{\e}\left(\theta^{\ell}\boldsymbol{\a}\right) - 1 &=  2 \sum_{h = \ell + 1}^{\infty} \frac{\left(h - \ell\right) q\left(\boldsymbol{\a}\right)_h \bar{\e}\left(\theta^{h}\boldsymbol{\a}\right)}{1 - q\left(\boldsymbol{\a}\right)_h}  \\
            &\le 2 \bar{\e}\left(\boldsymbol{\a}\right) \sum_{h = \ell + 1}^{\infty} \frac{\left(h - \ell\right) q\left(\boldsymbol{\a}\right)_h}{1 - q\left(\boldsymbol{\a}\right)_h} \\
            & \to 0, \quad \ell \to \infty,
        \end{align}
    where at the last inequality we use the fact that $\bar{\e}\left(\theta^{\ell}\boldsymbol{\a}\right)$, $\ell \in \N$ is a decreasing sequence. Hence we have \eqref{eq:app_1}. 
    
    Now we derive \eqref{eq:frac_F}. By integrating both sides of \eqref{eq:skip_stat_14} with respect to $\nu_{\boldsymbol{\a}}$ and then dividing both sides by $\bar{\e}\left(\theta^{\ell}\boldsymbol{\a}\right)$ we have 
        \begin{align}
            \frac{\bar{\e}\left(\theta^{\ell}\boldsymbol{\a}\right)}{\bar{\e}\left(\theta^{\ell+1}\boldsymbol{\a}\right)} &= \frac{2}{1 - \left(\theta^{\ell}\a\right)_1} - \frac{\bar{\e}\left(\theta^{\ell+2}\boldsymbol{\a}\right)}{\bar{\e}\left(\theta^{\ell+1}\boldsymbol{\a}\right)} \\
            &= \frac{2}{1 - \left(\theta^{\ell}\a\right)_1} - \cfrac{1}{\cfrac{2}{1-\left(\theta^{\ell+1}\a\right)_1} - \frac{\bar{\e}\left(\theta^{\ell+3}\boldsymbol{\a}\right)}{\bar{\e}\left(\theta^{\ell+2}\boldsymbol{\a}\right)}} \\
            &=\dots = F\left(\theta^{\ell} \boldsymbol{\a}\right).
        \end{align}
    Thanks to \eqref{eq:app_1} and the monotonicity of $\bar{\e}\left(\theta^{\ell}\boldsymbol{\a}\right)$, $\ell \in \N$, we get
        \begin{align}
            \bar{\e}\left(\boldsymbol{\a}\right) = \prod_{\ell = 0}^{\infty} \frac{\bar{\e}\left(\theta^{\ell}\boldsymbol{\a}\right)}{\bar{\e}\left(\theta^{\ell+1}\boldsymbol{\a}\right)} = \prod_{\ell = 0}^{\infty} F\left(\theta^{\ell} \boldsymbol{\a}\right). 
        \end{align}
            \end{proof}

        By a similar strategy used in the proof of Theorem \ref{thm:skip_stat}, we can compute the expectation of $f\left(\Psi_{k}\left(\eta\right)\right)$ with the condition on $\eta\left(0\right)$. {The following proposition will be used to compute the correlations between carriers with seat numbers in Section \ref{subsec:expectation}.}
            \begin{proposition}\label{prop:skip_stat}
                Suppose that $\boldsymbol{\a} \in \mathcal{A}^{+}$. Then, for any $k \in \N$ and local function $f : \{0,1\}^{\Z} \to \R$, we have
                        \begin{align}
                            &\int_{\Omega_{*}} d\mu_{\boldsymbol{\a}}\left(\eta\right) \eta\left(0\right) f\left( \Psi_{k}\left(\eta\right)\right) \\ 
                            &= \frac{\bar{\e}\left(\theta^{k}\boldsymbol{\a}\right)\left(\bar{\e}\left(C_{k}\boldsymbol{\a}\right) - 1\right)}{2\bar{\e}\left(\boldsymbol{\a}\right)}\int_{\Omega_{*}} d\mu_{\theta^{k}\boldsymbol{\a}}\left(\eta\right) f\left( \eta\right) + \frac{\bar{\e}\left(\theta^{k}\boldsymbol{\a}\right)}{\bar{\e}\left(\boldsymbol{\a}\right)}\int_{\Omega_{*}} d\mu_{\theta^{k}\boldsymbol{\a}}\left(\eta\right) \eta\left(0\right) f\left( \eta\right) \\
                            & \quad + \frac{\bar{\e}\left(\theta^{k}\boldsymbol{\a}\right)\left(\bar{\e}\left(\boldsymbol{\a}\right) - \bar{\e}\left(\theta^{k}\boldsymbol{\a}\right)\bar{\e}\left(C_{k}\boldsymbol{\a}\right) + k\right)}{2\bar{\e}\left(\boldsymbol{\a}\right)\left(\bar{\e}\left(\theta^{k}\boldsymbol{\a}\right) - \bar{\e}\left(\theta^{k+1}\boldsymbol{\a}\right)\right)}\int_{\Omega_{*}} d\mu_{\theta^{k}\boldsymbol{\a}}\left(\eta\right) \eta^{\uparrow}_{1}\left(1\right) f \left(\eta\right) \\
                            & \quad + \frac{\bar{\e}\left(\theta^{k}\boldsymbol{\a}\right)\left(\bar{\e}\left(\boldsymbol{\a}\right) - \bar{\e}\left(\theta^{k}\boldsymbol{\a}\right)\bar{\e}\left(C_{k}\boldsymbol{\a}\right) - k\right)}{2\bar{\e}\left(\boldsymbol{\a}\right)\left(\bar{\e}\left(\theta^{k}\boldsymbol{\a}\right) - \bar{\e}\left(\theta^{k+1}\boldsymbol{\a}\right)\right)}\int_{\Omega_{*}} d\mu_{\theta^{k}\boldsymbol{\a}}\left(\eta\right) \eta^{\downarrow}_{1}\left(1\right) f \left(\eta\right).
                            \label{eq:skip_stat_prop}
                        \end{align}
            \end{proposition}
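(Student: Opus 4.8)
The plan is to follow, essentially line for line, the proof of Theorem \ref{thm:skip_stat}, carrying the extra weight $\eta(0)$ through the argument. By \eqref{whole_2} it suffices to treat the case $k=1$ and then iterate, so I would begin there. For $k=1$, applying the Palm-type identity \eqref{def:inv_palm} together with $\tau_{y}\eta(0)=\eta(y)$ gives
\begin{align}
\int_{\Omega_{*}}d\mu_{\boldsymbol{\a}}(\eta)\,\eta(0)f(\Psi_{1}(\eta))=\frac{1}{\bar{\e}(\boldsymbol{\a})}\int_{\Omega_{r}}d\hat{\mu}_{\boldsymbol{\a}}(\eta)\sum_{y=0}^{s_{\infty}(\eta,1)-1}\eta(y)\,f(\Psi_{1}(\tau_{y}\eta)),
\end{align}
and then, using \eqref{eq:skip_stat_2}, i.e. $\Psi_{1}(\tau_{y}\eta)=\tau_{z}\Psi_{1}(\eta)$ for $s_{1}(\eta,z)\le y\le s_{1}(\eta,z+1)-1$, I would regroup the $y$-sum into the blocks $[s_{1}(\eta,z),s_{1}(\eta,z+1))$.

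The one genuinely new ingredient is a refinement of the length identity used for Theorem \ref{thm:skip_stat}: instead of the total block length $s_{1}(\eta,z+1)-s_{1}(\eta,z)=1+2\zeta_{1}(\eta,z)+\sum_{\sigma}\Psi_{1}(\eta)^{\sigma}_{1}(z+1)$, I need to count only the \emph{occupied} sites of a block. Running the same four-case analysis on the pair $(\eta(s_{1}(\eta,z)),\eta(s_{1}(\eta,z+1)))$ that appears in the proof of Theorem \ref{thm:10=seat}, and using \eqref{eq:whole_1} to rewrite the seat occupancies at $s_{1}(\eta,z+1)$ in terms of $\Psi_{1}(\eta)$, I would obtain $\sum_{y=s_{1}(\eta,z)}^{s_{1}(\eta,z+1)-1}\eta(y)$ as $\tau_{z}\Psi_{1}(\eta)(0)+\zeta_{1}(\eta,z)$ plus a correction built from $\Psi_{1}(\eta)^{\uparrow}_{1}(z+1)$ and $\Psi_{1}(\eta)^{\downarrow}_{1}(z+1)$; the fact that this correction treats $\uparrow$ and $\downarrow$ asymmetrically is precisely what forces the two separate terms $\int d\mu_{\theta^{k}\boldsymbol{\a}}\eta^{\uparrow}_{1}(1)f$ and $\int d\mu_{\theta^{k}\boldsymbol{\a}}\eta^{\downarrow}_{1}(1)f$ in \eqref{eq:skip_stat_prop}. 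Substituting this identity, together with $s_{\infty}(\Psi_{1}(\eta),1)=\xi_{1}(\eta,s_{\infty}(\eta,1))$ from \eqref{eq:whole_p2**}, each resulting block-sum becomes a Palm integral against $\mu_{\theta\boldsymbol{\a}}$; the $\zeta_{1}$-contributions are evaluated using the i.i.d. structure of Remark \ref{rem:indep} and the independence of $\Psi_{1}(\eta)$ from $\zeta_{1}$ (Lemma \ref{lem:indep_psi}), exactly as in \eqref{eq:skip_stat_3}--\eqref{eq:skip_stat_4} and \eqref{eq:number_sol}, with a linearly-weighted soliton count responsible for the ``$\pm k$'' in the numerators. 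This expresses the $k=1$ case as a linear combination of $\int d\mu_{\theta\boldsymbol{\a}}w\,f$ with $w\in\{1,\ \eta(0),\ \eta^{\uparrow}_{1}(1),\ \eta^{\downarrow}_{1}(1)\}$.

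For general $k$ I would then set up and solve a closed linear recursion in $\ell$, $0\le\ell\le k$, for the quantities $\bar{\e}(\theta^{\ell}\boldsymbol{\a})\int d\mu_{\theta^{\ell}\boldsymbol{\a}}(\eta)\,w(\eta)f(\Psi_{k-\ell}(\eta))$, with $w$ ranging over the finite weight family above (possibly enlarged by a few auxiliary weights such as $\eta(0)\eta^{\sigma}_{1}(1)$); the update rules follow by combining the $k=1$ identities with the relations \eqref{eq:skip_stat_3}--\eqref{eq:skip_stat_6} already derived for Theorem \ref{thm:skip_stat}, and they mirror \eqref{eq:skip_stat_5}--\eqref{eq:skip_stat_6}. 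Solving the system, the pieces attached to $\int d\mu_{\theta^{k}\boldsymbol{\a}}f$ and $\int d\mu_{\theta^{k}\boldsymbol{\a}}\sum_{\sigma}\eta^{\sigma}_{1}(1)f$ obey the same recursions \eqref{eq:system_c}, \eqref{eq:system_d} as in Theorem \ref{thm:skip_stat} and are therefore again identified with excursion-length expectations for the cutoff parameter, i.e. with $\bar{\e}(\theta^{\ell}C_{k}\boldsymbol{\a})$, via \eqref{eq:skip_stat_14} and \eqref{eq:system_Ce}; the $\eta(0)f$-coefficient and the $\uparrow/\downarrow$-antisymmetric part acquire the $\pm k$ corrections from telescoping identities of the type in \eqref{eq:number_sol}. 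Evaluating at $\ell=0$ then yields \eqref{eq:skip_stat_prop}. I expect the bulk of the work, and the only real obstacle, to be bookkeeping: verifying the refined block-occupancy identity in every case and checking that the enlarged weight family genuinely closes under the recursion rather than generating further functionals of $\Psi_{1}(\eta)$, so that the same linear algebra as for Theorem \ref{thm:skip_stat} can be carried through.
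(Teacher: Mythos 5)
Your plan coincides with the paper's proof: the same Palm decomposition into blocks $[s_{1}(\eta,z),s_{1}(\eta,z+1))$, the same refined occupancy identity (which comes out as $\sum_{y=s_{1}(\eta,z)}^{s_{1}(\eta,z+1)-1}\eta(y)=\Psi_{1}(\eta)(z)+\zeta_{1}(\eta,z)+\Psi_{1}(\eta)^{\uparrow}_{1}(z+1)$, i.e.\ the asymmetric correction you anticipate is exactly one $\uparrow$-term and no $\downarrow$-term), and the same linear recursion in $\ell$ solved with the $a_{k-\ell,\ell},b_{k-\ell,\ell},c_{\ell},d_{\ell}$ machinery from Theorem \ref{thm:skip_stat}, the weight family $\{1,\eta(0),\eta^{\uparrow}_{1}(1),\eta^{\downarrow}_{1}(1)\}$ indeed closing under the update with no auxiliary weights needed. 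The only remaining work is the bookkeeping you already flag.
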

        \begin{proof}[Proof of Proposition \ref{prop:skip_stat}]
            We fix a local function $f$. 
            First we observe that for any $\eta \in \Omega_{r}$ and $z \in \Z$, 
                \begin{align}
                    \sum_{y = s_{1}\left(\eta,z\right)}^{s_{1}\left(\eta,z+1\right)-1} \eta\left(y\right) 
                    &= \eta\left(s_{1}\left(\eta,z\right)\right) + \sum_{y = s_{1}\left(\eta,z\right) + 1}^{s_{1}\left(\eta,z+1\right)-1} \eta^{\uparrow}_{1}\left(y\right) \\
                    &= \Psi_{1}\left(\eta\right)\left(z\right) + \zeta_{1}\left(\eta,z\right) +  \Psi_{1}\left(\eta\right)^{\uparrow}_{1}\left(z+1\right).
                \end{align}
            Hence, by a similar computation used in the proof of Theorem \ref{thm:skip_stat}, for any $\a \in \mathcal{A}^{+}$ and $k \in \N$, we have 
                \begin{align}
                    &\bar{\e}\left(\boldsymbol{\a}\right)\int_{\Omega_{*}} d\mu_{\boldsymbol{\a}}\left(\eta\right) \eta\left(0\right) f\left( \Psi_{k}\left(\eta\right)\right) \\ 
                    &= \int_{\Omega_{r}} d\hat{\mu}_{\boldsymbol{\a}}\left(\eta\right) \sum_{z = 0}^{s_{\infty}\left(\Psi_{1}\left(\eta\right),1\right)-1} f\left(\tau_{z} \Psi_{k}\left(\eta\right)\right) \sum_{y = s_{1}\left(\eta,z\right)}^{s_{1}\left(\eta,z+1\right)-1} \eta\left(y\right) \\
                    &= \frac{\a_1 \bar{\e}\left(\theta\boldsymbol{\a}\right)}{1 - \a_1} \int_{\Omega_{*}} d\mu_{\theta\boldsymbol{\a}}\left(\eta\right) f\left( \Psi_{k-1}\left(\eta\right)\right) + \bar{\e}\left(\theta\boldsymbol{\a}\right)\int_{\Omega_{*}} d\mu_{\theta\boldsymbol{\a}}\left(\eta\right) \eta\left(0\right)f\left( \Psi_{k-1}\left(\eta\right)\right) \\
                    & \quad + \bar{\e}\left(\theta\boldsymbol{\a}\right)\int_{\Omega_{*}} d\mu_{\theta\boldsymbol{\a}}\left(\eta\right) \eta^{\uparrow}_{1}\left(1\right)f\left( \Psi_{k-1}\left(\eta\right)\right) . \label{eq:skip_stat_prop_0}
                \end{align}
            Also, for any $\a \in \mathcal{A}^{+}$, $k \in \N$ and $\sigma \in \{\uparrow, \downarrow\}$, we get 
                \begin{align}
                    &\bar{\e}\left(\boldsymbol{\a}\right)\int_{\Omega_{*}} d\mu_{\boldsymbol{\a}}\left(\eta\right) \eta^{\sigma}_{1}\left(1\right) f\left( \Psi_{k}\left(\eta\right)\right) \\
                    &= \int_{\Omega_{r}} d\hat{\mu}_{\boldsymbol{\a}}\left(\eta\right) \sum_{z = 0}^{s_{\infty}\left(\Psi_{1}\left(\eta\right),1\right)-1} f\left(\tau_{z} \Psi_{k}\left(\eta\right)\right) \sum_{y = s_{1}\left(\eta,z\right)}^{s_{1}\left(\eta,z+1\right)-1} \eta^{\sigma}_{1}\left(y+1\right) \\
                    &= \int_{\Omega_{r}} d\hat{\mu}_{\boldsymbol{\a}}\left(\eta\right) \sum_{z = 0}^{s_{\infty}\left(\Psi_{1}\left(\eta\right),1\right)-1} f\left(\tau_{z} \Psi_{k}\left(\eta\right)\right) \left( \zeta_{1}\left(\eta,z\right) + \Psi_{1}\left(\eta\right)^{\sigma}_{1}\left(z+1\right) \right) \\
                    &= \frac{\a_1 \bar{\e}\left(\theta\boldsymbol{\a}\right)}{1 - \a_1} \int_{\Omega_{*}} d\mu_{\theta\boldsymbol{\a}}\left(\eta\right) f\left( \Psi_{k-1}\left(\eta\right)\right) + \bar{\e}\left(\theta\boldsymbol{\a}\right)\int_{\Omega_{*}} d\mu_{\theta\boldsymbol{\a}}\left(\eta\right) \eta^{\sigma}_{1}\left(1\right)f\left( \Psi_{k-1}\left(\eta\right)\right) \label{eq:skip_stat_prop_1}.
                \end{align}
        From \eqref{eq:skip_stat_7}, \eqref{eq:skip_stat_10}, \eqref{eq:skip_stat_12} and \eqref{eq:skip_stat_prop_1}, for any $\a \in \mathcal{A}^{+}$, $k \in \N$ and $0 \le \ell \le k-1$, we have 
            \begin{align}
                &\bar{\e}\left(\theta^{\ell}\boldsymbol{\a}\right)\int_{\Omega_{*}} d\mu_{\theta^{\ell}\boldsymbol{\a}}\left(\eta\right) \eta^{\sigma}_{1}\left(1\right) f\left( \Psi_{k-\ell}\left(\eta\right)\right) - \bar{\e}\left(\theta^{k}\boldsymbol{\a}\right)\int_{\Omega_{*}} d\mu_{\theta^{k}\boldsymbol{\a}}\left(\eta\right) \eta^{\sigma}_{1}\left(1\right) f\left( \eta\right) \\
                &= \sum_{h = \ell}^{k-1} \frac{\left(\theta^{h}\a\right)_1}{1 - \left(\theta^{h}\a\right)_1} \left( c_{h+1} a_{0,k} + d_{h+1}b_{0,k} \right) \\
                &= \frac{\left(c_{\ell}-c_{\ell+1}\right)a_{0,k}}{2} + \frac{\left(d_{\ell}-d_{\ell+1} - 1\right)b_{0,k}}{2} \label{eq:skip_stat_prop_2},
            \end{align}
        where $a_{0,k}, b_{0,k}, c_{\ell}, d_{\ell}$ are defined in the proof of Theorem \ref{thm:skip_stat}. 
        Hence, by \eqref{eq:skip_stat_7}, \eqref{eq:skip_stat_13},  \eqref{eq:skip_stat_10}, \eqref{eq:skip_stat_12}, \eqref{eq:skip_stat_prop_0} and \eqref{eq:skip_stat_prop_2}, we obtain 
            \begin{align}
                &\bar{\e}\left(\boldsymbol{\a}\right)\int_{\Omega_{*}} d\mu_{\boldsymbol{\a}}\left(\eta\right) \eta\left(0\right) f\left( \Psi_{k}\left(\eta\right)\right) - \bar{\e}\left(\theta^{k}\boldsymbol{\a}\right)\int_{\Omega_{*}} d\mu_{\theta^{k}\boldsymbol{\a}}\left(\eta\right) \eta\left(0\right) f\left( \eta \right)  \\
                &= \sum_{h=0}^{k-1} \frac{\left(\theta^{h}\a\right)_1}{1 - \left(\theta^{h}\a\right)_1} \left( c_{h+1} a_{0,k} + d_{h+1}b_{0,k} \right) \\
                & \quad + \sum_{h' = 1}^{k-1} \frac{\left(c_{h'} - c_{h'+1}\right)a_{0,k}}{2} + \frac{\left(d_{h'} - d_{h'+1} - 1\right)b_{0,k}}{2} \\
                & \quad + k \bar{\e}\left(\theta^{k}\boldsymbol{\a}\right)\int_{\Omega_{*}} d\mu_{\theta^{k}\boldsymbol{\a}}\left(\eta\right) \eta^{\uparrow}_{1}\left(1\right) f\left( \eta\right) \\
                &= \frac{c_{0} - 1}{2} a_{0,k} + \frac{d_{0} - k}{2} b_{0,k} + k \bar{\e}\left(\theta^{k}\boldsymbol{\a}\right)\int_{\Omega_{*}} d\mu_{\theta^{k}\boldsymbol{\a}}\left(\eta\right) \eta^{\uparrow}_{1}\left(1\right) f\left( \eta\right).
            \end{align}
        Therefore \eqref{eq:skip_stat_prop} holds. 
            
        \end{proof}
                
        By using \eqref{eq:rel_aq_O}, we can rewrite Theorem \ref{thm:skip_stat} in terms of $\phi_\q$. For any $k \in \N$, we define $\tilde{C}_{k} : \mathcal{Q} \to \mathcal{Q}$ as 
            \begin{align}
                \left(\tilde{C}_{k}\q\right)_{\ell} := 
                \begin{dcases}
                    q_{\ell} \ & \ 1\le \ell \le k, \\
                    0 \ & \ \text{otherwise},
                \end{dcases}
            \end{align}
        for any $\q \in \mathcal{Q}$.
        We note that $C_{k}\boldsymbol{\a}\left(\q\right) =\boldsymbol{\a}\left(\tilde{C}_{k}\q\right)$.
        \begin{corollary}\label{cor:q_skip}
            Suppose that $q \in \mathcal{Q}^{+}$. Then, for any $k \in \N$ and local function $f : \{0,1\}^{\Z} \to \R$, we have
                \begin{align}
                            \int_{\Omega_*} d\phi_{\q}\left(\eta | s_{\infty}\left(\eta,0\right) = 0 \right) f\left( \Psi_{k}\left(\eta\right)\right) = \int_{\Omega_*} d\phi_{\tilde{\theta}^{k}\q}\left(\eta | s_{\infty}\left(\eta,0\right) = 0 \right) f\left( \eta\right),
                \end{align}
            and 
            \begin{align}
                            &\int_{\Omega_*} d\phi_{\q}\left(\eta\right) f\left( \Psi_{k}\left(\eta\right)\right) \\ 
                            &= \frac{\bar{\e}\left(\tilde{\theta}^{k}\q\right)\bar{\e}\left(\tilde{C}_{k}\q\right)}{\bar{\e}\left(\q\right)}\int_{\Omega_*} d\phi_{\tilde{\theta}^{k}\q}\left(\eta\right) f\left( \eta\right) \\
                            & \quad + \frac{\bar{\e}\left(\tilde{\theta}^{k}\q\right)\left(\bar{\e}\left(\q\right) - \bar{\e}\left(\tilde{\theta}^{k}\q\right)\bar{\e}\left(\tilde{C}_{k}\q\right)\right)}{\bar{\e}\left(\q\right)\left(\bar{\e}\left(\tilde{\theta}^{k}\q\right) - \bar{\e}\left(\tilde{\theta}^{k+1}\q\right)\right)}\int_{\Omega_*} d\phi_{\tilde{\theta}^{k}\q}\left(\eta\right) \left(\sum_{\sigma \in \{\uparrow, \downarrow\}} \eta^{\sigma}_{1}\left(1\right)\right) f \left(\eta\right).
            \end{align}
            
        \end{corollary}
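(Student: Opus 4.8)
The plan is to obtain the Corollary as a pure change of parametrization in Theorem~\ref{thm:skip_stat}, using the bijection between $\mathcal{A}^{+}$ and $\mathcal{Q}^{+}$ recalled from \cite[Theorem 3.1]{FG}. Concretely, given $\q \in \mathcal{Q}^{+}$, I would set $\boldsymbol{\a} := \boldsymbol{\a}(\q)$; by \eqref{eq:Q+A+} this lies in $\mathcal{A}^{+}$, so Theorem~\ref{thm:skip_stat} applies and provides \eqref{eq:skip_stat_0} and \eqref{eq:skip_stat} for this $\boldsymbol{\a}$.

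The next step is to rewrite each quantity on both sides of those identities in terms of $\q$. By \eqref{eq:rel_aq_O} we have $\mu_{\boldsymbol{\a}(\q)} = \phi_{\q}$, which turns the $\mu_{\boldsymbol{\a}}$-integrals into $\phi_{\q}$-integrals. Iterating the intertwining relation \eqref{eq:theta-tilde} (already exploited in the proof of Lemma~\ref{lem:shift_q}) gives $\boldsymbol{\a}(\tilde{\theta}^{j}\q) = \theta^{j}\boldsymbol{\a}(\q)$ for every $j$, hence $\mu_{\theta^{k}\boldsymbol{\a}} = \phi_{\tilde{\theta}^{k}\q}$ and $\mu_{\theta^{k+1}\boldsymbol{\a}} = \phi_{\tilde{\theta}^{k+1}\q}$ by \eqref{eq:rel_aq_O} again. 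Since $\bar{\e}(\q)$ is defined to be $\bar{\e}(\boldsymbol{\a}(\q))$, the same identities yield $\bar{\e}(\theta^{j}\boldsymbol{\a}) = \bar{\e}(\tilde{\theta}^{j}\q)$ for all $j$; and using $C_{k}\boldsymbol{\a}(\q) = \boldsymbol{\a}(\tilde{C}_{k}\q)$ together with the definition of $\bar{\e}(\cdot)$ on $\mathcal{Q}$ gives $\bar{\e}(C_{k}\boldsymbol{\a}) = \bar{\e}(\tilde{C}_{k}\q)$. Substituting these dictionary entries into \eqref{eq:skip_stat} produces verbatim the displayed formula of the Corollary, and doing the same in \eqref{eq:skip_stat_0} gives the conditional statement, once one observes that $\phi_{\q}(\,\cdot\mid s_{\infty}(\eta,0)=0) = \hat{\phi}_{\q}$ exactly as $\mu_{\boldsymbol{\a}}(\,\cdot\mid s_{\infty}(\eta,0)=0) = \hat{\mu}_{\boldsymbol{\a}}$ in the proof of Theorem~\ref{thm:skip_stat}.

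The only point demanding a small verification --- bookkeeping rather than a genuine difficulty --- is the intertwining $C_{k}\boldsymbol{\a}(\q) = \boldsymbol{\a}(\tilde{C}_{k}\q)$ of the cutoff operators under \eqref{def:atoq}--\eqref{def:qtoa}: for $\ell \le k$ the value $\boldsymbol{\a}(\q)_{\ell}$ in \eqref{def:qtoa} depends only on $q_{1},\dots,q_{\ell}$, which $\tilde{C}_{k}$ leaves unchanged, while for $\ell > k$ both sides vanish. With this and the $\theta$--$\tilde{\theta}$ intertwining in hand, no new estimates are needed and the Corollary follows. I expect the main (mild) obstacle to be nothing more than keeping the indices on the various shifted and cut-off parameters straight while transcribing \eqref{eq:skip_stat}.
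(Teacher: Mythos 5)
Your proposal is correct and is exactly the route the paper intends: apply Theorem \ref{thm:skip_stat} with $\boldsymbol{\a}=\boldsymbol{\a}(\q)$ and translate via $\phi_{\q}=\mu_{\boldsymbol{\a}(\q)}$, the intertwining $\boldsymbol{\a}(\tilde{\theta}\q)=\theta\boldsymbol{\a}(\q)$, and $C_{k}\boldsymbol{\a}(\q)=\boldsymbol{\a}(\tilde{C}_{k}\q)$. Your explicit check of the cutoff intertwining (that $\boldsymbol{\a}(\q)_{\ell}$ depends only on $q_{1},\dots,q_{\ell}$) is the one detail the paper merely asserts, and it is verified correctly.
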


\subsection{Two-sided Markov distribution case}\label{subsec:Markov}

    In this subsection we will show that if the law of $\left(\eta\left(x\right)\right)_{x \in \Z}$ is a two-sided homogeneous Markov chain conditioned on $\Omega_{r}$, then the law of $\left(\Psi_{1}\left(\eta\right)\left(x\right)\right)_{x \in \Z}$ is also a two-sided homogeneous Markov chain conditioned on $\Omega_{r}$. Before describing the precise statement, we prepare some notations and recall some facts on Markov chains on $\{0,1\}$. 
    In the following discussion, we denote the transition matrix of a given two-sided homogeneous Markov chain $\left(\eta(x)\right)_{x\in\mathbb{Z}}$ by $P=\left(p(r,s)\right)_{r,s=0,1}$, where
        \begin{align}
                    p(r,s) := \mathbb{P}\left( \eta(1) = s | \eta(0) = r \right),  
        \end{align}
    and assume that { $0 < p(0,1) < p(1,0)$}. 
    If we define $a, b$ as
                        \begin{align}
                            a := p(0,1)p(1,0), \quad b := p(0,0)p(1,1),
                        \end{align}
    then $a, b$ satisfies $0 < a < 1$, $0 \le b < 1$ and $\sqrt{a} + \sqrt{b} < 1$. In addition, $p\left( 0,0 \right)$ and $p\left( 1,0 \right)$ are expressed in terms of $a, b$ as
                \begin{align}
                    p\left( 0,0 \right) &= \frac{1 - a + b + \sqrt{\left(1 - (a + b)\right)^{2} - 4 ab}}{2}, \\
                    p\left( 1,0 \right) &= \frac{1 + a - b + \sqrt{\left(1 - (a + b)\right)^{2} - 4 ab}}{2}.
                \end{align}
    Conversely, for a given $a,b$ such that $0 < a < 1$, $0 \le b < 1$ and $\sqrt{a} + \sqrt{b} < 1$, one can construct a transition matrix with condition { $0 < p(0,1) < p(1,0)$}. Actually, we can show the following. 
                \begin{lemma}\label{lem:bi_PQ}
                    We define   
                        \begin{align}
                            \mathcal{P} := \left\{ \left(p\left(i,j\right)\right)_{i,j = 0, 1} {\in} [0,1]^{4} \ ; \ \sum_{j = 0,1}  p\left(i,j\right) = 1, \ i = 0, 1, \ {0 < p\left(0,1\right) < p\left(1,0\right)} \right\},
                        \end{align}
                    and 
                        \begin{align}
                            \mathcal{P}' := \left\{ \left(a,b\right) {\in} [0,1]^2 \ ; \ a > 0, \ \sqrt{a} + \sqrt{b} < 1  \right\}.
                        \end{align}
                    In addition, we define a map $P' : \mathcal{P} \to \mathcal{P}'$ as
                        \begin{align}
                            P'\left(\left(p\left(i,j\right)\right)_{i,j = 0, 1}\right) := \left(p\left(0,1\right)p\left(1,0\right), p\left(0,0\right)p\left(1,1\right)\right).
                        \end{align}
                    Then, $P'$ is a bijection between $\mathcal{P}$ and $\mathcal{P}'$. 
                \end{lemma}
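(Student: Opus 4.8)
The plan is to strip the statement down to a two-variable computation and then to read off the inverse from the formulas already displayed just before the lemma. Since any matrix in $\mathcal{P}$ has rows summing to one, it is completely described by the pair $(x,y):=(p(0,1),p(1,1))$, and belonging to $\mathcal{P}$ is then the single condition $x+y<1$ (together with $x=p(0,1)>0$, which is what forces $a>0$ and is implicit in working with a genuine transition matrix here). Under this parametrization $P'$ becomes the map $G(x,y)=(a,b)=(x(1-y),\,(1-x)y)$, so the lemma is equivalent to saying that $G$ is a bijection from $D:=\{(x,y): 0<x,\ 0\le y,\ x+y<1\}$ onto $\mathcal{P}'$.

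First I would check $G(D)\subseteq\mathcal{P}'$. Clearly $a,b\in[0,1)$, and $a=x(1-y)>0$ since $x>0$ and $y<1$ (the latter from $x+y<1$). For the crucial inequality $\sqrt a+\sqrt b<1$, apply AM--GM termwise:
\[
\sqrt a+\sqrt b=\sqrt{x(1-y)}+\sqrt{(1-x)y}\le\frac{x+(1-y)}{2}+\frac{(1-x)+y}{2}=1 ,
\]
with equality only if $x=1-y$ and $1-x=y$, i.e. $x+y=1$, which is excluded; hence $\sqrt a+\sqrt b<1$ strictly.

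For injectivity I would eliminate variables. From $a=x(1-y)$, $b=(1-x)y$ one gets $a-b=x-y$ and $a+b=(x+y)-2xy$; writing $s:=x+y$ and inserting $xy=\tfrac{1}{4}(s^2-(a-b)^2)$ yields the quadratic $s^2-2s+2(a+b)-(a-b)^2=0$, so $s=1\pm\sqrt{(1-a-b)^2-4ab}$. The condition $s<1$ selects the minus sign, and then $x=\tfrac{1}{2}(s+a-b)$, $y=\tfrac{1}{2}(s-a+b)$ are forced; since $x=1-p(0,0)$ and $y=1-p(1,0)$, this is precisely the inverse displayed before the lemma. (Equivalently: $G$ is two-to-one on $[0,1]^2$, the fiber through $(x,y)$ being $\{(x,y),(1-y,1-x)\}$, and exactly one member of each such pair lies in $D$.) For surjectivity I would run this in reverse: given $(a,b)\in\mathcal{P}'$ the discriminant is positive by the factorization $(1-a-b)^2-4ab=(1-\sqrt a-\sqrt b)(1+\sqrt a+\sqrt b)(1-\sqrt a+\sqrt b)(1+\sqrt a-\sqrt b)$, all four factors being positive (using $0<\sqrt a<1$ and $\sqrt b\le\sqrt a+\sqrt b<1$), so $s$ is well-defined with $s<1$. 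Then $x\ge0$ and $y\ge0$ come from the identities $(1+a-b)^2-\bigl((1-a-b)^2-4ab\bigr)=4a$ and $(1-a+b)^2-\bigl((1-a-b)^2-4ab\bigr)=4b$ (together with $1+a-b>0$, $1-a+b>0$), while $x,y<1$ is immediate from $a-b-1<0\le\sqrt{(1-a-b)^2-4ab}$ and its mirror image; finally $G(x,y)=(a,b)$ is checked by direct substitution, using $2s-s^2=2(a+b)-(a-b)^2$ to simplify. This also confirms the resulting matrix lies in $\mathcal{P}$ (entries in $[0,1]$, rows summing to one, and $p(0,1)+p(1,1)=s<1$).

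I expect the only real labor to be the bookkeeping in the surjectivity step — verifying that the explicit inverse obeys every inequality defining $\mathcal{P}$ — but the two algebraic identities above dispatch it without any conceptual difficulty; the one genuinely slick point is the AM--GM estimate yielding $\sqrt a+\sqrt b<1$, together with the neat factorization of the discriminant.
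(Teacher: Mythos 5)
Your proof is correct and follows essentially the same route as the paper: reduce to the two free parameters $(p(0,1),p(1,1))$, solve the resulting quadratic for $p(0,1)+p(1,1)$ to obtain the explicit inverse $P(a,b)$ displayed before the lemma, and verify the range conditions. You in fact supply more detail than the paper, which dismisses injectivity as ``clear from the definition'' and the verification of $P(\mathcal{P}')\subset\mathcal{P}$ and $P'\circ P=\mathrm{id}$ as ``direct computation''; your AM--GM estimate for $\sqrt{a}+\sqrt{b}<1$, the factorization of the discriminant, and the identities $(1+a-b)^2-\bigl((1-a-b)^2-4ab\bigr)=4a$ (and its mirror) are precisely the computations being elided.
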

        \begin{proof}[Proof of Lemma \ref{lem:bi_PQ}]
            From the definition, it is clear that $P'$ is injective. To show that $P'$ is surjective, 
            we define a map $P : \mathcal{P}' \to \R^{4}$ as 
                \begin{align}
                    P\left(a,b\right) &= \left(p\left(a,b;i,j\right)\right)_{i,j = 0, 1} \\ &:= \begin{pmatrix}
                        \frac{1 - a + b + \sqrt{\left(1 - (a + b)\right)^{2} - 4 ab}}{2} & \frac{1 + a - b - \sqrt{\left(1 - (a + b)\right)^{2} - 4 ab}}{2} \\
                        \frac{1 + a - b + \sqrt{\left(1 - (a + b)\right)^{2} - 4 ab}}{2} & \frac{1 - a + b - \sqrt{\left(1 - (a + b)\right)^{2} - 4 ab}}{2}
                    \end{pmatrix}. \label{eq:transition}
                \end{align}
            By a direct computation, one can show that 
            $P\left(\mathcal{P}'\right) \subset \mathcal{P}$, and $P'  \circ P\left(a,b\right) = \left(a,b\right)$ for any $(a,b) \in \mathcal{P}'$. Hence $P'$ gives a bijection between $\mathcal{P}$ and $\mathcal{P}'$. 
        \end{proof}
            \begin{remark}
                If we define $F\left(a,b\right)$ as 
                    \begin{align}
                        F\left(a,b\right) :=
                            \begin{dcases}
                                \frac{1 - (a + b) - \sqrt{\left(1 - (a + b)\right)^{2} - 4 ab}}{2b} \ & \ b > 0, \\
                                a \ & \ b = 0,
                            \end{dcases}
                    \end{align}
                then $P\left(a,b\right)$ can be represented as 
                    \begin{align}
                        P\left(a,b\right) = \begin{pmatrix}
                            \frac{1}{1 + F\left( a, b \right)} & \frac{F\left( a, b \right)}{1 + F\left( a, b \right)} \\
                            \frac{a\left(1 + F\left( a, b \right)\right)}{F\left( a, b \right)} & b \left(1 + F\left( a, b \right)\right)
                        \end{pmatrix}.
                    \end{align}
                We note that $F\left(ab,b\right)$ coincides with the generating function of the Narayana numbers. 
            \end{remark}
    For any $\left(a,b\right) \in \mathcal{P}'$, we define $\left(a_{k}, b_{k}\right) \in \mathcal{P}'$, $k \in \Z_{\ge 0} $ as $a_{0} := a$, $b_{0} := b$ for $k = 0$, and 
        \begin{align}\label{eq:ab_1}
            a_{k} := \frac{a_{k-1}b_{k-1}}{\left(1 - a_{k-1}\right)^2}, \quad b_{k} := \frac{b_{k-1}}{\left(1 - a_{k-1}\right)^2},
        \end{align}
    for $k \ge 1$. We note that the fact $\left(a_{k}, b_{k}\right) \in \mathcal{P}'$ can be checked by direct computation. In addition, the sequence $\left(a_{k}\right)_{k \in \Z_{\ge 0}}$ is strictly decreasing and $\lim_{k \to \infty} a_k = 0$.

        \begin{theorem}\label{thm:skip_markov}
            Suppose that $\left(a,b\right) \in \mathcal{P}'$. 
            If $\left(\eta\left(x\right)\right)_{x \in \Z}$ is a two-sided homogeneous Markov chain conditioned on $\Omega_r$ with the transition matrix $P\left(a,b\right)$, then for any $k \in \N$,   $\left(\Psi_{k}\left(\eta\right)\left(x\right)\right)_{x \in \Z}$ is also a two-sided homogeneous Markov chain conditioned on $\Omega_r$ with the transition matrix $P\left(a_{k}, b_{k}\right)$. 
        \end{theorem}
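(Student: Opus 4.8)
The plan is to recognize the measures involved as members of the excursion family $\hat{\mu}_{\boldsymbol{\a}}$, $\boldsymbol{\a}\in\mathcal{A}^{+}$, from Section~\ref{subsubsec:ex}, to push such a measure forward through $\Psi_k$ using Lemmas~\ref{lem:shift_excursion_m} and \ref{lem:exskip_etaskip}, and then to match the resulting parameter against the recursion \eqref{eq:ab_1}. By Proposition~\ref{prop:semig} it would suffice to treat $k=1$ and iterate, but the computation below works for general $k$ at no extra cost.

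Fix $(a,b)\in\mathcal{P}'$ and set $\boldsymbol{\a}=(\a_j)_{j\in\N}$ with $\a_j:=ab^{j-1}$. Since $\sqrt{a}+\sqrt{b}<1$ we have $\boldsymbol{\a}\in\mathcal{A}^{+}$ (cf.\ \cite[Lemma~3.7, Corollary~4.8]{FG} and Remark~\ref{rem:Markov}), and Remark~\ref{rem:Markov} identifies $\mu_{\boldsymbol{\a}}$ with the stationary two-sided homogeneous Markov chain on $\{0,1\}^{\Z}$ whose transition matrix is \eqref{eq:transition} at $(a,b)$, i.e.\ $P(a,b)$; this chain is irreducible since $a>0$ forces $p(0,1),p(1,0)>0$, so its law is unique. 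On the other hand, the Palm-type identity \eqref{def:inv_palm} (used in the first line of the proof of Theorem~\ref{thm:skip_stat}) gives $\mu_{\boldsymbol{\a}}(\,\cdot\mid s_\infty(\eta,0)=0)=\hat{\mu}_{\boldsymbol{\a}}$, and $\hat{\mu}_{\boldsymbol{\a}}$ is carried by $\Omega_r$, on which $s_\infty(\eta,0)=0$ and all $|s_\infty(\eta,i)|<\infty$ automatically. Since $\mu_{\boldsymbol{\a}}$ charges $\Omega_{*}$, conditioning $P(a,b)$ on $\Omega_r$ coincides with conditioning on $\{s_\infty(\eta,0)=0\}$, so the law of $(\eta(x))_{x\in\Z}$ in the statement is exactly $\hat{\mu}_{\boldsymbol{\a}}$.

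Next I would push $\hat{\mu}_{\boldsymbol{\a}}$ forward by $\Psi_k$. Write $\eta=I((\e_i)_{i\in\Z})$ with the $\e_i$ i.i.d.\ $\nu_{\boldsymbol{\a}}$; by Lemma~\ref{lem:exskip_etaskip}, $\Psi_k(\eta)=I((\widetilde{\Psi}_k(\e_i))_{i\in\Z})$, and by Lemma~\ref{lem:shift_excursion_m} each $\widetilde{\Psi}_k(\e_i)$ has law $\nu_{\theta^k\boldsymbol{\a}}$, the family staying independent because $\widetilde{\Psi}_k$ acts coordinatewise. Hence $\Psi_k(\eta)$ has law $\hat{\mu}_{\theta^k\boldsymbol{\a}}$, with $\theta^k\boldsymbol{\a}\in\mathcal{A}^{+}$ by Remark~\ref{rem:theta_A+}. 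It remains to identify $\theta^k\boldsymbol{\a}$: if $\boldsymbol{\beta}$ has the geometric form $\beta_j=a'(b')^{j-1}$, then \eqref{def:shift_para} gives $(\theta\boldsymbol{\beta})_j=\beta_{j+1}/(1-\beta_1)^{2j}=a'(b')^j/(1-a')^{2j}=\bigl(\tfrac{a'b'}{(1-a')^2}\bigr)\bigl(\tfrac{b'}{(1-a')^2}\bigr)^{j-1}$, so $\theta$ sends the geometric profile with parameters $(a',b')$ to the geometric profile given by one step of \eqref{eq:ab_1}. By induction $(\theta^k\boldsymbol{\a})_j=a_k b_k^{\,j-1}$ with $(a_k,b_k)$ as in \eqref{eq:ab_1}. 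Applying Remark~\ref{rem:Markov} again, now at $(a_k,b_k)\in\mathcal{P}'$ (membership being the direct computation noted just before the theorem), $\mu_{\theta^k\boldsymbol{\a}}$ is the stationary two-sided homogeneous Markov chain with transition matrix $P(a_k,b_k)$, and exactly as above $\hat{\mu}_{\theta^k\boldsymbol{\a}}$ is that chain conditioned on $\Omega_r$. This is precisely the asserted law of $(\Psi_k(\eta)(x))_{x\in\Z}$.

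The two parameter manipulations are routine, and given the machinery already in place the statement is essentially a bookkeeping corollary. The one point requiring care — and what I expect to be the main, if mild, obstacle — is the chain of identifications ``conditioning on $\Omega_r$ $\equiv$ conditioning on $\{s_\infty(\eta,0)=0\}$ $\equiv$ passing to $\hat{\mu}_{\boldsymbol{\a}}$'': one should note that $\{s_\infty(\eta,0)=0\}$ carries positive $\mu_{\boldsymbol{\a}}$-mass (it is the set of record sites, of positive density in the subcritical regime $\rho<\tfrac12$ that $p(0,1)+p(1,1)<1$ guarantees) and that the conditioned law is genuinely the i.i.d.-excursion measure $\hat{\mu}_{\boldsymbol{\a}}$, not merely a process agreeing with it on the local functions appearing in \eqref{def:inv_palm}; both are inherent in the Palm construction of \cite{FG}.
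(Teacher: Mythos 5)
Your proposal is correct and follows essentially the same route as the paper: identify the conditioned Markov chain with $\hat{\mu}_{\boldsymbol{\a}}$ for the geometric profile $\a_\ell=ab^{\ell-1}$ via Remark \ref{rem:Markov}, push forward through $\Psi_k$ using the i.i.d.\ excursion decomposition (Lemmas \ref{lem:exskip_etaskip} and \ref{lem:shift_excursion_m}, which is exactly what underlies \eqref{eq:skip_stat_0}), and check that $\theta$ maps the geometric profile $(a',b')$ to the one given by \eqref{eq:ab_1}. The only cosmetic differences are that you handle general $k$ in one pass where the paper iterates the $k=1$ case, and that you make explicit the identification of ``conditioned on $\Omega_r$'' with the Palm measure $\hat{\mu}_{\boldsymbol{\a}}$, which the paper leaves implicit.
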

                \begin{proof}[Proof of Theorem \ref{thm:skip_markov}]
                First we consider the case $k = 1$. 
                From Remark \ref{rem:Markov}, the distribution of $\left(\eta\left(x\right)\right)_{x \in \Z}$ can be expressed as $\hat{\mu}_{\boldsymbol{\a}}$ by setting $\boldsymbol{\a}$ as $\a_{\ell} := ab^{\ell-1}$ for any $\ell \in \N$. On the other hand, from Theorem \ref{thm:skip_stat}, we see that the distribution of $\left(\Psi_{1}\left(\eta\right)\left(x\right)\right)_{x \in \Z}$ is $\hat{\mu}_{\theta\a}$, and 
                        \begin{align}
                            \left(\theta \a\right)_{\ell} = \frac{ab^{\ell}}{\left(1 - a\right)^{2\ell}} = \frac{ab}{\left(1 - a\right)^{2}} \left(\frac{b}{\left(1 - a\right)^{2}} \right)^{\ell-1} = a_{1} b_{1}^{\ell-1},
                        \end{align}
                for any $\ell \in \N$. Hence, $\left(\Psi_{1}\left(\eta\right)\left(x\right)\right)_{x \in \Z}$ is a two-sided homogeneous Markov chain with the transition matrix $P\left(a_{1}, b_{1}\right)$. 

                By repeating the above argument, we can prove the claim of this theorem inductively for any $k \in \N$.
                \end{proof}

        {At the end of this subsection, we give an explicit formula for $\q \in \mathcal{Q}^{+}$ where $\hat{\phi}_{\q}$ is a two-sided Markov distribution conditioned on $\Omega_{r}$. This formula is a counterpart of Remark \ref{rem:Markov} in terms of the $\q$-parametrization. 
        For any $\left(p\left(i,j\right)\right)_{i,j = 0, 1} \in \mathcal{P}$, we define $c,d \in [0,1)$, $d > 0$ as
            \begin{align}
                c := \frac{ p\left(1,1\right)}{ p\left(0,0\right)}, \quad d := \frac{ p\left(0,1\right)}{ p\left(1,0\right)}.
            \end{align}
        \begin{lemma}\label{lem:markov_cd}
            Suppose that $\hat{\phi}_{\q}$ is a two-sided Markov distribution conditioned on $\Omega_{r}$ with the transition matrix $\left(p\left(i,j\right)\right)_{i,j = 0, 1} \in \mathcal{P}$. Then, for any $k \in \N$, we have 
                \begin{align}\label{eq:q_Markov}
                    q_{k} = \frac{c^{k-1}d\left(1-c\right)^2}{\left(1-c^{k}d\right)^2}.
                \end{align}
            Conversely, if $\q \in \mathcal{Q}^{+}$ is given by \eqref{eq:q_Markov} for some $c, d \in [0,1)$, $d > 0$, then, $\hat{\phi}_{\q}$ is a two-sided Markov distribution conditioned on $\Omega_{r}$. 
        \end{lemma}
        \begin{remark}
            In \cite[Proofs of Lemmas 3.6 and 3.7]{FG}, the authors conjectured a simple
        explicit formula for $\mathbf{q}$ when $\hat{\phi}_{\q}$ is a two-sided Markov distribution conditioned on $\Omega_{r}$. 
        Equation \eqref{eq:q_Markov} gives such a formula. 
        \end{remark}
        \begin{remark}
            By Lemma \ref{lem:bi_PQ}, for any $\left(p\left(i,j\right)\right)_{i,j = 0, 1}$ there exists $\left(a,b\right) \in \mathcal{P}'$  such that $\left(p\left(i,j\right)\right)_{i,j = 0, 1} $ $=$ $\left(p\left(a,b ; i,j\right)\right)_{i,j = 0, 1}$, and by direct computation, $(a,b)$ can be expressed in terms of $(c,d)$ as follows : 
            \begin{align}
                \begin{dcases}\label{eq:cd_ab}
                    a = \frac{d(1-c)^2}{(1-cd)^2}, \\
                    b = \frac{c(1-d)^2}{(1-cd)^2}.
                \end{dcases}
                \end{align}
        \end{remark}
        \begin{remark}
            We can obtain a counterpart of Theorem \ref{thm:skip_markov} in terms of $(c,d)$. If $\q$ is given by \eqref{eq:q_Markov}, then 
                \begin{align}
                    \left(\theta^{\ell}\q\right)_{k} = \frac{c_{\ell}^{k-1}d_{\ell}(1-c_{\ell})}{(1-c^{k}_{\ell} d_{\ell})^2}
                \end{align}
            where $c_{\ell}, d_{\ell} \in [0,1)$ is given by 
                \begin{align}
                    c_{\ell} = c, \quad d_{\ell} = c^{\ell - 1}d.
                \end{align}
        \end{remark}
        \begin{proof}[Proof of Lemma \ref{lem:markov_cd}]
            Assume that $\hat{\phi}_{\q}$ is a two-sided Markov distribution conditioned on $\Omega_{r}$. Then, there exists $(a,b) \in \mathcal{P}'$ such that  $\a(\q)_{k} = ab^{k-1}$ for any $k \in \N$.
            Then, by \eqref{def:atoq}, $q_k$ satisfies the following recursive equation with boundary condition. 
                \begin{align}
                    \begin{dcases}
                        q_{k+1} q_{k-1} = \left(\frac{q_k}{1-q_k} \right)^2, \quad k \in \N, \\
                        q_1 = a, \  q_2 = \frac{ab}{(1-a)^2}.
                    \end{dcases}
                \end{align}
            By \eqref{eq:cd_ab} and direct computation, we see that $(q_{k})_{k \in \N}$ given in \eqref{eq:q_Markov} solves the above recursive equation with boundary condition, and thus $q_k$ must satisfy \eqref{eq:q_Markov}. 

            Conversely, if we assume that $q_k$ is given by \eqref{eq:q_Markov}, then by \eqref{def:qtoa} and \eqref{eq:cd_ab}, $\a(\q)_{k} = ab^{k-1}$ for any $k \in \N$. Thus by Remark \ref{rem:Markov} and \eqref{eq:rel_aq_O}, $\hat{\phi}_{\q}$ is a two-sided Markov distribution conditioned on $\Omega_{r}$.
        \end{proof}
        }

\subsection{Expectation of the carrier with seat numbers}\label{subsec:expectation}

    {We first recall the relation between seats and solitons. As explained in
Remark~\ref{rem:seat_sol}, a $k$-soliton contains one \((\ell,\uparrow)\)-seat
and one \((\ell,\downarrow)\)-seat for each \(1\leq \ell\leq k\). Thus the
difference between No.~$k$ seat and No.$k+1$ seat extracts the contribution
of $k$-solitons.}

{From this point of view, the quantity
\begin{align}
    \sum_{m=0}^{n-1}
    \left(
    \mathcal{W}_{k}(T^{m}\eta,0)
    -
    \mathcal{W}_{k+1}(T^{m}\eta,0)
    \right)
\end{align}
may be regarded as the integrated current of $k$-solitons through the origin
up to time $n$. Therefore, under a stationary distribution, its expectation
gives the mean current of $k$-solitons. This is the motivation for computing
the expectations of the carrier with seat numbers in this subsection. 

In the following, we first compute the expectations of
$\mathcal{W}_{k}(\eta,0)$ under the invariant measure
$\phi_{\q}$. After that, we specialize the result to the
two-sided Markov case and give an explicit expression in terms of the corresponding
parameters. In the rest of this paper, for notational simplicity, we write $\tilde{\theta}, \tilde{C}_{k}$ as $\theta, C_k$.  }
        \begin{proposition}
            Suppose that $\q \in \mathcal{Q}^{+}$. Then, for any $k \in \N$, we have 
                \begin{align}\label{eq:car_seat_0}
                    {\bar{\mathcal{W}}_{k}(\q)} :=  \int_{\Omega_*} d{\phi_{\q}}\left(\eta\right)\mathcal{W}_{k}\left(\eta,0\right) = \frac{\bar{\e}\left({\q}\right) - \bar{\e}\left(\tilde{C}_{k-1}{\q}\right)}{2 \bar{\e}\left({\q}\right)}. 
                \end{align}
        \end{proposition}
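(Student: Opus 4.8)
The plan is to combine a pointwise identity expressing $\mathcal{W}_{k}(\eta,0)$ through the $k$-skip map with Theorem~\ref{thm:skip_stat}. Concretely, I will first prove that for every $\eta\in\Omega_{*}$ and $k\in\N$,
\begin{align}
\mathcal{W}_{k}(\eta,0)=\Psi_{k-1}(\eta)(0),
\end{align}
with the convention $\Psi_{0}=\mathrm{id}$, and then evaluate the right-hand side using the formulas of Section~\ref{sec:dis}.

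To establish the identity I would use two elementary observations. First, taking $k=1$ in the recursion defining $\mathcal{W}_{k}$, the empty product makes the update collapse to $\mathcal{W}_{1}(\eta,x)=\eta(x)$ for every $\eta$ and $x\in\Z$; in particular $\mathcal{W}_{1}(\eta',0)=\eta'(0)$ for any $\eta'$. Second, for $k\ge 2$ the function $\mathcal{W}_{k}(\eta,\cdot)$ is constant on each block $[\,s_{k-1}(\eta,i),\,s_{k-1}(\eta,i+1)-1\,]$: by the whole-line analogue of \eqref{eq:sums_1} (which follows from Lemma~\ref{lem:sums_w}) the interior of such a block contains no $(k-1+\ell,\sigma)$-seat with $\ell\ge 1$, hence no $(k,\sigma)$-seat, and $\mathcal{W}_{k}$ changes only at $(k,\sigma)$-seats by \eqref{eq:car_n_seat_w}. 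On the other hand, \eqref{eq:whole_1} together with \eqref{eq:car_n_seat_w}, Lemma~\ref{lem:sums_w} and the identity $s_{\infty}(\Psi_{m}(\eta),0)=-\xi_{m}(\eta,0)$ (a special case of \eqref{eq:whole_p2**}) yields the single-site seat-shift relation $\mathcal{W}_{\ell}(\Psi_{m}(\eta),x)=\mathcal{W}_{m+\ell}(\eta,s_{m}(\eta,x+\xi_{m}(\eta,0)))$. Taking $m=k-1$, $\ell=1$, $x=0$ and using the first observation gives $\Psi_{k-1}(\eta)(0)=\mathcal{W}_{k}(\eta,s_{k-1}(\eta,\xi_{k-1}(\eta,0)))$; since $0$ lies in the block of $s_{k-1}(\eta,\xi_{k-1}(\eta,0))$, the second observation turns this into $\Psi_{k-1}(\eta)(0)=\mathcal{W}_{k}(\eta,0)$.

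With the identity in hand, I would apply Theorem~\ref{thm:skip_stat} with the local function $f(\eta)=\eta(0)$ and with $k$ replaced by $k-1$ (for $k=1$ this is just $\int\eta(0)\,d\mu_{\boldsymbol{\a}}$, handled below). This reduces the problem to computing $\int \eta(0)\,d\mu_{\theta^{k-1}\boldsymbol{\a}}$ and $\int \big(\sum_{\sigma}\eta^{\sigma}_{1}(1)\big)\eta(0)\,d\mu_{\theta^{k-1}\boldsymbol{\a}}$. The first equals $\rho(\theta^{k-1}\boldsymbol{\a})=\frac{\bar{\e}(\theta^{k-1}\boldsymbol{\a})-1}{2\bar{\e}(\theta^{k-1}\boldsymbol{\a})}$ by \eqref{eq:density_ball}. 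For the second, note that $\eta^{\uparrow}_{1}(1)\eta(0)=\eta(1)(1-\eta(0))\eta(0)=0$ while $\eta^{\downarrow}_{1}(1)\eta(0)=(1-\eta(1))\eta(0)^{2}=\eta^{\downarrow}_{1}(1)$ (using $\mathcal{W}_{1}(\eta,0)=\eta(0)$), so the product equals $\eta^{\downarrow}_{1}(1)$, whose $\mu_{\theta^{k-1}\boldsymbol{\a}}$-expectation is $\frac{\bar{\e}(\theta^{k-1}\boldsymbol{\a})-\bar{\e}(\theta^{k}\boldsymbol{\a})}{2\bar{\e}(\theta^{k-1}\boldsymbol{\a})}$ by the Palm formula \eqref{def:inv_palm} and \eqref{eq:number_sol}. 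Substituting these two values into the expression from Theorem~\ref{thm:skip_stat}, the factors $\bar{\e}(\theta^{k-1}\boldsymbol{\a})$ and $\bar{\e}(\theta^{k-1}\boldsymbol{\a})-\bar{\e}(\theta^{k}\boldsymbol{\a})$ cancel and the two terms proportional to $\bar{\e}(C_{k-1}\boldsymbol{\a})\bar{\e}(\theta^{k-1}\boldsymbol{\a})$ cancel against each other, leaving exactly $\frac{\bar{\e}(\boldsymbol{\a})-\bar{\e}(C_{k-1}\boldsymbol{\a})}{2\bar{\e}(\boldsymbol{\a})}$; for $k=1$ the same expression is obtained since $C_{0}\boldsymbol{\a}=\mathbf{0}$ and $\bar{\e}(\mathbf{0})=1$, so that $\int\eta(0)\,d\mu_{\boldsymbol{\a}}=\rho(\boldsymbol{\a})$ already is of the claimed form.

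I expect the main obstacle to be the first step. The subtlety is that the site $0$ need not itself be a $(\ge k)$-seat of $\eta$, so one cannot directly read $\mathcal{W}_{k}(\eta,0)$ off the seat-shift relation; the block-constancy of $\mathcal{W}_{k}$ on $(k-1)$-blocks is precisely what bridges this gap, and keeping track of the base points in the whole-line seat-shift identity requires some care. Everything after that is bookkeeping with the identities already assembled in Section~\ref{sec:dis}.
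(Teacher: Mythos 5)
Your proposal is correct and follows essentially the same route as the paper: both reduce the claim to the pointwise identity $\mathcal{W}_{k}(\eta,0)=\Psi_{k-1}(\eta)(0)$ and then apply Theorem \ref{thm:skip_stat} with $f(\eta)=\eta(0)$ together with \eqref{eq:density_ball} and \eqref{eq:number_sol}. The only cosmetic difference is in the derivation of that identity — the paper reindexes the telescoping sum \eqref{eq:car_n_seat_w} over the grid $\{s_{k-1}(\eta,i)\}$ directly, whereas you go through the carrier seat-shift relation plus block-constancy of $\mathcal{W}_{k}$ on $[s_{k-1}(\eta,i),s_{k-1}(\eta,i+1)-1]$; these amount to the same computation.
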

        \begin{proof}
            To show \eqref{eq:car_seat_0}, it is sufficient to prove 
                \begin{align}\label{eq:car_seat}
                    \mathcal{W}_{k}\left(\eta, 0\right) = \Psi_{k-1}\left(\eta\right)\left(0\right).
                \end{align}
            Actually, from \eqref{eq:density_ball}, {\eqref{eq:rel_aq_O} and Corollary \ref{cor:q_skip}} with $f\left(\eta\right) = \eta\left(0\right)$, we get 
                \begin{align}
                    \int_{\Omega_*} d{\phi_{\q}}\left(\eta\right) \Psi_{k-1}\left(\eta\right)\left(0\right)
                     = \frac{\bar{\e}\left({\q}\right) - \bar{\e}\left(\tilde{C}_{k-1}{\q}\right)}{2 \bar{\e}\left({\q}\right)}. 
                \end{align}
            In the following, we will show \eqref{eq:car_seat}. First we remark that $\mathcal{W}_{1}\left(\eta, x\right) = \eta\left(x\right)$ for any $x \in \N$. This is because if the site $x$ is occupied by a ball (if $x$ is vacant), then the seat with No.$1$ is occupied (vacant). Hence, for the case $k = 1$, \eqref{eq:car_seat} is clear.
            Now we consider the case $k \ge 2$. We observe that from  \eqref{eq:car_n_seat_w}, $\mathcal{W}_{k}\left( \eta, 0 \right)$ can be represented as 
                \begin{align}
                    \mathcal{W}_{k}\left( \eta, 0 \right) &= \sum_{y = s_{\infty}\left(\eta,0\right) + 1}^{0} \left( \eta^{\uparrow}_{k}\left(y\right) - \eta^{\downarrow}_{k}\left(y\right) \right) \\
                    &= \sum_{y = s_{\infty}\left(\eta,0\right)}^{0} \left( \eta^{\uparrow}_{k}\left(y\right) - \eta^{\downarrow}_{k}\left(y\right) \right) \\
                    &= \sum_{y = 0}^{\xi_{k-1}\left( \eta, 0 \right)} \left( \eta^{\uparrow}_{{k}}\left(s_{k-1}\left(\eta, y\right)\right) - \eta^{\downarrow}_{{k}}\left(s_{k-1}\left(\eta, y\right)\right) \right) \\
                    &= \sum_{y = - \xi_{k-1}\left( \eta, 0 \right)}^{0} \left( \Psi_{k-1}\left(\eta\right)^{\uparrow}_{1}\left(y\right) - \Psi_{k-1}\left(\eta\right)^{\downarrow}_{1}\left(y\right) \right),
                \end{align}
            where at the third line we use the facts that for each $\sigma \in \{\uparrow, \downarrow\}$, 
            $\eta^{\sigma}_{k}\left( y \right) = 1$ if and only if $y = s_{k-1}\left(\eta,i\right)$ for some $i \in \Z$, and 
                \begin{align}
                    \left\{  s_{k-1}\left(\eta,i\right) \ ; \ 0 \le i \le \xi_{k-1}\left(\eta,0\right) \right\} \subset  \left[ s_{\infty}\left(\eta,0\right), 0 \right].
                \end{align}
            We then recall that from \eqref{eq:whole_p2**}, $- \xi_{k-1}\left( \eta, 0 \right) = s_{\infty}\left(\Psi_{k-1}\left(\eta\right), 0\right)$. Hence, by using \eqref{eq:car_n_seat_w} again, we get 
                \begin{align}
                    \mathcal{W}_{k}\left( \eta, 0 \right) &= \sum_{y = s_{\infty}\left(\Psi_{k-1}\left(\eta\right), 0\right)}^{0} \left( \Psi_{k-1}\left(\eta\right)^{\uparrow}_{\ell}\left(y\right) - \Psi_{k-1}\left(\eta\right)^{\downarrow}_{\ell}\left(y\right) \right) \\
                    &= \mathcal{W}_{1}\left( \Psi_{k-1}\left(\eta\right), 0 \right) \\
                    &= \Psi_{k-1}\left(\eta\right)\left(0\right). 
                \end{align}
            Therefore \eqref{eq:car_seat} is proved. 
        \end{proof}

    Thanks to {\eqref{eq:rel_aq_O}}, Proposition \ref{prop:skip_stat} and \eqref{eq:car_seat}, one can also compute the correlations between $\mathcal{W}_{k}\left(0\right)$ and $\mathcal{W}_{\ell}\left(0\right)$ with $k \neq \ell$. {These expectations give the static correlations between soliton currents.}
        \begin{proposition}\label{prop:cor_kl}
            Suppose that ${\q \in \mathcal{Q}^{+}}$. For any $k, \ell \in \N$ with $k < \ell$, we have 
                \begin{align}
                    &\int_{\Omega_*} d{\phi_{\q}}\left(\eta\right)  \mathcal{W}_{k}\left(0\right)\mathcal{W}_{\ell}\left(0\right) \\
                    &= \frac{1}{2} - \frac{\bar{\e}\left(\tilde{C}_{k-1}{\q}\right)\left( \bar{\e}\left(\tilde{\theta}^{k-1}{\q}\right) - \bar{\e}\left(\tilde{\theta}^{\ell-1}{\q}\right) + \bar{\e}\left(\tilde{C}_{\ell-k}\tilde{\theta}^{k-1}{\q}\right) + \ell - k + 1\right)}{4 \bar{\e}\left({\q}\right)} \\
                    & \quad - \frac{\left(\bar{\e}\left({\q}\right) - \bar{\e}\left(\tilde{\theta}^{k-1}{\q}\right)\bar{\e}\left(\tilde{C}_{k-1}{\q}\right)\right)\left( \bar{\e}\left(\tilde{C}_{\ell - k}\tilde{\theta}^{k-1}{\q}\right) - \bar{\e}\left(\tilde{C}_{\ell - k-1}\tilde{\theta}^{k}{\q}\right)\right)}{2\bar{\e}\left({\q}\right)\left(\bar{\e}\left(\tilde{\theta}^{k-1}{\q}\right) - \bar{\e}\left(\tilde{\theta}^{k}{\q}\right)\right)}. \label{eq:cor_seat}
                \end{align}
        \end{proposition}
        \begin{proof}[Proof of Proposition \ref{prop:cor_kl}]
            {We write $\boldsymbol{\a}(\q)$ by $\boldsymbol{\a}$ for notational simplicity.}
            By Theorem \ref{thm:skip_stat} and \eqref{eq:car_seat}, we have 
                \begin{align}
                    &\int_{\Omega_*} d\mu_{\boldsymbol{\a}}\left(\eta\right)  \mathcal{W}_{k}\left(0\right)\mathcal{W}_{\ell}\left(0\right) \\ 
                    &= \int_{\Omega_*} d\mu_{\boldsymbol{\a}}\left(\eta\right)  \Psi_{k-1}\left(\eta\right)\left(0\right)\Psi_{\ell-k}\left(\Psi_{k-1}\left(\eta\right)\right)\left(0\right) \\
                    &= \frac{\bar{\e}\left(\theta^{k-1}\boldsymbol{\a}\right)\bar{\e}\left(C_{k-1}\boldsymbol{\a}\right)}{\bar{\e}\left(\boldsymbol{\a}\right)}\int_{\Omega_{*}} d\mu_{\theta^{k-1}\boldsymbol{\a}}\left(\eta\right) \eta\left(0\right) \Psi_{\ell-k}\left(\eta\right)\left(0\right)  \\
                    & \quad + \frac{\bar{\e}\left(\theta^{k-1}\boldsymbol{\a}\right)\left(\bar{\e}\left(\boldsymbol{\a}\right) - \bar{\e}\left(\theta^{k-1}\boldsymbol{\a}\right)\bar{\e}\left(C_{k-1}\boldsymbol{\a}\right)\right)}{\bar{\e}\left(\boldsymbol{\a}\right)\left(\bar{\e}\left(\theta^{k-1}\boldsymbol{\a}\right) - \bar{\e}\left(\theta^{k}\boldsymbol{\a}\right)\right)}\int_{\Omega_{*}} d\mu_{\theta^{k-1}\boldsymbol{\a}}\left(\eta\right) \eta^{\downarrow}_{1}\left(1\right) \Psi_{\ell-k}\left(\eta\right)\left(0\right).
                \end{align}
            By Proposition \ref{prop:skip_stat}, \eqref{eq:density_ball} and \eqref{eq:number_sol}, we get 
                \begin{align}
                    &\int_{\Omega_{*}} d\mu_{\theta^{k-1}\boldsymbol{\a}}\left(\eta\right) \eta\left(0\right) \Psi_{\ell-k}\left(\eta\right)\left(0\right) \\
                    &= \frac{\bar{\e}\left(\theta^{k-1}\boldsymbol{\a}\right) + \bar{\e}\left(\theta^{\ell-1}\boldsymbol{\a}\right) - \bar{\e}\left(C_{\ell-k}\theta^{k-1}\boldsymbol{\a}\right) - \left(\ell - k + 1\right)}{4 \bar{\e}\left(\theta^{k-1}\boldsymbol{\a}\right)}.
                \end{align}
            On the other hand, by \eqref{eq:skip_stat_prop_2}, 
                \begin{align}
                    &\int_{\Omega_{*}} d\mu_{\theta^{k-1}\boldsymbol{\a}}\left(\eta\right) \eta^{\downarrow}_{1}\left(1\right) \Psi_{\ell-k}\left(\eta\right)\left(0\right) \\
                    &= \frac{\bar{\e}\left(\theta^{k-1}\boldsymbol{\a}\right) - \bar{\e}\left(\theta^{k}\boldsymbol{\a}\right) - \bar{\e}\left(C_{\ell - k}\theta^{k-1}\boldsymbol{\a}\right) + \bar{\e}\left(C_{\ell - k-1}\theta^{k}\boldsymbol{\a}\right)}{2\bar{\e}\left(\theta^{k-1}\boldsymbol{\a}\right)}.
                \end{align}
            Hence this proposition is proved.
            
        \end{proof}

    {From now on we consider the expectation under two-sided Markov distribution.  
    \begin{proposition}\label{prop:carrier_ex}
        Suppose that $\q \in \mathcal{Q}^{+}$ is given by \eqref{eq:q_Markov} with some $c, d \in [0,1)$, $d > 0$. Then, we have 
            \begin{align}
                \bar{\mathcal{W}}_{k}(\q) = \frac{1}{2} - \frac{1+c^{k}d}{2(1-c^{k}d)} + \frac{c^{k-1}d(1-c)}{(1-c^{k-1}d)(1-c^{k}d)}\left( k - 1 + \frac{(1+c)(1-d)}{(1-c)(1+d)} \right).
            \end{align}
        In particular, we obtain
            \begin{align}
                &J_{k}\left(\q\right) :=  \bar{\mathcal{W}}_{k}(\q) - \bar{\mathcal{W}}_{k+1}(\q) \\
                &= \frac{k c^{k-1}d (1-c)^2 (1+c^{k}d)}{(1-c^{k-1}d)(1-c^{k}d)(1-c^{k+1}d)} - \frac{2c^{k-1}d^2 (1-c) (1+c) (1-c^{k})}{(1-c^{k-1}d)(1-c^{k}d)(1-c^{k+1}d)(1+d)}.
            \end{align}
    \end{proposition} 
    We can also compute the explicit formula for the effective velocity of solitons. We denote by $v^{\mathrm{eff}}_{k}(\q)$ the effective velocity for $k$-solitons under $\phi_{\q}$, $\q \in \mathcal{Q}^{+}$. 
    \begin{proposition}\label{prop:eff_velo}
        Suppose that $\q \in \mathcal{Q}^{+}$ is given by \eqref{eq:q_Markov} with some $c, d \in [0,1)$, $d > 0$. Then, for any $k \in \N$ and $0\le \ell \le k$, we have 
            \begin{align}\label{eq:eff_velo}
                v^{\mathrm{eff}}_{k-\ell}\left(\theta^{\ell}\mathbf{q}\right) = \left(k-\ell\right) \frac{1 + c^{\ell}d}{1 - c^{\ell}d} - \frac{2c^{\ell}d \left(1+c\right) \left(1-c^{k-\ell}\right)}{\left(1-c\right)\left(1-c^{\ell}d\right)\left(1+c^{k}d\right)},
            \end{align}
        with convention $v^{\mathrm{eff}}_{0}\left(\theta^{k}\mathbf{q}\right) := 0$.
    \end{proposition}
    To show Propositions \eqref{prop:carrier_ex} and \ref{prop:eff_velo} we prepare the following lemma. 
        \begin{lemma}\label{lem:explicit_e}
            Suppose that $\q \in \mathcal{Q}^{+}$ is given by \eqref{eq:q_Markov} with some $c, d \in [0,1)$, $d > 0$. Then, for any $k \in \N \cup\{\infty\}$ and $0 \le \ell \le k$, we have 
                \begin{align}\label{eq:l_k_e}
                    \bar{\e}\left(\tilde{\theta}^{\ell} \tilde{C}_{k} \q\right)
=
\frac{1+c^{k+1}d}{1-c^{k+1}d}
\frac{1+c^{\ell}d}{1-c^{\ell}d}
-
\frac{2 c^{k} d \left(1 - c\right)}
{\left(1 - c^{k}d\right)\left(1 - c^{k+1}d\right)}
\left(
\left(k-\ell\right)\frac{1+c^{\ell}d}{1-c^{\ell}d}
+
\frac{1 + c}{1-c}
\right).
                \end{align}
        \end{lemma}
    Proposition \ref{prop:carrier_ex} is a direct result from \eqref{eq:car_seat_0} and \eqref{eq:l_k_e}, so we omit the proof. We can also find the explicit representation of the right-hand side of \eqref{eq:cor_seat} in terms of $(c,d)$. We omit the fully expanded formula, since it is lengthy.
    \begin{remark}\label{rem:phys}
        The explicit formulae obtained in this subsection for the
two-sided Markov case are consistent with those previously derived in the
physics literature \cite{KMP}, after translating the notation.
        Indeed, since the expected number of $k$-solitons per excursion is 
            \begin{align}
                \bar{\e}\left(\tilde{\theta}^{k} \q\right) \times \frac{q_k}{1-q_k}, 
            \end{align}
        by \eqref{eq:q_Markov} and Lemma \ref{lem:explicit_e}, the density of $k$-soliton is equal to 
            \begin{align}
                \frac{\bar{\e}\left(\tilde{\theta}^{k} \q\right)}{\bar{\e}\left( \q\right)} \times \frac{q_k}{1-q_k} = \frac{c^{k-1}d(1-d)(1 + c^{k}d)(1-c)^2}{(1+d)(1-c^{k-1}d)(1-c^{k}d)(1-c^{k+1}d)} \quad ( = \text{  \cite[(3.26)]{KMP}}).
            \end{align}
        We note that for the Bernoulli product case, the density of $k$-soliton has been rigorously derived in \cite[(10)]{KL}.

        Also, by \eqref{eq:q_Markov}, the hole density of $k$-solitons is 
            \begin{align}
                \frac{\bar{\e}\left(\tilde{\theta}^{k} \q\right)}{\bar{\e}\left(\q\right)} = \frac{(1-d)(1 + c^{k}d)}{(1+d)(1-c^{k}d)} \quad ( = \text{  \cite[(3.25)]{KMP}}).
            \end{align}
            
        The formula for the effective
velocity also agrees with \cite[(D.2)]{KMP}. For the stationary $k$-soliton current, one can check the consistency by the following relation, 
            \begin{align}
                J_{k}\left(\q\right) = \left( \text{density of $k$-solitons} \right) \times \left( \text{effective velocity of $k$-solitons} \right). 
            \end{align}

    \end{remark}}

    {
    \begin{remark}\label{rem:offset_vel}
         The effective velocity under $\phi_{\q}$ can be obtained from the expectation of the offset, appearing in Theorem \ref{thm:linear_whole}. Indeed,
        in the
$\zeta$-coordinate, the mean velocity of $k$-solitons is given by 
    \begin{align}
        k - 2\bar{W}_{k}(\q), \quad \bar{W}_{k}(\q) := \int_{\Omega_*} d\phi_{\q}(\eta) W_{k}\left(\eta,0\right).
    \end{align}
    After taking into account the hole density of $k$-solitons, the effective velocity of $k$-solitons is expected to be 
            \begin{align}
                \frac{\bar{\e}\left(\q\right)}{\bar{\e}\left(\tilde{\theta}^{k} \q\right)} \times \left( k - 2\bar{W}_{k}(\q) \right).
            \end{align}
    This heuristic derivation will be justified rigorously in Appendix \ref{app:eff_velo}.
    
    \end{remark}}

{\begin{proof}[Proof of Lemma \ref{lem:explicit_e}]

    First we consider the case $k < \infty$. For a fixed $k \in \N \cup \{\infty\}$, we write $w_{\ell} : =  \bar{\e}\left(\tilde{\theta}^{\ell} \tilde{C}_{k} \q\right)$, $0 \le \ell \le k$. Then, by \eqref{eq:system_Ce}, $(w_{\ell})_{0 \le \ell \le k}$ satisfies 
        \begin{align}
            \begin{dcases}
                w_{\ell - 1} + w_{\ell + 1} - \frac{2 w_{\ell}}{1 - q_{\ell}} = 0, \quad  1 \le \ell \le k -1, \\
                w_{k} = 1, \quad w_{k-1} = \frac{1+q_k}{1-q_k}. 
            \end{dcases}
        \end{align}
    Two independent solutions of the recursive equation can be taken as 
        \begin{align}\label{eq:two_indep}
            \frac{1 + c^{\ell}d}{1 - c^{\ell}d}, \quad  \frac{\ell\left(1 + c^{\ell}d\right)}{1 - c^{\ell}d} - \frac{1 + c}{1-c}.
        \end{align}
    Then, since the right-hand side of \eqref{eq:l_k_e} can be represented as 
        \begin{align}
            \left(\frac{1+c^{k+1}d}{1-c^{k+1}d} - \frac{2 k c^{k} d \left(1 - c\right)}
{\left(1 - c^{k}d\right)\left(1 - c^{k+1}d\right)}\right) \frac{1 + c^{\ell}d}{1 - c^{\ell}d} + \frac{2 k c^{k} d \left(1 - c\right)}
{\left(1 - c^{k}d\right)\left(1 - c^{k+1}d\right)} \left(\frac{\ell\left(1 + c^{\ell}d\right)}{1 - c^{\ell}d} - \frac{1 + c}{1-c}\right),
        \end{align}
    and satisfies the boundary condition for $\ell = k, k - 1$, the claim of this lemma is proved for the case $k < \infty$. 

    Next we consider the case $k = \infty$. We observe that for any excursion $\e \in \mathcal{E}$ and $\ell \in \N$,
        \begin{align}
            |\Psi_{\ell-1}\left(\e\right)| = 1 + 2 \sum_{h = 1}^{\infty} \sum_{i = 0}^{|\Psi_{\ell + h - 1}\left(\e\right)| - 1} h \zeta_{\ell + h - 1}(\iota(\e),i),
        \end{align}
    and thus from Remark \ref{rem:indep}, by taking the expectation of both sides under $\phi_{\q}$, we see that   $(w_{\ell})_{\ell \in \Z_{\ge 0}}$ satisfies 
        \begin{align}
            \begin{dcases}
                w_{\ell - 1} + w_{\ell + 1} - \frac{2 w_{\ell}}{1 - q_{\ell}} = 0, \quad  1 \le \ell \le k -1, \\
                \lim_{\ell \to \infty} w_{\ell} = 1.
            \end{dcases}
        \end{align}
    Since two independent solutions of the recursive equation are \eqref{eq:two_indep}, we have 
        \begin{align}
            w_{\ell} = \frac{1 + c^{\ell}d}{1 - c^{\ell}d},
        \end{align}
    and thus the claim of this lemma is shown for the case $k = \infty$. 
    \end{proof}}

{Finally, we show Proposition \ref{prop:eff_velo}.

\begin{proof}[Proof of Proposition \ref{prop:eff_velo}]

    We fix $k \in \N$. We define $v_{\ell} := v^{\mathrm{eff}}_{k-\ell}\left(\theta^{\ell}\mathbf{q}\right)$, $0 \le \ell \le k - 1$ and set $v_{k} := 0$. 
    By \cite[(4.12),(4.13)]{OSS}, $(v_{\ell})_{0 \le \ell \le k}$ satisfies 
        \begin{align}\label{eq:rec_eff}
            \begin{dcases}
                v_{\ell - 1} + v_{\ell + 1} - \frac{2 v_{\ell}}{1 - q_{\ell}} = 0, \quad  1 \le \ell \le k -1, \\
                v_{k} = 0, \quad v_{k-1} = \frac{1}{\bar{\e}\left(\tilde{\theta}^{k} \q\right)}. 
            \end{dcases}
        \end{align}
    The above recursive equation is the same as that in the proof of Lemma \ref{lem:explicit_e}, but the boundary condition is different. By using \eqref{eq:two_indep}, the right-hand side of \eqref{eq:eff_velo} can be represented as 
        \begin{align}
            \left(k - \frac{1+c}{1-c} \frac{1-c^{k}d}{1+c^{k}d}\right)\frac{1 + c^{\ell}d}{1 - c^{\ell}d} - \left(\frac{\ell\left(1 + c^{\ell}d\right)}{1 - c^{\ell}d} - \frac{1 + c}{1-c}\right),
        \end{align}
    and satisfies the boundary condition for $\ell = k, k - 1$. Therefore we have \eqref{eq:eff_velo}. 
    
\end{proof}}

\section*{Acknowledgment}
    The author thanks Makiko Sasada for insightful discussion and for pointing out some gaps in Section \ref{subsec:Markov}.
    The work of HS has been supported by JST CREST Grant No. JPMJCR1913 and JSPS KAKENHI Grant No. JP21K20332, 24KJ1037, 24K16936 and 24K00528.

\appendix

\section{Takahashi--Satsuma algorithm}\label{app:TS}

    In this section, we briefly recall how a ball configuration $\eta \in \Omega$ is decomposed into $k$-solitons via the Takahashi--Satsuma algorithm \cite{TS}. First we recall that for any $i \in \Z$,  $s_{\infty}\left(i\right) = s_{\infty}\left(\eta, i\right)$ is defined in \eqref{def:sk} for the half-line case, and \eqref{def:s_k_1}, \eqref{def:s_k_3} for the whole-line case. To define solitons in $\eta$, we assume that $\left| s_{\infty}\left(i\right) \right| < \infty$
    for any $i \in \Z$. Under this assumption, for any $i \in \Z$, we define the $i$-th excursion $\e_i$ as 
        \begin{align}
            \e_i := \left( \eta\left(x\right) \ ; \ s_{\infty}\left(i\right) \le x \le s_{\infty}\left(i+1\right) - 1  \right).
        \end{align}
    This is an abuse of notation, but we write $\e_i \setminus \{ s_{\infty}\left(i\right) \}$ the sequence of $1$s and $0$s obtained by omitting the leftmost $0$ from $\e_i$, that is, $\e_i \setminus \{ s_{\infty}\left(i\right) \} := \left( \eta(x) \ ; \  s_{\infty}\left(i\right) < x < s_{\infty}(i+1) \right)$. 

    The identification of solitons in the whole configuration $\eta$ is done through the identification of solitons for each excursion.  
    For each $\e_i$, we can find solitons via the Takahashi--Satsuma algorithm as follows : 
            \begin{itemize}
                \item Select the leftmost run of consecutive $0$s or $1$s in $\e_i \setminus \{ s_{\infty}\left(i\right) \}$ such that the length of the subsequent run is at least as long as the length of it.
                \item Let $k$ be the length of the selected run. Group the $k$ element of the selected run and the first $k$ elements of the subsequent run. The grouped $2k$ elements are identified as a soliton with {size} $k$, or $k$-soliton.
                \item Remove the identified $k$-soliton, and repeat the above procedure until all $1$s are removed. 
            \end{itemize}
        Here, a $k$-soliton is defined as a subset of $\Z$, and its cardinality is $2k$. 
        From the definition of records, if $\e_i \setminus \{ s_{\infty}\left(i\right) \}$ is not empty, then all $1$s and $0$s in $\e_i \setminus \{ s_{\infty}\left(i\right) \}$ are grouped and become components of solitons, and for any $i \in \Z$, the site $s_{\infty}\left(i\right)$ is not a component of any soliton. 

        An example of applying the above algorithm to an excursion $011001110101100010$ is shown in Figure \ref{ex:TSforex}. By repeating the algorithm five times, we see that there are one $4$-soliton, one $2$-soliton and three $1$-solitons.

        \begin{figure}[H]
        \footnotesize
                \setlength{\tabcolsep}{4pt}
                \renewcommand{\arraystretch}{2}
                    \centering
                    \begin{tabular}{rccccccccccccccccccc}
                    $\e$ :  & 0 & 1 & 1 & 0 & 0 & 1 & 1 & 1 & 0 & 1 & 0 & 1 & 1 & 0 & 0 & 0 & 1 & 0 & 0 \\
                    & 0 & \xcancel{1} & \xcancel{1} & \xcancel{0} & \xcancel{0} & 1 & 1 & 1 & 0 & 1 & 0 & 1 & 1 & 0 & 0 & 0 & 1 & 0 & 0 \\
                    & 0 &  &  &  &  & 1 & 1 & 1 & \xcancel{0} & \xcancel{1} & 0 & 1 & 1 & 0 & 0 & 0 & 1 & 0 & 0 \\
                    & 0 &  &  &  &  & 1 & 1 & 1 &  &  & \xcancel{0} & \xcancel{1} & 1 & 0 & 0 & 0 & 1 & 0 & 0 \\
                    & 0 &  &  &  &  & 1 & 1 & 1 &  &  & & & 1 & 0 & 0 & 0 & \xcancel{1} & \xcancel{0} & 0 \\ 
                    & 0 &  &  &  &  & \xcancel{1} & \xcancel{1} & \xcancel{1} &  &  & & & \xcancel{1} & \xcancel{0} & \xcancel{0} & \xcancel{0} &  &  & \xcancel{0}
                   \end{tabular}
                    \caption{Identifying solitons in $\e$ by the TS Algorithm}\label{ex:TSforex}
                \end{figure}

\section{A formula of the effective velocity }\label{app:eff_velo}

    {We fix $\q \in \mathcal{Q}^{+}$. The purpose of this section is to show the following equation explained in Remark \ref{rem:offset_vel} : 
        \begin{align}
            v^{\mathrm{eff}}_{k}(\q) = \frac{\bar{\e}\left(\q\right)}{\bar{\e}\left(\tilde{\theta}^{k} \q\right)} \times \left( k - 2\bar{W}_{k}(\q) \right).
        \end{align}
     We observe that by \eqref{eq:car_seat_0}, the right-hand side is equal to
        \begin{align}
            \frac{1}{\bar{\e}\left(\tilde{\theta}^{k} \q\right)} \sum_{h = 1}^{k} \bar{\e}\left(\tilde{C}_{h-1} \q\right). 
        \end{align}
    We fix $k \in \N$, and we set $v'_{k} := 0$, and
        \begin{align}
            v'_{\ell} := \frac{1}{\bar{\e}\left(\tilde{\theta}^{k} \q\right)} \sum_{h = 1}^{k-\ell} \bar{\e}\left(\tilde{C}_{h-1} \tilde{\theta}^{\ell} \q\right),
        \end{align}
    for $0 \le \ell \le k -1$.
    By using the relation $\tilde{C}_{h-1} \tilde{\theta}^{\ell} \q = \tilde{\theta}^{\ell} \tilde{C}_{\ell + h-1} \q$ and $\bar{\e}\left(\tilde{\theta}^{\ell-1} \tilde{C}_{\ell-1} \q\right) = \bar{\e}\left(\tilde{\theta}^{\ell+1} \tilde{C}_{\ell} \q\right) = 1$, we get 
        \begin{align}
            &v'_{\ell - 1} + v'_{\ell+1} - 2v'_{\ell} \\
            &= \frac{1}{\bar{\e}\left(\tilde{\theta}^{k} \q\right)} \sum_{h = 1}^{k-\ell} \left( \bar{\e}\left(\tilde{\theta}^{\ell-1} \tilde{C}_{\ell + h-1} \q\right) + \bar{\e}\left(\tilde{\theta}^{\ell+1} \tilde{C}_{\ell + h-1} \q\right) - 2 \bar{\e}\left(\tilde{\theta}^{\ell} \tilde{C}_{\ell + h-1} \q\right) \right) \\
            & \quad + \frac{1}{\bar{\e}\left(\tilde{\theta}^{k} \q\right)} \bar{\e}\left(\tilde{\theta}^{\ell-1} \tilde{C}_{\ell} \q\right) - \frac{1}{\bar{\e}\left(\tilde{\theta}^{k} \q\right)} \bar{\e}\left(\tilde{\theta}^{\ell+1} \tilde{C}_{\ell} \q\right) \\
            &= \frac{2q_{\ell}}{1-q_\ell} \times \frac{1}{\bar{\e}\left(\tilde{\theta}^{k} \q\right)} \sum_{h = 1}^{k-\ell}  \bar{\e}\left(\tilde{\theta}^{\ell} \tilde{C}_{\ell + h-1} \q\right) \\
            &= \frac{2q_{\ell}}{1-q_\ell} v'_{\ell}, 
        \end{align}
    where at the second equality we use \eqref{eq:system_Ce}. Thus, $(v'_{\ell})_{0\le \ell \le k}$ satisfies
        \begin{align}
            \begin{dcases}
                v'_{\ell - 1} + v'_{\ell + 1} - \frac{2 v'_{\ell}}{1 - q_{\ell}} = 0, \quad  1 \le \ell \le k -1, \\
                v'_{k} = 0, \quad v'_{k-1} = \frac{1}{\bar{\e}\left(\tilde{\theta}^{k} \q\right)},
            \end{dcases}
        \end{align}
    and the above recursive equation with boundary condition coincides with \eqref{eq:rec_eff}. Therefore we have 
        \begin{align}
            v'_{0} = \frac{1}{\bar{\e}\left(\tilde{\theta}^{k} \q\right)} \sum_{h = 1}^{k} \bar{\e}\left(\tilde{C}_{h-1} \q\right) = v^{\mathrm{eff}}_{k}(\q).
        \end{align}}

\end{document}